\setlist[enumerate,1]{leftmargin=1cm}
\theoremstyle{plain}
\newtheorem{theorem}{Theorem}[section]
\newtheorem{proposition}[theorem]{Proposition}
\newtheorem{corollary}[theorem]{Corollary}
\newtheorem{lemma}[theorem]{Lemma}
\theoremstyle{definition}
\newtheorem{definition}[theorem]{Definition}
\theoremstyle{remark}
\newcommand{\ff}{\mathbf{f}}
 \DeclareRobustCommand{\checkarg}{\@ifnextchar[{\@witharg}{}}
 \DeclareRobustCommand{\@witharg}[1][]{\ensuremath{\left(#1\right)}}
 \DeclareRobustCommand{\scaleGen}[1]{\@ifnextchar[{\@scalewithargs{#1}}{\odot^{}_{#1}}}
 \def\@scalewithargs#1[#2][#3]{#2 \odot^{}_{#1} #3}
\def\IPspace{\mathcal{I}}
\def\dI{d_{\IPspace}}
\def\HIPspace{\IPspace_H}
\def\dH{d_H}
\def\Exc{\mathcal{E}}
\def\SExc{\Sigma(\Exc)}
\def\cExc{\SExc}
\def\mBxc{\nu_{\textnormal{BES}}}	
\def\cC{\mathcal{C}}	
\def\ScS{\Sigma(\cC)}
\def\cCRI{\cC([0,\infty),\IPspace)}
\def\cD{\mathcal{D}}
\def\DS{\cD_{\textnormal{stb}}}
\def\SDS{\Sigma(\DS)}
\def\DSxc{\cD_{\textnormal{exc}}}
\def\SDSxc{\Sigma(\DSxc)}
\def\ScDSxc{\SDSxc}
\def\mSxc{\nu_{\textnormal{stb}}}	
\def\H{\mathcal{N}^{\textnormal{\,sp}}}
\def\SH{\Sigma(\H)}
\def\cH{\Sigma(\H)}
\def\Hfin{\H_{\textnormal{fin}}}
\def\SHfin{\Sigma(\Hfin)}
\def\cHfin{\Sigma(\Hfin)}
\def\Hxc#1{\H_{#1\textnormal{cld}}}
\def\SHxc#1{\Sigma\left(\Hxc{#1}\right)}
\def\cHxc#1{\SHxc{#1}}
\def\mClade{\nu_{\textnormal{cld}}}	
\def\Hs{\mathcal{N}^{\textnormal{\,sp},*}}
\def\Hfins{\Hs_{\textnormal{fin}}}
\def\SHfins{\Sigma(\Hfins)}
\def\cN{\mathcal{N}}
\def\cNRHf{\mathcal{N}\big([0,\infty)\times\Hfin\big)}
\def\cNRE{\mathcal{N}\big([0,\infty)\times\Exc\big)}
\def\ScNRE{\Sigma\big(\cNRE\big)}
\def\cNS{\mathcal{N}(\cS)}
\def\cS{\mathcal{S}}
\def\ScS{\Sigma(\mathcal{S})}
\def\cT{\mathcal{T}}
\def\ScT{\Sigma(\mathcal{T})}
\def\len{\textnormal{len}}			
\def\life{\zeta}					
\def\dis{\textnormal{dis}}			
\def\IPmag#1{\left\|\vphantom{I}#1\right\|}		
\def\skewer{\textsc{skewer}}		
\def\skewerP{\widebar{\skewer}}		
\def\cutoffL#1#2{\textsc{cutoff}^{\leq}\ensuremath{\left(#1,#2\right)}}
\def\cutoffG#1#2{\textsc{cutoff}^{\geq}\ensuremath{\left(#1,#2\right)}}
\def\Dirac#1{\delta\left( #1 \right)}
\def\DiracBig#1{\delta\big( #1 \big)}	
\def\reverse{\mathcal{R}}			
\def\reverseexc{\reverse_{\textnormal{spdl}}}		
\def\reverseincr{\reverse_{\textnormal{stb}}}		
\def\reverseH{\reverse_{\textnormal{cld}}}			
\def\scaleB{\scaleGen{\textnormal{spdl}}}		
\def\scaleS{\scaleGen{\textnormal{stb}}}		
\def\scaleH{\scaleGen{\textnormal{cld}}}		
\def\scaleI{\scaleGen{\textnormal{IP}}}	
\def\ShiftRestrict#1#2{#1\big|^{\leftarrow}_{#2}} 
\def\shiftrestrict#1#2{#1|^{\leftarrow}_{#2}}
\def\Restrict#1#2{#1\big|_{#2}}
\def\restrict#1#2{#1|_{#2}}
\def\Concat{ \mathop{ \raisebox{-2pt}{\Huge$\star$} } }
\def\ConcatIL{ \mbox{\huge $\star$} }
\def\concat{\star}
\def\bN{\mathbf{N}}			
\def\bF{\mathbf{F}}			
\def\bG{\mathbf{G}}			
\def\bX{\mathbf{X}}			
\newcommand{\IPLT}{\mathscr{D}}
\def\bM{\mathbf{M}}			
\def\bff{\mathbf{f}}		
\newcommand{\td}[1]{\widetilde{#1}}
\def\tdN{\widetilde{\mathbf{N}}}
\def\tdF{\widetilde{\mathbf{F}}}
\def\tdX{\widetilde{\bX}}
\def\tdl{\widetilde{\ell}}
\newcommand{\wh}[1]{\widehat{#1}}
\def\whN{\widehat{\mathbf{N}}}
\def\whF{\widehat{\mathbf{F}}}
\newcommand{\ol}[1]{\widebar{#1}}
\def\olN{\widebar{\mathbf{N}}}
\def\olF{\widebar{\mathbf{F}}}
\def\olX{\widebar{\bX}}
\def\oll{\widebar{\ell}}
\def\BR{\mathbb{R}}				
\def\BN{\mathbb{N}}				
\def\Leb{\textnormal{Leb}}		
\def\to{\rightarrow}
\def\downto{\downarrow}
\def\cf{\mathbf{1}}				
\def\Pr{\mathbf{P}}				
\def\BPr{\mathbb{P}}			
\def\EV{\mathbf{E}}				
\def\cF{\mathcal{F}}			
\def\cFI{\cF_{\IPspace}}		
\def\cG{\mathcal{G}}			
\def\cH{\mathcal{H}}			
\def\indep{ \mathop{\perp\hspace{-5.5pt}\perp} }
\def\cA{\mathcal{A}}	
\def\distribfont#1{\texttt{\upshape #1}}
\def\ExpDist{\distribfont{Exponential}\checkarg}
\def\InvGammaDist{\distribfont{InverseGamma}\checkarg}
\def\PRM{\distribfont{PRM}\checkarg}
\def\Stable{\distribfont{Stable}\checkarg}
\def\StableA{\ensuremath{\distribfont{Stable}(1+\alpha)}}
\def\BESQ{\distribfont{BESQ}\checkarg}
\def\cadlag{c\`adl\`ag}
\let\save@mathaccent\mathaccent
\newcommand*\if@single[3]{%
  \setbox0\hbox{${\mathaccent"0362{#1}}^H$}%
  \setbox2\hbox{${\mathaccent"0362{\kern0pt#1}}^H$}%
  \ifdim\ht0=\ht2 #3\else #2\fi
  }
\newcommand*\rel@kern[1]{\kern#1\dimexpr\macc@kerna}
\newcommand{\widebar}{}
\DeclareRobustCommand*\widebar[1]{\@ifnextchar^{\wide@bar{#1}{0}}{\wide@bar{#1}{1}}}
\newcommand*\wide@bar[2]{\if@single{#1}{\wide@bar@{#1}{#2}{1}}{\wide@bar@{#1}{#2}{2}}}
\newcommand*\wide@bar@[3]{%
  \begingroup
  \def\mathaccent##1##2{%
    \let\mathaccent\save@mathaccent
    \if#32 \let\macc@nucleus\first@char \fi
    \setbox\z@\hbox{$\macc@style{\macc@nucleus}_{}$}%
    \setbox\tw@\hbox{$\macc@style{\macc@nucleus}{}_{}$}%
    \dimen@\wd\tw@
    \advance\dimen@-\wd\z@
    \divide\dimen@ 3
    \@tempdima\wd\tw@
    \advance\@tempdima-\scriptspace
    \divide\@tempdima 10
    \advance\dimen@-\@tempdima
    \ifdim\dimen@>\z@ \dimen@0pt\fi
    \rel@kern{0.6}\kern-\dimen@
    \if#31
      \overline{\rel@kern{-0.6}\kern\dimen@\macc@nucleus\rel@kern{0.4}\kern\dimen@}%
      \advance\dimen@0.4\dimexpr\macc@kerna
      \let\final@kern#2%
      \ifdim\dimen@<\z@ \let\final@kern1\fi
      \if\final@kern1 \kern-\dimen@\fi
    \else
      \overline{\rel@kern{-0.6}\kern\dimen@#1}%
    \fi
  }%
  \macc@depth\@ne
  \let\math@bgroup\@empty \let\math@egroup\macc@set@skewchar
  \mathsurround\z@ \frozen@everymath{\mathgroup\macc@group\relax}%
  \macc@set@skewchar\relax
  \let\mathaccentV\macc@nested@a
  \if#31
    \macc@nested@a\relax111{#1}%
  \else
    \def\gobble@till@marker##1\endmarker{}%
    \futurelet\first@char\gobble@till@marker#1\endmarker
    \ifcat\noexpand\first@char A\else
      \def\first@char{}%
    \fi
    \macc@nested@a\relax111{\first@char}%
  \fi
  \endgroup
}
\renewcommand{\mBxc}{\nu}
\DeclareMathSymbol{\minus}{\mathbin}{AMSa}{"39}
\numberwithin{equation}{section}
\numberwithin{figure}{section}
\numberwithin{table}{section}
\begin{document}

\begin{frontmatter}
 
 \title{Diffusions on a space of interval partitions:\\ construction from marked L\'evy processes\thanksref{T0}}
 
 \runtitle{Construction of interval partition diffusions}
 \runauthor{Forman, Pal, Rizzolo and Winkel}
 \thankstext{T0}{This research is partially supported by NSF grants {DMS-1204840, DMS-1308340, DMS-1612483, DMS-1855568}, UW-RRF grant A112251, EPSRC grant EP/K029797/1}
 
 \begin{aug}
  \author{\fnms{Noah} \snm{Forman}\thanksref{m4}\ead[label=e1]{noah.forman@gmail.com}},
  \author{\fnms{Soumik} \snm{Pal}\thanksref{m2}\ead[label=e2]{soumikpal@gmail.com}},
  \author{\fnms{Douglas} \snm{Rizzolo}\thanksref{m3}\ead[label=e3]{drizzolo@udel.edu}},\\ and 
  \author{\fnms{Matthias} \snm{Winkel}\thanksref{m1}\ead[label=e4]{winkel@stats.ox.ac.uk}}
  
  \affiliation{McMaster University\thanksmark{m4}, University of Washington\thanksmark{m2},\\ University of Delaware\thanksmark{m3}, University of Oxford\,\thanksmark{m1}}
  
  \address{Department of Mathematics \& Statistics\\ McMaster University\\ 1280 Main Street West\\ Hamilton, Ontario L8S 4K1\\ Canada\\
   \printead{e1}
  }
  
  \address{Department of Mathematics \\ University of Washington\\ Seattle WA 98195\\ USA\\
   \printead{e2}
  }
  
  \address{Department of Mathematical Sciences\\ University of Delaware\\ Newark DE 19716\\ USA\\
   \printead{e3}
  }
  
  \address{Department of Statistics\\ University of Oxford\\ 24--29 St Giles'\\ Oxford OX1 3LB\\ UK\\
   \printead{e4}
  }
 \end{aug} 
  
\begin{abstract} Consider a spectrally positive \StableA\ process whose jumps we interpret as lifetimes of individuals. We mark the jumps 
  by continuous excursions assigning ``sizes'' varying during the lifetime. As for Crump--Mode--Jagers processes (with ``characteristics''), we 
  consider for each level the collection of individuals alive. We arrange their ``sizes'' at the crossing height from left to right 
  to form an interval partition. We study the continuity and Markov properties of the interval-partition-valued process indexed by level. From the 
  perspective of the \StableA\ process, this yields new theorems of Ray--Knight-type. From the perspective of branching processes, this  
  yields new, self-similar models with dense sets of birth and death times of (mostly short-lived) individuals.
  This paper feeds into projects resolving conjectures by Feng and Sun (2010) on the existence of certain measure-valued diffusions with 
  Poisson--Dirichlet stationary laws, and by Aldous (1999) on the existence of a continuum-tree-valued diffusion.
\end{abstract}  
  
 \begin{keyword}[class=MSC]
  \kwd[Primary ]{60J25}
  \kwd{60J60}
  \kwd{60J80}
  \kwd[; Secondary ]{60G18}
  \kwd{60G52}
  \kwd{60G55}
 \end{keyword}
 
 \begin{keyword}
  \kwd{Interval partition}
  \kwd{excursion theory}
  \kwd{branching process}
  \kwd{self-similar diffusion}
  \kwd{Aldous diffusion}
  \kwd{Ray-Knight theorem}
  \kwd{infinitely-many-neutral-alleles model}
 \end{keyword}

\end{frontmatter}

\section{Introduction}
\label{sec:intro}


We define interval partitions, following Aldous \cite[Section 17]{AldousExch} and Pitman \cite[Chapter 4]{CSP}.

\begin{definition}\label{def:IP_1}
 An \emph{interval partition} is a set $\beta$ of disjoint, open subintervals of some interval $[0,M]$, that cover $[0,M]$ up to a Lebesgue-null set. We write 
 $\IPmag{\beta}$ to denote $M$. We refer to the elements of an interval partition as its \emph{blocks}. The Lebesgue measure of a block is called its \emph{width} or \emph{mass}.
\end{definition}

An interval partition represents a totally ordered, summable collection of positive real numbers, for example, the interval partition generated naturally by the range of a subordinator (see Pitman and Yor \cite{PitmYor92}), or the partition of $[0,1]$ given by the complement of the zero-set of a Brownian bridge (Gnedin and Pitman \cite[Example 3]{GnedPitm05}). They also arise from the so-called stick-breaking schemes; see \cite[Example 2]{GnedPitm05}. More generally, interval partitions occur as limits of \textit{compositions} of natural numbers $n$, i.e.\ sequences of positive integers with sum $n$. Interval partitions serve as extremal points in paintbox representations of composition structures on $\mathbb{N}$; see Gnedin \cite{Gnedin97}. 


We will introduce a construction of diffusion processes on spaces of interval partitions. We focus on self-similar processes with the branching property that blocks evolve independently and each give birth at a constant rate. Our framework enables the construction of continuum analogues to natural up-down Markov chains on discrete partitions based upon the Chinese Restaurant Processes \cite{CSP,PitmWink09,RogeWink19}. These relate to members of the canonical two-parameter family of Poisson--Dirichlet distributions. On partitions with blocks ordered by decreasing mass, related diffusions have been introduced by Ethier and Kurtz \cite{EthiKurt81} and, more recently, by Petrov \cite{Petrov09}. Other known processes of interval partitions such as Bertoin's \cite{Bertoin02} are not path-continuous.


\begin{figure}[t]
 \centering
 \input{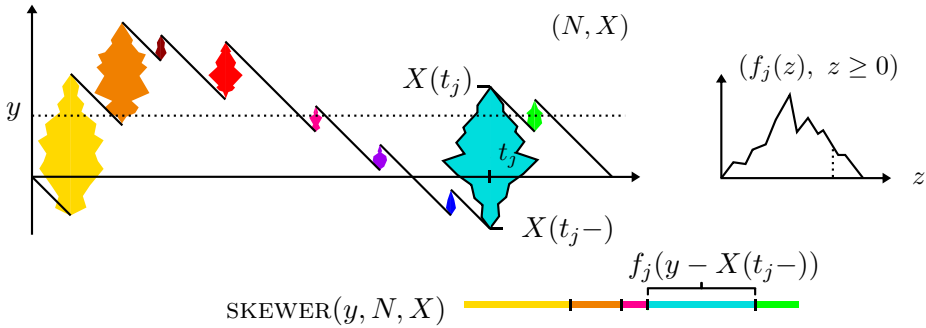}
 \caption{Left: The slanted black lines comprise the graph of the scaffolding $X$. Shaded blobs decorating jumps describe spindles: points $(t_j,f_j)$ of $N$. Right: Graph of one spindle. Bottom: A skewer, with blocks shaded to correspond to spindles; not drawn to scale.\label{fig:skewer_1}}
\end{figure}

Our construction is in the setting of \cite{Paper0} and requires two ingredients: (i) spectrally positive L\'evy processes that we call \textit{scaffolding}, and (ii) a family 
of independent 
excursions, called \textit{spindles}, one for each of the countably many jumps of the L\'evy processes. For each jump of the scaffolding, the corresponding excursion has a length given by the height of that jump. This allows us to imagine the spindles decorating the jumps. See Figure \ref{fig:skewer_1}, where we consider a scaffolding of finite variation and the spindles are represented by the laterally symmetric spindle-like shapes attached to the jumps. 

We define here an associated interval partition process, which at time $y$ is the output of a \emph{skewer} map at level $y$, as in Figure \ref{fig:skewer_1}. Let us 
first describe this map informally. 
As we move from left to right along the horizontal dotted line in Figure \ref{fig:skewer_1}, we encounter a sequence of spindles. Consider the widths of these spindles when intersected by this line, arrange them sequentially on the positive half-line, and slide them (as if on a skewer) towards the origin to remove gaps between them. The collection of the intervals of these widths now produces an interval partition. As $y$ varies we get a continuous process of interval partitions, which is our primary interest. See \ref{SuppSim} for a simulation of this. Any apparently overlapping spindles are still totally ordered left-to-right like the jumps in the scaffolding.

Let us formulate the above ideas more rigorously in the language of point processes that will be used throughout the rest of the paper. 
Recall that a continuous (positive) \textit{excursion} is a continuous function 
$f\colon \BR \to [0,\infty)$ with the property that, for some $z>0$, we have $f(x) > 0$ if and only if $x\in (0,z)$. That is, the function escapes up from zero at time zero and is killed upon its first return. We write $\zeta(f) = z$; this is the \emph{lifetime} of the excursion. Let $\Exc$ denote a suitable space containing continuous excursions.

For $n \in \BN$, take $0 \leq t_1 < \cdots < t_n \leq T$ and let $f_1,\ldots,f_n$ denote continuous excursions. We represent this collection of pairs $(t_j,f_j)$ in a counting measure $N = \sum_{j=1}^n \Dirac{t_j,f_j}$. Here, $\Dirac{t,f}$ denotes a Dirac point mass at $(t,f)\in [0,\infty)\times\Exc$. For some constant $c_0>0$ and $t\in [0,T]$ we define\vspace{-.1cm}
\begin{equation}
 X(t) := -c_0t+\int_{[0,t]\times \Exc}\zeta(f)dN(u,f)= -c_0 t + \sum_{j=1}^n \zeta(f_j)\cf\left\{ 0\le t_j\le t \right\}.\vspace{-.1cm}\label{eq:discrete_JCCP_eg}
\end{equation}
When $t_j$s arrive at rate 1 and $f_j$s are i.i.d.\ from any distribution $\overline{\nu}$ on $\Exc$, then $X$ is a spectrally positive L\'evy process and $N$ is a Poisson random measure with intensity $\Leb\otimes\overline{\nu}$, both stopped at $T$. 

\begin{definition}\label{def:skewer}
 For $y\in\BR$, $t\in [0,T]$, the \emph{aggregate mass} in $(N,X)$ at level $y$, up to time $t$ is\vspace{-0.2cm}
 \begin{equation}
  M_{N,X}^y(t) := \int_{[0,t]\times \Exc}f(y - X(s-))dN(s,f).\label{eq:agg_mass_from_spindles}\vspace{-0.1cm}
 \end{equation}
 The \emph{skewer} of $(N,X)$ at level $y$, denoted by $\skewer(y,N,X)$ is defined as \vspace{-0.1cm}
 \begin{equation}
  \left\{\left(M^y_{N,X}(t-),M^y_{N,X}(t)\right)\!\colon t\!\in\! [0,T],M^y_{N,X}(t-)\!<\!M^y_{N,X}(t)\right\}\!.\label{eq:skewer_def}\vspace{-0.1cm}
 \end{equation}
 The \emph{skewer process} $\skewerP(N,X)$ is defined as $\big( \skewer(y,N,X),y\!\geq\! 0\big)$.
\end{definition}


Lambert \cite{Lambert10} showed that certain Crump--Mode--Jagers branching processes could be represented in terms of L\'evy processes with bounded variation. For generalizations to unbounded variation, see \cite{LambertBravo18}. In our richer setting with spindles $f_i$,  $M_{N,X}(T)$ is structurally the same as the sum of characteristics studied by Jagers \cite{Jagers69,JagersBranching}; see \cite{Paper0}. In this analogy, the skewer process separates out the characteristics of the individuals in the population.



Figure \ref{fig:skewer_1} does not capture the level of complexity we require for self-similar processes. Fix $\alpha\in(0,1)$ and $q>\alpha$. Let $\kappa_{q}$ 
denote a stochastic kernel from $(0,\infty)$ to $\Exc$ so that $\kappa_{q}(z,\,\cdot\,)$ is a probability distribution on continuous, positive excursions of lifetime $z$, with the scaling property:
\begin{equation}\label{eq:spindle_scaling}
 \text{if}\quad \ff\sim \kappa_{q}(1,\,\cdot\,) \quad \text{then} \quad z^q\ff(\,\cdot/z) \sim \kappa_{q}(z,\,\cdot\,),\ z\ge0.
\end{equation}
For some constant $c_\nu\in(0,\infty)$, let $\nu$ denote the $\sigma$-finite measure
\begin{equation}
 \nu = c_\nu\int_{(0,\infty)} \kappa_q(z,\cdot\,)z^{-\alpha-2}dz.\label{eqn:nu}
\end{equation}
Let $\bN$ be a Poisson random measure with intensity $\Leb\otimes\nu$ on $[0,\infty)\times \Exc$, \linebreak which we will abbreviate as 
\PRM[\Leb\otimes\nu]; this is our \emph{point process of spindles}. Each point in this process is a pair $(t,f)$ of time and spindle. The spindles are as in Figure \ref{fig:skewer_1}; time $t$ refers to the time axis of the scaffolding process, which is horizontal in that figure. The associated scaffolding is
\begin{equation}\label{eq scaffold}
 \bX(t) := \lim_{z\to 0}\left(\int_{[0,t]\times\{g\in\Exc\colon \zeta(g)>z\}}\zeta(f)d\bN(u,x,f) - t\frac{1}{\alpha}c_\nu z^{-\alpha}\right),
\end{equation}
for $t\ge 0$. This scaffolding is a spectrally positive \StableA\ L\'evy process with L\'evy measure $\Pi(dz) = \nu(\zeta\in dz) = c_\nu z^{-2-\alpha}$ and Laplace exponent
$\psi(\lambda) = c_\nu\Gamma(1-\alpha)\alpha^{-1}(1+\alpha)^{-1}\lambda^{1+\alpha}$.
This is the setting in which \cite[Corollary 7]{Paper0} established the continuity of what in our notation is the \em total mass process \em $(\|\skewer(y,\bN,\bX)\|,y\ge 0)$.
We strengthen this one-dimensional result to our interval-partition setting, as follows. 

\begin{theorem}[Continuity]\label{thm:diffusion_0} 
 Let $\alpha\in(0,1)$, $q>\alpha$ and $T>0$. Suppose that $\ff\sim\kappa_q(1,\,\cdot\,)$ is $\theta$-H\"older for some 
 $\theta\in(0,q-\alpha)$, and that the H\"older constant
 $$D_\theta:=\sup_{0\le r<s\le 1}\frac{|\ff(s)-\ff(r)|}{|s-r|^\theta}$$
 has moments of all orders. Consider $\nu$ as in \eqref{eqn:nu} and a Poisson random measure $\bN$ on $[0,T]\times\Exc$ with intensity $\Leb\otimes\mBxc$. Then  
  $\skewerP(\bN,\bX)$ is path-continuous in a suitable metric space $(\IPspace_{\alpha/q},d_{\alpha/q})$ of interval partitions.
\end{theorem}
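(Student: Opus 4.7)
My strategy is to prove path-continuity via a Kolmogorov--Chentsov type moment estimate: for some $p\ge 1$, $C>0$, $\epsilon>0$,
\[
 \EV\!\left[d_{\alpha/q}\big(\skewer(y_1,\bN,\bX),\skewer(y_2,\bN,\bX)\big)^p\right]\le C\,|y_2-y_1|^{1+\epsilon}
\]
for $y_1,y_2$ in any compact interval. The metric distance will be estimated through an explicit correspondence between the two skewers based on the spindles of $\bN$: a spindle $(s,f)$ with $\bX(s-)<y_1\wedge y_2$ and $\bX(s)>y_1\vee y_2$ is \emph{matched} and contributes one paired block to each skewer; a spindle alive at only one of the two levels is \emph{unmatched} and contributes a solitary block to only one side of the correspondence.

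For a matched pair from spindle $(s,f)$, the H\"older hypothesis immediately gives
\[
 |f(y_1-\bX(s-))-f(y_2-\bX(s-))|\le D_\theta(f)\,|y_2-y_1|^\theta.
\]
For an unmatched spindle, the argument $y_i-\bX(s-)$ at the relevant level lies within $|y_2-y_1|$ of either $0$ or $\zeta(f)$, where $f$ vanishes; H\"older continuity then yields the \emph{same} bound $f(y_i-\bX(s-))\le D_\theta(f)\,|y_2-y_1|^\theta$. Thus every block in either skewer that appears in the distance estimate has mass at most $D_\theta(f)|y_2-y_1|^\theta$. By the scaling~\eqref{eq:spindle_scaling}, $D_\theta(f)\stackrel{d}{=} z^{q-\theta}D_\theta(\ff)$ for a spindle of lifetime $z=\zeta(f)$, and $D_\theta(\ff)$ has moments of all orders by hypothesis.

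The next step is to combine these per-spindle bounds into an estimate for $d_{\alpha/q}$ and then take expectations using the Poisson structure of $\bN$. The exponent $\alpha/q$ in the definition of $d_{\alpha/q}$ is calibrated so that, integrated against the L\'evy intensity $z^{-2-\alpha}dz$ together with the level-crossing count of the \StableA\ scaffolding $\bX$ over $[0,T]$ (which, by standard stable-process estimates, scales like $z^{-1-\alpha}$ for jumps of size~$z$), the per-block bounds assemble into a finite quantity of the form $C_T\,|y_2-y_1|^{\theta\alpha/q}$ (or a closely related power). The requirement $\theta<q-\alpha$ is precisely what secures the integrability of small spindles, and raising to a sufficiently high moment $p$ (permitted by the moment hypothesis on $D_\theta(\ff)$) produces the $|y_2-y_1|^{1+\epsilon}$ that Kolmogorov--Chentsov demands.

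I expect the main obstacle to be the bookkeeping in the correspondence and in the assembly of the metric bound. Because the jumps of $\bX$ form a dense set, the pairing must respect the left-to-right order of blocks within each interval partition and must handle short-lived spindles popping in and out of the slab $[y_1\wedge y_2,y_1\vee y_2]$ without double counting. A secondary delicate point is that if $d_{\alpha/q}$ incorporates a diversity-type quantity in addition to block masses, one must also control how the regularized enumeration of blocks varies with $y$; this is likely where the balance between the scaling exponent $\alpha/q$ and the H\"older exponent $\theta$ must be matched most carefully. The strict inequality $\theta<q-\alpha$ furnishes the essential slack that keeps the resulting moment integrals finite.
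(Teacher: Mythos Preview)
Your correspondence and per-spindle estimates are exactly the ones the paper uses: match spindles that cross both levels, and for both matched and unmatched spindles bound the relevant mass by $D_\theta(f)\,|y_2-y_1|^\theta$. But the paper's aggregation is \emph{pathwise}, not via Kolmogorov--Chentsov, and this is where your sketch runs into trouble.

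The first gap is the diversity component of $d_{\alpha/q}$, which you flag as ``secondary'' but is in fact half of the metric. The paper handles it by invoking two pre-established results: Theorem~\ref{thm:LT_property_all_levels}, which identifies $\IPLT_{\beta^y}(M^y(t))$ with the scaffolding local time $\ell^y(t)$ simultaneously at all levels, and Boylan's theorem (Theorem~\ref{thm:Boylan}), which gives a.s.\ H\"older-$\theta'$ continuity of $y\mapsto\ell^y(t)$ uniformly in $t$ for any $\theta'<\alpha/2$. With these in hand, $|\IPLT_{\beta^y}(U)-\IPLT_{\beta^z}(V)|=|\ell^y(t)-\ell^z(t)|\le C|z-y|^\theta$ immediately. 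Your moment approach would need $L^p$ bounds on $\sup_t|\ell^y(t)-\ell^z(t)|$, which are available but not free, and you have not supplied them.

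The second gap is in the mass aggregation. Your proposed integration of $z^{q-\theta}$ (the scaling of $D_\theta(f)$) against the L\'evy intensity $z^{-2-\alpha}dz$, weighted by a level-crossing factor, does not obviously converge; the exponent $\theta\alpha/q$ you write down has no clear provenance, and the role of $\alpha/q$ in $d_{\alpha/q}$ is purely in the diversity, not as a power on masses. The paper sidesteps all of this with Lemma~\ref{lem:spindle_piles} (from \cite{Paper0}): the translated spindles can be a.s.\ partitioned into countably many sequences $(g^n_j)_{j\ge1}$, each sequence having disjoint supports and a common H\"older constant $D_n$, with $\sum_n D_n<\infty$ a.s.. Since at any level $y$ at most one spindle from each pile is alive, the mass part of the distortion is bounded by $(\sum_n D_n)|z-y|^\theta$. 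Combining with the local-time bound for diversity gives $d_{\alpha/q}(\beta^y,\beta^z)\le\max\{C,D\}|z-y|^\theta$ a.s., with random but finite $C,D$. No moment estimate or Kolmogorov--Chentsov is needed.
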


We refer to such an interval partition evolution as a \em $(\nu,T)$-IP-evolution\em. 
The metric space $(\IPspace_{\alpha/q},d_{\alpha/q})$ was introduced in \cite{IPspace} (see also Section \ref{sec:IPspace} here) restricting to interval partitions $\beta$ for 
which the $\alpha/q$-diversity
$$\IPLT^{\alpha/q}_\beta(t):=\Gamma(1-\alpha/q)\lim_{h\downarrow 0}h^{\alpha/q}\#\{(a,b)\in\beta\colon\ |b-a|>h,b\le t\}$$
%
%
exists for all $t\in[0,\|\beta\|]$. We note that  
$\IPLT^{\alpha/q}_\beta(U):=\IPLT^{\alpha/q}_\beta(t)$ does not depend on $t\in U$ and write $\IPLT^{\alpha/q}_\beta(\infty):=\IPLT^{\alpha/q}_\beta(\|\beta\|)$. 

One can interpret our processes in the language of population genetics, with blocks representing species and their masses representing population sizes; then $\IPLT^{\alpha/q}_\beta(\infty)$ is a measure of genetic diversity in this regime of infinitely many species \cite{EthiKurt81,GPY2006alpha,GnedinRegenerationSurvey,RuggWalkFava13}. 

We prove Theorem \ref{thm:diffusion_0} at the end of Section \ref{sec:cont}. To obtain the Markov property of a $(\nu,T)$-IP-evolution, we need stronger assumptions. Specifically, we will assume that the spindles are associated with a $[0,\infty)$-valued diffusion process. It is well-known that self-similar diffusions form a three-parameter family, which we will call $(\alpha,c,q)$-block diffusions, and we find an appropriate excursion measure $\nu\!=\!\nu_{q,c}^{(-2\alpha)}$ when $\alpha\!\in\!(0,1)$, $q\!>\!\alpha$, $c\!>\!0$. See Section \ref{sec:BESQ}. Then, for suitable random times $T$, the $(\nu,T)$-IP-evolutions are Markovian. In Definition \ref{constr:type-1} we modify this construction to allow any fixed initial state. We refer to the processes thus constructed as $\nu$-IP-evolutions.

\begin{theorem}[Markovianity]\label{thm:diffusion}
 $\mBxc_{q,c}^{(-2\alpha)}$-IP-evolutions are self-similar path-continuous Hunt processes that can start at any point in $\big(\IPspace^{(1/q)}_{\alpha/q},d_{\alpha/q}\big)$, where \vspace{-0.2cm}
 $$\IPspace_{\alpha/q}^{(1/q)}:=\big\{\beta\in\IPspace_{\alpha/q}\colon\sum\nolimits_{U\in\beta}({\rm Leb}(U))^{1/q}<\infty\big\}.$$
\end{theorem}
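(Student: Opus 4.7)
\emph{Plan and self-similarity.} The plan is to establish self-similarity by a scaling argument, then construct $\nu$-IP-evolutions from arbitrary initial states by concatenation, and finally derive the Markov and Hunt properties via excursion theory for the stable scaffolding. For self-similarity, the scaling property \eqref{eq:spindle_scaling} of $\kappa_q$ combined with the exponent $-\alpha-2$ in \eqref{eqn:nu} makes $\nu$ transform compatibly with the $1/(1+\alpha)$-self-similarity of the $\StableA$ scaffolding $\bX$; applying the induced joint rescaling of spindles, scaffolding, and level inside the skewer map yields the identity
\[
\bigl(\gamma^{\lambda\beta}_{\lambda^{1/q}y}\bigr)_{y\ge 0}\stackrel{d}{=}\bigl(\lambda\,\gamma^\beta_y\bigr)_{y\ge 0}\quad\text{for every }\lambda>0,
\]
where $\gamma^\beta$ denotes a $\nu$-IP-evolution from $\beta$.

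\emph{Construction from arbitrary $\beta\in\IPspace^{(1/q)}_{\alpha/q}$.} Take an independent family $(\bN^U,\bX^U)_{U\in\beta}$ of spindle-scaffolding systems, each seeded with an initial $(\alpha,c,q)$-block diffusion started from $\Leb(U)$ (as in Definition \ref{constr:type-1}), let $\gamma^U$ be the resulting IP-evolution run until absorption at the empty partition, and define $\gamma^\beta(y)$ to be the left-to-right concatenation of the $\gamma^U(y)$ over $U\in\beta$. By the scaling identity, each $\gamma^U$ has extinction time of order $\Leb(U)^{1/q}$, so the summability hypothesis $\sum_U\Leb(U)^{1/q}<\infty$, combined with Theorem \ref{thm:diffusion_0} applied to each $\gamma^U$ and the associated control of the $\alpha/q$-diversity, both places $\gamma^\beta(y)$ in $\IPspace^{(1/q)}_{\alpha/q}$ for every $y\ge 0$ and makes $y\mapsto\gamma^\beta(y)$ continuous in $d_{\alpha/q}$.

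\emph{Markov and Hunt properties.} For fixed $y_0\ge 0$, excursion theory for $\bX$ above level $y_0$, combined with the strong Markov property of each $(\alpha,c,q)$-block diffusion at its passage time to $y_0$, decomposes the spindle-scaffolding data above level $y_0$, conditionally on $\gamma^\beta(y_0)$, into conditionally independent components indexed by the blocks of $\gamma^\beta(y_0)$, each distributed as the system associated to a fresh $\nu$-IP-evolution started from a single block of the corresponding mass. Combined with the concatenation construction, this yields the simple Markov property of the transition semigroup; Feller-type continuity in the initial condition (a consequence of path-continuity and of the concatenation construction) then upgrades this to the strong Markov property and, together with path-continuity, to the Hunt property.

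The main technical obstacle is the conditional decomposition just described: one must simultaneously split the Poisson spindle measure and the stable scaffolding at level $y_0$ to produce a measurable conditional factorization indexed by the (countably infinite, densely ordered) blocks of $\gamma^\beta(y_0)$, and then identify the resulting pieces with those of the concatenation construction. The uniform-in-$U$ path-continuity and diversity estimates required to make the concatenation rigorous for arbitrary initial states will also require substantial work, especially for blocks of small mass, where the H\"older constants driving Theorem \ref{thm:diffusion_0} need to be controlled uniformly across the family $\{\gamma^U\}_{U\in\beta}$.
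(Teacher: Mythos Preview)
Your outline follows essentially the same architecture as the paper: concatenation construction from arbitrary initial states, a level-$y_0$ conditional decomposition via excursion theory for the scaffolding combined with the Markov property of the block diffusions (what the paper calls the ``mid-spindle Markov property'' and formalizes through bi-clades), simple Markov, then upgrade to strong Markov via continuity in the initial state. Your self-similarity identity is also correct and matches the paper's.

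There is one genuine gap. You write that ``Feller-type continuity in the initial condition'' is ``a consequence of path-continuity and of the concatenation construction.'' It is not: path-continuity of each realization says nothing about how the law depends on the starting point. The paper obtains continuity in the initial state (Proposition~\ref{prop:type-1:cts_in_init_state}) by an explicit coupling: given $\beta$ and a nearby $\gamma$, it matches large blocks via a $d_{\alpha/q}$-correspondence and \emph{scales} the clades $\bN_{U_j}$ by $b_j/a_j$ to produce clades $\bN_{V_j}$ with the correct law, then uses path-continuity only to control the small time-shift $z\mapsto z(b_j/a_j)$. Without such a coupling you cannot pass from the simple to the strong Markov property.

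A smaller point: your proposed route to path-continuity at $y=0$ --- uniform H\"older constants across the family $\{\gamma^U\}_{U\in\beta}$ --- is not what the paper does and would be hard to carry out, since there are infinitely many $\gamma^U$ with no a~priori uniform control. Instead the paper observes that for any $y>0$ only finitely many $\gamma^U$ survive (Lemma~\ref{lem:finite_survivors}), so H\"older continuity on $[z,\infty)$ reduces to a finite concatenation; continuity at $0$ is handled separately by a domination/coupling argument against a \Stable[\alpha] interval partition (Lemma~\ref{lem:IP:domination}, Propositions~\ref{prop:cts_lt_at_0}--\ref{prop:type-1:cts}), which controls total diversity and total mass rather than H\"older constants.
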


This result is reminiscent of the Ray--Knight theorems that say that the local time process of (suitably stopped) Brownian motion is a diffusion as a process in the spatial
variable $y\ge 0$. We will show in a sequel paper 
\cite{PartB} that in the special case where the $(\alpha,c,q)$-block diffusion is a $(-2\alpha)$-dimensional squared Bessel processes, which we abbreviate as $\BESQ[-2\alpha]$, the total mass process is a $\BESQ[0]$, further strengthening the connection to the second Brownian Ray--Knight theorem \cite[Theorem XI.(2.3)]{RevuzYor}.

As a consequence of \cite[Theorem 2.4]{IPspace}, which establishes the continuity of various functions on $\IPspace_{\alpha/q}$, we obtain the following.

\begin{corollary}\label{cor:contfns} Let $(\beta^y,y\ge 0)$ be a $(\nu,T)$- or $\mBxc_{q,c}^{(-2\alpha)}$-IP-evolution as in Theorem \ref{thm:diffusion_0} or \ref{thm:diffusion}. Then the following processes are path-continuous.
  \begin{itemize}\item the measure-valued process $\textsc{measure}(\beta^y)\!=\!\sum_{U\in\beta^y}\Leb(U)\delta(\IPLT_{\beta^y}^{\alpha/q}(U))$, $y\ge 0$, in the space of compactly supported finite Borel
    measures on $[0,\infty)$ equipped with the topology of weak convergence;
    \item the (real-valued) total diversity process $\IPLT_{\beta^y}^{\alpha/q}(\infty)$, $y\ge 0$;
    \item the process $\textsc{ranked}(\beta^y)$, $y\ge 0$, of ranked block masses, in the space of summable decreasing sequences equipped with the $\ell_1$-metric;
    \item the total mass process $\|\beta^y\|$, $y\ge 0$. 
  \end{itemize}
\end{corollary}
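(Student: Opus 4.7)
The plan is essentially a one-line reduction: combine the path-continuity of $y\mapsto\beta^y$ in $(\IPspace_{\alpha/q},d_{\alpha/q})$, supplied by Theorem \ref{thm:diffusion_0} or \ref{thm:diffusion}, with the continuity of the four relevant functionals on $\IPspace_{\alpha/q}$, which is the content of \cite[Theorem 2.4]{IPspace}. So the task is to check that each of the four target processes is the image of $\beta^y$ under such a continuous functional, valued in the topological space indicated.

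First I would address items (ii) and (iv), which are the simplest: \cite[Theorem 2.4]{IPspace} asserts that $\beta\mapsto\IPLT_{\beta}^{\alpha/q}(\infty)$ and $\beta\mapsto\|\beta\|$ are continuous real-valued functionals on $(\IPspace_{\alpha/q},d_{\alpha/q})$, so composing with $y\mapsto\beta^y$ yields continuous real-valued sample paths. Next, for the ranked-mass process (iii), the same theorem provides continuity of $\textsc{ranked}\colon\IPspace_{\alpha/q}\to\ell_1^\downarrow$, where $\ell_1^\downarrow$ is the space of summable non-increasing sequences with the $\ell_1$ metric; again the claim follows by composition. The only non-trivial issue is to make sure the version of Theorem 2.4 being invoked uses exactly the same topology on $\ell_1^\downarrow$ as in the statement of Corollary \ref{cor:contfns}; if the cited theorem is stated with a weaker (e.g.\ $\ell_\infty$) topology, I would upgrade it by a standard argument using that the total mass $\sum_i(\textsc{ranked}(\beta)_i)=\|\beta\|$ is itself continuous, and Scheff\'e's lemma for sequences.

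The most delicate item is (i), the measure-valued process $\textsc{measure}(\beta^y)$. Here the map sends $\beta$ to the finite Borel measure on $[0,\infty)$ obtained by pushing each block mass $\Leb(U)$ forward along the diversity function $U\mapsto\IPLT^{\alpha/q}_\beta(U)$. The plan is to invoke the part of \cite[Theorem 2.4]{IPspace} that gives continuity of this measure map into $\cM_f([0,\infty))$ with the topology of weak convergence, noting that the support of $\textsc{measure}(\beta)$ is contained in $[0,\IPLT^{\alpha/q}_\beta(\infty)]$ and hence compact, and that compactness of the support along the trajectory $y\in[0,T]$ follows from item (ii) since $\IPLT^{\alpha/q}_{\beta^y}(\infty)$ is continuous and therefore locally bounded in $y$. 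The main obstacle, if any, is purely bookkeeping: matching the statement of \cite[Theorem 2.4]{IPspace} to the precise formulation (compactly supported finite Borel measures, weak topology) used here, which amounts to checking that the diversity $\IPLT^{\alpha/q}_\beta(t)$ is continuous and non-decreasing in $t$ along each fixed $\beta\in\IPspace_{\alpha/q}$, so that the pushforward is well-defined and weak convergence of the measures is implied by joint continuity of mass and diversity at the block level.

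Once these four applications of the cited theorem are in place, the corollary follows because continuity of each target process is the composition of the continuous path $y\mapsto\beta^y\in(\IPspace_{\alpha/q},d_{\alpha/q})$ with a continuous map into the respective target space. No additional probabilistic input beyond Theorems \ref{thm:diffusion_0} and \ref{thm:diffusion} is needed.
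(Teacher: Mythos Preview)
Your proposal is correct and matches the paper's approach exactly: the paper treats this corollary as an immediate consequence of \cite[Theorem~2.4]{IPspace}, which supplies continuity of all four functionals on $(\IPspace_{\alpha/q},d_{\alpha/q})$, composed with the path-continuity from Theorems~\ref{thm:diffusion_0} and~\ref{thm:diffusion}. Your additional caution about matching topologies and the Scheff\'e upgrade is more than the paper provides (it gives no proof beyond the one-line citation), but the underlying argument is the same.
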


\subsection{Bertoin's work on Bessel processes of dimension $d=1-\alpha\in(0,1)$}

In 1990, Bertoin \cite{Bertoin1990,Bertoin1990c} studied the excursions of Bessel processes of dimensions between 0 and 1. He 
decomposed the Bessel process $\mathbf{R}=\mathbf{B}-\frac{1}{2}(1-d)\mathbf{H}$ into a Brownian motion $\mathbf{B}$ and a path-continuous process $\mathbf{H}$ 
with zero quadratic variation. 
By extracting suitable statistics, namely $\mathbf{R}(t)$, where $\mathbf{H}(t)=y$, he showed \cite[Theorem II.2-II.3]{Bertoin1990c} that the 
measure-valued process
  $$y\mapsto\mu^y_{[0,T]}:=\sum_{0\le t\le T\colon\mathbf{H}(t)=y,\mathbf{R}(t)\neq 0}\delta_{\mathbf{R}(t)}$$ 
is path-continuous (with respect to the vague topology for $\sigma$-finite measures on $(0,\infty)$) and Markovian for suitable stopping times $T$ such as any inverse local time of $(\mathbf{R},\mathbf{H})$ at $(0,0)$. He further
showed in \cite[Corollary II.4]{Bertoin1990c} that 
  $$y\mapsto\lambda^y(T):=2\int_{(0,\infty)}x\mu_{[0,T]}^y(dx)=\sum_{0\le t\le T\colon\mathbf{H}(t)=y}\mathbf{R}(t)$$
is \BESQ[0]. We further explore the connection to Bertoin's results in \cite{PartB}. 
Specifically, we will establish the following more precise connection to our work on IP-evolutions.

\begin{theorem}\label{thm:Bertoin} In Bertoin's setting, 
  $$\left\{\left(\lambda^y(t-),\lambda^y(t)\right)\colon t\in[0,T],\,\mathbf{R}(t)\neq 0,\,\mathbf{H}(t)=y\right\}$$ 
  is a $\big(\nu_{1,1}^{(-2(1-d))},T\big)$-IP-evolution.
\end{theorem}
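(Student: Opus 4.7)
The strategy is to realize Bertoin's interval-partition process as the $\skewerP$ of a scaffolding-and-spindles pair $(\bN,\bX)$ constructed pathwise from $(\mathbf{R},\mathbf{H})$, then invoke Theorem~\ref{thm:diffusion_0} (with $q=1$, $c=1$, $\alpha:=1-d$) to conclude. Throughout, write $\alpha:=1-d\in(0,1)$, so that $\nu_{1,1}^{(-2(1-d))}=\nu_{1,1}^{(-2\alpha)}$ is the excursion measure of $\BESQ[-2\alpha]$ in our notation.

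For each excursion $e$ of $\mathbf{R}$ on $[u_e,v_e]$, the process $\mathbf{H}$ is (almost surely) strictly monotone on $[u_e,v_e]$ and admits an inverse $\tau_e\colon[0,\Delta H_e]\to[u_e,v_e]$, where $\Delta H_e:=\mathbf{H}(v_e)-\mathbf{H}(u_e)$. Define the spindle
\begin{equation*}
 f_e(y):=\mathbf{R}(\tau_e(y)),\qquad y\in[0,\Delta H_e].
\end{equation*}
Applying the Bessel SDE $d\mathbf{R}=dW+\tfrac{d-1}{2\mathbf{R}}\,dt$ under the time change $d\tau_e/dy=f_e(y)$ and using Dambis--Dubins--Schwarz yields
\begin{equation*}
 df_e(y)=\sqrt{f_e(y)}\,d\beta_y+\tfrac{d-1}{2}\,dy,
\end{equation*}
so that $Z:=4f_e$ satisfies $dZ=2\sqrt{Z}\,d\beta+2(d-1)\,dy$; i.e., $4f_e$ is a $\BESQ[-2\alpha]$ excursion of lifetime $\Delta H_e$. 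This identifies each spindle, up to a deterministic scalar, with an excursion under $\nu_{1,1}^{(-2\alpha)}$.

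The main technical obstacle is the scaffolding. We seek a time parametrization $\{s_e\}$ such that $\bX$ is spectrally positive $\StableA$ with jump at $s_e$ of size $\Delta H_e$, and the marked process $\bN:=\sum_e\delta_{(s_e,f_e)}$ is $\PRM[\Leb\otimes\nu_{1,1}^{(-2\alpha)}]$. The naive choice---parametrizing by the local time of $\mathbf{R}$ at $0$, so that $\bX(\ell)=\mathbf{H}(L^{-1}(\ell))$---does not yield the correct process: since the inverse local time $L^{-1}$ is a stable subordinator of index $(1-\alpha)/2$ and $\Delta H_e\sim\sqrt{\zeta_e}$ in distribution, the resulting $\bX$ is a stable subordinator of index $1-\alpha$, not a L\'evy process of index $1+\alpha$. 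Identifying the correct parametrization, together with the associated compensator, is the central work and is deferred to the sequel paper~\cite{PartB}.

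Once $(\bN,\bX)$ is in place, a direct pathwise check confirms that $\skewerP(\bN,\bX)$ agrees with the interval-partition process of the theorem: the block contributed at level $y$ by the spindle $(s_e,f_e)$ has width $f_e(y-\bX(s_e-))=\mathbf{R}(\tau_e(y-\mathbf{H}(u_e)))=\mathbf{R}(t)$ for the unique $t\in[u_e,v_e]$ with $\mathbf{H}(t)=y$, and summation left-to-right in the order given by $\{s_e\}$ reproduces $\{(\lambda^y(t-),\lambda^y(t))\}$. Theorem~\ref{thm:diffusion_0} then identifies this process as a $(\nu_{1,1}^{(-2(1-d))},T)$-IP-evolution.
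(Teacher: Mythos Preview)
The paper does not prove this theorem. Immediately before stating it, the text reads ``We further explore the connection to Bertoin's results in \cite{PartB},'' and Theorem~\ref{thm:Bertoin} is presented as a result to be established in that sequel; no argument is given here. Your proposal mirrors this exactly: you identify the intended mechanism (spindles obtained by time-changing excursions of $\mathbf{R}$ via $\mathbf{H}$, scaffolding built from the increments $\Delta H_e$) and then explicitly defer the central step---finding the reparametrization under which the scaffolding becomes a spectrally positive $\StableA$ process with the correct compensation---to \cite{PartB}. So your treatment is consistent with the paper's, and there is no paper proof against which to compare the details of your sketch.

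One remark on the sketch itself: your $\BESQ$ computation shows that $4f_e$, not $f_e$, satisfies the $\BESQ[-2\alpha]$ SDE, so the spindle law matches $\nu_{1,1}^{(-2\alpha)}$ only after a deterministic space-time rescaling (equivalently, a change of the constant $c$). This is harmless for the qualitative identification but would need to be tracked carefully in \cite{PartB} to land exactly on $\nu_{1,1}^{(-2(1-d))}$ rather than $\nu_{1,c}^{(-2(1-d))}$ for some $c\neq 1$.
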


\subsection{Further motivation: conjectures by Aldous, and by Feng and Sun}

In 1999, David Aldous \cite{AldousDiffusionProblem} asked to find a continuum analogue of a Markov chain on a space of $n$-leaf binary trees with the uniform 
tree as its invariant distribution. Specifically, he related this discrete Markov chain to certain Wright--Fisher diffusions with negative 
mutation rates and asked if there is a continuum-tree-valued process that has the Brownian Continuum Random Tree \cite{AldousCRT1} as its invariant distribution and incorporates 
the Wright--Fisher diffusions. The latter have since been studied by Pal \cite{Pal13}. In \cite{PartB,Paper3,Paper4}, we further study IP-evolutions and construct the 
continuum-tree-valued process, superseding the unpublished preprint \cite{PalPreprint}. Indeed, $\textsc{measure}(\beta^y)$ as in Corollary \ref{cor:contfns} can be viewed as a branch of length
$\IPLT_{\beta^y}^{\alpha/q}(\infty)$ equipped with subtree masses $\Leb(U)$ at locations $\IPLT_{\beta^y}^{\alpha/q}(U)$, which for $\nu_{1,1}^{(-1)}$-IP-evolutions is 
related to the evolution of a suitable branch in the continuum-tree process. 

A related process was proposed by L\"ohr, Mytnik and Winter as a scaling limit of Aldous's Markov chain on a new class of trees called algebraic trees, which ``can be seen as metric trees where one has `forgotten' the metric'' \cite{LohrMytnWint18}. This allows an approach via classical martingale problem methods. Our approach allows us to capture the metric, finding continuously fluctuating distances corresponding to local times of stable processes, at the expense of requiring the new constructions given here. 



In 2010, Feng and Sun \cite{FengSun10} conjectured the existence of a measure-valued process whose atom sizes (in ranked order) follow Petrov's Poisson--Dirichlet diffusion
\cite{Petrov09}, in a two-parameter setting $\alpha\in(0,1)$ and $\theta>-\alpha$. Petrov's diffusions, extending Ethier and Kurtz's $\alpha=0$ case \cite{EthiKurt81}, are 
stationary. In \cite{PartB}, we exhibit stationary Poisson--Dirichlet IP-evolutions with parameters $(\alpha,0)$ or $(\alpha,\alpha)$, laws which appear as zero sets of Brownian 
motion, Brownian bridge and more general Bessel processes and bridges. We show in \cite{Part2} that the associated ranked process is Petrov's diffusion. Our construction here 
gives new insights into Petrov's diffusions and provides an approach to measure-valued processes, which we intend to explore in future work.

\subsection{Structure of this paper}

Once the scaffolding-and-spindles construction of IP-evolutions
has been formally set up in Section \ref{sec:sample_space}, we can prove Theorem \ref{thm:diffusion_0} quickly in Section 
\ref{sec:cont}, building on the groundwork of \cite{Paper0,IPspace}. Proving Theorem \ref{thm:diffusion} 
requires more technical machinery. To avoid getting bogged down in technicality and notation before delivering 
the main punches of this paper, in the following section we: (1) highlight the main intermediate results on the path to this proof, 
(2) informally support these results with soft arguments, and (3) explain how they are used to prove Theorem 
\ref{thm:diffusion}.

\subsection{Overview of the proof of Theorem \ref{thm:diffusion}}\label{sec:overview}

The overall strategy is to start in the setting of \PRM s of \BESQ\ spindles, then generalize the initial states, then extend to more general spindles.

One of the key tools in our approach is It\^o's excursion theory \cite{Ito72, GreePitm80, BertoinLevy}.  Let $(\bN,\bX)$ be as before: $\bN$ is a \PRM\ on $[0,\infty)\times\Exc$ and $\bX$ the \StableA\ L\'evy process as in \eqref{eq scaffold}. Then $\bX$ is recurrent at every level. If we fix a level $y$, we can decompose $\bX$ into its excursions away from level $y$. Each excursion comprises three parts: an escape downwards from $y$, a single jump up across $y$, and a subsequent descent back to $y$. Each excursion thus contributes a unique block to the interval partition obtained from the skewer construction at level $y$, corresponding to the one jump across $y$; see Figure \ref{fig:bi-clade_decomp}. 

From It\^o's excursion theory, the excursions of $\bX$ away from $y$ 
naturally form a \PRM\ whose atoms $\Dirac{s,g}$ are excursions $g$ of $\bX$ (shifted to start at $0$ and be excursions away from $0$) together with the local time $s$ at which they occur. It is helpful to think of our approach as separately marking jumps of each excursion of $\bX$ away from $y$ with spindles, which results in a \PRM\ of marked excursions. Technically, rather than working with \PRM s of marked excursions it is easier to identify each excursion of $\bX$ with the points in $\bN$ that determine it via \eqref{eq scaffold}.  This allows us to easily look simultaneously at the excursions of $\bX$ away from different levels $y$.  This results in a \PRM\ $\bF^y$ such that each atom $\Dirac{s,N}$ of $\bF^y$ is itself a point measure $N$ comprising those atoms of $\bN$ that correspond to an excursion $g$ of $\bX$. The details of this are contained in \emph{Proposition \ref{prop:bi-clade_PRM}}. 

The point measure $N$ corresponding to $g$ is called a ``\em bi-clade.\em''  This terminology arises from a genetics interpretation.  In this interpretation, the atoms of $N$ corresponding to the portion of $g$ that lies above level $y$ represent the descendants of the individual corresponding to the jump across level $y$ in the excursion, and thus represent a clade within the larger population represented by $\bN$.  ``Bi-clade'' comes from the fact that the atoms of $N$ also include the clade-like collection of atoms corresponding to the portion of $g$ that lies below level $y$, which we call an anti-clade.  The intensity measure of $\bF^y$ is of the form $\Leb \otimes \mClade$, where $\mClade$ is a $\sigma$-finite measure on bi-clades.




\emph{Lemma \ref{lem:clade:invariance}} notes scaling and time-reversal invariance properties of $\mClade$ following from invariance properties of $\mBxc_{q,c}^{(-2\alpha)}$ and the \StableA\ process.

\emph{Lemma \ref{lem:mid_spindle_Markov}} notes a ``mid-spindle Markov property.'' Imagine a ``spindle-reader process'' -- an ant walking along the graph of $\bX$ and, at each jump, walking up from the bottom to the top of the jump, ``reading'' the corresponding spindle. 
Then Lemma \ref{lem:mid_spindle_Markov} can be thought of as a Markov property for this spindle-reader process at certain special stopping times. In particular, we prove this at the $k^{\text{th}}$ time that a spindle crosses level $y$ with mass greater than $\epsilon$. 
Specifically, we show that conditionally given the mass $f_T(y-\bX(t-))$ of the spindle $f_T$ as it crosses level $y$, the piece of that spindle below that level, $\check f_T^y$, and the point process prior to that time, $\restrict{\bN}{[0,T)}$, are independent of the piece of the spindle above level $y$, $\hat f_T^y$, and the subsequent point process, $\shiftrestrict{\bN}{(T,\infty)}$. 
Intuitively, this is a consequence of the Poisson property of $\bN$ and the Markov property of the spindles. 

\begin{figure}
 \input{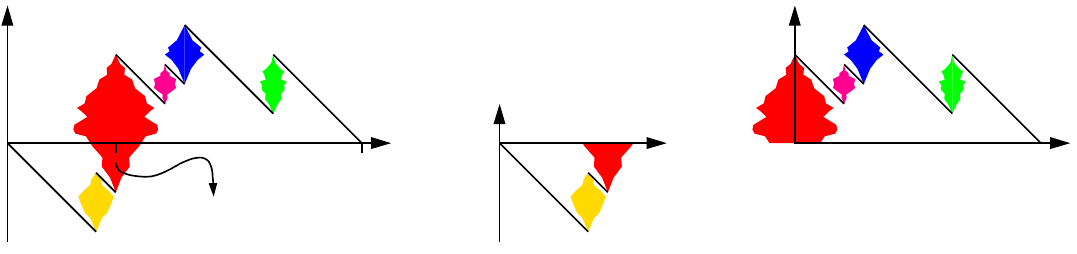_t}
 \caption{Decomposition of a bi-clade around the middle mass, $m_0$.\label{fig:bi-clade_decomp}}
\end{figure}

In Figure \ref{fig:bi-clade_decomp}, we see a bi-clade $N$ decomposed into two parts $(N^-,N^+)$, with the spindle marking the middle jump split into two \emph{broken spindles}. We write $m_0$ to denote the width (or \emph{mass}) of this middle spindle as it crosses level zero.

\emph{Proposition \ref{prop:clade_splitting}} states, firstly, that the measure $\mClade$ admits a natural disintegration by conditioning on $m_0$, so that, given $\bar N \sim \mClade(\,\cdot \mid m_0 = 1)$, we obtain a clade with law $\mClade(\,\cdot \mid m_0 = a)$ by suitably rescaling $\bar N$ by $a$. Secondly, under these conditional laws $\mClade(\,\cdot \mid m_0 = a)$, the two parts $(N^-,N^+)$ of the decomposed bi-clade in Figure \ref{fig:bi-clade_decomp} are independent. Thirdly, let $\hat f$ denote a broken spindle with initial mass $\hat f(0) = a$, as in the assumed Markov property of the spindles, independent of $\bN \sim \PRM[\Leb\otimes\mBxc_{q,c}^{(-2\alpha)}]$. Then the point process formed by beginning with spindle $\hat f$ at time 0, continuing according to $\bN$, and stopping when the resulting scaffolding hits 0, $\delta_{(0,\hat f)} + \restrict{\bN}{[0,T_{-\zeta(\hat f)}]}$, has the law of $N^+$ under $\mClade(\,\cdot\mid m_0 = a)$; and the point process formed by rotating this scaffolding-and-spindles picture $180^\circ$, effectively reversing the left-to-right order of spindles and flipping each spindle vertically, has the law of $N^-$ under $\mClade(\,\cdot\mid m_0 = a)$.

To prove 
this proposition, we consider the first bi-clade $N$ about level $0$ that has middle mass $m_0 \ge 1$. By Proposition \ref{prop:bi-clade_PRM}, since the bi-clades arise from a \PRM[\Leb\otimes\mClade], we get $N\sim\mClade(\,\cdot \mid m_0>1)$. 
 From the scaling invariance of $\mClade$ noted in Lemma \ref{lem:clade:invariance}, we find that a normalized bi-clade $\bar N$, obtained by scaling $N$ to have $m_0=1$, has law $\mClade(\,\cdot\mid m_0 = 1)$. The mid-spindle Markov property, Lemma \ref{lem:mid_spindle_Markov}, applied to $\bN$ at the time of the middle spindle of $N$ then proves that $N^+$ and $N^-$ are conditionally independent and $N^+$ has the claimed conditional law. The claimed conditional law for $N^-$ follows from this and the reversal invariance of $\mClade$ of Lemma \ref{lem:clade:invariance}.

Via the mappings $N\mapsto N^+$ and $N\mapsto N^-$ sending a bi-clade to its \em clade \em and \em anti-clade \em parts, we map the bi-clade \PRM\ $\bF^y$ to \PRM s $\bF^{\ge y}$ of clades and $\bF^{\le y}$ of anti-clades. 
We define a \emph{level filtration} $(\cF^y,y\ge0)$ so that all spindles and parts of spindles arising below level $y$ in the scaffolding-and-spindles picture are $\cF^y$-measurable; this is made precise in Definition \ref{def:filtrations}. Informally, $\cF^y$ is also generated by the point process of anti-clades, $\bF^{\le y}$.

\emph{Proposition \ref{prop:PRM:Fy-_Fy+}} is a Markov-like property of $\bN$ in the level filtration: $\bF^{\ge y}$ is conditionally independent of $\cF^y$ 
given $\skewer(y,\bN,\bX)$. This is akin to a Markov property in that $\bF^{\ge y}$ encodes the future of the skewer process beyond time $y$, while $\cF^y$ describes its past. Moreover, the conditional law of $\bF^{\ge y}$ given $\skewer(y,\bN,\bX) = \beta$ is the law of $\sum_{U\in\beta} \delta\big((\IPLT^{\alpha}_\beta(U),N_U)\big)$, where each $N_U$ is an independent clade with law $\mClade(N^+\in\cdot\mid m_0 = \Leb(U))$.

A subtle point is that the $\alpha$-diversity in the interval partition $\IPLT^{\alpha}_\beta(U)$ corresponds to local times in the scaffolding-and-spindles picture, and therefore to the (local) time index in the point process of clades $\bF^{\ge y}$; this was studied in \cite{Paper0}, see also Theorem \ref{thm:LT_property_all_levels} below. Informally, we prove Proposition \ref{prop:PRM:Fy-_Fy+} by appealing to: (1) Proposition \ref{prop:bi-clade_PRM}, which asserts that $\bF^y$ is a \PRM, so bi-clades are independent, and (2) Proposition \ref{prop:clade_splitting}, which says that each bi-clade $N_U$ can be split into a clade and anti-clade, which are conditionally independent given their middle mass, which is the mass of the block $U$ in $\skewer(y,\bN,\bX)$ corresponding to that bi-clade. Then, gathering all of the clades together into $\bF^{\ge y}$ and the anti-clades into $\bF^{\le y}$, these are conditionally independent given the skewer.

From this proposition, most of the remaining work to prove Theorem \ref{thm:diffusion} involves passing results from the setting of the \PRM\ of spindles, $\bN$, to a more general setting of a point process of spindles $\bN_\beta$ engineered to describe an IP-evolution from any given deterministic or random initial state $\beta$. Proposition \ref{prop:type-1:Fy-_Fy+} restates Proposition \ref{prop:PRM:Fy-_Fy+} in that setting. From there, we easily conclude the simple Markov property of the IP-evolutions resulting from taking the skewer; this appears in Corollary \ref{cor:type-1:simple_Markov_1}. We extend this to a strong Markov property by proving continuity in the initial condition, in Proposition \ref{prop:type-1:cts_in_init_state}.

\section{Ingredients for the Poissonian construction}
\label{sec:sample_space}

In this section, we introduce the state space for IP-evolutions and formalise the scaffolding-and-spindles set-up. 

\subsection{Technical remarks}\label{sec:techrem}

\emph{A. Disintegrations and scaling.} We will require disintegrations of $\sigma$-finite excursion measures. Informally, for our purposes, if $(\cS,\ScS,\mu)$ is a measure space and $\phi\colon\cS\to\cT$ a measurable function, then a $\phi$-disintegration of $\mu$ is a stochastic kernel, which we will denote by $t\mapsto \mu(\,\cdot \mid \phi = t)$, with the following properties. Firstly, for $t\in \cT$, the law $\mu(\,\cdot \mid \phi = t)$ is supported on the pre-image $\phi^{-1}(t)$; and secondly
 \begin{equation}
  \mu(A) = \int \mu(A \mid \phi = t)\mu(\phi \in dt) \quad \text{for }A\in\ScS.\label{eq:scl_ker:integration}
 \end{equation}

In \ref{SuppMeas}, we observe that if $\mu$ satisfies an invariance identity with respect to a scaling operation on $\cS$ and $\phi$ interacts well with this scaling operation, then there is a canonical choice of disintegration so that an object with law $\mu(\,\cdot \mid \phi = t)$ can be obtained by suitably scaling an object with law $\mu(\,\cdot \mid \phi = 1)$. Throughout this paper, all disintegrations are of this kind.

\emph{B. Measurability of random counting measures.} As outlined in Section \ref{sec:overview}, random counting measures, sometimes called ``point processes,'' play a key role in this work. 
We use \cite{DaleyVereJones1, DaleyVereJones2} as our reference. In that framework, random counting measures $\mathbf{M}$ must always be boundedly finite on some complete, separable metric space $(X,d)$, meaning it must be a.s.\ the case that every bounded Borel set in $(X,d)$ has finite measure under $\mathbf{M}$. 
We collect 
associated topological and measure-theoretic 
arguments in \ref{SuppMeas}.

\subsection{The state space $(\IPspace,\dI)$: interval partitions with diversity}\label{sec:IPspace}

This section discusses the metric topology on interval partitions proposed in \cite{IPspace}.

 Let $\HIPspace$ denote the set of all interval partitions in the sense of Definition \ref{def:IP_1}. 
 We say that an interval partition $\beta\in\HIPspace$ of a finite interval $[0,M]$ has the \emph{$\alpha$-diversity property}, or that $\beta$ is an \emph{interval partition with diversity}, if the following limit exists for every $t\in [0,M]$:\vspace{-0.2cm}
 \begin{equation}
  \IPLT_{\beta}^\alpha (t) := \Gamma(1-\alpha)\lim_{h\downto 0}h^\alpha\#\{(a,b)\in \beta\colon\ |b-a|>h,\ b\leq t\}.\label{eq:IPLT}\vspace{-0.2cm}
 \end{equation}
 For $\alpha\in(0,1)$, we denote by $\IPspace_\alpha \subset\HIPspace$ the set of interval partitions $\beta$ that possess the $\alpha$-diversity property. We call $\IPLT_{\beta}^\alpha (t)$ the $\alpha$-\emph{diversity} of the interval partition up to $t\in[0,M]$. For $U\in\beta$, $t\in U$, we write $\IPLT_{\beta}^\alpha (U)=\IPLT_{\beta}^\alpha (t)$, and we write $\IPLT_{\beta}^\alpha (\infty) := \IPLT_{\beta}^\alpha (M)$ to denote the \emph{total ($\alpha$-)diversity} of $\beta$. We often fix $\alpha\in(0,1)$ and simplify notation $\IPspace:=\IPspace_\alpha $ and $\IPLT_\beta:=\IPLT_\beta^\alpha $. 

Note that $\IPLT_\beta^\alpha (U)$ is well-defined, since $\IPLT_{\beta}^\alpha $ is constant on each interval $U \in \beta$, as the intervals of $\beta$ are disjoint. 

\begin{proposition}\label{prop:IP:Stable} For a \Stable[\alpha] subordinator $Y=(Y(s),\;s\geq 0)$ with Laplace
  exponent $\Phi(\lambda) = \lambda^{\alpha}$ and any $T>0$, the interval partition \vspace{-0.2cm}
 \begin{equation}\label{eq:IPs_from_subord}
  \beta := \{(Y(s-),Y(s))\colon\ s\in [0,T),\ Y(s-)<Y(s)\}
 \end{equation}
 has $\alpha$-diversity $\IPLT_\beta^\alpha (\infty)=T$ a.s.. We call $\beta$ a \Stable[\alpha] interval partition. 
\end{proposition}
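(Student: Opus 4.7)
The plan is to realize the counting function in the definition of $\alpha$-diversity as a rescaled Poisson process indexed by jump time, and to translate the limit back to the interval partition via the right-continuous inverse $L(t):=\inf\{s\ge 0\colon Y(s)>t\}$. Recalling that the \Stable[\alpha] subordinator $Y$ with Laplace exponent $\lambda^{\alpha}$ has L\'evy measure $\pi(dx)=\frac{\alpha}{\Gamma(1-\alpha)}x^{-\alpha-1}dx$ on $(0,\infty)$, the jumps of $Y$ form a Poisson point process of intensity $\Leb\otimes\pi$, so for each $h>0$ the counting process $N(s,h):=\#\{r\in[0,s]\colon Y(r)-Y(r-)>h\}$ is Poisson in $s$ with rate $\lambda_h:=\pi((h,\infty))=h^{-\alpha}/\Gamma(1-\alpha)$. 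Applying Doob's $L^{2}$-inequality to the compensated martingale $s\mapsto N(s,h)-s\lambda_h$ yields
\begin{equation*}
\EV\!\left[\sup_{s\in[0,T]}\bigl(h^{\alpha}N(s,h)-s/\Gamma(1-\alpha)\bigr)^{2}\right]\le 4Th^{\alpha}/\Gamma(1-\alpha).
\end{equation*}
For $\rho>1$ fixed and $h_n:=\rho^{-n}$, the right-hand side is summable in $n$, so Markov's inequality followed by Borel--Cantelli gives a.s.\ uniform convergence of $h_n^{\alpha}N(\,\cdot\,,h_n)$ to $(\,\cdot\,)/\Gamma(1-\alpha)$ on $[0,T]$.

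For arbitrary $h\in[h_{n+1},h_n]$, monotonicity of $N(s,\cdot)$ sandwiches
\begin{equation*}
\rho^{-\alpha}h_n^{\alpha}N(s,h_n)\le h^{\alpha}N(s,h)\le \rho^{\alpha}h_{n+1}^{\alpha}N(s,h_{n+1}),
\end{equation*}
and letting $\rho\downto 1$ through a countable set upgrades the previous display to $\Gamma(1-\alpha)\,h^{\alpha}N(s,h)\to s$ almost surely, uniformly in $s\in[0,T]$. A block $(Y(s-),Y(s))\in\beta$ has right endpoint $\le t$ iff $s<L(t)$, so $\#\{(a,b)\in\beta\colon|b-a|>h,\,b\le t\}=N(L(t)-,h)$. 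Since the range of $\restrict{Y}{[0,T)}$ is a.s.\ Lebesgue-null, $\IPmag{\beta}=Y(T-)$ a.s.\ and $L(t)\in[0,T]$ for every $t\in[0,\IPmag{\beta}]$; the uniform convergence therefore gives $\IPLT_{\beta}^{\alpha}(t)=L(t)$ simultaneously for all such $t$, establishing the $\alpha$-diversity property. Density of the jumps of $Y$ forces $Y(s)>Y(T-)$ for every $s>T$, so $L(Y(T-))=T$ and $\IPLT_{\beta}^{\alpha}(\infty)=T$ a.s.

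The principal obstacle is securing uniformity of the convergence in both $s$ and $h$ simultaneously: Doob's $L^{2}$-inequality secures uniformity in $s$ for each fixed $h_n$, and the geometric sandwich with $\rho\downarrow 1$ extends this to arbitrary $h$. Everything else is bookkeeping; the final identification $\IPLT_{\beta}^{\alpha}=L$ recovers the classical fact that the $\alpha$-diversity of the range of a \Stable[\alpha] subordinator coincides with its Mittag--Leffler-type inverse.
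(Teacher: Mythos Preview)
Your proof is correct and more detailed than the paper's one-line argument, which simply reads ``This follows from the Strong Law of Large Numbers for the Poisson process of jumps and the monotonicity of $\IPLT_{\beta}^\alpha(t)$ in $t$.'' Both approaches begin from the same observation that $N(s,h)$ is Poisson with rate $h^{-\alpha}/\Gamma(1-\alpha)$; the difference lies in how uniformity in $s$ is secured. The paper implicitly fixes countably many $s$, invokes the SLLN at each, and then interpolates via the monotonicity of $s\mapsto N(s,h)$ together with continuity of the limit $s\mapsto s$. You instead obtain uniform-in-$s$ convergence directly via Doob's $L^2$-inequality and Borel--Cantelli along the geometric grid $h_n=\rho^{-n}$, then sandwich for general $h$. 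Your route is more self-contained and would transfer more readily to settings where the limiting functional is not monotone; the paper's route is quicker here precisely because monotonicity is available.

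One small remark: your identity $\#\{(a,b)\in\beta\colon|b-a|>h,\,b\le t\}=N(L(t)-,h)$ can be off by one when $t=Y(s_0)$ is exactly the right endpoint of a block (then $L(t)=s_0$ and the jump at $s_0$ should be counted but is excluded from $N(s_0-,h)$). This is harmless, since the discrepancy is at most $1$ and $h^\alpha\to 0$, but it is worth noting if you want the identity to be exact.
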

\begin{proof}
 This follows from the Strong Law of Large Numbers for the Poisson process of jumps and the monotonicity of $\IPLT_{\beta}^\alpha (t)$ in $t$.
\end{proof}

We adopt the standard discrete mathematics notation $[n] := \{1,2,\ldots,n\}$.
To define metrics on $\HIPspace$ and on $\IPspace_\alpha$, we say that a \emph{correspondence} from $\beta\in\HIPspace$ to $\gamma\in\HIPspace$ is a finite sequence of pairs of intervals $(U_1,V_1),\ldots,(U_n,V_n) \in \beta\times\gamma$, $n\geq 0$, where the sequences $(U_j)_{j\in [n]}$ and $(V_j)_{j\in [n]}$ are each strictly  
increasing in the left-to-right ordering of the interval partitions.

Now fix $\alpha\in(0,1)$. For $\beta,\gamma\in\IPspace_\alpha $, the $\alpha$-\emph{distortion} $\dis_\alpha (\beta,\gamma,(U_j,V_j)_{j\in [n]})$
of a correspondence $(U_j,V_j)_{j\in [n]}$ from $\beta$ to $\gamma$ is defined to be the maximum of the following four quantities:
 \begin{enumerate}[label=(\roman*), ref=(\roman*)]
  \item $\sum_{j\in [n]}|\Leb(U_j)-\Leb(V_j)| + \IPmag{\beta} - \sum_{j\in [n]}\Leb(U_j)$, \label{item:IP_m:mass_1}
  \item $\sum_{j\in [n]}|\Leb(U_j)-\Leb(V_j)| + \IPmag{\gamma} - \sum_{j\in [n]}\Leb(V_j)$. \label{item:IP_m:mass_2}
  \item $\sup_{j\in [n]}|\IPLT_{\beta}^\alpha (U_j) - \IPLT_{\gamma}^\alpha (V_j)|$,
  \item $|\IPLT_{\beta}^\alpha (\infty) - \IPLT_{\gamma}^\alpha (\infty)|$,
 \end{enumerate}
Similarly, the \em Hausdorff distortion \em is defined to be the maximum of (i)-(ii). \pagebreak

\begin{definition} \label{def:IP:metric}
 For $\beta,\gamma\in\HIPspace$ we define\vspace{-0.1cm}
 \begin{equation} d_H^\prime(\beta,\gamma):=\inf_{n\ge 0, (U_j,V_j)_{j\in[n]}}\dis_H(\beta,\gamma,(U_j,V_j)_{j\in[n]},\vspace{-0.1cm}
 \end{equation}
 where the infimum is over all correspondences from $\beta$ to $\gamma$.
 
 For $\beta,\gamma\in\IPspace_\alpha$ we similarly define\vspace{-0.1cm}
 \begin{equation}\label{eq:IP:metric_def}
  d_\alpha (\beta,\gamma) := \inf_{n\ge 0,\,(U_j,V_j)_{j\in [n]}}\dis_\alpha \big(\beta,\gamma,(U_j,V_j)_{j\in [n]}\big).\vspace{-0.1cm}
 \end{equation}
 We often fix $\alpha\in(0,1)$ and use notation $(\IPspace,d_\IPspace):=(\IPspace_\alpha,d_\alpha)$.
\end{definition}

\begin{theorem}[Theorems 2.2 and 2.3 of \cite{IPspace}]\label{thm:Lusin}
 $(\HIPspace,d_H^\prime)$ is a complete metric space. $(\IPspace_\alpha ,d_\alpha )$ is Lusin, i.e.\ a metric space that is homeomorphic to a Borel subset of a compact metric space. 
\end{theorem}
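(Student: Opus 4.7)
The two assertions are essentially independent. For both $d_H^\prime$ and $d_\alpha$, symmetry is immediate by reversing pairs in a correspondence. The triangle inequality follows by \emph{composing} correspondences: given $(U_j,V_j)_{j\in[n]}$ from $\beta$ to $\gamma$ and $(\widetilde V_k,W_k)_{k\in[m]}$ from $\gamma$ to $\eta$ that nearly realize the two distances, retain only those indices where $V_j=\widetilde V_k$; order preservation yields an order-preserving $\beta$-to-$\eta$ correspondence whose matched-mass differences are handled by $|\Leb(U_j)-\Leb(W_k)|\leq |\Leb(U_j)-\Leb(V_j)|+|\Leb(V_j)-\Leb(W_k)|$, and a careful accounting of the discarded/unmatched mass bounds the composed distortion by the sum. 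For non-degeneracy, if the distance is zero then for every $\epsilon>0$ the finitely many blocks of mass $>\epsilon$ in $\beta$ must match bijectively with blocks of $\gamma$ of the same masses in the correct relative order; letting $\epsilon\downarrow 0$ forces $\beta=\gamma$, and items (iii)--(iv) additionally pin down $\IPLT_\beta^\alpha=\IPLT_\gamma^\alpha$ in the $d_\alpha$ case.

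For completeness of $(\HIPspace,d_H^\prime)$, I would embed $\HIPspace$ injectively into a Polish space and show Cauchy sequences have limits in the image. A convenient choice is $\beta\mapsto (\IPmag{\beta},\mu_\beta)$, where $\mu_\beta:=\sum_{(a,b)\in\beta}(b-a)\delta_b$ is a finite Borel measure on $[0,\infty)$ recording each block's width as an atom at its right endpoint: an atom at $b$ of mass $m$ encodes the block $(b-m,b)$, so $\beta$ is reconstructable from the pair. Items (i)--(ii) give $|\IPmag{\beta}-\IPmag{\gamma}|\leq 2\,d_H^\prime(\beta,\gamma)$, and the combined matched/unmatched-mass control shows that $d_H^\prime$-Cauchy sequences map to sequences of measures whose total masses are Cauchy and which are themselves weakly Cauchy. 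The weak limit, once shown to be purely atomic with total mass matching $\lim_n\IPmag{\beta_n}$, reconstitutes an interval partition attaining the limit.

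For the Lusin property of $(\IPspace_\alpha,d_\alpha)$, I would construct a bi-continuous injection $\Phi\colon\IPspace_\alpha\to(\HIPspace,d_H^\prime)\times C([0,\infty),\mathbb R)$ by $\Phi(\beta):=(\beta,\ell_\beta)$, where $\ell_\beta$ is $\IPLT_\beta^\alpha$ extended to equal $\IPLT_\beta^\alpha(\infty)$ on $[\IPmag{\beta},\infty)$; items (iii)--(iv) in $\dis_\alpha$ are precisely what makes the second coordinate continuous. Since the codomain is Polish, it suffices to verify that the image is Borel: the condition that $\ell$ agrees with the $\alpha$-diversity function of $\beta$ expands via \eqref{eq:IPLT} into a countable conjunction of Borel conditions obtained by testing rational levels $t$ against a sequence $h_n\downarrow 0$.

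The main obstacle is showing that the candidate limit produced in the completeness step is truly an interval partition. A weak limit of $(\mu_{\beta_n})$ can a priori have a continuous part or lose mass at infinity, and the argument must exploit the \emph{order-preserving} nature of correspondences --- a feature absent in standard transportation frameworks --- together with the simultaneous control of matched and unmatched mass in items (i)--(ii), to rule these pathologies out and recover a limiting interval partition of the correct total length. The second theorem, by contrast, is largely formal once continuity of $\Phi$ is established.
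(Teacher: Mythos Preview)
The paper does not prove this theorem; it is quoted as Theorems 2.2 and 2.3 of the companion paper \cite{IPspace}, and no argument appears here. So there is no in-paper proof to compare against.

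On your proposal: the triangle inequality via composing correspondences and your completeness sketch are the natural lines, and you correctly flag the main obstacle in the latter. The paper records, just after the theorem, that the $d_H'$-topology coincides with the Hausdorff topology on the closed complements $C_\beta$; working through that equivalence is arguably cleaner than your right-endpoint measure encoding, though the core difficulty is the same.

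Your Lusin argument has a genuine gap, however. The diversity function $t\mapsto\IPLT_\beta^\alpha(t)$ need \emph{not} be continuous, so your $\ell_\beta$ does not in general lie in $C([0,\infty),\mathbb{R})$. For example, take blocks of size $n^{-1/\alpha}$ for $n\ge 1$, placed adjacently left to right (summable since $\alpha<1$, so $\IPmag{\beta}=\sum_{n\ge 1}n^{-1/\alpha}<\infty$): then $\IPLT_\beta^\alpha(t)=0$ for every $t<\IPmag{\beta}$, since only finitely many blocks have right endpoint $\le t$, while $\IPLT_\beta^\alpha(\IPmag{\beta})=\Gamma(1-\alpha)$. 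You would therefore need to embed into a space of monotone \cadlag\ functions instead; and even then, items (iii)--(iv) of the $\alpha$-distortion control diversities only at matched blocks and at $\infty$, so deducing convergence of the full diversity function from $d_\alpha$-convergence requires a separate interpolation step. That step, not the formal packaging, is where the substance of the Lusin assertion actually lies.
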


We further show in \cite{IPspace} that the $d_H^\prime$-topology is the same as the topology generated by the Hausdorff distance $d_H$ between (the complements such as  
$C_\beta:=[0,\IPmag{\beta}]\setminus\bigcup_{U\in\beta}U$ of) interval partitions. We give a detailed account of the topological properties of $(\IPspace_\alpha ,d_\alpha )$ and 
$(\HIPspace,d_H^\prime)$ in \cite{IPspace}. It follows from Theorem \ref{thm:Lusin} that they are Borel spaces, i.e. bi-measurably in bijective correspondence 
with Borel subsets of $[0,1]$. In this setting, regular conditional distributions exist; see Kallenberg \cite[Theorem A1.2, Theorem 6.3]{Kallenberg}. 

There are various natural operations for interval partitions. We define a \emph{scaling map} $\scaleI\colon (0,\infty)\times\HIPspace\rightarrow\HIPspace$ by saying, for $c>0$ and $\beta \in\HIPspace$,
 \begin{equation}
  \scaleI[c][\beta] = \{(ca,cb)\colon (a,b)\in \beta\}.\label{eq:IP:scale_def}
 \end{equation}
 Let $(\beta_a)_{a\in\mathcal{A}}$ denote a family of interval partitions indexed by a totally ordered set $(\cA,\preceq)$. For the purpose of this definition, let 
 $S(a-) := \sum_{b\prec a}\IPmag{\beta_b}$ for $a\in\mathcal{A}$.
 If $S(a-) < \infty$ for every $a\in \mathcal{A}$, then we define the \emph{concatenation}\vspace{-0.1cm}
 \begin{equation}\label{eq:IP:concat_def}
  \Concat_{a\in\mathcal{A}}\beta_a := \{(x+S(a-),y+S(a-)\colon\ a\in\mathcal{A},\ (x,y)\in \beta_a\}.\vspace{-0.1cm}
 \end{equation}
 When $\mathcal{A}=\{a_1,a_2\}$, $a_1\prec a_2$, we denote this by $\beta_{a_1}\concat\beta_{a_2}$. 
 If $\sum_{a\in\mathcal{A}}\IPmag{\beta_a} < \infty$, we call $(\beta_a)_{a\in\mathcal{A}}$ \emph{summable}, then  
 \emph{strongly summable} if the concatenated partition has the diversity property \eqref{eq:IPLT}. 

The following lemmas record some elementary properties of $d_H^\prime$ and $d_\alpha$.

\begin{lemma}\label{lem:IP:concat}
  If $(\beta_a)_{a\in \mathcal{A}},(\gamma_a)_{a\in \mathcal{A}} \in \IPspace_\alpha^{\mathcal{A}}$ are strongly summable, then\vspace{-0.1cm}
  \begin{equation}\label{eq:IP:concat_dist}
   d_\alpha\left(\Concat_{a\in\mathcal{A}}\beta_a,\ \Concat_{a\in\mathcal{A}}\gamma_a\right) \leq \sum_{a\in\mathcal{A}}d_\alpha(\beta_a,\gamma_a).\vspace{-0.1cm}
  \end{equation}
  The same holds for $d_H'$ when $(\beta_a)_{a\in \mathcal{A}},(\gamma_a)_{a\in \mathcal{A}}\in \HIPspace^{\mathcal{A}}$ are summable.
\end{lemma}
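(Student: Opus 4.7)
The plan is to construct a near-optimal correspondence between the two concatenations by pasting together near-optimal correspondences on each pair $(\beta_a,\gamma_a)$, using summability to absorb the tail. If $D := \sum_a d_\alpha(\beta_a, \gamma_a) = \infty$ the inequality is vacuous, so I will assume $D < \infty$. Fix $\epsilon > 0$. Since $(\IPmag{\beta_a})_{a}$, $(\IPmag{\gamma_a})_{a}$, and $(d_\alpha(\beta_a,\gamma_a))_{a}$ are all summable, I can pick a finite $\mathcal{A}_0 \subset \mathcal{A}$ with $\sum_{a \notin \mathcal{A}_0}\bigl(\IPmag{\beta_a} + \IPmag{\gamma_a} + d_\alpha(\beta_a,\gamma_a)\bigr) < \epsilon$. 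For each $a \in \mathcal{A}_0$ I choose a correspondence $C_a = (U^{(a)}_j, V^{(a)}_j)_{j \in [n_a]}$ from $\beta_a$ to $\gamma_a$ with $\dis_\alpha(\beta_a,\gamma_a,C_a) \leq d_\alpha(\beta_a,\gamma_a) + \epsilon/|\mathcal{A}_0|$, and assemble them, shifting the intervals by the cumulative widths preceding $a$ in $\Concat_a \beta_a$ and $\Concat_a \gamma_a$ respectively, into a single finite correspondence $C$ from $\Concat_a \beta_a$ to $\Concat_a \gamma_a$; strict monotonicity is inherited from the total order on $\mathcal{A}$.

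Next I would check that each of the four quantities (i)--(iv) in $\dis_\alpha(\Concat_a\beta_a,\Concat_a\gamma_a,C)$ is at most $D + 2\epsilon$, and then let $\epsilon\downto 0$. For (i), the mass-difference sum splits over $a \in \mathcal{A}_0$ (Lebesgue measure being translation invariant), while the residual mass $\IPmag{\Concat_a \beta_a}-\sum \Leb(U^{(a)}_j)$ decomposes as the tail $\sum_{a \notin \mathcal{A}_0}\IPmag{\beta_a}$ plus the per-$a$ residuals, so (i) equals $\sum_{a \notin \mathcal{A}_0}\IPmag{\beta_a} + \sum_{a \in \mathcal{A}_0}(\text{(i) for }C_a)$, which is bounded by $\epsilon + \sum_{a \in \mathcal{A}_0}(d_\alpha(\beta_a,\gamma_a)+\epsilon/|\mathcal{A}_0|) \leq D + 2\epsilon$; quantity (ii) is symmetric. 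For (iii), the additivity of $\alpha$-diversity along concatenations writes the difference $\IPLT_{\Concat \beta}(U^{(a)}_j) - \IPLT_{\Concat \gamma}(V^{(a)}_j)$ (for the shifted intervals) as $\sum_{b \prec a}(\IPLT_{\beta_b}(\infty)-\IPLT_{\gamma_b}(\infty)) + (\IPLT_{\beta_a}(U^{(a)}_j)-\IPLT_{\gamma_a}(V^{(a)}_j))$, whose absolute value is bounded by the partial sum $\sum_{b \preceq a}\dis_\alpha(\beta_b,\gamma_b,C_b) \leq D+\epsilon$. Quantity (iv) follows immediately from additivity of total diversities.

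The hard part is the requirement that correspondences be \emph{finite} when $\mathcal{A}$ is infinite: every block of each concatenation must be accounted for in quantities (i) and (ii), yet only finitely many pairs are allowed. Strong summability, together with the assumed finiteness of $D$ (without which there is nothing to prove), is exactly what lets the uncaptured tail be absorbed into $\epsilon$. The $d_H'$ inequality will follow from the same argument restricted to (i) and (ii), with no diversity book-keeping needed.
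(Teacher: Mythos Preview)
Your argument is sound when $\mathcal{A}$ is finite: take $\mathcal{A}_0 = \mathcal{A}$ so there is no tail, and the ``additivity of $\alpha$-diversity along concatenations'' you invoke for (iii) and (iv) is just the fact that a finite sum of limits equals the limit of the finite sum. The paper states this lemma without proof (calling it an elementary property, with a pointer to its companion paper on $(\IPspace_\alpha,d_\alpha)$), and every application in this paper is to a finite concatenation, so the finite case is all that is actually needed.

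For infinite $\mathcal{A}$, however, there is a genuine gap at precisely the step you treat as routine: diversity need not be additive over infinite concatenations, even under strong summability, and indeed the inequality as stated fails. Take $\alpha = 1/2$, $\mathcal{A} = \mathbb{N}$ with the usual order, $\beta_n = \{(0,n^{-2})\}$ and $\gamma_n = \emptyset$. Each $\beta_n,\gamma_n \in \IPspace_{1/2}$ with $\IPLT^{1/2}_{\beta_n}(\infty)=\IPLT^{1/2}_{\gamma_n}(\infty)=0$, both families are strongly summable, and the only correspondence from $\beta_n$ to $\emptyset$ is empty, giving $d_{1/2}(\beta_n,\emptyset) = \IPmag{\beta_n} = n^{-2}$; so the right-hand side of \eqref{eq:IP:concat_dist} equals $\pi^2/6$. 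But the concatenation $\beta := \ConcatIL_n\beta_n$ has block sizes $(n^{-2})_{n\ge1}$, whence $\#\{U\in\beta:\Leb(U)>h\}\sim h^{-1/2}$ and $\IPLT^{1/2}_{\beta}(\infty) = \Gamma(\tfrac12) = \sqrt\pi$; quantity (iv) alone then forces $d_{1/2}(\beta,\emptyset) \geq \sqrt\pi > \pi^2/6$. Strong summability guarantees that the limit defining $\IPLT^{1/2}_{\beta}(\infty)$ exists, not that it equals $\sum_n \IPLT^{1/2}_{\beta_n}(\infty)$; your bounds on both (iii) and (iv) rest on the latter.
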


\begin{lemma}\label{lem:IP:scale}
 For $\beta,\gamma\in\IPspace_\alpha$ and $c>0$,
 \begin{gather}
  \dH'(\beta,\scaleI[c][\beta]) = \left|c-1\right|\IPmag{\beta}, \qquad \dH'(c\scaleI\beta,c\scaleI\gamma) = c\dH'(\beta,\gamma),\label{eq:IP:Haus_scale}\\
  d_\alpha(\beta,\scaleI[c][\beta]) \leq \max\left\{\left|c^\alpha - 1\right|\IPLT_{\beta}(\infty), \left|c-1\right|\IPmag{\beta}\right\},\label{eq:IP:scaling_dist_1}\\
  \min\{c,c^\alpha\}d_\alpha(\beta,\gamma) \leq d_\alpha(\scaleI[c][\beta],\scaleI[c][\gamma]) \leq \max\{c,c^\alpha\}d_\alpha(\beta,\gamma).\label{eq:IP:scaling_dist_2}
 \end{gather}
\end{lemma}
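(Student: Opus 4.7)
My plan is to derive all four relations from a single natural bijective correspondence $U \mapsto cU$ between the blocks of $\beta$ and those of $\scaleI[c][\beta]$ (where $cU$ denotes the dilated interval $(ca,cb)$ when $U = (a,b)$), together with one scaling identity for diversity. The identity is
\[
\IPLT_{\scaleI[c][\beta]}^{\alpha}(cU) = c^{\alpha}\IPLT_{\beta}^{\alpha}(U), \qquad \IPLT_{\scaleI[c][\beta]}^{\alpha}(\infty) = c^{\alpha}\IPLT_{\beta}^{\alpha}(\infty),
\]
which follows by the change of variables $h \mapsto ch'$ in the defining limit \eqref{eq:IPLT}. This is the only genuine computation; the rest is bookkeeping on the four components in the definition of $\dis_\alpha$.

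For the first equality in \eqref{eq:IP:Haus_scale}, fix an enumeration $\{U_1, U_2, \ldots\}$ of $\beta$ and consider the truncated correspondence $(U_j, cU_j)_{j \in [n]}$. Both quantities (i) and (ii) of the Hausdorff distortion evaluate to $|c-1|\sum_{j=1}^n \Leb(U_j)$ plus nonnegative unmatched-mass remainders that vanish as $n \to \infty$, so $\dH'(\beta, \scaleI[c][\beta]) \leq |c-1|\IPmag{\beta}$. For the matching lower bound I observe that any correspondence satisfies $\dis_H \geq \bigl|\IPmag{\beta} - \IPmag{\gamma}\bigr|$: since the unmatched-mass terms in (i) and (ii) are nonnegative, the triangle inequality gives $\IPmag{\beta} - \IPmag{\gamma} \leq \text{(i)}$ and $\IPmag{\gamma} - \IPmag{\beta} \leq \text{(ii)}$; applied with $\gamma = \scaleI[c][\beta]$, this yields $\dH' \geq |c-1|\IPmag{\beta}$. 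The second equality in \eqref{eq:IP:Haus_scale} follows from the bijection $(U_j, V_j) \leftrightarrow (cU_j, cV_j)$ between correspondences from $(\beta, \gamma)$ and from $(\scaleI[c][\beta], \scaleI[c][\gamma])$, under which every Lebesgue mass in the Hausdorff distortion is multiplied by $c$.

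For \eqref{eq:IP:scaling_dist_1}, I apply the same natural correspondence between $\beta$ and $\scaleI[c][\beta]$ to the $d_\alpha$-distortion. Items (i) and (ii) again contribute at most $|c-1|\IPmag{\beta}$ in the truncation limit. Using the diversity scaling law, each term in (iii) equals $|1 - c^{\alpha}|\IPLT_\beta^{\alpha}(U_j) \leq |c^{\alpha} - 1|\IPLT_\beta^{\alpha}(\infty)$, and (iv) equals exactly $|c^{\alpha} - 1|\IPLT_\beta^{\alpha}(\infty)$. The stated bound is the maximum of these.

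Finally, for \eqref{eq:IP:scaling_dist_2}, I use the correspondence-bijection $(U_j, V_j)_{j\in[n]} \leftrightarrow (cU_j, cV_j)_{j\in[n]}$ between correspondences from $(\beta,\gamma)$ and from $(\scaleI[c][\beta], \scaleI[c][\gamma])$. Under this map the mass components (i), (ii) of the distortion scale by exactly $c$ while the diversity components (iii), (iv) scale by exactly $c^{\alpha}$, so the overall distortion is multiplied by a factor lying in $[\min\{c, c^{\alpha}\}, \max\{c, c^{\alpha}\}]$. Taking the infimum over correspondences yields the upper inequality of \eqref{eq:IP:scaling_dist_2}; the matching lower inequality follows by applying the upper one with $(c, \beta, \gamma)$ replaced by $(c^{-1}, \scaleI[c][\beta], \scaleI[c][\gamma])$. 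The main (mild) obstacle is purely administrative: since the definition of a correspondence requires a finite sequence, the full bijection of blocks must be approached through finite truncations, which is handled by the vanishing unmatched-mass remainders noted above.
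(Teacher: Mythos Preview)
Your argument is correct and complete. The paper does not actually give a proof of this lemma; it simply states it (alongside Lemma~\ref{lem:IP:concat}) as recording ``some elementary properties of $d_H'$ and $d_\alpha$'' and moves on. Your write-up supplies exactly the natural verification the authors had in mind: the block-bijection $U\mapsto cU$, the diversity scaling identity $\IPLT^\alpha_{c\scaleI\beta}(cU)=c^\alpha\IPLT^\alpha_\beta(U)$ obtained by substituting $h=ch'$ in \eqref{eq:IPLT}, the general lower bound $\dis_H\ge \bigl|\IPmag{\beta}-\IPmag{\gamma}\bigr|$, and the inversion trick $c\mapsto c^{-1}$ for the left inequality in \eqref{eq:IP:scaling_dist_2}. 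One cosmetic point: when you ``fix an enumeration $\{U_1,U_2,\dots\}$'' and form $(U_j,cU_j)_{j\in[n]}$, you should note that after restricting to the first $n$ blocks you re-list them in left-to-right order so that the pair sequence is a correspondence in the sense of Definition~\ref{def:IP:metric}; this does not affect any of the distortion quantities.
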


\subsection{Spindles: excursions as block size evolutions}
\label{sec:BESQ}

Let $(\cD,d_\cD)$ denote the Skorokhod space of real-valued c\`adl\`ag functions. Recall that its Borel $\sigma$-algebra $\Sigma(\cD)$ is generated by the evaluation
maps $g\mapsto g(t)$, $t\in\mathbb{R}$; see \cite[Theorem 14.5]{Billingsley}. Let $\Exc$ be the subset of non-negative real-valued excursions that are continuous, possibly excepting \cadlag\ jumps at their times of birth (time 0 as elements of $\Exc$) and death:
 \begin{equation}
  \Exc := \left\{f\in\cD\ \middle| \begin{array}{c}
    \displaystyle \exists\ z\in(0,\infty)\textrm{ s.t.\ }\restrict{f}{(-\infty,0)\cup [z,\infty)} = 0,\\[0.2cm]
    \displaystyle f \text{ positive and continuous on } (0,z)
   \end{array}\right\}.\label{eq:cts_exc_space_def}
 \end{equation}
Let $\SExc$ denote the Borel $\sigma$-algebra on $\Exc$ generated by $d_\cD$. We define the \emph{lifetime} and \emph{amplitude} 
$\life,A\colon \Exc \to (0,\infty)$ via
 \begin{equation}
  \life(f) = \sup\{s\!\geq\! 0\colon f(s)\!>\!0\},\ \mbox{and}\  A(f)=\sup\{f(s),0\!\le\! s\!\le\!\zeta(f)\}.
  \label{eqn:lifeamplitude}
 \end{equation}

Squared Bessel processes (\BESQ) are a family of diffusions solving
$$dZ_s=\delta\,ds+2\sqrt{Z_s}dB_s,\qquad Z_0=y,$$
for $s\le\zeta(Z)=\inf\{r\ge 0\colon Z_r=0\}$. Here, $\delta\in\mathbb{R}$ is a parameter. We make the boundary state 0 absorbing by setting $Z_s=0$ for $s\ge\zeta(Z)$. These diffusions contain the Feller diffusion, which is a continuous-state 
branching process, when the \em dimension \em parameter is $\delta=0$, with immigration when $\delta>0$. The squared norm of a $\delta$-dimensional Brownian 
motion is a \BESQ[\delta] starting from 0, when $\delta\in\BN$. Let $\alpha\in(0,1)$. The case $\delta=-2\alpha$ can be interpreted as emigration. In this case (as when $\delta=0$), 
the boundary point 0 is not an entrance boundary, while exit at 0 happens almost surely. See \cite{PitmYor82,GoinYor03,Pal13}. 

\begin{lemma}[Equation (13) in \cite{GoinYor03}]\label{lem:BESQ:length}
  Let $Z=(Z_s,s\ge 0)$ be a \BESQ[-2\alpha] process starting from $z >0$. Then $\zeta(Z)$ has law  
  \InvGammaDist[1+\alpha,z/2], i.e.\ $z/2\zeta(Z)$ has density $(\Gamma(1+\alpha))^{-1}x^{\alpha}e^{-x}$, $x\in(0,\infty)$. 
\end{lemma}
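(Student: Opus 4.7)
The plan is to identify the law of $\zeta(Z)$ by combining the scaling invariance of $\BESQA$ with a Laplace-transform computation via the infinitesimal generator.

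First, the SDE $dZ_s = -2\alpha\,ds + 2\sqrt{Z_s}\,dB_s$ is invariant under the space-time rescaling $(Z_s,\,s\ge 0) \mapsto (Z_{zs}/z,\,s\ge 0)$ together with a Brownian scaling of the driving noise, which sends $\BESQA$ starting from $z$ to $\BESQA$ starting from $1$. Consequently $\zeta(Z) \stackrel{d}{=} z\,\tau$, where $\tau$ denotes the absorption time of $Z$ when $Z_0 = 1$. Hence it suffices to identify the law of $\tau$, and the claimed $z$-dependence in the final formula is then forced by scaling.

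To identify this law I would compute $u(z) := \mathbf{E}_z[\exp(-\lambda\zeta(Z))]$ as the unique bounded solution of the Dirichlet problem $\mathcal{L}u = \lambda u$ on $(0,\infty)$ with $u(0) = 1$ and $u(z) \to 0$ as $z \to \infty$, where $\mathcal{L} = 2z\partial_z^2 - 2\alpha\partial_z$ is the generator of $\BESQA$; the decay at infinity is visible from the scaling identity $\zeta\stackrel{d}{=}z\tau$. Substituting $y = \sqrt{2z}$ and writing $u = y^{1+\alpha}w(y)$ reduces the ODE to the modified Bessel equation
$$y^2 w''(y) + y w'(y) - \bigl[(1+\alpha)^2 + \lambda y^2\bigr] w(y) = 0.$$
The condition at infinity rules out the exponentially growing solution $I_{1+\alpha}$, and the normalization at $0$ (using $K_{1+\alpha}(x)\sim \Gamma(1+\alpha) 2^\alpha x^{-(1+\alpha)}$ as $x\downto 0$) pins down the coefficient of $K_{1+\alpha}$, giving
$$u(z) = \frac{2^{(1-\alpha)/2}(z\lambda)^{(1+\alpha)/2}}{\Gamma(1+\alpha)}\,K_{1+\alpha}\bigl(\sqrt{2z\lambda}\bigr).$$

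Finally, I would recognize the right-hand side as the Laplace transform of $\InvGammaDist[1+\alpha,z/2]$ via the classical identity
$$\int_0^\infty t^{-\nu-1}\exp(-at - b/t)\,dt = 2(a/b)^{\nu/2} K_\nu\bigl(2\sqrt{ab}\bigr),$$
and then read off the claimed $\GammaDist[1+\alpha,1]$ density for $z/(2\zeta(Z))$ through the change of variables $x = z/(2t)$. No genuine obstacle arises: the computation is classical (indeed, this is Equation~(13) of Going-Jaeschke--Yor \cite{GoinYor03}), and the only point requiring care is the verification of the boundary behavior of $u$, which the scaling reduction $\zeta \stackrel{d}{=} z\tau$ makes transparent.
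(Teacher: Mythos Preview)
The paper does not supply a proof of this lemma: it is stated with the attribution ``Equation (13) in \cite{GoinYor03}'' and used as a black box. So there is nothing to compare your argument against within the paper itself.

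That said, your derivation is correct and is essentially the standard route to this result. The scaling reduction is immediate; the generator computation and the substitution $y=\sqrt{2z}$, $u=y^{1+\alpha}w$ do indeed yield the modified Bessel equation of order $1+\alpha$; the boundary analysis correctly singles out $K_{1+\alpha}$ and fixes the constant; and the Bessel-$K$ integral identity recovers the $\InvGammaDist[1+\alpha,z/2]$ Laplace transform. In short, you have supplied a self-contained proof where the paper is content to cite one.
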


Pitman and Yor \cite{PitmYor82} constructed excursion measures $\Lambda$ for diffusions even when there is no reflecting extension (to replace absorption at 0) that has $\Lambda$ as its 
It\^o excursion measure. They gave several descriptions, the first of which yields the following for the special case of \BESQ[-2\alpha]. We define \emph{first passage times} $H^a\colon\Exc\to[0,\infty]$ via $H^a(f)=\inf\{s\!\ge\! 0\colon f(s)\!=\!a\}$, $a\!>\!0$.

\begin{lemma}[Section 3 of \cite{PitmYor82}]\label{lem:BESQ:existence}
  There is a measure $\Lambda$ on $\Exc$ such that $\Lambda\{f\in\Exc\colon f(0)\neq 0\}=0$, 
  $\Lambda\{H^a<\infty\}=a^{-1-\alpha}$, $a>0$, and under $\Lambda(\,\cdot\;|\,H^a<\infty)$, the restricted canonical process 
  $f|_{[0,H^a]}$ is a \BESQ[4+2\alpha] process starting from 0 and stopped at the first passage time of $a$, independent of $f(H^a+\cdot\,)$, which is a \BESQ[-2\alpha] process 
  starting from $a$. 
\end{lemma}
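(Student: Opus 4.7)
The plan is to construct $\Lambda$ as a $\sigma$-finite measure by specifying its restriction to each level set $\{H^a<\infty\}$ and verifying that these restrictions are consistent across levels. For each $a>0$, let $\mathbb{Q}_a$ denote the probability law on $\Exc$ of the path obtained by concatenating (i) an independent \BESQ[4+2\alpha] started from $0$ and run up to its first-passage time $H^a$, with (ii) a \BESQ[-2\alpha] started from $a$ and absorbed at $0$. Piece (i) terminates a.s.\ because \BESQ[4+2\alpha] is transient and reaches every level; piece (ii) has finite lifetime by Lemma~\ref{lem:BESQ:length}. Thus $\mathbb{Q}_a$ is well defined, supported on continuous paths with $f(0)=0$, and the candidate measure must satisfy $\Lambda|_{\{H^a<\infty\}} = a^{-1-\alpha}\mathbb{Q}_a$ for every $a>0$.

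The first key step is the consistency condition: for $0<a<b$ I need $a^{-1-\alpha}\mathbb{Q}_a|_{\{H^b<\infty\}} = b^{-1-\alpha}\mathbb{Q}_b$. For the total mass, note that $h(x)=x^{1+\alpha}$ is harmonic for the \BESQ[-2\alpha] generator $2x\,d^2/dx^2 - 2\alpha\,d/dx$, so $Z_t^{1+\alpha}$ is a local martingale; optional stopping at $H^0\wedge H^b$ (where $H^0$ denotes the hitting time of $0$) with $Z$ a \BESQ[-2\alpha] from $a$ yields $\mathbb{Q}_a(H^b<\infty)=(a/b)^{1+\alpha}$, matching the required normalization since $a^{-1-\alpha}(a/b)^{1+\alpha} = b^{-1-\alpha}$.

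For the matching of conditional laws, the strong Markov properties of \BESQ[4+2\alpha] at $H^a$ and of \BESQ[-2\alpha] at $H^b$ reduce the problem to showing that a \BESQ[-2\alpha] from $a$ conditioned on $\{H^b<H^0\}$ and stopped at $H^b$ has the law of a \BESQ[4+2\alpha] from $a$ stopped at $H^b$. This is the classical Doob $h$-transform identification with $h(x)=x^{1+\alpha}$: the $h$-transformed \BESQ[-2\alpha] generator has drift $-2\alpha + 2\cdot 2x \cdot h'(x)/h(x) = 4+2\alpha$ and diffusion coefficient $2x$, which is precisely the \BESQ[4+2\alpha] generator; and by standard diffusion theory the $h$-transform coincides with conditioning on $\{H^b<H^0\}$ on the filtration up to $H^b\wedge H^0$.

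Given consistency, a routine extension argument produces $\Lambda$. Since every $f\in\Exc$ has $A(f)>0$, the sets $\{H^{1/n}<\infty\}$, $n\geq 1$, form a countable increasing exhaustion of $\Exc$, so the consistent finite measures $n^{1+\alpha}\mathbb{Q}_{1/n}$ extend uniquely---via Carathéodory on the algebra they generate---to a $\sigma$-finite Borel measure $\Lambda$ on $(\Exc,\SExc)$, which by construction satisfies the three stated properties. The main technical hurdle I anticipate is making the $h$-transform-as-conditioning argument fully rigorous: one must justify the Radon--Nikodym derivative $h(Z_{t\wedge H^b\wedge H^0})/h(a)$ on the stopped filtration and verify that the resulting equivalence of laws persists up to the random time $H^b$. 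This is standard for non-explosive one-dimensional diffusions with a bounded harmonic function on the conditioning region, but requires some care.
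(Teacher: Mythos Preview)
The paper does not prove this lemma; it is stated as a citation to \cite[Section 3]{PitmYor82} and is used as a black box. So there is no ``paper's own proof'' to compare against---the paper simply imports the result.

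Your construction is correct and is essentially the first of Pitman and Yor's several equivalent descriptions of the excursion measure (their ``Description (3.1)''): build $\Lambda$ by specifying $\Lambda|_{\{H^a<\infty\}}=a^{-1-\alpha}\mathbb{Q}_a$ for each $a>0$, verify consistency across levels via the Doob $h$-transform identity relating $\BESQ[-2\alpha]$ conditioned on $\{H^b<H^0\}$ to $\BESQ[4+2\alpha]$, and extend. Your computations of the scale function $h(x)=x^{1+\alpha}$, the exit probability $(a/b)^{1+\alpha}$, and the transformed drift $4+2\alpha$ are all correct, and the exhaustion $\{H^{1/n}<\infty\}\uparrow\Exc$ is legitimate since every continuous excursion has strictly positive amplitude. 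The only point you flag as delicate---that the $h$-transform on $[0,H^b\wedge H^0)$ agrees with conditioning on $\{H^b<H^0\}$---is indeed standard for one-dimensional diffusions on a bounded interval with a regular boundary; see e.g.\ \cite[Ch.~VIII]{RevuzYor} or the original Pitman--Yor paper. Nothing is missing.
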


We will consider a constant multiple of $\Lambda$ as an intensity of a Poisson random measure on $[0,\infty)\times\Exc$. 
In the setting of scaffoldings and spindles discussed in the introduction, changing the intensity by a constant corresponds to time-changing the scaffolding, which will not impact the skewer map of Definition \ref{def:skewer} or our interval partition diffusions. We 
make the following choice (so that $\Phi(\lambda)=\lambda^{\alpha}$ in Proposition \ref{prop:agg_mass_subord}).
 We define 
\begin{equation}\label{excmeas}\mBxc_{\tt BESQ}^{(-2\alpha)}:=(2\alpha(1+\alpha)/\Gamma(1-\alpha))\Lambda\end{equation}
 as our \BESQ[-2\alpha] \emph{excursion measure}, where 
 $\Lambda$ is the Pitman--Yor excursion measure of Lemma \ref{lem:BESQ:existence}. We call continuous elements of $\Exc$ such as $\mBxc_{\tt BESQ}^{(-2\alpha)}$-a.e.\ 
 $f\in\Exc$ \emph{spindles} and elements of $\Exc$ with a discontinuity at birth and/or death \emph{broken spindles}.
While every spindle $f\in\Exc$ has an intrinsic lifetime $\life(f)\in[0,\infty)$, the scaffolding of Section \ref{sec:prelim:JCCP} will shift spindles to non-zero birth
times that are not intrinsic to each spindle.   

\begin{lemma}\label{lem:BESQ:exc_length} For the excursion measure 
 \eqref{excmeas}, 
  we have for $m\!>\!0$, $y\!>\!0$, 
  $$\mBxc_{\tt BESQ}^{(-2\alpha)}\{A\!>\!m\}=\frac{2\alpha(1\!+\!\alpha)m^{-1-\alpha}}{\Gamma(1-\alpha)} 
  \mbox{ and }\mBxc_{\tt BESQ}^{(-2\alpha)}\{\life\!>\!y\}= \frac{\alpha y^{-1-\alpha}}{2^\alpha\Gamma(1\!-\!\alpha)\Gamma(1\!+\!\alpha)}.$$
\end{lemma}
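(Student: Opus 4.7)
The plan is to treat the amplitude and lifetime identities separately, with Lemma \ref{lem:BESQ:existence} and \BESQ\ scaling as the main tools.

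For the amplitude identity, note that $\Lambda$-a.e.\ $f\in\Exc$ is continuous with $f(0)=0$, so $\{A>m\}=\{H^m<\infty\}$ as events. Lemma \ref{lem:BESQ:existence} gives $\Lambda\{H^m<\infty\}=m^{-1-\alpha}$, and multiplying by the normalising constant $2\alpha(1+\alpha)/\Gamma(1-\alpha)$ yields the claimed formula.

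For the lifetime identity, I first use \BESQ\ scaling to reduce to evaluating a single constant. For $c>0$, let $s_c\colon\Exc\to\Exc$ act by $s_c(f)(t)=cf(t/c)$. Applying \BESQ\ scaling to each of the two pieces in the description of Lemma \ref{lem:BESQ:existence} shows that $s_c(f)$ reaches level $ca$ iff $f$ reaches level $a$, and conditionally on this event its description matches that of $\Lambda(\,\cdot\mid H^{ca}<\infty)$; comparing with $\Lambda\{H^{ca}<\infty\}=(ca)^{-1-\alpha}$ gives the pushforward identity $s_c\ast\Lambda=c^{1+\alpha}\Lambda$. Since $\zeta(s_c(f))=c\zeta(f)$, this forces $\Lambda\{\zeta>y\}=Ky^{-1-\alpha}$ for some constant $K$.

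To identify $K$, I apply the decomposition of Lemma \ref{lem:BESQ:existence}: for each $a>0$,
\[
\Lambda\{\zeta>y,\,H^a<\infty\}=a^{-1-\alpha}\,\Pr(H^a+\zeta'>y),
\]
where $H^a$ is the first passage time to $a$ of a \BESQ[4+2\alpha] started at $0$, and independently $\zeta'$ is the lifetime of a \BESQA\ started at $a$. By Lemma \ref{lem:BESQ:length}, $\zeta'\sim\InvGammaDist[1+\alpha,a/2]$, and by \BESQ\ scaling $H^a+\zeta'\stackrel{d}{=}a(H^1+\tilde\zeta)$ with $\tilde\zeta\sim\InvGammaDist[1+\alpha,1/2]$ independent of $H^1$. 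As $a\downarrow 0$, $\{H^a<\infty\}=\{A>a\}\uparrow\{A>0\}$, which is $\Lambda$-conull, so monotone convergence gives
\[
\Lambda\{\zeta>y\}=\lim_{a\downarrow 0}a^{-1-\alpha}\Pr(H^1+\tilde\zeta>y/a).
\]
Integrating the $\InvGammaDist[1+\alpha,1/2]$ density yields $\Pr(\tilde\zeta>t)\sim t^{-1-\alpha}/(2^{1+\alpha}(1+\alpha)\Gamma(1+\alpha))$ as $t\to\infty$. Since $H^1$ is the first passage time to $1$ of a \BESQ[\delta] from $0$ with $\delta=4+2\alpha>0$, it has finite exponential moments; a brief dominated-convergence argument conditional on $H^1$ then gives $\Pr(H^1+\tilde\zeta>t)\sim\Pr(\tilde\zeta>t)$. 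Substituting $t=y/a$, I obtain $K=1/(2^{1+\alpha}(1+\alpha)\Gamma(1+\alpha))$, and multiplying by $2\alpha(1+\alpha)/\Gamma(1-\alpha)$ collapses (using $(1+\alpha)$ cancelling in numerator and denominator and $2/2^{1+\alpha}=2^{-\alpha}$) to the required $\alpha y^{-1-\alpha}/(2^\alpha\Gamma(1-\alpha)\Gamma(1+\alpha))$.

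The main obstacle is the tail comparison in the last step: one must confirm that $H^1$ is negligible on the scale $t\to\infty$ relative to the power-law tail of $\tilde\zeta$. This is not deep---it follows from standard light-tail estimates for first passage times of \BESQ[\delta] from $0$ with $\delta>0$---but it is the only point in the argument where one does genuine work beyond invoking Lemmas \ref{lem:BESQ:length} and \ref{lem:BESQ:existence} and applying scaling.
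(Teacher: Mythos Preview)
Your proof is correct, and for the lifetime formula it takes a genuinely different route from the paper. For the amplitude formula both arguments coincide: $\{A>m\}=\{H^m<\infty\}$ and Lemma~\ref{lem:BESQ:existence} gives the answer.

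For $\mBxc_{\tt BESQ}^{(-2\alpha)}\{\life>y\}$, the paper invokes an alternative characterisation of $\Lambda$ from \cite[Description (3.2)]{PitmYor82}, namely the exact identity $\Lambda\{\life>s\}=\EV[Z_s^{-1-\alpha}]$ for a \BESQ[4+2\alpha] process $Z$ from $0$, and then evaluates this expectation in one line using the explicit Gamma density of $Z_s$ from \cite{GoinYor03}. Your argument instead stays entirely within Lemmas~\ref{lem:BESQ:length} and~\ref{lem:BESQ:existence}: you first derive the scaling $(s_c)_*\Lambda=c^{1+\alpha}\Lambda$ directly from the first-passage description (this is essentially the content of Lemma~\ref{lem:BESQ:invariance}, which the paper proves afterwards), forcing $\Lambda\{\life>y\}=Ky^{-1-\alpha}$, and then extract $K$ by letting $a\downarrow 0$ in $\Lambda\{\life>y,\,H^a<\infty\}$ and analysing the tail of the convolution $H^1+\tilde\zeta$. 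The tail comparison step is sound: the \InvGammaDist[1+\alpha,1/2] tail of $\tilde\zeta$ is regularly varying of index $-(1+\alpha)$, while $H^1$, as a first exit time of a diffusion from a bounded interval, has exponential tails (even the crude bound $\Pr(H^1>t)\le\Pr(Z_t<1)=O(t^{-2-\alpha})$ would suffice), so the standard dominated-convergence argument you sketch gives $\Pr(H^1+\tilde\zeta>t)\sim\Pr(\tilde\zeta>t)$.

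The trade-off is clear: the paper's route is a two-line computation but imports an external formula not stated in this paper; your route is longer and requires a tail-asymptotic argument, but is self-contained relative to what is already on the page.
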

\begin{proof} The first formula follows straight from Lemma \ref{lem:BESQ:existence}. We can calculate the second one using \cite[Description (3.2)]{PitmYor82} to express
  $\Lambda\{\life>s\}$ in terms of a
  \BESQ[4+2\alpha] process $Z$ starting from 0, whose probability density function at time $s$ is given in \cite[Equation (50)]{GoinYor03}: 
  $$\Lambda(\life\!>\!s)=\EV[Z_s^{-1-\alpha}]=\!\int_0^\infty\!\! y^{-1-\alpha}\frac{(2s)^{-2-\alpha}y^{1+\alpha}}{\Gamma(2+\alpha)}e^{-y/2s}dy
                                         =\frac{s^{-1-\alpha}}{2^{1+\alpha}\Gamma(2\!+\!\alpha)}.$$\vspace{-0.8cm}
  
\end{proof}

 We define a \emph{reversal involution} $\reverseexc\colon\Exc\rightarrow\Exc$ and, for any fixed $q>0$ that we suppress notationally, a \emph{spindle scaling map} $\scaleB\colon (0,\infty)\times\Exc\rightarrow\Exc$, by saying, for $a>0$ and $f\in\Exc$,
 \begin{equation}
  \reverseexc (f)\!:=\!\big(f\big((\life(f)\!-\!y)\!-\!\big),y\!\in\!\BR\big)  \text{ and }  \scaleB[a][f]\!:=\!\left(a^qf(y/a),y\!\in\!\BR\right)\!.\label{eq:BESQ:scaling_def}
 \end{equation}

\begin{lemma}\label{lem:BESQ:invariance}
 For $B\in \cExc$, $a>0$, and for spindle scaling with $q=1$
 \begin{equation*}
  \mBxc_{\tt BESQ}^{(-2\alpha)}(\reverseexc(B)) = \mBxc_{\tt BESQ}^{(-2\alpha)}(B)\ \text{and}\ 
  \mBxc_{\tt BESQ}^{(-2\alpha)}(\scaleB[a][B]) = a^{-1-\alpha}\mBxc_{\tt BESQ}^{(-2\alpha)}(B).
 \end{equation*}
\end{lemma}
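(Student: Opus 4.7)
My plan is to deduce both identities from the first-passage description of $\Lambda$ in Lemma \ref{lem:BESQ:existence}, combined with the Brownian scaling and Williams-type time-reversal properties of squared Bessel processes, and then transfer the identities to $\mBxc_{\tt BESQ}^{(-2\alpha)}$, which differs from $\Lambda$ only by a positive constant. For the scaling identity, I would start from the scaling property of BESQ: if $Z$ is a $\BESQ[\delta]$ process started at $z$, then $(aZ_{t/a})_{t\ge 0}$ is a $\BESQ[\delta]$ process started at $az$. Since the $q=1$ spindle-scaling acts by $\scaleB[a][f](t)=af(t/a)$, this carries a $\BESQ[4+2\alpha]$ process run until first hitting $b$ to one run until first hitting $ab$, and a $\BESQ[-2\alpha]$ process from $b$ absorbed at $0$ to one from $ab$ absorbed at $0$. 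Observing additionally that $H^{ab}(\scaleB[a][f])=aH^b(f)$, Lemma \ref{lem:BESQ:existence} then yields that $\scaleB[a]$ pushes $\Lambda(\,\cdot\mid H^b<\infty)$ forward to $\Lambda(\,\cdot\mid H^{ab}<\infty)$. Consequently, for any Borel $B\subseteq\{H^b<\infty\}$ (so that $\scaleB[a][B]\subseteq\{H^{ab}<\infty\}$),
\begin{equation*}
\Lambda(\scaleB[a][B]) = \Lambda\{H^{ab}<\infty\}\,\Lambda\big(\scaleB[a][B]\,\big|\,H^{ab}<\infty\big) = (ab)^{-1-\alpha}\Lambda(B\mid H^{b}<\infty) = a^{-1-\alpha}\Lambda(B).
\end{equation*}
Monotone convergence using $\Exc=\bigcup_{n\ge 1}\{H^{1/n}<\infty\}$ extends this to all Borel subsets of $\Exc$.

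For the reversal invariance, the plan is to invoke a Williams-type time-reversal duality between $\BESQ[4+2\alpha]$ and $\BESQ[-2\alpha]$ from \cite{PitmYor82}: reversing a $\BESQ[-2\alpha]$ started at $b$ and absorbed at $0$ gives a $\BESQ[4+2\alpha]$ started at $0$ and stopped at its last passage time of $b$, and conversely, reversing a $\BESQ[4+2\alpha]$ from $0$ stopped at the first passage of $b$ gives a $\BESQ[-2\alpha]$ from $b$ absorbed at $0$ without revisiting $b$. Applying this piecewise to the first-passage decomposition of $\Lambda(\,\cdot\mid H^b<\infty)$ at $H^b$ shows that $\reverseexc(f)$ admits a last-passage decomposition at $L^b(\reverseexc(f))=\zeta(f)-H^b(f)$ whose two independent components constitute the last-passage description of $\Lambda(\,\cdot\mid H^b<\infty)$. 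Since the first- and last-passage descriptions determine the same conditional law, this gives $\reverseexc_*\Lambda(\,\cdot\mid H^b<\infty)=\Lambda(\,\cdot\mid H^b<\infty)$; as $\reverseexc$ also preserves $\{H^b<\infty\}=\{A>b\}$, unconditioning and letting $b\downto 0$ along a countable sequence concludes the proof.

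The main obstacle I anticipate is verifying that the Williams reversal pieces assemble into exactly the last-passage decomposition of the same measure $\Lambda(\,\cdot\mid H^b<\infty)$, and in particular that the last-passage and first-passage descriptions determine the same law; the scaling half is essentially mechanical once BESQ scaling is in hand.
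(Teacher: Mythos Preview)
Your scaling argument is correct and is exactly what the paper does: it invokes Lemma~\ref{lem:BESQ:existence} together with the $\BESQ$ scaling property to push $\Lambda(\,\cdot\mid H^b<\infty)$ forward to $\Lambda(\,\cdot\mid H^{ab}<\infty)$ under $\scaleB[a]$, then reads off the factor $a^{-1-\alpha}$ from $\Lambda\{H^{ab}<\infty\}=(ab)^{-1-\alpha}$ and exhausts $\Exc$ by letting $b\downto 0$.

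For the reversal identity the paper takes a shortcut: it simply cites \cite[(3.3)]{PitmYor82}, where the time-reversal invariance of the Pitman--Yor excursion measure is stated (or follows immediately from an alternative description given there). Your route instead re-derives this from the first-passage description via a Williams-type duality between $\BESQ[4+2\alpha]$ and $\BESQ[-2\alpha]$. This is a legitimate and more self-contained approach, but it is circuitous: the ``main obstacle'' you identify---that the first-passage and last-passage decompositions determine the same conditional law $\Lambda(\,\cdot\mid H^b<\infty)$---\emph{is} the reversal invariance, and verifying it requires precisely the Williams-type identities that Pitman--Yor establish on the way to (3.3). So your argument effectively reproves the cited result rather than invoking it. There is no error, but if you are willing to cite \cite{PitmYor82} for the duality, you may as well cite it for the invariance directly.
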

\begin{proof} Time reversibility can be read from \cite[(3.3)]{PitmYor82}. The scaling relation follows from Lemma \ref{lem:BESQ:existence} and the scaling
  properties of \BESQ[-2\alpha] and \BESQ[4+2\alpha] as noted e.g.\ in \cite[A.3]{GoinYor03}. 
\end{proof}

Scaling as in \eqref{eq:BESQ:scaling_def} and Lemma \ref{lem:BESQ:invariance}, the pair $(\mu,\phi) = \big(\mBxc_{\tt BESQ}^{(-2\alpha)} , \zeta\big)$ falls into the setting of Section \ref{sec:techrem}. This yields the following.


\begin{corollary}\label{cor:BESQ:scl_ker}
  There exists a $\life$-disintegration of $\mBxc_{\tt BESQ}^{(-2\alpha)}$, denoted by $\mBxc_{\tt BESQ}^{(-2\alpha)}(\,\cdot\,|\,\life)$, that is unique with the following property. For every $a,b\!>\!0$, if $\mathbf{f}$  has law $\mBxc_{\tt BESQ}^{(-2\alpha)}(\,\cdot\;|\,\life\!=\!a)$ then $\scaleB[(b/a)][\mathbf{f}]$, for $q\!=\!1$, has law $\mBxc_{\tt BESQ}^{(-2\alpha)}(\,\cdot\;|\,\life\!=\!b)$.
\end{corollary}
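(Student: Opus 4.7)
The plan is to obtain this corollary as a routine application of the general disintegration-with-scaling machinery outlined in remark A of Section \ref{sec:techrem} (and detailed in the online supplement \ref{SuppMeas}), specialised to the pair $(\mu,\phi)=(\nu_{\tt BESQ}^{(-2\alpha)},\zeta)$. First I would verify that the required hypotheses are in place: $\Exc$ is a Borel subset of the Polish space $\cD$ (hence Lusin); Lemma \ref{lem:BESQ:exc_length} gives $\nu_{\tt BESQ}^{(-2\alpha)}\{\zeta>y\}<\infty$ for every $y>0$, so $\nu_{\tt BESQ}^{(-2\alpha)}$ is $\sigma$-finite; the definition \eqref{eq:BESQ:scaling_def} at $q=1$ yields $\zeta(\scaleB[c][f])=c\,\zeta(f)$, so the map $f\mapsto\scaleB[c][f]$ carries the fibre $\zeta^{-1}(t)$ bijectively onto $\zeta^{-1}(ct)$; and Lemma \ref{lem:BESQ:invariance} gives the scaling identity $\nu_{\tt BESQ}^{(-2\alpha)}(\scaleB[c][B])=c^{-1-\alpha}\,\nu_{\tt BESQ}^{(-2\alpha)}(B)$.

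Next I would proceed in three short steps. First, invoke Kallenberg's theorem on regular conditional distributions to extract some $\zeta$-disintegration $(\mu_t)_{t>0}$ of $\nu_{\tt BESQ}^{(-2\alpha)}$ satisfying \eqref{eq:scl_ker:integration}, and fix the probability measure $\rho:=\mu_1$ supported on $\zeta^{-1}(1)$. Second, propose the scaling-equivariant family
\[
\nu_{\tt BESQ}^{(-2\alpha)}(\,\cdot\,\mid\zeta=t)\;:=\;\rho\circ\big(f\mapsto\scaleB[t][f]\big)^{-1},\qquad t>0,
\]
as the canonical candidate, and verify \eqref{eq:scl_ker:integration} for it: after the change of variables $f=\scaleB[t][g]$, the Jacobian $t^{-1-\alpha}$ supplied by Lemma \ref{lem:BESQ:invariance} combines with the explicit $\zeta$-density read off from Lemma \ref{lem:BESQ:exc_length} so that both sides collapse to an integral against $\rho$. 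Third, read off the scaling property directly: if $\mathbf{f}\sim\nu_{\tt BESQ}^{(-2\alpha)}(\,\cdot\,\mid\zeta=a)$ then $\mathbf{f}=\scaleB[a][\mathbf{g}]$ for some $\mathbf{g}\sim\rho$, whence $\scaleB[(b/a)][\mathbf{f}]=\scaleB[b][\mathbf{g}]\sim\nu_{\tt BESQ}^{(-2\alpha)}(\,\cdot\,\mid\zeta=b)$, exactly the assertion of the corollary.

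The main obstacle I anticipate is uniqueness, since Kallenberg-type disintegrations are only determined modulo $\nu_{\tt BESQ}^{(-2\alpha)}\!\circ\zeta^{-1}$-null sets, whereas the corollary asserts uniqueness of the version having the scaling property. The resolution I would pursue is that the scaling relation forces the entire family to be determined by its value at $\zeta=1$: any two scaling-equivariant disintegrations restrict on $\zeta^{-1}(1)$ to probability measures which must agree, since by pushing \eqref{eq:scl_ker:integration} through $f\mapsto\scaleB[t][f]$ over any interval of positive $\zeta$-mass and applying Lemma \ref{lem:BESQ:invariance}, both restrictions are forced to integrate any bounded measurable function on $\zeta^{-1}(1)$ to the same value. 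This pinning-down step is the content of Section A of \ref{SuppMeas}, from which the present corollary follows as an immediate special case.
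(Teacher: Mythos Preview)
Your proposal is correct and follows the same approach as the paper: both reduce the corollary to an application of the general disintegration-with-scaling framework of Section~\ref{sec:techrem} (detailed in \ref{SuppMeas}) specialised to $(\mu,\phi)=(\nu_{\tt BESQ}^{(-2\alpha)},\zeta)$, invoking Lemma~\ref{lem:BESQ:invariance} for the scaling invariance and Lemma~\ref{lem:BESQ:exc_length} for $\sigma$-finiteness. The paper's justification is a single sentence preceding the corollary, while you have spelled out the verification of hypotheses and the construction explicitly, but the substance is identical.
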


\begin{lemma}[e.g.\ Corollary 3 of \cite{Paper0}]\label{lem:BESQ:Holder}
 For every $\theta\in (0,\frac12)$, $\mBxc_{\tt BESQ}^{(-2\alpha)}$-a.e.\ excursion is H\"older-$\theta$.
\end{lemma}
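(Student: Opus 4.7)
The plan is to exhaust the $\sigma$-finite measure $\mBxc_{\tt BESQ}^{(-2\alpha)}$ by the events $\{H^a<\infty\}$ of finite measure for $a>0$. Since $\mBxc_{\tt BESQ}^{(-2\alpha)}$-a.e.\ excursion $f\in\Exc$ is positive and continuous on $(0,\zeta(f))$ and satisfies $f(0)=0$, it attains every level below its amplitude, so $\{H^a<\infty\}\uparrow\Exc$ as $a\downto 0$ up to a $\mBxc_{\tt BESQ}^{(-2\alpha)}$-null set. It therefore suffices to fix $a>0$ and $\theta\in(0,1/2)$ and show that under $\mBxc_{\tt BESQ}^{(-2\alpha)}(\,\cdot\mid H^a<\infty)$ the canonical process is H\"older-$\theta$ almost surely.

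By Lemma \ref{lem:BESQ:existence}, under this conditional law the restriction $\restrict{f}{[0,H^a]}$ is \BESQ[4+2\alpha] from $0$ stopped at first passage to $a$, independent of $f(H^a+\,\cdot\,)$, which is \BESQ[-2\alpha] from $a$ run until the absorption time $\zeta$, a.s.\ finite by Lemma \ref{lem:BESQ:length}. A concatenation of two continuous H\"older-$\theta$ functions agreeing at the join is again H\"older-$\theta$ (with constant at most twice the maximum of the two), so it is enough to check each piece separately.

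For either piece, the \BESQ[\delta] SDE yields a semimartingale decomposition $Z_t=Z_0+\delta t+M_t$, where $M$ is a continuous local martingale with $\langle M\rangle_t=4\int_0^t Z_s\,ds$. The drift term is Lipschitz, so the work is in controlling $M$. I would apply Dambis--Dubins--Schwarz to write $M_t=W_{\langle M\rangle_t}$ for a standard Brownian motion $W$. On the first piece the path is bounded by $a$, so $\langle M\rangle$ is Lipschitz with constant $4a$ on $[0,H^a]$; on the second piece the running maximum of $Z$ on the a.s.\ finite interval $[0,\zeta]$ is a finite (random) bound $S$, again making $\langle M\rangle$ Lipschitz on $[0,\zeta]$ with constant $4S$. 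Standard L\'evy--type H\"older regularity of Brownian motion on compact intervals gives that $W$ is H\"older-$\theta$ for every $\theta<1/2$ on $[0,\langle M\rangle_{H^a}]$, respectively $[0,\langle M\rangle_\zeta]$, so the composition $W\circ\langle M\rangle$ inherits H\"older-$\theta$ regularity.

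The main technical point to watch is the behavior of the first piece at the origin: \BESQ[4+2\alpha] starts at $0$, where $\sqrt{Z}$ vanishes, so there is no issue with explosion of the quadratic variation there; the uniform bound $Z\le a$ up to $H^a$ gives a global Lipschitz bound on $\langle M\rangle$ on that interval. Gluing the two pieces across the join at $H^a$ (both taking value $a$) and absorbing the Lipschitz drift then completes the argument, and letting $a\downto 0$ recovers the statement for $\mBxc_{\tt BESQ}^{(-2\alpha)}$-a.e.\ excursion.
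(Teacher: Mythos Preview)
Your argument is correct. The paper does not actually prove this lemma; it simply cites it from \cite{Paper0}. Your route---exhausting $\mBxc_{\tt BESQ}^{(-2\alpha)}$ by the finite-measure events $\{H^a<\infty\}$, invoking the Pitman--Yor description of Lemma~\ref{lem:BESQ:existence} to split the excursion into a \BESQ[4+2\alpha] ascent and a \BESQ[-2\alpha] descent, and then reading off H\"older regularity from the semimartingale decomposition via Dambis--Dubins--Schwarz---is a clean, self-contained alternative to the citation. The key observation that $\langle M\rangle$ is Lipschitz on each piece (by the bound $Z\le a$ on the first, and by the a.s.\ finite running maximum on the second over the finite lifetime from Lemma~\ref{lem:BESQ:length}) is exactly what makes the time-change argument go through, and your gluing step and treatment of the endpoint $t=0$ are fine. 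The only cosmetic point is that DDS may require an enlargement of the probability space when $\langle M\rangle_\infty<\infty$, but this is standard and does not affect the conclusion.
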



Any continuous $\Exc$-valued random excursion $\bff$ of length $\life(\bff)=1$ provides a model for a block size evolution that lasts one time unit. 
Fix $\alpha\in(0,1)$ and $q>\alpha$. Assume $\int_0^1\EV[(\bff(y))^{\alpha/q}]dy<\infty$. For any $x>0$, denote by $\kappa_q(x,\cdot)$ the distribution of $\scaleB[x][\bff]=(x^q\bff(y/x),y\in\BR)$ and let\vspace{-0.1cm}
\begin{equation}\label{eqn:generalnu}
 \nu=\int_0^\infty c_{\nu} x^{-\alpha-2}\kappa_q(x,\cdot\,)dx,\quad c_{\nu}=\frac{\alpha}{\Gamma(1\!-\!\alpha/q)\int_0^1\EV[(\bff(y))^{\alpha/q}]dy}.
\end{equation} 
By Lemma \ref{lem:BESQ:exc_length} and Corollary \ref{cor:BESQ:scl_ker}, the excursion measure $\mBxc_{\tt BESQ}^{(-2\alpha)}$ is a special case of this 
general construction, for which $c_{\nu}=\alpha(1+\alpha)/2^\alpha\Gamma(1-\alpha)\Gamma(1+\alpha)$, $q=1$ and 
$\kappa_1(x,\cdot)=\nu_{\tt BESQ}^{(-2\alpha)}(\,\cdot\,|\,\zeta=x)$. In the present paper we discuss 
$\dI$-continuity in models based on the general construction \eqref{eqn:generalnu} as stated in Theorem \ref{thm:diffusion_0}. We also investigate the Markov property that is
key to Theorem \ref{thm:diffusion}. The 
natural generality for a Markov 
property are (multiples of) Pitman--Yor excursion measures $\nu$ of suitable self-similar diffusions. 

Indeed, it follows e.g.\ from Lamperti's 
\cite{Lamperti72} characterization of positive self-similar Markov processes as time-changed exponential L\'evy processes, that, in our case of 
continuous sample paths, every positive self-similar Markov process that is absorbed when reaching 0 can be obtained from a squared Bessel process 
of some dimension by a power transformation of space by $x\!\mapsto\! cx^q$. For our
purposes we will also need $\alpha\!\in\!(0,1)$, $q\!>\!\alpha$ and $c\!>\!0$ so that \eqref{eqn:generalnu} and \eqref{eq scaffold} are well-defined. We refer to such a diffusion as an 
$(\alpha,q,c)$-block diffusion. By similarly transforming $\nu_{\tt BESQ}^{(-2\alpha)}$, we also associate an excursion measure $\nu_{q,c}^{(-2\alpha)}$. We develop this in 
detail in Section \ref{IPevolgeneral}.

\subsection{Scaffolding: \StableA\ processes to describe births and deaths of blocks}
\label{sec:prelim:JCCP}

For $\nu$ as in \eqref{eqn:generalnu}, let $\bN$ denote a \PRM[\Leb\otimes\mBxc] on $[0,\infty)\times \Exc$. 
By mapping a spindle $f$ to its lifetime $\zeta(f)$, we obtain 
the associated point process of spindle lifetimes $\int\delta(s,\zeta(f))d\bN(s,f)$, which is a \PRM[\Leb\otimes\mBxc(\zeta\in\cdot\,)]. Note that    
$$\int_{(z,\infty]}x\,\mBxc(\zeta\in dx)=\int_\Exc\cf\{\zeta(f)>z\}\zeta(f)d\mBxc(f) = \frac{1}{\alpha}c_{\nu}z^{-\alpha}\longrightarrow\infty\quad\text{as }z\downarrow 0.$$
Thus, if we take these spindle lifetimes to be the heights of jumps for a \cadlag\ path, as in the introduction, 
then these jumps are almost surely not summable. To define a path $\bX$ associated with $\bN$ in this manner, we require a limit with
compensation. We give a definition generalising \eqref{eq scaffold} that will also apply to random measures constructed from independent copies of $\bN$. 

 For a complete, separable metric space $(\cS,d_\cS)$, denote by $\cN(\cS)$ the set of counting measures $N$ on $\cS$ that are boundedly finite: $N(B)<\infty$ for all bounded Borel sets $B\subset\cS$. We equip $\cN(\cS)$ with the $\sigma$-algebra $\Sigma(\cN(\cS))$ generated by evaluation maps $N\mapsto N(B)$. 
 
 For $N\in \cNRE$, we define the \emph{length} of $N$ as \vspace{-0.1cm}
 \begin{equation}
  \len(N) := \inf\Big\{t>0\colon N\big([t,\infty)\times\Exc\big) = 0\Big\} \in [0,\infty].\label{eq:assemblage:len_def}\vspace{-0.1cm}
 \end{equation}
 When the following limit exists for $t\in [0,\len(N)]\cap [0,\infty)$, we define \vspace{-0.1cm}
 \begin{equation}
  \xi_N(t) := \lim_{z\downto 0}\left(\int_{[0,t]\times\{g\in\Exc\colon\zeta(g) > z\}}\life(f)dN(s,f) - t\frac{1}{\alpha}c_{\nu}z^{-\alpha}\right).\label{eq:JCCP_def}\vspace{-0.1cm}
 \end{equation}   
 We also set $\xi_N(t)=0$ for $t>\len(N)$ and write\vspace{-0.1cm} 
 $$\xi(N) := \big( \xi_N(t),\,t\ge 0 \big).\vspace{-0.1cm}$$ 

The limit in \eqref{eq:JCCP_def} only exists for a rather specific class of measures $N$.

\begin{proposition}\label{prop:stable_JCCP}
  For $\bN$ a \PRM[\Leb\otimes\mBxc] on $[0,\infty)\times \Exc$, the convergence in \eqref{eq:JCCP_def} holds a.s.\ uniformly in $t$ on any bounded interval.   
  Moreover, the scaffolding $\xi(\bN)$ is a spectrally positive stable L\'evy process of index $1+\alpha$, with L\'evy measure and Laplace exponent given by\vspace{-0.1cm} 
 \begin{equation}
  \mBxc(\zeta\in dx)=c_{\nu}x^{-2-\alpha}dx\quad\mbox{and}\quad\psi(\lambda) = c_{\nu}\frac{\Gamma(1-\alpha)}{\alpha(1+\alpha)}\lambda^{1+\alpha}.\label{eq:JCCP:Laplace}\vspace{-0.1cm}
 \end{equation}
\end{proposition}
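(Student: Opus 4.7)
The strategy is to recognize $\xi(\bN)$ as exactly the compensated sum of jumps in the classical L\'evy--It\^o decomposition of a spectrally positive $(1+\alpha)$-stable process, and then read off the Laplace exponent from the L\'evy--Khintchine formula.

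\emph{Step 1 (pushforward to a PRM of jumps).} Push $\bN$ forward under the map $(s,f)\mapsto(s,\zeta(f))$. Since $\kappa_q(x,\,\cdot\,)$ is, by construction via $\scaleB[x][\bff]$, supported on excursions of lifetime exactly $x$, the mapping theorem for Poisson measures and \eqref{eqn:generalnu} yield a $\PRM[\Leb\otimes\Pi]$ on $[0,\infty)\times(0,\infty)$ with $\Pi(dx)=c_{\nu}x^{-2-\alpha}dx$. Because $\alpha\in(0,1)$, one has $\int_0^1 x^2\Pi(dx)=c_{\nu}/(1-\alpha)<\infty$ and $\int_1^\infty\Pi(dx)=c_{\nu}/(1+\alpha)<\infty$, so $\Pi$ is a legitimate L\'evy measure.

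\emph{Step 2 (split into large and small jumps, check compensator matches).} For $0<z\le 1$ and $t\ge 0$, rewrite the integrand in \eqref{eq:JCCP_def} as
\begin{equation*}
\int_{[0,t]\times\{z<\zeta\le 1\}}\!\zeta\,d\bN \;+\; \int_{[0,t]\times\{\zeta>1\}}\!\zeta\,d\bN \;-\; t\!\int_{\{\zeta>z\}}\!\zeta\,\Pi(d\zeta),
\end{equation*}
and note the key identity $\int_{\{\zeta>z\}}\zeta\,\Pi(d\zeta)=\int_z^\infty c_{\nu}\zeta^{-1-\alpha}d\zeta=c_{\nu}z^{-\alpha}/\alpha$, which is exactly the compensator subtracted in \eqref{eq:JCCP_def}. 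The large-jump part minus $t\int_{\{\zeta>1\}}\zeta\,\Pi(d\zeta)$ is a compound Poisson process with finite intensity $c_\nu/(1+\alpha)$, which is \'evidently c\`adl\`ag and finite for all $t$. The small-jump compensated integral $M_z(t):=\int_{[0,t]\times\{z<\zeta\le 1\}}\zeta\,d\bN-t\int_{\{z<\zeta\le 1\}}\zeta\,\Pi(d\zeta)$ is, for each $z$, a c\`adl\`ag martingale.

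\emph{Step 3 (a.s.\ uniform convergence as $z\downarrow 0$).} The isometry for Poisson stochastic integrals gives, for $0<z'<z\le 1$,
\begin{equation*}
\EV\bigl[(M_{z'}(t)-M_z(t))^2\bigr]=t\!\int_{\{z'<\zeta\le z\}}\!\zeta^2\,\Pi(d\zeta)\le t\cdot c_{\nu}(z^{1-\alpha}-z'{}^{1-\alpha})/(1-\alpha),
\end{equation*}
which tends to $0$ as $z,z'\downarrow 0$. Doob's $L^2$ maximal inequality upgrades this to $\EV[\sup_{t\le T}(M_{z'}(t)-M_z(t))^2]\to 0$, so $(M_{z})$ is Cauchy in probability in the uniform-on-compacts topology. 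Extracting an a.s.\ convergent subsequence and using monotonicity of the small-jump $L^2$ bound in $z$ yields the full a.s.\ uniform convergence on $[0,T]$. Combined with Step~2, this establishes the a.s.\ uniform convergence asserted in the first half of the proposition.

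\emph{Step 4 (identification of the limit).} The three summands in Step~2 are independent (they are integrals over disjoint regions of the PRM), each a L\'evy process, with only positive jumps; hence $\xi(\bN)$ is a spectrally positive L\'evy process with L\'evy measure $\Pi$ and no Gaussian component. Its Laplace exponent is
\begin{equation*}
\psi(\lambda)=\int_0^\infty\!\bigl(e^{-\lambda x}-1+\lambda x\bigr)c_{\nu}x^{-2-\alpha}dx=c_{\nu}\lambda^{1+\alpha}\!\int_0^\infty\!(e^{-u}-1+u)u^{-2-\alpha}du,
\end{equation*}
and two successive integrations by parts evaluate the remaining integral to $\Gamma(1-\alpha)/[\alpha(1+\alpha)]$, giving \eqref{eq:JCCP:Laplace}. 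Scaling $\psi(c\lambda)=c^{1+\alpha}\psi(\lambda)$ confirms $(1+\alpha)$-stability.

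\emph{Main obstacle.} Apart from standard L\'evy--It\^o bookkeeping, the only delicate point is reconciling the ``single compensator'' $tc_{\nu}z^{-\alpha}/\alpha$ in \eqref{eq:JCCP_def} with the more common ``truncated'' compensator that only integrates $\zeta$ over $\{z<\zeta\le 1\}$; this is handled by the explicit identity $\int_{\{\zeta>z\}}\zeta\,\Pi(d\zeta)=c_{\nu}z^{-\alpha}/\alpha$ in Step~2, which makes the large-jump piece absorb a constant drift while leaving a small-jump $L^2$-martingale to pass to the limit.
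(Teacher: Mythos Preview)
Your proof is correct and follows essentially the same approach as the paper: recognize the pre-limiting expression as a compensated compound Poisson process, invoke the L\'evy--It\^o decomposition for the a.s.\ uniform convergence, and compute the Laplace exponent via the L\'evy--Khintchine formula. The paper's proof is a terse two-line appeal to \cite[Theorem 19.2]{Sato}, whereas you have spelled out the standard $L^2$-martingale and Doob maximal inequality argument that underlies that theorem; the content is the same.
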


\begin{proof} By Lemma \ref{lem:BESQ:exc_length} and elementary Poisson random measure arguments, the pre-limiting quantity is a compensated compound Poisson process.
  By the L\'evy--It\^o decomposition of L\'evy processes, e.g.\ in \cite[Theorem 19.2]{Sato}, the remaining conclusions follow. Specifically, we use
  $$\psi(\lambda)\!=\!\!\int_0^\infty\!\!(e^{-\lambda x}\!-\!1\!+\!\lambda x)\mBxc(\life\!\in\! dx),\ \ \int_0^\infty\!\!(e^{-\lambda x}\!-\!1\!+\!\lambda x)\frac{\alpha x^{-2-\alpha}}{\Gamma(1\!-\!\alpha)}dx\!=\!\frac{\lambda^{1+\alpha}}{1\!+\!\alpha}. \qedhere$$
\end{proof}

 Henceforth we write ``\StableA'' to refer exclusively to L\'evy processes with the Laplace exponent specified in \eqref{eq:JCCP:Laplace}. In particular, such processes are spectrally positive. 
  We write $\bX := \xi(\bN)$.

\begin{definition}[$\H$, $\Hfin$, point processes of spindles]\label{def:assemblage_m}
 Let $\Hfin\subset \cNRE$ denote the set of all counting measures $N$ on $[0,\infty)\times \Exc$ with the following additional properties:
 \begin{enumerate}[label=(\roman*), ref=(\roman*)]
  \item $N\big( \{t\}\times\Exc \big)\leq 1$ for every $t\in [0,\infty)$,
  \item $N\big( [0,t]\times\{f\in\Exc\colon \life(f) > z\} \big) < \infty$ for every $t,z > 0$, 
  \item the length of $N$, defined in \eqref{eq:assemblage:len_def} 
    is finite and the convergence in (\ref{eq:JCCP_def}) holds uniformly in $t\in[0,\len(N)]$.
 \end{enumerate}
 
 We define $\H\subset\cNRE$ by saying $N\in\H$ if and only if the restriction $\restrict{N}{[0,t]}$ of $N$ to $[0,t]\times\Exc$ is in $\Hfin$ for every $t>0$; 
 here, we abuse notation and consider $\restrict{N}{[0,t]}$ as a measure on $[0,\infty)\times\Exc$ that equals $N$ on $[0,t]\times\Exc$ and vanishes on $(t,\infty)\times\Exc$. 
 We call the members of $\Hfin$ and $\H$ \emph{point processes of spindles}. We denote by $\SH$ and $\cHfin$ the restrictions of $\Sigma\left(\mathcal{N}\!\left([0,\infty)\!\times\!\Exc\right)\right)$ to subsets of $\H$ and $\Hfin$.
\end{definition}

\begin{proposition}\label{prop:JCCP_meas}
 The map $\xi\colon \H \to \cD$ specified in Definition \ref{def:assemblage_m} and 
 \eqref{eq:JCCP_def} 
 is well-defined and measurable, where $\cD$ is the Skorokhod space of real-valued \cadlag\ functions $g\colon[0,\infty)\rightarrow\BR$.
\end{proposition}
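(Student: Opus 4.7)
The statement splits into two claims: that $\xi(N)$ is well-defined and lies in $\cD$ for every $N\in\H$, and that $\xi$ is $\SH/\Sigma(\cD)$-measurable. My plan is to handle them in turn, using the definitions directly together with a standard monotone-class argument for point-process integrals.

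For well-definedness, I would fix $N\in\H$ and $T>0$. By the definition of $\H$, the restriction $\restrict{N}{[0,T]}$ lies in $\Hfin$, so by condition (iii) of Definition \ref{def:assemblage_m} the convergence in \eqref{eq:JCCP_def} is uniform on $[0,T]$. For each fixed $z>0$, the pre-limit
$$\xi^z_N(t):=\int_{[0,t]\times\{f\in\Exc\colon\zeta(f)>z\}}\zeta(f)\,dN(s,f)-\frac{t c_\nu}{\alpha}z^{-\alpha}$$
is \cadlag\ in $t$: the integral term is a step function with only finitely many jumps on $[0,T]$ by condition (ii) of Definition \ref{def:assemblage_m}, and the compensator is linear. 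Since a locally uniform limit of \cadlag\ functions is \cadlag, this gives $\xi(N)\in\cD$.

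For measurability, since the paper records that $\Sigma(\cD)$ is generated by the evaluation maps $g\mapsto g(t)$, it suffices to show that $N\mapsto\xi_N(t)$ is $\SH$-measurable for each fixed $t\ge 0$. I would fix $z,t>0$ and note that $B_z:=[0,t]\times\{f\in\Exc\colon\zeta(f)>z\}$ is a Borel subset of $[0,\infty)\times\Exc$ (using that $\zeta$ is Borel on $\Exc$ via its expression as a sup of evaluations over rationals) and that $(s,f)\mapsto\zeta(f)\mathbf{1}_{B_z}(s,f)$ is Borel-measurable. A standard monotone-class argument, starting from the generators $N\mapsto N(A)$ of $\Sigma\big(\cN([0,\infty)\times\Exc)\big)$ and approximating by simple functions, then shows that $N\mapsto\int g\,dN$ is measurable for any non-negative Borel $g$. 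Applied here, this gives measurability of $N\mapsto\xi^z_N(t)$. Since $\xi_N(t)=\lim_{n\to\infty}\xi^{1/n}_N(t)$ for every $N\in\H$ by the very definition of $\H$, measurability is preserved under this pointwise limit.

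There is no serious obstacle: this is essentially a consistency check that $\xi$ makes sense as a measurable function between the relevant spaces, so that downstream probabilistic statements about $\xi(\bN)$ are meaningful. The only ingredient that warrants an external citation is the monotone-class extension from generators $N\mapsto N(A)$ to integrals $N\mapsto\int g\,dN$, which is standard material in the point-process references \cite{DaleyVereJones1,DaleyVereJones2} adopted throughout the paper.
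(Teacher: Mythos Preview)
Your argument is correct and is exactly the expansion of what the paper's one-line proof (``follows from definitions and an appeal to \cite[Theorem 14.5]{Billingsley}'') is pointing at: use the uniform convergence built into the definition of $\H$ to get $\xi(N)\in\cD$, then use that $\Sigma(\cD)$ is generated by evaluations to reduce measurability to that of each $N\mapsto\xi_N(t)$, handled via point-process integrals and a pointwise limit in $z$. The approaches are essentially identical.
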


\begin{proof}
 This follows from definitions and an appeal to \cite[Theorem 14.5]{Billingsley} concerning the Skorokhod topology.
\end{proof}

Most of the constructions in this paper begin with a point process $N\in\H$ and from there obtain a scaffolding $X=\xi(N)$. However, it is useful to be able to go in the other direction, to begin with a scaffolding $X$ and to define a point process $N\in\H$ by marking the jumps of $X$ with continuous excursions (which we call spindles, see Section \ref{sec:BESQ}). 

\begin{proposition}[The \PRM\ of spindles via marking jumps]\label{prop:marking_jumps}
 Let $\bX$ denote a \StableA\ process with Laplace exponent as in \eqref{eq:JCCP:Laplace}. Let $\bM = \sum_{t\geq 0\colon\Delta \bX(t)>0}\Dirac{t,\Delta\bX(t)}$. 
 Use the marking kernel $x\mapsto\mBxc(\,\cdot\;|\,\zeta=x)$ to mark each point $(t,\Delta\bX(t))$ of $\bM$ by a spindle $f_t$ with length $\life(f_t)=\Delta\bX(t)$.
 Then $\bN := \sum_{t\geq 0\colon\Delta\bX(t)>0}\Dirac{t,f_t}$ is a \PRM[\Leb\otimes\mBxc] and $\bX = \xi(\bN)$.
\end{proposition}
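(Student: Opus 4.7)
The statement has two parts: first, that $\bN$ is a \PRM[\Leb\otimes\mBxc]; second, the pathwise identity $\bX = \xi(\bN)$. My plan is to handle these in order, with the first following from a marking theorem and the second from the L\'evy--It\^o decomposition applied to the spectrally positive stable process $\bX$.

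For the first part, I would recall that for a spectrally positive \StableA\ process $\bX$ with Laplace exponent as in \eqref{eq:JCCP:Laplace}, its jump measure $\bM = \sum_{t\colon\Delta\bX(t)>0}\Dirac{t,\Delta\bX(t)}$ is a Poisson random measure on $[0,\infty)\times(0,\infty)$ with intensity $\Leb\otimes\Pi$, where $\Pi(dx) = c_\nu x^{-2-\alpha}dx$. By Proposition \ref{prop:stable_JCCP}, this coincides with $\mBxc(\zeta\in dx)$. Now I mark each atom $(t,x)$ of $\bM$ independently with a spindle drawn from the stochastic kernel $x\mapsto\mBxc(\,\cdot\;|\,\zeta=x)$, as provided by the $\zeta$-disintegration analogous to Corollary \ref{cor:BESQ:scl_ker} in the \BESQ\ case. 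By Kingman's marking theorem for Poisson random measures (e.g.\ \cite[Proposition 6.4.VI]{DaleyVereJones2}), $\bN = \sum_{t\colon\Delta\bX(t)>0}\Dirac{t,f_t}$ is a Poisson random measure on $[0,\infty)\times\Exc$ with intensity
\begin{equation*}
\Leb(dt)\otimes\int_0^\infty \mBxc(\zeta\in dx)\,\mBxc(\,\cdot\;|\,\zeta=x) = \Leb\otimes\mBxc,
\end{equation*}
by the definition of the disintegration \eqref{eq:scl_ker:integration}.

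For the second part, by construction $\zeta(f_t) = \Delta\bX(t)$ for every $t$ with $\Delta\bX(t)>0$. Hence the pre-limit quantity in \eqref{eq:JCCP_def} reads
\begin{equation*}
\int_{[0,t]\times\{g\colon\zeta(g)>z\}}\zeta(f)d\bN(s,f) - t\frac{c_\nu}{\alpha}z^{-\alpha} = \sum_{s\le t\colon\Delta\bX(s)>z}\Delta\bX(s) - t\int_{(z,\infty)}x\,\Pi(dx),
\end{equation*}
using that $\int_{(z,\infty)}x\,\Pi(dx) = c_\nu\int_z^\infty x^{-1-\alpha}dx = c_\nu z^{-\alpha}/\alpha$. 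Since $\bX$ is a pure-jump spectrally positive L\'evy process with no Gaussian component and no additional drift (its Laplace exponent in \eqref{eq:JCCP:Laplace} is a pure fractional power), the L\'evy--It\^o decomposition gives exactly
\begin{equation*}
\bX(t) = \lim_{z\downto 0}\left(\sum_{s\le t\colon\Delta\bX(s)>z}\Delta\bX(s) - t\int_{(z,\infty)}x\,\Pi(dx)\right),
\end{equation*}
uniformly in $t$ on compacts a.s. Comparing with \eqref{eq:JCCP_def} yields $\xi(\bN)(t) = \bX(t)$ for all $t\ge 0$ a.s.

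The main obstacle, if any, is more bookkeeping than conceptual: one must verify that the marking kernel $x\mapsto\mBxc(\,\cdot\;|\,\zeta=x)$ is genuinely a measurable stochastic kernel on $\Exc$, so that Kingman's marking theorem applies (this reduces to the existence of a regular $\zeta$-disintegration of the $\sigma$-finite measure $\mBxc$, which follows from the general discussion in Section \ref{sec:techrem} since $\Exc$ is Lusin and $\mBxc$ is $\sigma$-finite in $\zeta$). The pathwise identification of $\bX$ with $\xi(\bN)$ then requires only that the compensator used in \eqref{eq:JCCP_def} matches the one appearing in the L\'evy--It\^o decomposition of $\bX$, which is precisely what the computation above verifies.
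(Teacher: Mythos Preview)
Your proof is correct and follows essentially the same approach as the paper: identify $\bM$ as a \PRM\ with intensity $\Leb\otimes\mBxc(\zeta\in\cdot)$, apply the marking theorem (the paper cites \cite[Proposition 6.4.VI]{DaleyVereJones1}) to conclude $\bN$ is a \PRM, and use the $\zeta$-disintegration to identify its intensity as $\Leb\otimes\mBxc$. You are in fact more explicit than the paper in verifying $\bX=\xi(\bN)$ via the L\'evy--It\^o decomposition; the paper's proof leaves this step implicit.
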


\begin{proof} 
 Since $\bX$ is a L\'evy process, $\bM$ is a \PRM. By \eqref{eq:JCCP:Laplace}, its intensity is $\mBxc(\life\in\cdot\,)$. 
 It is well-known that marking constructions like that above result in \PRM s; see \cite[Proposition 6.4.VI]{DaleyVereJones1}. Thus, $\bN$ is a \PRM. 
 Since $\mBxc(\,\cdot\;|\,\life)$ is a disintegration of $\mBxc$,
 we conclude that $\bN$ has intensity $\Leb\otimes\mBxc$.
\end{proof}

 For a \cadlag\ function $g\colon [0,\infty) \to\BR$, a bivariate measurable function $(y,t)\mapsto \ell^y_g(t)$ from $\BR\times [0,\infty]$ to $[0,\infty]$, is an \emph{(occupation density) local time} for $g$ if $t\mapsto\ell^y_g(t)$ is increasing for all $y\in\BR$ and if for every bounded and measurable $h\colon \BR\to[0,\infty)$,
 \begin{equation}
  \int_{-\infty}^{\infty} h(y)\ell^y_g(t)dy = \int_0^t h\big(g(s)\big)ds.\label{eq:LT_int_ident}
 \end{equation}
 We call $t$ the \emph{time parameter} and $y$ the \emph{space parameter} and say $\ell^y_g(t)$ is the local time of $g$ at level $y$, up to time $t$.

\begin{theorem}[Boylan \cite{Boylan64}, equations (4.4) and (4.5)]\label{thm:Boylan} As stable process 
 $\bX\sim\StableA$ has an a.s.\ unique jointly continuous local time process $\ell_\bX=(\ell_\bX^y(t);\;y\in\BR,\,t\geq 0)$.
 Moreover, for every $\theta\in (0,\alpha/(2+\alpha))$, $\theta'\in (0,\alpha/2)$, and each bounded space-time rectangle $R$, the restriction of $(y,t)\mapsto\ell^y_\bX(t)$ to $R$ is uniformly H\"older-$\theta$ in the time coordinate and uniformly H\"older-$\theta'$ in the space coordinate.
\end{theorem}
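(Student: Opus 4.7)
The plan is to combine a Fourier-analytic construction of the occupation density (in the spirit of Berman, adapted to the stable setting by Boylan) with a Kolmogorov--Chentsov argument to produce joint continuity and Hölder regularity in one stroke. Let $\psi^*(u) := -\log\EV[e^{iu\bX(1)}]$, so that a spectrally positive \StableA\ process satisfies $\mathrm{Re}\,\psi^*(u) \asymp |u|^{1+\alpha}$. Since $1+\alpha>1$, the resolvent integrability $\int_\BR(1\wedge \mathrm{Re}\,\psi^*(u)^{-1})\,du<\infty$ holds, and Plancherel produces an $L^2(\Omega\times dy)$--valued candidate local time
$$\ell^y_\bX(t)\;=\;\frac{1}{2\pi}\int_\BR e^{-iuy}\int_0^t e^{iu\bX(s)}\,ds\,du,$$
for which the occupation-density identity \eqref{eq:LT_int_ident} follows from Parseval and a monotone class argument.

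The bulk of the work lies in controlling increments. For $n\in\BN$ large, I would expand $\bigl(\ell^y_\bX(t)-\ell^{y'}_\bX(t')\bigr)^{2n}$ via the Fourier representation and evaluate expectations on the time simplex via the identity $\EV\bigl[\prod_k e^{iu_k\bX(s_k)}\bigr]=\exp\bigl(-\sum_k\psi^*(\sum_{\ell\ge k}u_\ell)(s_{(k)}-s_{(k-1)})\bigr)$, reducing the problem to $2n$--fold integrals against $\prod_k \mathrm{Re}\,\psi^*(v_k)^{-1}$. Space increments are handled via the interpolation $|e^{-iuy}-e^{-iuy'}|\le(|u||y-y'|)^{\theta'}\wedge 2$, integrable against $\mathrm{Re}\,\psi^*(u)^{-1}$ precisely when $\theta'<\alpha/2$. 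Time increments are handled by noting that truncating the simplex at length $|t-t'|$ gains a factor $|t-t'|^n$ that competes against the self-similarity cost $|t-t'|^{-n/(1+\alpha)}$ of the $u_k$--integrals. The resulting bound
$$\EV\bigl|\ell^y_\bX(t)-\ell^{y'}_\bX(t')\bigr|^{2n}\;\le\;C_n\bigl(|y-y'|^{2n\theta'}+|t-t'|^{2n\theta}\bigr),$$
valid for every $\theta'<\alpha/2$ and $\theta<\alpha/(2+\alpha)$, is fed into Kolmogorov--Chentsov on each bounded rectangle $R$ and, via a countable exhaustion together with a diagonal argument in $(\theta,\theta')$, yields a jointly continuous modification with the advertised Hölder regularity.

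The main obstacle is matching the \emph{sharp} time exponent $\alpha/(2+\alpha)$. The space exponent falls out cleanly because $|u|^{2\theta'-(1+\alpha)}$ is integrable at infinity exactly when $\theta'<\alpha/2$. The time exponent, however, is strictly smaller than $\alpha/2$: it reflects the extra cost of moving the endpoint of a time integral rather than merely perturbing a phase, and the precise denominator $2+\alpha$ emerges only after simultaneous accounting of the $n$ simplex truncations and the $(1+\alpha)^{-1}$--scaling of the $u_k$--integrals. Any more naive approach that does space and time separately will not see the $2+\alpha$. Finally, a.s.\ uniqueness is automatic: any two bicontinuous occupation densities of $\bX$ satisfy \eqref{eq:LT_int_ident} for $h=\cf_{(a,b)}$ with $a,b\in\BQ$, agree on $\BR\times[0,\infty)$ outside a null set, and hence coincide everywhere by continuity.
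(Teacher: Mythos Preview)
The paper does not supply its own proof of this theorem; it is quoted verbatim as a classical result of Boylan (1964), and the authors use it as a black box. Your Fourier--moment--Kolmogorov approach is exactly Boylan's method, so you have correctly reconstructed the argument behind the citation rather than found an alternative.

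One point deserves correction: your time-exponent bookkeeping does not produce what you claim. Your heuristic ``gain $|t-t'|^n$, cost $|t-t'|^{-n/(1+\alpha)}$'' yields a $2n$-th moment bound of order $|t-t'|^{n\alpha/(1+\alpha)}$, hence H\"older exponent $\alpha/(2+2\alpha)$, which falls short of the stated $\alpha/(2+\alpha)$. The cleaner computation bounds each of the $2n$ factors $\int_0^{|t-t'|}e^{-s\,\mathrm{Re}\,\psi^*(v)}\,ds\le \min\bigl(|t-t'|,\,c|v|^{-(1+\alpha)}\bigr)$ and integrates in $v$ to obtain $|t-t'|^{\alpha/(1+\alpha)}$ per factor, giving $\EV|\ell^y_\bX(t)-\ell^y_\bX(t')|^{2n}\le C_n|t-t'|^{2n\alpha/(1+\alpha)}$ and hence H\"older-$\theta$ for every $\theta<\alpha/(1+\alpha)$. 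This is strictly \emph{better} than Boylan's $\alpha/(2+\alpha)$, which was never intended to be sharp; so the exponent you call ``sharp'' is not, and your method, done carefully, overshoots rather than merely matches it. Your space exponent $\alpha/2$ and the uniqueness argument are correct as written.
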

We denote the \em 
inverse local time \em by $\tau^y_\bX(s):=\inf\{t\!\ge\! 0\colon\ell_\bX^y(t)\!>\!s\}$, $s\!\ge\! 0$.

\section{$\dI$-path-continuity of $(\nu,T)$-IP-evolutions}
\label{sec:cont}

\subsection{The skewer map}

We now make a slight modification to Definition \ref{def:skewer} of the aggregate mass process, with the aim of having it apply nicely when some spindles are broken.

\begin{definition}\label{def:skewer_2}\!\!\!\!
 The \emph{aggregate mass process} of $N\!\in\!\H$ at level $y\!\in\!\BR$ is
 $$M^y_{N,\xi(N)}(t) := \int_{[0,t]\times\Exc} \!\!\max\!\Big\{ f\big((y-\xi_N(u\minus))\minus\big), f\big(y-\xi_N(u\minus)\big)\Big\}dN(u,f)$$
 for $t\geq 0$. We leave the definition of the skewer map unchanged but abbreviate it $\skewer(y,N) := \skewer(y,N,\xi(N))$, so that
 $$\skewer(y,N) = \left\{\left(M^y_{N}(t-),M^y_{N}(t)\right)\colon t\geq 0,\, M^y_{N}(t-) < M^y_{N}(t)\right\}.$$
 We abbreviate $M^y_N(t)\!:=\!M^y_{N,\xi(N)}(t)$ and $\skewerP(N)\!:=\!\skewerP(N,\xi(N))\!=\!\big(\skewer(y,N),\,y\!\geq\! 0\big)$.
\end{definition}

Recall the inverse local time $\big(\tau^y_\bX(s),\ s\geq 0\big)$ of $\bX=\xi(\bN)$ at level $y$. 

\begin{proposition}[Aggregate mass]\label{prop:agg_mass_subord} Let $\bN$ be a \PRM[\Leb\otimes\mBxc], where $\nu$ is as in \eqref{eqn:generalnu}. Then  
 $\big(M^y_{\bN}\circ\tau^y_{\bX}(s) -M^y_{\bN}\circ\tau^y_{\bX}(0),\ s\geq 0\big)$ is a \Stable[\alpha/q] subordinator 
 with Laplace exponent $\Phi(\lambda) = \lambda^{\alpha/q}$, for each fixed $y\in\BR$.
\end{proposition}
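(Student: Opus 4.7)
The plan is to establish two things: (a) the process is a subordinator, and (b) its Laplace exponent equals $\lambda^{\alpha/q}$.

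For (a), I will use It\^o's excursion theory applied to the scaffolding $\bX$. Since $\bX$ is a recurrent \StableA\ process, at any fixed level $y$ the excursions of $\bX$ away from $y$ form a Poisson point process indexed by local time $s$, with intensity $\Leb\otimes n$ for the associated It\^o excursion measure. Because $\bX$ is spectrally positive, every excursion that strictly rises above $y$ must do so at a single upcrossing jump, and via Proposition~\ref{prop:marking_jumps} each jump of $\bX$ carries an independent spindle mark drawn from the appropriate kernel. The aggregate mass contribution from an excursion equals $f_*(y-\bX(u-))$, the value of the unique spindle $f_*$ crossing level $y$ at its crossing level. Thus
\begin{equation*}
 M^y_\bN\circ\tau^y_\bX(s) - M^y_\bN\circ\tau^y_\bX(0)
\end{equation*}
is a sum of functionals over the atoms of a \PRM\ of marked excursions indexed by $s\in[0,\infty)$, hence a pure-jump subordinator.

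For (b), I will identify the stability index by scaling and pin down the multiplicative constant via the normalization of $c_\nu$ in \eqref{eqn:generalnu}. Consider the joint transformation $\Psi_c(t,f) := (c^{1+\alpha} t,\scaleB[c][f])$ for $c>0$. A substitution in \eqref{eqn:generalnu} gives $(\scaleB[c])_*\mBxc = c^{1+\alpha}\mBxc$, and the time-dilation by $c^{1+\alpha}$ supplies the reciprocal factor, so $\Psi_c(\bN)\stackrel{d}{=}\bN$. Tracking the definitions through \eqref{eq:JCCP_def}, Definition~\ref{def:skewer_2}, and the occupation-density identity \eqref{eq:LT_int_ident}, I obtain
\begin{gather*}
 \xi_{\Psi_c(\bN)}(t) = c\,\xi_\bN(tc^{-(1+\alpha)}),\qquad M^y_{\Psi_c(\bN)}(t) = c^q M^{y/c}_\bN(tc^{-(1+\alpha)}),\\
 \tau^y_{\xi(\Psi_c(\bN))}(s) = c^{1+\alpha}\tau^{y/c}_\bX(sc^{-\alpha}).
\end{gather*}
Composing these and using spatial stationarity of $\bX$ (which makes the law of $s\mapsto M^y_\bN\circ\tau^y_\bX(s)-M^y_\bN\circ\tau^y_\bX(0)$ independent of $y$), I derive the self-similarity
\begin{equation*}
 \bigl(M^y_\bN\circ\tau^y_\bX(sc^{-\alpha})-M^y_\bN\circ\tau^y_\bX(0)\bigr) \stackrel{d}{=} c^{-q}\bigl(M^y_\bN\circ\tau^y_\bX(s)-M^y_\bN\circ\tau^y_\bX(0)\bigr)
\end{equation*}
as processes in $s$, which forces the stable index to equal $\alpha/q$.

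To fix the constant $1$ in $\Phi(\lambda) = \lambda^{\alpha/q}$, I will compute the L\'evy measure $\Pi$ of the subordinator explicitly. By the compensation formula for the \PRM\ of jumps of $\bX$ (intensity $\Leb\otimes c_\nu z^{-2-\alpha}dz$) together with its independent spindle marks $\kappa_q(z,\cdot)$, the contribution to a crossing jump across $y$ at local time $s$ is $z^q \mathbf{f}(V/z)$ for $\mathbf{f}\sim\kappa_q(1,\cdot)$ and a starting-height random variable $V\in(0,z)$ whose distribution comes from It\^o excursion theory for the spectrally positive \StableA\ process. A change of variables $m=z^q\mathbf{f}(u)$, $u=V/z$, in $\int_0^\infty(1-e^{-\lambda m})\Pi(dm)$ produces an integral whose $z$-integrand is homogeneous of the right degree to give $\lambda^{\alpha/q}$ after the $z$-integration, and whose remaining $u,\mathbf{f}$-dependence collapses to $\int_0^1\EV[\mathbf{f}(u)^{\alpha/q}]du$. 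The constraint that this equals exactly $\lambda^{\alpha/q}$ recovers the formula $c_\nu = \alpha/\bigl(\Gamma(1-\alpha/q)\int_0^1\EV[\mathbf{f}(u)^{\alpha/q}]du\bigr)$ from \eqref{eqn:generalnu}. The main obstacle is this final constant verification: the stability index comes out ``for free'' from scaling, but fixing the constant requires the careful moment identity that motivated the definition of $c_\nu$ in the first place.
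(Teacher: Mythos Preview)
The paper does not actually prove this proposition here; it simply cites \cite[Proposition 8(i)]{Paper0} and refers to \cite[Section 6.4]{Paper0} for the verification that the constant in the Laplace exponent equals $1$. So there is no in-paper argument to match against, and your proposal is in fact considerably more detailed than what the paper supplies.

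Your approach is sound. Part (a) is exactly the right idea: the bi-clade Poisson point process (which the paper later formalises in Proposition~\ref{prop:bi-clade_PRM}) gives the subordinator structure, and the spectrally positive nature of $\bX$ ensures each excursion crosses level $y$ via a unique jump, contributing a single mass $m^0$. Your scaling argument in (b) is clean and correct; it is essentially Lemma~\ref{lem:clade:invariance} applied to $m^0$, and it pins down the index $\alpha/q$ without any computation. This is a genuine advantage over a direct L\'evy-measure calculation.

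The one part that remains a sketch is the constant. You correctly identify that it comes down to the choice of $c_\nu$ in \eqref{eqn:generalnu}, but the computation you outline --- passing through the undershoot law $V$ from excursion theory and collapsing the integral to $\int_0^1\EV[\mathbf{f}(u)^{\alpha/q}]du$ --- is not fully executed. In particular, you would need to invoke the compensation formula for $\bN$ against the predictable integrand $f(y-\bX(u-))$, then use the occupation density formula to turn the $du$-integral into an integral against local time, and finally exploit that $\EV[\ell^x(\tau^y(s))-\ell^x(\tau^y(0))]$ has a tractable form for spectrally one-sided stable processes. This is precisely what \cite[Section 6.4]{Paper0} does. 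Your acknowledgement that this is ``the main obstacle'' is accurate; the sketch is pointing in the right direction but is not yet a proof.
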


\begin{proof}
 This was proved in \cite[Proposition 8(i)]{Paper0}. See also \cite[Section 6.4]{Paper0} to show that our choice of intensity $c_\nu$ in
 \eqref{eqn:generalnu} is such that $\Phi(\lambda)=k\lambda^{\alpha/q}$ has $k=1$ here. 
\end{proof}

\begin{theorem}[Scaffolding local time equals skewer diversity everywhere; Theorem 1 of \cite{Paper0}]\label{thm:LT_property_all_levels}
 Let $\bN$ be a \PRM[\Leb\otimes\mBxc] and $\bX = \xi(\bN)$, where $\nu$ is as in \eqref{eqn:generalnu}. Suppose that $\bff\sim\nu(\,\cdot\,|\,\zeta=1)$ is $\theta$-H\"older 
 for some $\theta\in(0,q)$ and that the H\"older constant $D_\theta=\sum_{0<x<y<1}|\bff(y)-\bff(x)|/|y-x|^\theta$ has moments of all orders. Then
 there is an event of probability 1 on which, for every $y\in\BR$ and $s\geq 0$, the partition $\beta^y_s := \skewer\big(y,\,\restrict{\bN}{[0,\tau_\bX^y(s)]}\big)$ 
 possesses the $\alpha/q$-diversity property of \eqref{eq:IPLT}, and
 \begin{equation}
  \ell^y_\bX(t) = \IPLT^{\alpha/q}_{\beta^y_s}\left(M^y_{\bN}(t)\right)\qquad\mbox{for all }t\in[0,\tau^y_\bX(s)],\ s\ge 0,\ y\in\BR. 
\label{eq:inft_skewer_LT_cnvgc}
 \end{equation} 
\end{theorem}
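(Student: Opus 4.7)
The plan is to establish Theorem \ref{thm:LT_property_all_levels} in two stages: first prove the identity $\ell^y_\bX(t)=\IPLT^{\alpha/q}_{\beta^y_s}(M^y_\bN(t))$ for each fixed $y$ by identifying a Stable$(\alpha/q)$ subordinator governing the aggregate mass, then upgrade to a single full-measure event on which the identity holds simultaneously for all $y\in\BR$ and $s\ge 0$, using the joint continuity of the stable local time and the H\"older regularity of spindles.

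For the fixed-$y$ statement, the key observation is that the blocks of $\beta^y_s$ are in bijection with the atoms $(t_j,f_j)$ of $\restrict{\bN}{[0,\tau^y_\bX(s)]}$ whose spindles upcross level $y$, and each such block has mass $f_j(y-\bX(t_j-))$. Listing blocks in the temporal order inherited from $\bN$, the partial-sum map $t\mapsto M^y_\bN(t)$ traces out the range of the time-changed process $s'\mapsto M^y_\bN(\tau^y_\bX(s'))$. By Proposition \ref{prop:agg_mass_subord} this time-changed process is a Stable$(\alpha/q)$ subordinator with Laplace exponent $\lambda^{\alpha/q}$, so by Proposition \ref{prop:IP:Stable} its range over $[0,s]$ is a Stable$(\alpha/q)$ interval partition with the $\alpha/q$-diversity property and total diversity equal to $s$. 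Specialising the same proposition to the range over $[0,\ell^y_\bX(t)]$ with $t\le\tau^y_\bX(s)$ yields $\IPLT^{\alpha/q}_{\beta^y_s}(M^y_\bN(t))=\ell^y_\bX(t)$, which is the identity at fixed $y$.

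The main obstacle is upgrading from ``for each $y$, almost surely'' to ``almost surely, for all $y\in\BR$ and $s\ge 0$ simultaneously,'' as the paper's later arguments require a single exceptional null set. The plan is to pick a countable dense set $Q\subset\BR$ on which the identity holds a.s.\ for every $y\in Q$, and then extend in $y$ by continuity. The right-hand side $(y,t)\mapsto\ell^y_\bX(t)$ is locally jointly H\"older on compact rectangles by Theorem \ref{thm:Boylan}, so it suffices to control variation of $y\mapsto M^y_\bN(t)$ and of $y\mapsto\IPLT^{\alpha/q}_{\beta^y_s}(\,\cdot\,)$ evaluated at the moving endpoint. This is where the H\"older hypothesis enters: combined with the spindle scaling \eqref{eq:spindle_scaling}, it gives quantitative bounds of order $|y-y'|^\theta$ (with moments in $D_\theta$) on how much $f(y-\bX(t-))$ moves in $y$ for a single spindle of amplitude $A(f)$. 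The hard part will be summing these contributions across the Poissonian population whose spindles straddle a thin slab of levels: one must combine the $x^{-2-\alpha}$ jump intensity of $\nu$ with an amplitude tail bound from $\kappa_q$ in a Kolmogorov/chaining argument, paralleling Boylan's proof of joint continuity of stable local times, to exclude sudden changes in diversity coming from arbitrarily small blocks newly born or killed as $y$ varies. Once joint continuity in $(y,t)$ of both sides of \eqref{eq:inft_skewer_LT_cnvgc} is established on compacts, the identity extends from $Q$ to all of $\BR$ on the chosen full-measure event.
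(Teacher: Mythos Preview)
The paper does not supply its own proof of this result: it is quoted as Theorem~1 of \cite{Paper0}. The paper's only comment is the sentence following the statement, noting that Proposition~\ref{prop:agg_mass_subord} yields the fixed-$y$ version; this is exactly your first stage, and your argument there is correct.

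Your second stage, however, has a gap in its formulation. You propose to establish ``joint continuity in $(y,t)$ of both sides of \eqref{eq:inft_skewer_LT_cnvgc}'' and then extend the identity from a dense set. But the right-hand side is defined via the limit \eqref{eq:IPLT}, and the \emph{first} assertion of the theorem is precisely that this limit exists for every $y\in\BR$; you cannot discuss continuity of $y\mapsto\IPLT^{\alpha/q}_{\beta^y_s}$ before that is known, and equality on a dense set does not propagate existence of a limit to the complement. What is actually required is to show that the pre-limit quantity $h^{\alpha/q}\#\{U\in\beta^y_s:\Leb(U)>h,\ U\text{ left of }M^y_\bN(t)\}$ converges to $\ell^y_\bX(t)$ \emph{uniformly in $(y,t)$} on compacts as $h\downarrow 0$. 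Your chaining remarks may be aimed at this, but as written they target continuity of an object whose very existence is what must be proved. The machinery actually used in \cite{Paper0}, visible here through the quoted Lemma~\ref{lem:spindle_piles}, is a decomposition of the translated spindles into sequences with disjoint supports and summable H\"older constants, which gives direct control of block contributions simultaneously across all levels rather than appealing to continuity of diversity after the fact.
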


The strength of the preceding result is that it holds a.s.\ simultaneously at every level $y$, so $\skewerP(\restrict{\bN}{[0,\tau^y(s)]})$ is
an $\IPspace$-valued process for $\IPspace:=\IPspace_{\alpha/q}$. Proposition \ref{prop:agg_mass_subord} implies the weaker result that \eqref{eq:inft_skewer_LT_cnvgc} holds a.s.\ for every $s\geq 0$, for any fixed $y$. Recall Definition \ref{def:assemblage_m} of the measurable spaces $(\H,\SH)$ and $(\Hfin,\SHfin)$. We are interested in diffusions on $(\IPspace,\dI)$. To that end we require measures $N\in \Hfin$ for which $\skewerP(N)$ is path-continuous in $(\IPspace,\dI)$. 

\begin{definition}[$\Hs,\ \Hfins$]\label{def:domain_for_skewer}
 Let $\Hs$ denote the set of all $N\in\H$ with the following additional properties.
 \begin{enumerate}[label=(\roman*), ref=(\roman*)]
  \item The aggregate mass $M^y_N(t)$ is finite for every $y\in\BR$ and $t\geq 0$.\label{item:d_f_s:fin}
  \item The occupation density local time $(\ell^y_{\xi(N)}(t))$ is bi-continuous on $t\geq 0$, $y\in (\inf_u \xi_N(u), \sup_u\xi_N(u))$, and for every $(y,t)$ in this range,\label{item:d_f_s:div}
   \begin{equation}
    \IPLT_{\skewer(y,\restrict{N}{[0,t]})}(\infty) = \ell^y_{\xi(N)}(t).\label{eq:div_LT_condition}
   \end{equation}
  \item For every $t>0$, the skewer process $\skewerP\left(\restrict{N}{[0,t]}\right)$ is continuous in $(\IPspace,\dI)$.\label{item:d_f_s:cts}
 \end{enumerate}
 
 Let $\Hfins := \Hfin\cap\Hs$. Let $\Sigma(\Hs) := \{A\cap\Hs\colon A\in\SH\}$, and correspondingly define $\Sigma(\Hfins)$.
\end{definition}

In condition (ii) above, we restrict $y$ away from boundary values because \eqref{eq:div_LT_condition} can fail at $y = 0$ for the point processes $\bN_{\beta}$ constructed in a clade construction; see Definition \ref{constr:type-1}.

\begin{proposition}\label{prop:skewer_measurable} The map 
 $\skewerP$ of Definitions \ref{def:skewer_2} and \ref{def:domain_for_skewer} is measurable from $(\Hfins,\SHfins)$ to the space $\mathcal{C}([0,\infty),\IPspace)$ of continuous functions, under the Borel $\sigma$-algebra generated by uniform convergence.
\end{proposition}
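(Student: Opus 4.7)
The plan is to establish measurability at each fixed level $y\ge 0$ first, then lift to measurability of the entire $\IPspace$-valued path by exploiting the $y$-continuity built into condition~(iii) of Definition~\ref{def:domain_for_skewer}.

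For the fixed-level step, Proposition~\ref{prop:JCCP_meas} gives that $\xi\colon\H\to\cD$ is measurable, so $N\mapsto\xi_N(u-)$ and $N\mapsto\xi_N(u)$ are measurable for each $u\ge 0$. The integrand in Definition~\ref{def:skewer_2} is therefore jointly measurable in $(N,u,f)$, and by a standard Fubini-type argument for integrals against counting measures, $N\mapsto M^y_N(t)$ is measurable for each fixed $y,t\ge 0$. The interval partition $\skewer(y,N)$ is reconstructed from the non-decreasing \cadlag\ function $t\mapsto M^y_N(t)$ by enumerating its jumps, and this extraction is measurable into $(\HIPspace,d_H')$ via approximation using rational thresholds on jump sizes. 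To upgrade to measurability into the finer space $(\IPspace,\dI)$, I need measurability of the diversity functionals, and condition~(ii) of Definition~\ref{def:domain_for_skewer} supplies precisely this bridge: the total diversity $\IPLT^{\alpha/q}_{\skewer(y,\restrict{N}{[0,t]})}(\infty)$ equals the local time $\ell^y_{\xi(N)}(t)$, whose measurability in $N$ follows from the standard occupation-time approximation $\ell^y_g(t)=\lim_{\epsilon\downto 0}(2\epsilon)^{-1}\int_0^t\cf\{|g(s)-y|<\epsilon\}ds$ combined with Theorem~\ref{thm:Boylan}. Partial diversities $\IPLT^{\alpha/q}_{\skewer(y,N)}(U)$ at individual blocks $U=(M^y_N(t-),M^y_N(t))$ arise similarly as values of the same continuous local time at the corresponding jump times, so all four quantities entering $\dis_\alpha$ (masses and diversities) are measurable in $N$.

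For the lifting step, $(\IPspace,\dI)$ is separable (in fact Lusin, by Theorem~\ref{thm:Lusin}), so the Borel $\sigma$-algebra on $\mathcal{C}([0,\infty),\IPspace)$ under uniform convergence on compact intervals is generated by the countable family of evaluation maps $\phi\mapsto\phi(y)$ for $y\in\BQ\cap[0,\infty)$. Condition~(iii) of Definition~\ref{def:domain_for_skewer} guarantees that $\skewerP(N)\in\mathcal{C}([0,\infty),\IPspace)$ for every $N\in\Hfins$, and the previous paragraph shows each such evaluation is measurable. Hence $\skewerP$ is measurable on $(\Hfins,\SHfins)$.

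The main technical obstacle is measurability of the $\alpha/q$-diversity as a functional of $N$: its definition in \eqref{eq:IPLT} is a delicate limit that is not directly amenable to Fubini-type reasoning against $N$. The essential leverage is the identity in condition~(ii) of Definition~\ref{def:domain_for_skewer}, which was designed precisely to route all such diversity questions through the jointly continuous occupation density local time of $\xi(N)$ provided by Theorem~\ref{thm:Boylan}, where standard approximation arguments yield measurability.
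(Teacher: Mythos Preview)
The paper defers its proof of this proposition to Supplement~A, so a line-by-line comparison with the paper's argument is not possible from the main text. Your outline is the natural one and is essentially sound: establish measurability of $N\mapsto\skewer(y,N)$ at fixed levels via Proposition~\ref{prop:JCCP_meas} and standard integration against counting measures, then lift to path space using the continuity in condition~(iii) together with separability of $(\IPspace,d_{\alpha/q})$ (a consequence of its being Lusin, Theorem~\ref{thm:Lusin}).

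One point deserves tightening. You show that block masses and diversities are each measurable in $N$, and you assert this yields measurability into $(\IPspace,d_{\alpha/q})$; but you do not identify a generating family for the Borel $\sigma$-algebra of $(\IPspace,d_{\alpha/q})$ against which to check this. A cleaner route, implicit in the paragraph following Theorem~\ref{thm:Lusin}, is to observe that both $(\HIPspace,d_H')$ and $(\IPspace,d_{\alpha/q})$ are standard Borel, and the identity inclusion $\IPspace\hookrightarrow\HIPspace$ is continuous (since the Hausdorff distortion uses only clauses (i)--(ii) of the $\alpha$-distortion, so $d_H'\le d_{\alpha/q}$) and injective. By the Lusin--Souslin theorem it is therefore a Borel isomorphism onto its image. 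Hence once you have measurability into $(\HIPspace,d_H')$---which your jump-extraction argument gives---and you know the image lies in $\IPspace$ (this is built into the definition of $\Hfins$), measurability into $(\IPspace,d_{\alpha/q})$ follows automatically. Your local-time route to diversities is correct but becomes unnecessary for the measurability conclusion; it is, however, exactly the content of condition~(ii), so invoking it is natural and likely mirrors what the supplement does.
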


We prove this proposition in \ref{SuppMeas}.

\subsection{Proof of Theorem \ref{thm:diffusion_0}}
\label{sec:type-1:PRM}

Let $\nu$ be as in \eqref{eqn:generalnu} and assume there is $\theta\in(0,q-\alpha)$ such that $\bff\sim\nu(\,\cdot\,|\,\zeta=1)$ has a $\theta$-H\"older
constant with moments of all orders. Let $\bN$ be a \PRM[\Leb\otimes\mBxc] living on a probability space $(\Omega,\cA,\Pr)$. We define $\tdN := \restrict{\bN}{[0,T)}$, where $T\in(0,\infty)$. We take ``twiddled versions'' of our earlier notation to denote the corresponding objects for $\tdN$; for instance, $\tdl$ will denote the jointly H\"older continuous version of the local time process associated with $\tdX := \xi(\tdN)$. 
It follows from Proposition \ref{prop:agg_mass_subord} that for each $y\geq 0$ we have $\td\beta^y := \skewer(y,\tdN)\in\IPspace$ almost surely. I.e.\ $\td\beta^y$ is almost surely a finite interval partition with the diversity property. 

\begin{lemma}[Proposition 6 of \cite{Paper0}]\label{lem:spindle_piles}
 For each spindle $(t,f)$ of $\tdN$, let $\bar f(y) := f(y-\tdX(t-))$, $y\in \BR$. These translated spindles can a.s.\ be partitioned into sequences $(g^n_j,\,j\geq 1)$, for $n\geq 1$, in such a way that in each sequence $(g^n_j,\,j\geq 1)$:
 (i) the spindles have disjoint support, and (ii) they are uniformly H\"older-$\theta$ with some constants $D_n$. Furthermore, we may choose $D_n$ so that
 $\sum_{n\ge1}D_n < \infty$ a.s..
\end{lemma}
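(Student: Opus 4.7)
By the scaling relation \eqref{eq:spindle_scaling}, each spindle $f_i$ in $\tdN$ has the law of $\zeta_i^q\bff(\,\cdot/\zeta_i)$ for $\bff\sim\kappa_q(1,\,\cdot\,)$, so the translated $\bar f_i$ is $\theta$-H\"older with constant $C_i=\zeta_i^{q-\theta}D_{\theta,i}$, where the $D_{\theta,i}$ are i.i.d.\ copies of $D_\theta$ (translation preserves the H\"older constant). My plan is a two-stage construction: group the spindles dyadically by H\"older constant, then within each group resolve support overlaps by interval-graph coloring.

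Concretely, set $\mathcal{S}_n=\{i:C_i\in(2^{-n-1},2^{-n}]\}$ for $n\in\BZ$. A direct application of Campbell's formula to $\tdN$, using $\mBxc(\zeta\in dz)=c_{\mBxc}z^{-2-\alpha}dz$ and the moment hypothesis on $D_\theta$, shows that each $|\mathcal{S}_n|$ is a.s.\ finite with $\EV[|\mathcal{S}_n|]\lesssim 2^{n(1+\alpha)/(q-\theta)}$ (after a change of variables in the jump-size integral). Within $\mathcal{S}_n$, the supports $I_i=[\tdX(t_i-),\tdX(t_i)]$ form a finite interval family; since interval graphs are perfect, this family admits a proper coloring with $\chi_n$ colors, where $\chi_n$ equals the maximum number of these intervals meeting at any single point of $\BR$. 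Each color class becomes one of our sequences $(g^m_j,j\geq 1)$: the supports within it are pairwise disjoint, and every member is $\theta$-H\"older with the uniform constant $D_m\leq 2^{-n}$. Relabeling the color classes across all $n$ as a single countable family then gives the partition, with $\sum_m D_m \leq \sum_n \chi_n\cdot 2^{-n}$.

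The crux will be proving $\sum_n \chi_n\cdot 2^{-n}<\infty$ a.s.; here the crude bound $\chi_n\leq|\mathcal{S}_n|$ is insufficient under the hypothesis $\theta<q-\alpha$ (it would require the strictly stronger $\theta<q-1-\alpha$). The sharper bound exploits the \StableA\ structure of $\tdX$: spindles in $\mathcal{S}_n$ have typical lifetime $\zeta_i\asymp 2^{-n/(q-\theta)}$, and the Poisson compensation formula for $\tdN$, combined with the bounded, jointly H\"older occupation-density local time of $\tdX$ from Theorem \ref{thm:Boylan}, shows that the expected number of jumps of $\tdX$ of size $\asymp\epsilon$ whose interval $I_i$ covers a fixed level $y$ is of order $\epsilon^{-\alpha}\sup_z\ell^z_\tdX(T)$. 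This yields $\chi_n\lesssim 2^{n\alpha/(q-\theta)}$ and hence
\[
 \sum_n \chi_n\cdot 2^{-n}\;\lesssim\;\sum_n 2^{n(\alpha/(q-\theta)-1)},
\]
which converges precisely when $\theta<q-\alpha$, matching the hypothesis. The finiteness of all moments of $D_\theta$ feeds into a standard Borel--Cantelli argument that both controls $\max_{i\in\mathcal{S}_n}D_{\theta,i}$ and upgrades expected bounds to almost-sure ones. The main obstacle, forcing the use of stable-process geometry rather than a soft coloring bound, is exactly this sharp estimate on $\chi_n$; everything else is bookkeeping.
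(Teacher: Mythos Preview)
The paper does not prove this lemma; it is quoted as Proposition~6 of \cite{Paper0}, so there is no in-document argument to compare against.

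Your strategy---dyadic grouping, interval-graph coloring within each group to obtain disjoint supports, and an overlap bound via the \StableA\ local-time structure---is sound and is very much in the spirit of how such a result is proved. The step that is not just bookkeeping, and where the sketch has a real gap, is the passage from ``spindles in $\mathcal{S}_n$ have \emph{typical} lifetime $\asymp 2^{-n/(q-\theta)}$'' to $\chi_n\lesssim 2^{n\alpha/(q-\theta)}$. Because you group by $C_i=\zeta_i^{q-\theta}D_{\theta,i}$ rather than by $\zeta_i$, a spindle in $\mathcal{S}_n$ with small $D_{\theta,i}$ has large $\zeta_i$, and such long intervals inflate the clique number at every level they cross. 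You invoke control of $\max_{i\in\mathcal{S}_n}D_{\theta,i}$, but an \emph{upper} bound on $D_{\theta,i}$ yields only a \emph{lower} bound on $\zeta_i$ for $i\in\mathcal{S}_n$; the coloring estimate needs an upper bound on $\zeta_i$, hence a lower bound on $D_{\theta,i}$, which is nowhere assumed. The gap is repairable---either further decompose each $\mathcal{S}_n$ by dyadic $\zeta$-scale and sum the contributions (the atypical-$\zeta$ part is then controlled by a moment of $D_\theta$), or, more cleanly, group by $\zeta_i$ from the outset. In the latter route, with $\mathcal{T}_k=\{i:\zeta_i\in(2^{-k-1},2^{-k}]\}$, all intervals in a group have comparable length, the overlap bound $\chi_k\lesssim 2^{k\alpha}\sup_y\ell^y(T)$ is direct, and the group H\"older constant is at most $2^{-k(q-\theta)}\max_{i\in\mathcal{T}_k}D_{\theta,i}$; since $|\mathcal{T}_k|\asymp 2^{k(1+\alpha)}$ and $D_\theta$ has all moments, that maximum is $o(2^{k\epsilon})$ a.s.\ for every $\epsilon>0$, and the sum $\sum_k\chi_k\cdot 2^{-k(q-\theta)}\max_{i\in\mathcal{T}_k}D_{\theta,i}$ converges precisely when $\theta<q-\alpha$.
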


\begin{corollary}\label{cor:PRM:type-1_wd}
 It is a.s.\ the case that $M^y_{\tdN}(T)$ is finite for all $y\in\BR$. Moreover, $(\td\beta^y,\ y\in\BR)$ a.s.\ takes values in $\IPspace$ for all $y\in\BR$.
\end{corollary}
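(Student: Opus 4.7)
\medskip

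\textbf{Proof plan.} The plan is to derive both assertions directly from Lemma \ref{lem:spindle_piles}, together with Theorem \ref{thm:LT_property_all_levels} for the diversity statement.

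\emph{Finiteness of $M^y_{\tdN}(T)$.} First I would observe that, since $\tdN = \restrict{\bN}{[0,T)}$ is supported on a bounded time interval and $\tdX = \xi(\tdN)$ is a stable process of index $1+\alpha$, only finitely many points of $\tdN$ have spindle lifetime exceeding any $z>0$; in particular, the maximal jump
\[
 Z := \sup\{\zeta(f)\colon (t,f)\in\tdN\} = \sup_{s\in[0,T]}\Delta\tdX(s)
\]
is a.s.\ finite. Next I would invoke Lemma \ref{lem:spindle_piles} to partition the translated spindles $\bar f$ into sequences $(g^n_j,\,j\ge1)$, $n\ge1$, whose elements within each sequence have disjoint supports and are uniformly $\theta$-H\"older with constants $D_n$ satisfying $\sum_n D_n<\infty$ a.s. Each $g^n_j$ vanishes at the endpoints of its support of length at most $Z$, so the H\"older bound yields $g^n_j(y)\le D_n(Z/2)^\theta$ for all $y$. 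Since within each sequence at most one spindle is positive at a given level $y$, I conclude
\[
 M^y_{\tdN}(T) \;\le\; \sum_{n\ge1}\sup_j g^n_j(y) \;\le\; (Z/2)^\theta\sum_{n\ge1}D_n,
\]
which is finite a.s., uniformly in $y\in\BR$.

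\emph{Diversity.} For the second assertion I would reduce to Theorem \ref{thm:LT_property_all_levels} by a truncation-and-prefix argument. Fix the event of probability one on which the conclusion of that theorem holds, i.e.\ on which $\skewer(y,\restrict{\bN}{[0,\tau^y_\bX(s)]})\in\IPspace$ for every $y\in\BR$ and every $s\ge0$. For each $y\in\BR$, set $s_y:=\ell^y_\bX(T)$, so that $\tau^y_\bX(s_y)\ge T$, hence $\tdN$ is the restriction of $\restrict{\bN}{[0,\tau^y_\bX(s_y)]}$ to $[0,T)$. Since the skewer map orders blocks by birth time, $\skewer(y,\tdN)$ is the left-prefix of $\skewer(y,\restrict{\bN}{[0,\tau^y_\bX(s_y)]})$ consisting of blocks born before time $T$; it therefore inherits the $\alpha/q$-diversity property from \eqref{eq:IPLT}, which only requires the defining limit at each $t\le\|\beta\|$. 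Thus $\td\beta^y\in\IPspace$ a.s.\ simultaneously for every $y\in\BR$.

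\emph{Expected obstacles.} Neither step is deep once Lemma \ref{lem:spindle_piles} and Theorem \ref{thm:LT_property_all_levels} are in hand; the main subtlety is obtaining the two conclusions \emph{simultaneously} for all $y$. For finiteness this is automatic since the bound $(Z/2)^\theta\sum_n D_n$ is uniform in $y$. For diversity it requires the prefix observation above, which is where I would be most careful: one needs that the set of blocks of $\skewer(y,\tdN)$ is an initial segment (in the left-to-right order) of the blocks of $\skewer(y,\restrict{\bN}{[0,\tau^y_\bX(s_y)]})$, so that partial diversities of the former equal the corresponding diversities of the latter.
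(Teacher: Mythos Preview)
Your proposal is correct. The diversity step is essentially identical to the paper's: the paper also invokes Theorem \ref{thm:LT_property_all_levels} and implicitly uses the same prefix observation, compressed into the single sentence ``by Theorem \ref{thm:LT_property_all_levels}, this implies that $\td\beta^y\in\IPspace$ for all $y$ simultaneously.'' Your spelling out of why the restriction to $[0,T)$ inherits diversity from the restriction to $[0,\tau^y_\bX(s_y)]$ is the content behind that sentence.

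For finiteness you take a genuinely different route. The paper writes $M^y_{\tdN}(T)=\sum_n g^n(y)$ with $g^n=\sum_j g^n_j$ H\"older-$\theta$ with constant $D_n$, then appeals to Proposition \ref{prop:agg_mass_subord} to get $M^0_{\tdN}(T)<\infty$ a.s., and concludes that $y\mapsto M^y_{\tdN}(T)$ is itself H\"older-$\theta$ with constant $\sum_n D_n$, hence finite everywhere. You instead bound each $g^n_j(y)$ directly by $D_n(Z/2)^\theta$ via the vanishing at the endpoints of its support and the a.s.\ finiteness of the maximal jump $Z$, obtaining a uniform-in-$y$ bound without needing a reference level. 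Your argument is more self-contained (it avoids Proposition \ref{prop:agg_mass_subord}), while the paper's yields slightly more: H\"older continuity of the aggregate-mass profile $y\mapsto M^y_{\tdN}(T)$, which is the template reused immediately afterward in the proof of Proposition \ref{prop:PRM:cts_skewer}.
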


\begin{proof}
 It suffices to check the first assertion: by Theorem \ref{thm:LT_property_all_levels}, this implies that $\td\beta^y\in\IPspace$ for all $y$ simultaneously, almost surely. For each $n\geq 1$ let the $(g^n_j,\ j\geq 1)$ and $D_n$ be as in Lemma \ref{lem:spindle_piles}. Definition \ref{def:skewer_2} gives\vspace{-0.2cm}
 \begin{equation}
  M^y_{\tdN}(T) = \sum_{n\geq 1}\sum_{j\geq 1} g^n_j(y) \qquad \text{for }y\in\BR.\label{eq:agg_mass_sum_piles}
 \end{equation}
 Let $g^n := \sum_{j\geq 1} g^n_j$ for each $n\geq 1$. Since the $g^n_j$ in each sequence have disjoint support, $g^n$ is H\"older-$\theta$ with constant $D_n$. Proposition \ref{prop:agg_mass_subord} implies that $M^0_{\tdN}(T)$ is a.s.\ finite. Thus, by \eqref{eq:agg_mass_sum_piles}, $y\mapsto M^{y}_{\tdN}(T)$ is almost surely H\"older-$\theta$ with constant bounded by $\sum_{n\geq 1}D_n$.
\end{proof}

Recall Definition \ref{def:domain_for_skewer} of $\Hfins$, the subspace of $\H$ on which the skewer map measurably produces a continuously evolving interval partition. The following result implies Theorem \ref{thm:diffusion_0}.

\begin{proposition}\label{prop:PRM:cts_skewer}
 Suppose that $\theta<\min\{\alpha/2,q-\alpha\}$. Then $\tdN$ and $\bN$ a.s.\ belong to $\Hfins$ and $\Hs$, respectively. In particular, $(\td\beta^y,\,y\ge0)$ is a.s.\ H\"older-$\theta$ in $(\IPspace,\dI)$.
\end{proposition}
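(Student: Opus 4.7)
The plan is to verify the three defining conditions of Definition \ref{def:domain_for_skewer} for $\tdN$; the statement for $\bN$ then follows since for every $T'>0$ the restriction $\restrict{\bN}{[0,T')}$ has the same law as $\tdN$ (for the appropriate choice of $T$). Condition \itemref{item:d_f_s:fin} is exactly Corollary \ref{cor:PRM:type-1_wd}. For condition \itemref{item:d_f_s:div}, the joint continuity of $\tdl$ on $\BR\times[0,\infty)$ is Theorem \ref{thm:Boylan}, while the identity $\IPLT^{\alpha/q}_{\skewer(y,\restrict{\tdN}{[0,t]})}(\infty)=\tdl^{\,y}_{\tdX}(t)$ is a direct consequence of Theorem \ref{thm:LT_property_all_levels} applied with $s=\tdl^{\,y}_{\tdX}(t)$ so that $\tdt^{\,y}_{\tdX}(s)=t$. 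The substantive remaining task is condition \itemref{item:d_f_s:cts}, which I would prove by establishing the stronger H\"older-$\theta$ bound.

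To that end, fix $0\le y<y'$. I would build a correspondence between $\td\beta^y$ and $\td\beta^{y'}$ by pairing, in left-to-right order (i.e.\ by increasing birth time $t$), exactly those pairs of blocks that originate from a single spindle $(t,f)$ of $\tdN$ crossing both levels, i.e.\ satisfying $f(y-\tdX(t-))>0$ and $f(y'-\tdX(t-))>0$. For a matched pair, the two block widths are $f(y-\tdX(t-))$ and $f(y'-\tdX(t-))$, so the per-spindle mass discrepancy is at most $D_f|y-y'|^\theta$ whenever $f$ is H\"older-$\theta$ with constant $D_f$. Applying the pile decomposition of Lemma \ref{lem:spindle_piles}, within each pile $n$ the spindles have disjoint support, so their contributions to the matched mass discrepancy, as well as to the masses of \emph{unmatched} blocks (spindles whose lifetime just grazes the strip $[y,y']$), are jointly bounded by $C\,D_n|y-y'|^\theta$ for a universal $C$. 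Summing over $n$ and using $\sum_n D_n<\infty$ a.s., both mass-distortion terms \itemref{item:IP_m:mass_1} and \itemref{item:IP_m:mass_2} in Definition \ref{def:IP:metric} are bounded by a finite random multiple of $|y-y'|^\theta$.

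For the two diversity terms, the identification from Theorem \ref{thm:LT_property_all_levels} gives, for a matched block $U\in\td\beta^y$ generated by the spindle at time $t$, $\IPLT^{\alpha/q}_{\td\beta^y}(U)=\tdl^{\,y}_{\tdX}(t)$ and similarly at level $y'$. Hence the per-pair diversity differences and the total-diversity difference all reduce to spatial oscillations $|\tdl^{\,y}_{\tdX}(t)-\tdl^{\,y'}_{\tdX}(t)|$, and by Theorem \ref{thm:Boylan} these are uniformly bounded by $C|y-y'|^{\theta''}$ for any $\theta''<\alpha/2$; the assumption $\theta<\alpha/2$ lets us take $\theta''=\theta$. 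Combining with the mass bounds above yields $\dI(\td\beta^y,\td\beta^{y'})\le K|y-y'|^\theta$ for some a.s.\ finite random $K$, which is both the required H\"older estimate and \itemref{item:d_f_s:cts}. The main obstacle is precisely the bookkeeping of the typically infinitely many unmatched blocks (spindles born or killed inside $[y,y']$); Lemma \ref{lem:spindle_piles} is tailored exactly to handle this, reducing the infinite sum to a pile-by-pile comparison that can be controlled by one H\"older constant per pile, with the $\theta<q-\alpha$ hypothesis of Theorem \ref{thm:diffusion_0} ensuring the summability $\sum_n D_n<\infty$ needed to close the estimate.
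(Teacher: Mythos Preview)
Your proposal is correct and follows essentially the same approach as the paper's proof: verify conditions \itemref{item:d_f_s:fin} and \itemref{item:d_f_s:div} via Corollary \ref{cor:PRM:type-1_wd} and Theorem \ref{thm:LT_property_all_levels}, then for \itemref{item:d_f_s:cts} construct the correspondence by matching blocks from spindles that survive the whole level interval $[y,y']$, control the mass terms pile-by-pile using Lemma \ref{lem:spindle_piles} and the summability of the $D_n$, and control the diversity terms via the spatial H\"older continuity of local time from Theorem \ref{thm:Boylan}. The paper organizes the correspondence by piles (indexing by $n\in A=\{n:g^n|_{[y,y']}>0\}$) rather than by individual spindles, but since spindles within a pile have disjoint support this is the same pairing as yours; the paper also makes the key observation explicit that for $n\notin A$ there is some $x\in[y,y']$ with $g^n(x)=0$, forcing $g^n(y),g^n(y')\le D_n|y'-y|^\theta$, which is the bookkeeping step you allude to for the unmatched blocks.
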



\begin{proof}
 We have already shown in Theorem \ref{thm:LT_property_all_levels} and Corollary \ref{cor:PRM:type-1_wd} that $\tdN$ (resp.\ $\bN$) satisfies the first two conditions in Definition \ref{def:domain_for_skewer} for membership in $\Hfins$ (resp.\ $\Hs$). It remains to prove the claimed H\"older continuity.
 
 For $n,j\geq 1$, let $g^n_j$ and $D_n$ be as in Lemma \ref{lem:spindle_piles} and let $g^n := \sum_{j\geq 1} g^n_j$. Since $\tdN$ is stopped at an a.s.\ finite time, the path of $\tdX$ lies within a random bounded space-time rectangle. We restrict our attention to the intersection of the almost sure events posited by Lemma \ref{lem:spindle_piles}, Corollary \ref{cor:PRM:type-1_wd}, and Theorem \ref{thm:Boylan}: that the H\"older constants $D_n$ are summable, the process $(\td\beta^y)$ lies in $\IPspace$, and the local times $(\tdl^y(t))$ are uniformly H\"older-$\theta$ in level and continuous in time. Let\vspace{-0.2cm}
 \begin{equation*}
  C := \sup_{-\infty<y<z<\infty}\sup_{t\in [0,T]}\frac{|\tdl^z(t) - \tdl^y(t)|}{|z-y|^{\theta}} \qquad \text{and} \qquad D := \sum_{n=1}^\infty D_n.\vspace{-0.1cm}
 \end{equation*}
 
 Fix $y,z\in \BR$ with $y<z$. Let $A := \{n\geq 1\colon \Restrict{g^n}{[y,z]} > 0\}$. That is, $A$ is the set of indices $n$ for which a single spindle in the sequence $(g^n_j)_{j\geq 1}$ survives the interval $[y,z]$. For each $n\in A$, let $t_n$ denote the time at which that particular spindle arises as a point in $\tdN$. Recall Definition \ref{def:IP:metric} of $\dI$ based on correspondences between interval partitions. Consider the correspondence from $\td\beta^y$ to $\td\beta^z$ that, for each $n\in A$, pairs the block $U_n^y\in\td\beta^y$ with $U_n^z\in\td\beta^z$, where there are the blocks corresponding to $g^n$. This is indeed a correspondence, respecting order in the two interval partitions, since each paired block corresponds to the same spindle as its partner.
 
 Note that for $n\!\notin\! A$ there is some $x\!\in\! [y,z]$ for which $g^n(x) \!=\! 0$. By its H\"older continuity, both $g^n(y)$ and $g^n(z)$ are bounded by $D_n(z-y)^{\theta}$. Thus,\vspace{-0.1cm}
 \begin{equation*}
 \begin{split}
  \sum_{n\in A} |g^n(z) - g^n(y)| + \max\left\{\sum_{n\notin A}g^n(y),\sum_{n\notin A}g^n(z)\right\} &\leq \sum_{n\ge 1} D_n(z-y)^{\theta},\\
  \sup_{n\in A}\left|\IPLT_{\td\beta^z}(U_n^z) - \IPLT_{\td\beta^y}(U_n^y)\right| = \sup_{n\in A}\left|\tdl^z(t_n) - \tdl^y(t_n)\right| &\leq C(z-y)^{\theta},\\
  \text{and} \quad \left|\IPLT_{\td\beta^z}(\infty) - \IPLT_{\td\beta^y}(\infty)\right| = \left|\tdl^z(T) - \tdl^y(T)\right| &\leq C(z-y)^{\theta}.\vspace{-0.2cm}
 \end{split}
 \end{equation*}
 By Definition \ref{def:IP:metric} of $\dI$, we conclude that $(\td\beta^y,\ y\in\BR)$ is H\"older-$\theta$ with H\"older constant bounded by $\max\{C,D\}$.
\end{proof}


\section{Excursion theory for scaffolding and spindles} 

A $\nu_{q,c}^{(-2\alpha)}$-IP-evolution starting from $\beta\in\IPspace$ can be obtained by concatenating 
excursions of scaffolding with spindles. To establish the Markov property of a $\nu_{q,c}^{(-2\alpha)}$-IP-evolution, we will decompose the 
scaffolding and spindles at the corresponding level. In this section, we first formalise the definition of $\nu_{q,c}^{(-2\alpha)}$-IP-evolutions 
and then study excursion theory for the \StableA\ scaffolding, enriched by the spindles marking the jumps of the scaffolding.

\subsection{Definition of $\mBxc_{q,c}^{(-2\alpha)}$-IP-evolutions}

Fix $\alpha\in(0,1)$, $q>\alpha$ and $c>0$. We again abbreviate notation to $\IPspace:=\IPspace_{\alpha/q}$.

 Let $(N_a)_{a\in\mathcal{A}}$ denote a family of elements of $\H$ indexed by a totally ordered set $(\cA,\preceq)$. For the purpose of the following, set
 \begin{equation}\label{eq:length_partial_sums}
  S(a) := \sum_{b\preceq a}\len(N_b) \quad \text{and} \quad S(a-) := \sum_{b\prec a}\len(N_b) \quad \text{for each }a\in \mathcal{A}.
 \end{equation}
 If $S(a-) < \infty$ for every $a\in \mathcal{A}$ and if for every consecutive $a\prec b$ in $\mathcal{A}$ we have $N_a(\{\len(N_a)\}\times\Exc) + N_b(\{0\}\times\Exc)\leq 1$, then we define the \emph{concatenation} of $(N_a)_{a\in\mathcal{A}}$ to be the counting measure
 \begin{equation}\label{eqn:concat}
   \Concat_{a\in\mathcal{A}}N_a := \sum_{a\in\mathcal{A}}\int\Dirac{S(a-)+t,f}dN_a(t,f).
 \end{equation}

We now formalize the construction of IP-evolutions started from any $\beta\!\in\!\IPspace$.

\begin{definition}[$\nu_{q,c}^{(-2\alpha)}$-IP-evolution, $\Pr^{\alpha,q,c}_{\beta}$]\label{constr:type-1}
 Take $\beta\in\IPspace$. If $\beta = \emptyset$ then $\bN_{\beta} := 0$. Otherwise, for each $U\in\beta$ we carry out the following construction independently. Let $\bN$ denote a \PRM[\Leb\otimes\mBxc_{q,c}^{(-2\alpha)}], let $\bff$ be an independent $(\alpha,q,c)$-block diffusion started from $\Leb(U)$ and absorbed at 0, and consider the hitting time $T := \inf\{t>0\colon \xi_{\bN}(t) = -\life(\bff)\}$. Let $\bN_U := \Dirac{0,\bff}+\restrict{\bN}{[0,T]}$. If $\sum_{U\in\beta}\len(\bN_U)<\infty$, let $\bN_{\beta} := \ConcatIL_{U\in\beta}\bN_U$. 
 Recalling Definition \ref{def:skewer_2}, 
 we call $(\beta^y,\,y\ge0) := \skewerP(\bN_{\beta})$ 
 a \emph{$\nu_{q,c}^{(-2\alpha)}$-IP-evolution} starting from $\beta$.
\end{definition}

  We write $\Pr^{\alpha,q,c}_{\beta}$ to denote the law of $\bN_{\beta}$ on $\Hfin$. For probability distributions $\mu$ on $\IPspace$, we write $\Pr^{\alpha,q,c}_{\mu} := \int \Pr^{\alpha,q,c}_{\beta}\mu(d\beta)$ to denote the $\mu$-mixture of the laws $\Pr^{\alpha,q,c}_{\beta}$. 

We remark that the measurability of the map $\omega\mapsto \beta^y(\omega)$ is a subtle point.  Propositions \ref{prop:skewer_measurable} and \ref{prop:PRM:cts_skewer} prove this when the initial state comes from a L\'evy process as in Section \ref{sec:cont}.  This is also sufficient for Section \ref{sec:easyinitialdata}. Arbitrary initial data is addressed in Section \ref{sec:arbitraryinitialdata}, where Lemma \ref{lem:finite_survivors}, Proposition \ref{prop:type-1:LT_diversity}, and \ref{SuppMeas} Lemma 8 show that it is measurable when $q=c=1$. In this case, we confirm in Proposition \ref{prop:clade_lengths_summable} that 
$\sum_{U\in\beta}\len(\bN_U)\!<\!\infty$ a.s.\ for each $\beta\in\IPspace$ and that $\beta\mapsto\Pr^{\alpha,q,c}_{\beta}$ is a stochastic kernel. 
In Lemma \ref{lem:type-1:wd} we show that $\beta^y\in\IPspace$ for every $y\geq 0$, a.s.. In Proposition \ref{prop:type-1:cts}, we show 
that $\nu_{\tt BESQ}^{(-2\alpha)}$-IP-evolutions are a.s.\ continuous. This is extended to $\nu_{q,c}^{(-2\alpha)}$-IP-evolutions in Section \ref{IPevolgeneral}.

\subsection{Scaffolding levels: excursion theory for \StableA\ processes}
\label{sec:exc}

Excursion theory for Markov processes was first developed by It\^o \cite{Ito72}. Bertoin \cite[Chs.\ IV-V]{BertoinLevy} offers a nice treatment in the setting of L\'evy processes. 

 We define two subsets of the space $\cD$ of c\`adl\`ag functions $g\colon[0,\infty)\rightarrow\BR$:
 \begin{align}\label{eq:stable_exc_space_def}
  \DS &:= \xi(\H)=\left\{\xi(N)\colon N\in\H\right\},\\[0.2cm]
  \DSxc &:= \left\{\xi(N)\colon N\in\Hfin,\ \xi_N\neq 0 \text{ on }(0,\len(N)),\ \xi_N(\len(N))=0\right\}.
 \end{align}
 We take $\SDS$ and $\SDSxc$ to denote the $\sigma$-algebras on these spaces generated by the evaluation maps. We say that members of $\DS$ are \emph{\StableA-like processes} and members of $\DSxc$ are \emph{\StableA-like excursions}.
 
 For $g=\xi(N)\in\DS$, we define the \emph{length} of $g$ to be $\len(g)=\len(N)$. We will use the convention $g(0-)=0$.  




 Fix $y\in\BR$ and $g\in\DS$ and recall \eqref{eq:LT_int_ident}. 
 If the level $y$ local time associated with $g$ exists, in the sense that for all $t\ge 0$ the limits 
 \begin{equation}
  \ell^y_g(t)\! =\! \lim_{h\downto 0} \frac{1}{h}\!\int_0^t\! \cf\{y\!-\!h\!<\!g(s)\!<\!y\}ds=\lim_{h\downto 0} \frac{1}{h}\!\int_0^t\! \cf\{y\!<\!g(s)\!<\!y\!+\!h\}ds \label{eq:LT_def}
 \end{equation}
exist, the \emph{inverse local time} is defined as $\tau^y_g(s) := \inf\{t \geq 0\colon\ell^y_g(t) > s\}$ for $s\geq 0$. In this notation we may replace $g$ with $N\in\H$ to denote the corresponding object with $g = \xi(N)$.

In the sequel, we will suppress the `$g$' in the above notations when we refer to these objects applied to $g = \bX$, where $\bX$ is the 
\StableA\ scaffolding $\xi(\bN)$ of Proposition \ref{prop:stable_JCCP}. Let $\ell$ denote the jointly H\"older continuous version of local time 
specified in Theorem \ref{thm:Boylan}.
 Note that for $y\in\BR$ fixed, $T^y = \tau^y(0)$ a.s., but this is not simultaneously the case for all $y\in\BR$.

  Let $[a,b]\subset[0,\infty)$ be an interval, $g\colon[0,\infty)\to\cS$ a function and $N\in\cN([0,\infty)\times\cS)$ a counting measure. We define \emph{shifted restrictions} by setting for $x\in[0,\infty)$, $B\in\ScS$ and $I\subset[0,\infty)$ Borel
 \begin{equation}\label{eq:shift_restrict_defn}
  \ShiftRestrict{g}{[a,b]}(x):=\Restrict{g}{[a,b]}(x\!+\!a),\ \ \mbox{and}\ \ \ShiftRestrict{N}{[a,b]}(I\!\times\! B):=\Restrict{N}{[a,b]}((I\!+\!a)\!\times\! B).
 \end{equation}
 We use similar notation with open/half-open intervals $(a,b)$, $[a,b)$, and $(a,b]$. As in Definition \ref{def:assemblage_m}, we abuse notation and consider $\ShiftRestrict{g}{[a,b]}$ as a 
 function on $[0,\infty)$ that vanishes on $(b-a,\infty)$, and $\ShiftRestrict{N}{[a,b]}$ as a measure on $[0,\infty)\times\Exc$.

  We denote by $V^y$ the set of intervals of complete excursions of $\bX$ about level $y$; this is defined formally in Appendix \ref{app:exc_intervals}. 
  We define an excursion counting measure and an associated intensity
 \begin{align}
  \mathbf{G}^y &:= \sum_{[a,b]\in V^y} \Dirac{\ell^y(a),\ShiftRestrict{\bX}{[a,b]}};\label{eq:stable:exc_PRM}\\
  \mSxc(B) &:= \EV[\mathbf{G}^0([0,1]\times B)] \quad \text{for }B\in\Sigma(\DSxc\setminus\{0\}).\label{eq:stable_exc_measure}
 \end{align}

\begin{proposition}[
see e.g.\ \cite{BertoinLevy} Theorems IV.8 and IV.10]\label{thm:excursion_PRM} 
 For each $y\in\BR$, the inverse local-time $\tau^y(s)$ of the \StableA\ process $\bX$ is a.s.\ finite for every $s\ge 0$, and $\mathbf{G}^y$ is a \PRM[\Leb\otimes\mSxc] on $[0,\infty)\times\DSxc$.
\end{proposition}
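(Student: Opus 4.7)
The plan is to deduce both claims directly from It\^o's excursion theory for L\'evy processes as presented in Bertoin \cite{BertoinLevy}, after checking that the scaffolding $\bX$ from Proposition \ref{prop:stable_JCCP} satisfies the hypotheses of that theory at every fixed level $y$.

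First I would handle the a.s.\ finiteness of $\tau^y(s)$. Since $\bX$ is a real-valued L\'evy process with Laplace exponent $\psi(\lambda) = c_\nu \Gamma(1-\alpha)\alpha^{-1}(1+\alpha)^{-1}\lambda^{1+\alpha}$ and index $1+\alpha\in(1,2)$, the Chung--Fuchs criterion $\int_0^1 \mathrm{Re}(1/\psi(\lambda))\,d\lambda=\infty$ is satisfied, so $\bX$ is recurrent. Moreover, stable processes of index in $(1,2)$ hit points and every point is regular for itself. Applied to the shifted L\'evy process $\bX-y$ at $0$, this means that local time at $y$ is non-degenerate and its right-continuous inverse $\tau^y$ is a proper subordinator; in particular $\tau^y(s)<\infty$ a.s.\ for every $s\ge 0$. (In fact $(\tau^y(s)-\tau^y(0),\,s\ge0)$ is a \Stable[1/(1+\alpha)] subordinator up to a multiplicative constant, consistent with the local time normalization \eqref{eq:LT_def}.)

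Next I would invoke It\^o's exit system/excursion theorem \cite[Theorem IV.10]{BertoinLevy} for the strong Markov process $\bX-y$ at the regular recurrent point $0$. This yields a $\sigma$-finite excursion measure on $\DSxc\setminus\{0\}$ such that the counting measure of excursions away from $0$, indexed by local time, is a Poisson point process with intensity $\Leb$ tensor this excursion measure. By construction \eqref{eq:stable:exc_PRM}, $\mathbf{G}^y$ is precisely this point process after identifying each complete excursion interval $[a,b]\in V^y$ with the shifted excursion path $\ShiftRestrict{\bX}{[a,b]}$ (well-defined on the a.s.\ event that all excursions are genuine excursions away from $y$, which holds by regularity of $y$). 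Thus $\mathbf{G}^y$ is a Poisson point process on $[0,\infty)\times\DSxc$ with an intensity of the form $\Leb\otimes\mSxc^y$ for some $\sigma$-finite $\mSxc^y$.

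Finally, to conclude that $\mSxc^y=\mSxc$ does not depend on $y$, I would use the spatial homogeneity of $\bX$: the process $(\bX(t)-y,\,t\ge 0)$ under its law shifted to start at $-y$ has the same transition semigroup as $\bX$ itself, and the local time normalization in \eqref{eq:LT_def} is translation-invariant in the space variable. Hence the excursion measure about level $y$ coincides with that about level $0$, which is $\mSxc$ by definition \eqref{eq:stable_exc_measure} together with the first-moment identity $\EV[\mathbf{G}^0([0,1]\times B)]=\mSxc(B)$ for Poisson random measures. No serious obstacle arises; the only points demanding care are matching our excursion parametrization (shifted restrictions on $V^y$ of Appendix \ref{app:exc_intervals}) with Bertoin's exit-system formalism and verifying that the chosen normalization of $\ell^y$ in \eqref{eq:LT_def} is compatible with the version used in \cite[Ch.\ IV]{BertoinLevy}, which is routine.
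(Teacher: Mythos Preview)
Your proposal is correct and aligns with the paper's treatment: the paper gives no proof of its own but simply cites \cite[Theorems IV.8 and IV.10]{BertoinLevy}, and your argument is precisely a careful verification that those theorems apply to the spectrally positive \StableA\ process $\bX$, together with the spatial-homogeneity observation identifying $\mSxc^y$ with $\mSxc$. There is nothing to add.
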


This proposition has the consequence that, for fixed $y\in\BR$,\vspace{-0.1cm}
\begin{align}
 V^y = \big\{[\tau^y(t-),\tau^y(t)]\colon\ t>0,\ \tau^y(t-)\neq \tau^y(t)\big\}\quad\text{a.s..}\label{eq:inv_LT_intervals}\vspace{-0.1cm}
\end{align}

 Let $\reverseincr$ denote the increment-reversal involution on excursions $g\in \DSxc$:\vspace{-0.1cm}
 \begin{align}
  \reverseincr(g) = \big(-g\big((\len(g)-t)-\big),\ t\in [0,\len(g)]\big).\label{eq:stable:reversal_def}\vspace{-0.1cm}
 \end{align}
 Let $\scaleS$ denote the \StableA\ scaling map from $(0,\infty)\times\DS$ to $\DS$:\vspace{-0.1cm}
 \begin{align}
  c \scaleS g := \left(cg\left(c^{-1-\alpha}t\right),\ t\in [0,c^{1+\alpha}\len(g)]\right).\label{eq:stable:scaling_def}\vspace{-0.1cm}
 \end{align}

\begin{lemma}[Invariance of \Stable $(1+\alpha)$ excursions]
\label{lem:stable:invariance}
 For $B\in \SDSxc$ and $c>0$, the \Stable $(1+\alpha)$ excursion measure $\mSxc$ of \eqref{eq:stable_exc_measure} satisfies\vspace{-0.1cm}
 \begin{align}
  \mSxc(\reverseincr(B)) = \mSxc(B)\quad \text{and} \quad \mSxc(\scaleS[c][B]) = c^{-\alpha}\mSxc(B).\label{eq:stable:scaling_inv}\vspace{-0.1cm}
 \end{align}
\end{lemma}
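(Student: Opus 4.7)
The plan is to derive each invariance from a corresponding symmetry of the underlying \StableA\ scaffolding $\bX=\xi(\bN)$, and then transfer it to the excursion intensity via the \PRM\ description in Proposition~\ref{thm:excursion_PRM}.

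\medskip

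\textbf{Scaling.} I would introduce $\bX':=\bigl(c\bX(c^{-1-\alpha}t)\bigr)_{t\ge 0}$. By \StableA\ self-similarity, $\bX'\stackrel{d}{=}\bX$. A short computation from the occupation-density definition \eqref{eq:LT_def} gives $\ell^{0}_{\bX'}(t)=c^{\alpha}\,\ell^{0}_{\bX}(c^{-1-\alpha}t)$, and each excursion interval $[a,b]$ of $\bX$ at $0$ is mapped to the excursion interval $[c^{1+\alpha}a,c^{1+\alpha}b]$ of $\bX'$ at $0$, with shifted excursion $c\scaleS\shiftrestrict{\bX}{[a,b]}$. Hence $\mathbf{G}'^{0}$ is the image of $\mathbf{G}^{0}$ under the measurable map $\Phi\colon(s,g)\mapsto\bigl(c^{\alpha}s,\,c\scaleS g\bigr)$. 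Its intensity is $\Phi_{\ast}(\Leb\otimes\mSxc)=c^{-\alpha}\Leb\otimes(c\scaleS)_{\ast}\mSxc$, which must coincide with $\Leb\otimes\mSxc$ since $\bX'\stackrel{d}{=}\bX$. Uniqueness of the product-measure factors forces $(c\scaleS)_{\ast}\mSxc=c^{\alpha}\mSxc$, and substituting $B\mapsto\scaleS[c][B]$ rewrites this as $\mSxc(\scaleS[c][B])=c^{-\alpha}\mSxc(B)$, as required.

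\medskip

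\textbf{Reversal.} Here the relevant symmetry of $\bX$ is the L\'evy time-reversal duality: for any deterministic $T>0$,
\[
\bigl(\bX(T)-\bX((T-t)-)\bigr)_{0\le t\le T}\;\stackrel{d}{=}\;\bigl(\bX(t)\bigr)_{0\le t\le T},
\]
so the identity persists under the conditioning $\bX(T)=0$, yielding that the \StableA-bridge of length $T$ is invariant under $g\mapsto\reverseincr(g)=-g((T-\cdot)-)$. I would then disintegrate $\mSxc$ along the length functional $\len$,
\[
\mSxc(B)=\int_{0}^{\infty}\mSxc(B\mid\len=\ell)\,\mSxc(\len\in d\ell),
\]
and verify that $\mSxc(\,\cdot\mid\len=\ell)$ is a $\reverseincr$-invariant functional of the \StableA-bridge of length $\ell$ (via the Pitman--Yor/Vervaat description of the stable excursion as a bridge rotated to its running infimum). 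Since $\len\circ\reverseincr=\len$ and each conditional law is $\reverseincr$-invariant, so is $\mSxc$.

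\medskip

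\textbf{Main obstacle.} The scaling step is essentially a calculation, so the delicate part is the reversal argument, which involves the $\sigma$-finite measure $\mSxc$ rather than a probability measure. To keep the argument rigorous I would prefer to bypass the bridge detour and instead appeal to the duality between the ascending and descending ladder-height subordinators of $\bX$ (Bertoin, \emph{L\'evy Processes}, Ch.~VII): this realises a generic excursion of $\bX$ away from $0$ as the concatenation of a pre-infimum descending piece and a post-infimum ascending piece whose laws are related by $\reverseincr$, thereby establishing the invariance directly at the level of $\mSxc$ without passing through a conditioning argument.
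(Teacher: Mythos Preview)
Your scaling argument is correct and is precisely the fleshed-out version of the paper's one-line justification (``follows from the scaling invariance of $\bX$''): transport the excursion \PRM\ $\mathbf{G}^0$ under the self-similarity map and read off the intensity.

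For the reversal, the paper does not argue at all --- it simply cites Getoor and Sharpe \cite[Theorem (4.8)]{GetoShar81}, whose general theorem relates the excursion measure of a Markov process to that of its dual via time reversal. For a L\'evy process the dual with respect to Lebesgue is $-\bX$, and combining this duality with negation yields exactly $\reverseincr$-invariance of $\mSxc$. Your proposed routes, by contrast, have genuine gaps. The bridge/Vervaat argument does not apply here: the excursions governed by $\mSxc$ are \emph{two-sided} --- a typical one first descends below $0$, then jumps across, then descends back (Definition~\ref{def:typical_exc}) --- whereas the classical Vervaat transform produces \emph{positive} excursions from a bridge cut at its infimum. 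There is no off-the-shelf Vervaat theorem whose output is the two-sided excursion away from $0$ of a spectrally one-sided stable process, so the step ``$\mSxc(\,\cdot\mid\len=\ell)$ is a $\reverseincr$-invariant functional of the bridge'' is unsupported. Your ladder-height alternative is closer in spirit to Getoor--Sharpe duality, but as written it asserts the very thing to be proved: that the pre- and post-infimum pieces of a generic excursion are exchanged under $\reverseincr$ at the level of $\mSxc$. Citing Getoor--Sharpe is both shorter and rigorous; if you want a self-contained argument, the natural one parallels your scaling proof but uses the L\'evy time-reversal identity at an inverse local time rather than a bridge disintegration.
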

\begin{proof} The increment-reversal invariance was obtained by Getoor and Sharpe \cite[Theorem (4.8)]{GetoShar81}. The scaling invariance follows from the scaling invariance of $\bX$. 
\end{proof}

\subsection{Bi-clades: level filtrations via excursions of scaffolding with spindles}
\label{sec:clade_filtration}

In the preceding section, we have looked at excursions of the \StableA\ scaffolding process. In this section, we consider such excursions with jumps marked by $(\alpha,q,c)$-block evolutions.

 We define\vspace{-0.1cm}
 \begin{align*}
  \Hxc{\pm} &:= \big\{N\in \Hfin\colon \xi(N)\in\DSxc\big\},\\
  \Hxc{+}	&:= \big\{N\in \Hxc{\pm}\colon \inf\nolimits_t\xi_N(t)=0\big\},\\
  \text{and}\ \Hxc{-}	&:= \big\{N\in \Hxc{\pm}\colon \sup\nolimits_t\xi_N(t)=0\big\},\vspace{-0.1cm}
 \end{align*}
 where $\Hfin$ is as in Definition \ref{def:assemblage_m}. Let $\Sigma(\Hxc{\pm})$, $\Sigma(\Hxc{+})$, and $\Sigma(\Hxc{-})$ denote the restrictions of $\ScNRE$ to subsets of $\Hxc{\pm}$, $\Hxc{+}$, and $\Hxc{-}$ respectively. We call the members of $\Hxc{+}$ the \emph{clades} and those of $\Hxc{-}$ the \emph{anti-clades}. Members of $\Hxc{\pm}$ are called \emph{bi-clades}.

\begin{definition}\label{def:typical_exc}
 We call an excursion $g\in\DSxc$ \emph{typical} if there exists some time $T_0^+(g) \in (0,\len(g))$ such that: (i) $g(t) < 0$ for $t\in (0,T_0^+(g))$, and (ii) $g(t) > 0$ for $t\in [T_0^+(g),\len(g))$. We call $g$ \emph{degenerate} if it is not typical.
\end{definition}

Proposition \ref{prop:nice_level} in Appendix \ref{app:exc_intervals} observes that almost all excursions are typical. A typical excursion in this sense can be decomposed into an initial escape downwards, a jump up across zero, and a final first-passage descent. Correspondingly, as illustrated in Figure \ref{fig:bi-clade_decomp}, a bi-clade $N\in\Hxc{\pm}$ for which $\xi(N)$ is typical may be split into two components around the jump of $\xi(N)$ across zero. The initial component, corresponding to the negative path-segment of $\xi(N)$, is an anti-clade, and the subsequent component, on which $\xi(N)$ is positive, is a clade. In order to make this decomposition, we must break the spindle marking the middle jump of the excursion into two parts, above and below level zero.

\begin{figure}
 \centering
 \input{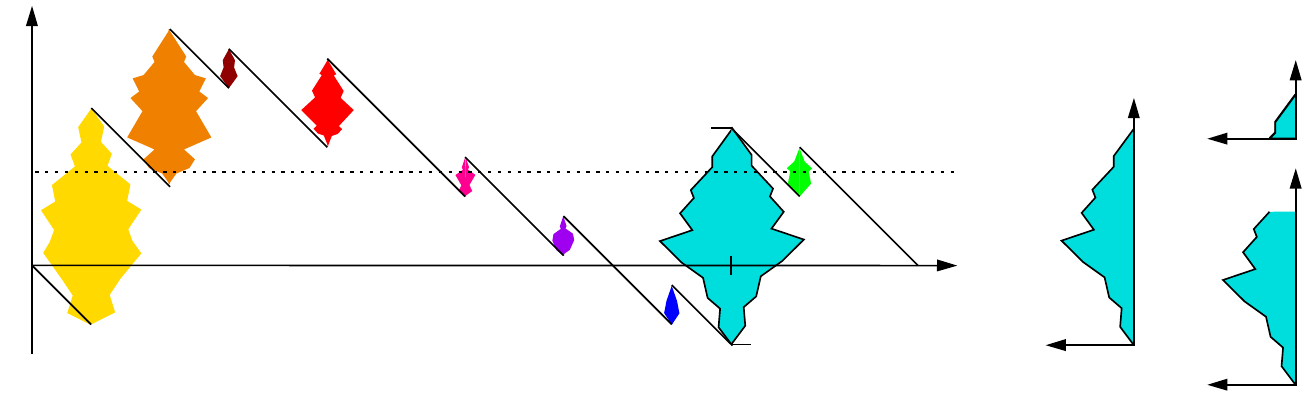_t}
 \caption{Splitting a spindle of some $N\in \H$ about some level $y$, as in \eqref{eq:spindle_split}.\label{fig:spindle_split}}
\end{figure}

If a measure $N\in \H$ has a point $(t,f_t)$ with $y\in (\xi_N(t-),\xi_N(t))$, then we define
\begin{equation}
 \left.\begin{array}{r@{\ :=\ }l}
     \check f_{t}^y(z) & f_{t}(z)\cf\{z \in [0,y-\xi_N(t-))\}\\[0.2cm]
     \text{and}\quad\hat f_{t}^y(z) & f_{t}(y - \xi_N(t-) + z)\cf\{z \in [0,\xi_N(t)-y]\}.
 \end{array}\right.\label{eq:spindle_split}
\end{equation}
This splits the spindle $f_{t}$ into two parts, corresponding to the part of the jump of $\xi(N)$ that goes from $\xi_N(t-)$ up to $y$, and the part extending from $y$ up to $\xi_N(t)$. This is illustrated in Figure \ref{fig:spindle_split}. Following Definition \ref{def:typical_exc}, for $N\in\Hxc{\pm}$ the \emph{crossing time} is
\begin{equation}
 T_0^+(N) := \inf\{t \in (0,\len(N)] \colon \xi_N(t) \geq 0\}.\label{eq:clade:crossing_def}
\end{equation}

Fix $N\in\Hxc{\pm}\setminus(\Hxc{-}\cup\Hxc{+})$. For the purposes of the following definitions, we abbreviate the crossing time $T_0^+ := T_0^+(N)$. We split the bi-clade into anti-clade and clade components, denoted by $(N^-,N^+)$, as follows:
\begin{equation}\label{eq:bi-clade_split}
 N^- := \Restrict{N}{[0,T_0^+)} + \Dirac{T_0^+,\check f_{T_0^+}^0}, \quad N^+ := \Dirac{0,\hat f_{T_0^+}^0} + \ShiftRestrict{N}{(T_0^+,\infty)}.
\end{equation}
We define $(N^-,N^+)$ to be $(N,0)$ if $N\in\Hxc{-}$ or $(0,N)$ if $N\in\Hxc{+}$.

More generally, we may define scaffolding and spindles \emph{cut off} above and below a level $y\in\BR$. These processes are illustrated in Figure \ref{fig:cutoff}. For the purpose of the following, for $N\in\H$ and $y\in\BR$, let
\begin{equation*}
 \sigma^y_N(t) := \Leb\{u\leq t\colon \xi_N(u)\leq y\} \qquad \text{for }t\ge0.
\end{equation*}
In other words, $\sigma^y_N(t)$ is the amount of time that $\xi(N)$ spends below level $y$, up to time $t$. Then for $t\ge 0$,
\begin{equation}
\begin{split}
  &\cutoffL{y}{\xi(N)}(t) := \xi_N\big( \sup\{t\ge 0\colon \sigma^y_N(t) \le s\} \big) - \min\{y,0\},\\
  &\cutoffG{y}{\xi(N)}(t) := \xi_N\big( \sup\{t\ge 0\colon t - \sigma^y_N(t) \le s\} \big) - \max\{y,0\},\\
 \label{eq:cutoff_def}
  &\cutoffL{y}{N} := \!\!\!\sum_{\text{points }(t,f_t)\text{ of }N} \!\left(\!\!\begin{array}{r@{\,}l}
 		\cf\big\{y\in (\xi_N(t\minus),\xi_N(t))\big\}&\DiracBig{\sigma^y_N(t),\check f^y_t}\\[3pt]
 		+\; \cf\big\{\xi_N(t) \leq y\big\}&\DiracBig{\sigma^y_N(t),f_t}
 		\end{array}\!\!\right)\!,\\
  &\cutoffG{y}{N} := \!\!\!\sum_{\text{points }(t,f_t)\text{ of }N} \!\left(\!\!\!\begin{array}{r@{\,}l}
 		\cf\big\{y\in (\xi_N(t\minus),\xi_N(t))\big\}&\DiracBig{t\!-\!\sigma^y_N(t),\hat f^y_t}\\[3pt]
 		+\; \cf\big\{\xi_N(t\minus) \geq y\big\}&\DiracBig{t-\sigma^y_N(t),f_t}
 		\end{array}\!\!\!\right)\!.
\end{split} 
\end{equation}

\begin{figure}
 \centering
 \input{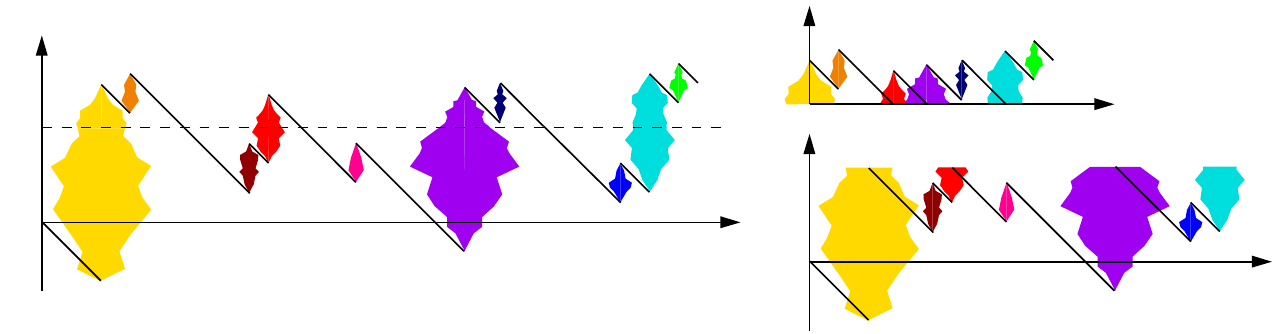_t}\vspace{-.3cm}
 \caption{Left: $(N,\xi(N))$. Right: $\big(\cutoffG{y}{N},\cutoffG{y}{\xi(N)}\big)$ (above) and $\big(\cutoffL{y}{N}, \cutoffL{y}{\xi(N)}\big)$, as in \eqref{eq:cutoff_def}.\label{fig:cutoff}}
\end{figure}

We note the following elementary result.

\begin{lemma}\label{lem:cutoff_meas}
 The four cutoff processes defined in \eqref{eq:cutoff_def} are measurable functions of $N$, for $y$ fixed.
\end{lemma}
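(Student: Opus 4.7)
The plan is to express each cutoff as a composition of operations whose measurability is either already established or essentially routine, with Proposition \ref{prop:JCCP_meas}---the measurability of $N\mapsto\xi(N)$---as the common starting point. Fix $y\in\BR$ throughout.

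For the scaffolding cutoffs, the occupation time $\sigma^y_N(t)=\int_0^t\cf\{\xi_N(u)\le y\}\,du$ is measurable in $N$ for each $t$ by Fubini applied to the jointly measurable integrand $(N,u)\mapsto\cf\{\xi_N(u)\le y\}$, and is $1$-Lipschitz in $t$. Its right-continuous inverse $A^y_N(s):=\sup\{t\ge 0\colon\sigma^y_N(t)\le s\}$, together with $B^y_N(s):=\sup\{t\ge 0\colon t-\sigma^y_N(t)\le s\}$, is measurable in $N$ as a right-continuous inverse of a monotone measurable function. The compositions $s\mapsto\xi_N(A^y_N(s))$ and $s\mapsto\xi_N(B^y_N(s))$ are \cadlag\ paths that depend measurably on $N$; by \cite[Theorem 14.5]{Billingsley} this reduces to measurability of the evaluations at each fixed $s$, which follows from Proposition \ref{prop:JCCP_meas}. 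Subtracting the constants $\min\{y,0\}$ or $\max\{y,0\}$ then yields the measurability of $\cutoffL{y}{\xi(N)}$ and $\cutoffG{y}{\xi(N)}$.

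For the counting-measure cutoffs, recall that $\Sigma(\cNRE)$ is generated by evaluation maps $M\mapsto M(B)$ over bounded Borel $B\subset[0,\infty)\times\Exc$, so it suffices to verify measurability of $N\mapsto\cutoffL{y}{N}(B)$ and $N\mapsto\cutoffG{y}{N}(B)$ for each such $B$. I would invoke a measurable enumeration $(t_n(N),f_n(N))_{n\ge 1}$ of the atoms of $N$, available since $N$ is boundedly finite on the Polish space $[0,\infty)\times\Exc$ (cf.\ \cite[Proposition 9.1.XII]{DaleyVereJones2}). Then $\cutoffL{y}{N}(B)$ is the countable sum over $n$ of
\[
\cf\{y\in(\xi_N(t_n\minus),\xi_N(t_n))\}\cf\{(\sigma^y_N(t_n),\check f^y_{t_n})\in B\}+\cf\{\xi_N(t_n)\le y\}\cf\{(\sigma^y_N(t_n),f_n)\in B\},
\]
and analogously for $\cutoffG{y}{N}(B)$. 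Each of the ingredients---$\xi_N(t_n\pm)$, $\sigma^y_N(t_n)$, and the broken spindles $\check f^y_{t_n},\hat f^y_{t_n}$ defined by pointwise truncation and shift in \eqref{eq:spindle_split}---depends measurably on $N$, and countable sums of measurable functions are measurable.

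The main subtlety I anticipate is confirming that the spindle-splitting maps $(f,a)\mapsto\check f^y$ and $(f,a)\mapsto\hat f^y$ are jointly measurable as maps into $(\Exc,\SExc)$ rather than merely producing the intended functions pointwise. This is routine once one observes that $\SExc$ is inherited from the Skorokhod $\sigma$-algebra on $\cD$, which is itself generated by the point evaluations $g\mapsto g(z)$; each such evaluation of $\check f^y$ or $\hat f^y$ is then an immediate measurable function of $(f,a)$, so no deeper topological argument is required.
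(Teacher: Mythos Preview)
The paper does not prove this lemma in the main text; it introduces it as ``the following elementary result'' and leaves the proof to the measure-theoretic supplement. Your argument is correct and follows the natural route one would expect: pull back to $\xi(N)$ via Proposition~\ref{prop:JCCP_meas}, build $\sigma^y_N$ and its inverses by Fubini and monotone-inverse constructions, and then handle the point-measure cutoffs by a measurable enumeration of atoms together with the observation that the spindle-splitting maps are measurable because $\SExc$ is generated by point evaluations. There is nothing to correct.
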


Recall Figure \ref{fig:skewer_1} and Definition \ref{def:skewer} of the skewer map. We are ultimately interested in the processes $\big(\skewer(y,\bN_{\beta},\xi(\bN_{\beta})),\ y\geq 0\big)$, where $\bN_{\beta}$ is as in Definition \ref{constr:type-1}. We view such processes as evolving in \emph{level} rather than in \emph{time}, as the parameter $y$ of this process corresponds to values, or levels, in the scaffolding function $\xi(\bN_{\beta})$. From this standpoint, $\cutoffL{y}{N}$ describes the past up to level $y$, and $\cutoffG{y}{N}$ describes the future beyond level $y$. This motivates the following. Throughout, superscripts refer to level whereas subscripts refer to time.

\begin{definition}\label{def:filtrations}
 \begin{enumerate}[label=(\roman*), ref=(\roman*)]
  \item We define the \emph{filtration in level} $(\cF^y)_{y\geq 0}$ on $\H$ to be the least right-continuous filtration under which the maps $N\!\mapsto\! \left(\cutoffL{y}{\xi(N)},\cutoffL{y}{N}\right)$ are $\cF^y$-measurable for $y\!\geq\! 0$; see \cite[Section 1.3]{DuquLeGall02} for a similar definition on Skorokhod space.
  
  \item The \emph{filtration in time} on $\H$, denoted by $(\cF_t,\ t\geq 0)$, is defined to be the least right-continuous filtration under which $N\mapsto \restrict{N}{[0,t]}$ is $\cF_t$-measurable for every $t\geq 0$.
	
  \item We write $(\cF^{y-})$, $(\cF_{t-})$ for left-continuous versions of the filtrations.
 \end{enumerate}
\end{definition}

Generalizing the notation $V^y$ introduced above \eqref{eq:stable:exc_PRM}, we write $V^y(N)$ to denote the set of intervals of complete excursions of $\xi(N)$ about level $y$ and $V^y_0(N)$ to denote this set including the incomplete first and last excursion intervals. These sets are defined formally in Definition \ref{def:exc_intervals}.

\begin{definition}\label{def:bi-clade_PP}
 Take $N\in\H$ and $y\in\BR$. If the level-$y$ local time $\ell^y_N(t):=\ell^y_{\xi(N)}(t)$, $t\geq 0$, of \eqref{eq:LT_def} exists, we define the following counting measures of (bi-/anti-)clades.\vspace{-0.1cm}
  \begin{gather*}
   F^{y}(N) := \sum_{I\in V^y(N)}\!\Dirac{\ell^y_N(a),\shiftrestrict{N}{I}}\!,\ \ 
   F^{\geq y}(N) := \sum_{I\in V^y(N)}\!\Dirac{\ell^y_N(a),\left(\shiftrestrict{N}{I}\right)^+}\!,\\
   F^y_0(N) := \sum_{I\in V^y_0(N)}\!\Dirac{\ell^y_N(a),\shiftrestrict{N}{I}}\!,\ \ 
   F^{\leq y}(N) := \sum_{I\in V^y(N)}\!\Dirac{\ell^y_N(a),\left(\shiftrestrict{N}{I}\right)^-}\!.\vspace{-0.1cm}
  \end{gather*}
  and\vspace{-0.7cm}
  \begin{equation*}
   \begin{split}
    F^{\leq y}_0(N) := F^{\leq y}(N) &+ \cf\left\{N^{\leq y}_{\textnormal{first}} \neq 0\right\}\Dirac{0,N^{\leq y}_{\textnormal{first}}}\\
    		&+ \cf\left\{N^{\leq y}_{\textnormal{last}} \neq 0\right\}\Dirac{\ell^y_N(\len(N)),N^{\leq y}_{\textnormal{last}}},\vspace{-0.1cm}
   \end{split}
  \end{equation*}
  where $N^{\le y}_{\textnormal{first/last}}$ denotes the initial/final (possibly incomplete) anti-clade of $N$ below level $y$; this is defined formally in Appendix \ref{app:exc_intervals}. We define $F^{\ge y}_0(N)$ just as $F^{\le y}_0(N)$, but with all instances of ``$\le$'' replaced by ``$\ge$.'' 
  If $\ell^y_N(t)$ is undefined for some $t\in [0,\len(N)]$, we set all six of these measures to zero.
\end{definition}


 Let $(N_a)_{a\in\mathcal{A}}$ denote a family of elements of $\Hfin$ indexed by a totally ordered set $(\cA,\preceq)$, with all but finitely many being bi-clades. Let $S\colon \mathcal{A}\to[0,\infty]$ be as in \eqref{eq:length_partial_sums}. We require that: (i) $S(a-)<\infty$ for all $a\in\mathcal{A}$ and (ii) there is no infinite $\mathcal{B}\subseteq\mathcal{A}$ with $\inf_{b\in \mathcal{B}}\sup_t|\xi_{N_b}(t)| > 0$. 
 Then we define the \emph{concatenation of scaffoldings} by setting\vspace{-0.1cm}
 \begin{equation}\label{eqn:concatscaf}
  \Big(\Concat_{a\in\mathcal{A}}\xi(N_a)\Big)(t) := \sum_{a\in\mathcal{A}}\left\{\begin{array}{ll}
  		\xi_{N_a}(\len(N_a))	& \text{if }S(a)\le t,\\[2pt]
  		\xi_{N_a}(t - S(a-))	& \text{if }t\in [S(a-),S(a)),\\[2pt]
  		0						& \text{otherwise.}
  	\end{array}\right.\vspace{-0.1cm}
 \end{equation}
 for $t\in \left[0,\sum_{a\in\mathcal{A}}\len(N_a)\right]$. Note that $\xi_{N_a}(\len(N_a))=0$ if $N_a$ is a bi-clade.

In the following, we need to restrict to ``nice'' levels, to exclude certain degeneracies that $\xi(N)$ may have in general. As these are 
well-known properties when applied to L\'evy processes, we refer to the appendix for details. 

\begin{lemma}\label{lem:cutoff_vs_PP}
 Take $N\in\H$ and $y\in\BR$. If level $y$ is nice for $\xi(N)$, as defined in Proposition \ref{prop:nice_level}, then the cutoff processes of \eqref{eq:cutoff_def} satisfy\vspace{-0.4cm}
 \begin{equation*}
 \begin{split}
  \cutoffL{y}{N} = \Concat_{(s,N^{\le y}_s)}N^{\le y}_s,&\ \ 
  \cutoffL{y}{\xi(N)} = \Concat_{(s,N^{\le y}_s)}\xi\big(N^{\le y}_s\big),\\[-0.2cm]
  \cutoffG{y}{N} = \Concat_{(s,N^{\ge y}_s)}N^{\ge y}_s,&\ \ 
  \cutoffG{y}{\xi(N)} = \Concat_{(s,N^{\ge y}_s)}\xi\big(N^{\ge y}_s\big),\vspace{-0.2cm}
 \end{split}
 \end{equation*}
 where the concatenations in the first line are over points $\big(s,N^{\le y}_s\big)$ of $F^{\le y}_0(N)$, and those in the second line are over points $\big(s,N^{\ge y}_s\big)$ of $F^{\ge y}_0(N)$. 
 On this event, $\cutoffL{y}{N}$ is a measurable function of $F^{\leq y}_0(N)$, and likewise for $F^{\geq y}_0(N)$ and $\cutoffG{y}{N}$. Moreover, $F^{\leq y}_0$ generates $\cF^y$ up to sets that are \PRM[\Leb\otimes\mBxc_{q,c}^{(-2\alpha)}]-null.
\end{lemma}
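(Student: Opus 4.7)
The first assertion is an excursion-by-excursion bookkeeping. The plan is to first unpack the niceness hypothesis of Proposition \ref{prop:nice_level}, which guarantees that the intervals $V^y_0(N)$ partition $[0,\len(N)]$ up to a Lebesgue-null set and that no excursion of $\xi(N)$ about level $y$ is degenerate in the sense of Definition \ref{def:typical_exc}. I would then fix $I=[a,b]\in V^y(N)$ and verify directly from \eqref{eq:spindle_split}, \eqref{eq:bi-clade_split}, and \eqref{eq:cutoff_def} that the time-change $t\mapsto\sigma^y_N(t)$ sends the below-$y$ portion of $I$ bijectively (modulo a null set) onto a subinterval on which $\cutoffL{y}{\xi(N)}$ reproduces the scaffolding of $(\shiftrestrict{N}{I})^-$, and on which the atoms of $\cutoffL{y}{N}$ are precisely the spindles of $(\shiftrestrict{N}{I})^-$, including a single broken spindle at the crossing time if $\xi(N)$ jumps over $y$ during $I$. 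Summing over $I\in V^y(N)$ and adjoining the initial/terminal possibly-incomplete pieces $N^{\le y}_{\textnormal{first}}, N^{\le y}_{\textnormal{last}}$ yields the claimed concatenation identity, because the time-order of excursion intervals $I$ agrees with the $\ell^y_N(a)$-order of the corresponding atoms of $F^{\le y}_0(N)$. The proof for $\cutoffG{y}{\,\cdot\,}$ and $F^{\ge y}_0$ is verbatim.

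Given these identities, the measurability of $\cutoffL{y}{N}$ as a function of $F^{\le y}_0(N)$ is immediate, since concatenation of counting measures is a measurable operation on the appropriate product spaces, exactly as in Proposition \ref{prop:JCCP_meas}; the analogous statement for $\cutoffG{y}{N}$ follows the same way.

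For the filtration statement, I would prove the two inclusions separately, modulo \PRM-null sets. The inclusion $\sigma(F^{\le y}_0)\subseteq\cF^y$ follows once I check that $F^{\le y}_0(N)$ is measurable in $(\cutoffL{y}{\xi(N)},\cutoffL{y}{N})$: the excursions of $\cutoffL{y}{\xi(N)}$ about $0$ are in order-preserving bijection with the atoms of $F^{\le y}_0(N)$; each atom's spindle content is read off from the matching piece of $\cutoffL{y}{N}$; and the local-time label is recovered as the local time at $0$ of $\cutoffL{y}{\xi(N)}$ at the start of that piece, which a.s.\ coincides with $\ell^y_N(a)$ by the standard time-change identity for occupation-density local times below a level. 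The reverse inclusion $\cF^y\subseteq\sigma(F^{\le y}_0)\vee\mathcal{N}$ follows from the concatenation formula together with the standard right-continuity of the envelope filtration generated by $y\mapsto F^{\le y}_0$ under the \PRM\ law, using that almost every level is nice.

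The step I expect to be the main obstacle is the careful identification of local-time labels, i.e.\ verifying that the local time at $0$ of $\cutoffL{y}{\xi(N)}$ equals $\ell^y_{\xi(N)}$ composed with the inverse of $\sigma^y_N$, almost surely. This fits the standard Blumenthal--Getoor picture, but relies precisely on the niceness of level $y$ to preclude creeping of $\xi(N)$ across $y$ and to ensure continuity of $\sigma^y_N$ at excursion endpoints. A secondary technicality is handling the initial/terminal incomplete-excursion pieces $N^{\le y}_{\textnormal{first/last}}$ consistently with the convention of Definition \ref{def:bi-clade_PP}, where they are assigned local-time labels $0$ and $\ell^y_N(\len(N))$.
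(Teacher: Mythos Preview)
Your proposal is correct and follows essentially the same route as the paper: the paper states that ``the proofs of the formulas are straightforward'' and defers the measurability and filtration assertions to the supplement \ref{SuppMeas}, so your excursion-by-excursion bookkeeping via the time-change $\sigma^y_N$ is exactly the intended argument. One small correction: your reference to Proposition \ref{prop:JCCP_meas} for the measurability of concatenation is misplaced, since that proposition concerns the scaffolding map $\xi$; the measurability of concatenating the atoms of a point process in their local-time order is a separate (and slightly more delicate) point, handled in the supplement.
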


The proofs of the formulas are straightforward. We prove the measurability assertion in \ref{SuppMeas}.

We will find in \eqref{eq:cutoff_scaffold_commute} that $\cutoffL{y}{\xi(\bN_\beta)} = \xi\big(\cutoffL{y}{\bN_\beta}\big)$ a.s., where $\bN_\beta$ is as in Definition \ref{constr:type-1}. A more general result of this type may be true, but we do not need it here. A challenge to proving such a result is that in general, the scaffolding map does not commute with concatenation: $\xi(\ConcatIL_a N_a) \neq \ConcatIL_a \xi(N_a)$. This can be seen in the case of concatenating bi-clades corresponding to excursions of a \StableA\ process $\xi(\bN)$ above its running minimum. 



\subsection{Bi-clade It\^o measure and invariance}
\label{sec:biclade_PRM}

Let $\bN\sim\PRM[\Leb\otimes\mBxc_{q,c}^{(-2\alpha)}]$ and $\bX:=\xi(\bN)$. As in Section \ref{sec:exc}, we adopt the convention of suppressing the parameter $\bX$ when referring various functions of $\bX$, including the local time $(\ell^y(t))$, inverse local time $(\tau^y(s))$, hitting and crossing times $T^y$ and $T^{\geq y}$, and sets of excursion intervals $V^y$ and $V^y_0$. Refer back to Section \ref{sec:exc} for definitions of these objects. 
We use notation such as $\bF^y := F^y(\bN)$ and $\bF^{\geq y} := F^{\geq y}(\bN)$ for the counting measures of Definition \ref{def:bi-clade_PP}.

\begin{table}
 \centering
 \begin{tabular}{|c||c|c||Sc|c|}\hline
  		&	PRM of spindles				&	intensity				&	PRM of bi-clades				&	intensity\\\hline\hline
  $(\bN,\xi(\bN))$	&	$\bN = \sum\Dirac{t,f_t}$	&	$\nu=\nu_{q,c}^{(-2\alpha)}$&	$\bF^y = \sum \Dirac{s,N^y_s}$	&	$\mClade$\\[1pt]\hline
  $\bX$				&	$\sum\Dirac{t,\Delta X_t}$	&	$c_\nu x^{-2-\alpha}dx$	    &	$\bG^y = \sum \Dirac{s,g^y_s}$	&	$\mSxc$\\[1pt]\hline
 \end{tabular}\vspace{3pt}
 \caption{Objects from L\'evy process (excursion) theory and analogous objects in the setting of bi-clades, where 
  $\nu$ and $c_\nu$ are given in \eqref{eqn:generalnu}, see also \eqref{eqn:cnu}.\label{tbl:exc_vs_cld}}
\end{table}

 We define the \emph{It\^o measures on bi-clades}, \emph{clades}, and \emph{anti-clades} respectively by saying that for $A\in\cHxc{\pm}$, $B\in\cHxc{+}$, and $C\in\cHxc{-}$,\vspace{-0.1cm}
 \begin{equation}\label{eqn:nuclade}
  \left.\begin{array}{c}
  \mClade(A) := \EV\big[\bF^0([0,1]\times A)\big],\\[0.2cm]
  \mClade^+(B) := \EV\big[\bF^{\geq 0}([0,1]\times B)\big], \quad\mClade^-(C) := \EV\big[\bF^{\leq 0}([0,1]\times C)\big].\vspace{-0.1cm}
  \end{array}\right.
 \end{equation}

In Proposition \ref{prop:marking_jumps} we construct $\bN$ by marking jumps of the scaffolding $\bX$ with independent $(\alpha,q,c)$-block 
diffusions. 

\begin{proposition}[Bi-clade It\^o measure via marking jumps]\label{prop:bi-clade_PRM}
 For $g\in\DSxc$, let $\bN_g$ be derived from $g$ like $\bN$ is derived from $\bX$ in Proposition \ref{prop:marking_jumps} -- i.e.\ by passing from a \cadlag\ path to a point process of jumps and marking jumps of height $z$ with spindles with law $\mBxc_{q,c}^{(-2\alpha)}(\,\cdot\;|\;\life = z)$. Then the map that assigns with $g\in\DSxc$ the law
 $\mu_g$ of $\bN_g$ is a marking kernel from $\DSxc$ to $\Hxc{\pm}$, we have $\mClade=\int\mu_g\,\mSxc(dg)$, and 
for every $y\in\BR$, the point measure $\bF^y$ is a \PRM[\Leb\otimes\mClade] on $[0,\infty)\times \Hxc{\pm}$.
\end{proposition}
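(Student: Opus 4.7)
\emph{The plan} is to combine the marking construction of $\bN$ from $\bX$ (Proposition \ref{prop:marking_jumps}) with the PRM property of $\bG^y$ (Proposition \ref{thm:excursion_PRM}) via a second application of the Poisson marking theorem. The whole argument is then to run marking first from $\bX$ to $\bN$, decompose $\bX$ into its level-$y$ excursions to get $\bG^y$, and observe that both decompositions are compatible in the sense that the bi-clades of $\bF^y$ are exactly the marked versions of the excursions of $\bG^y$.

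\emph{Step 1: the marking kernel $\mu$.} Given an excursion $g \in \DSxc$, its jumps form a locally finite point measure on $(0,\len(g)) \times (0,\infty)$. Marking each jump of height $z$ with an independent spindle distributed according to the disintegration $\nu_{q,c}^{(-2\alpha)}(\,\cdot \mid \life = z)$ (whose existence follows from the analogue of Corollary \ref{cor:BESQ:scl_ker} for general $(\alpha,q,c)$-block diffusions) produces a random element $\bN_g$ of $\Hxc{\pm}$, whose scaffolding satisfies $\xi(\bN_g) = g$ by construction. Measurability of the assignment $g \mapsto \mu_g := \Pr(\bN_g \in \cdot\,)$ on the Polish space $\DSxc$ follows from standard measurability of Poisson marking constructions, handled in \ref{SuppMeas} along the lines of \cite[Proposition 6.4.VI]{DaleyVereJones1}.

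\emph{Step 2: conditional structure of bi-clades given $\bX$.} Fix $y \in \BR$. For each $[a,b] \in V^y$, the bi-clade $\shiftrestrict{\bN}{[a,b]}$ consists of exactly those spindles of $\bN$ whose time coordinate lies in $[a,b]$; their lifetimes are the jump heights of $\shiftrestrict{\bX}{[a,b]}$. By Proposition \ref{prop:marking_jumps}, conditionally on $\bX$ the spindles of $\bN$ are independent across jump times, each with the conditional law $\nu_{q,c}^{(-2\alpha)}(\,\cdot \mid \life = z)$ at a jump of height $z$. Since the excursion intervals $V^y$ are $\sigma(\bX)$-measurable and pairwise disjoint in time, conditionally on $\bG^y$ the bi-clades $\{\shiftrestrict{\bN}{[a,b]} : [a,b] \in V^y\}$ are mutually independent, and the bi-clade corresponding to an atom $(s,g)$ of $\bG^y$ has conditional law $\mu_g$.

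\emph{Step 3: marked PRM and identification of $\mClade$.} By Proposition \ref{thm:excursion_PRM}, $\bG^y$ is a $\PRM[\Leb \otimes \mSxc]$ on $[0,\infty) \times \DSxc$. Applying the marking theorem for Poisson random measures \cite[Proposition 6.4.VI]{DaleyVereJones1} with the kernel $g \mapsto \mu_g$, the augmented point measure
\[
\sum_{[a,b] \in V^y}\delta\!\left(\ell^y(a),\ \shiftrestrict{\bX}{[a,b]},\ \shiftrestrict{\bN}{[a,b]}\right)
\]
is a PRM on $[0,\infty) \times \DSxc \times \Hxc{\pm}$ with intensity $\Leb \otimes \mSxc(dg) \otimes \mu_g(dN)$. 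Projecting onto the first and third coordinates (the scaffolding coordinate being a measurable function of the bi-clade $N$ via $\xi$) shows that $\bF^y$ is a PRM on $[0,\infty) \times \Hxc{\pm}$ with intensity $\Leb \otimes \int \mu_g \, \mSxc(dg)$. Taking expectations at $y=0$ and invoking the definition of $\mClade$ in \eqref{eqn:nuclade} then yields $\mClade(A) = \int \mu_g(A)\, \mSxc(dg)$ for all $A \in \cHxc{\pm}$, so that $\bF^y$ is in fact a $\PRM[\Leb \otimes \mClade]$ for every $y$.

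\emph{Main obstacle.} The only real technical care needed is bookkeeping of measurability: checking that $g \mapsto \mu_g$ is a bona fide stochastic kernel between the Polish spaces $\DSxc$ and $\Hxc{\pm}$, and that the assignment $(y,\bN) \mapsto \bF^y$ is jointly measurable so that the Poisson marking theorem applies cleanly to the $\sigma$-finite intensity $\mSxc$. Both points are routine consequences of the general framework for random counting measures in \cite{DaleyVereJones1,DaleyVereJones2}, and are deferred to \ref{SuppMeas}.
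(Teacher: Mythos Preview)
Your proof is correct and follows essentially the same approach as the paper: both arguments view $\bN$ as obtained from $\bX$ by marking jumps (Proposition \ref{prop:marking_jumps}), invoke the PRM property of $\bG^y$ (Proposition \ref{thm:excursion_PRM}), apply the Poisson marking theorem with kernel $g\mapsto\mu_g$ to deduce that $\bF^y$ is a $\PRM[\Leb\otimes\int\mu_g\,\mSxc(dg)]$, and then identify the intensity with $\mClade$ via \eqref{eqn:nuclade}. Your write-up is somewhat more explicit (working on the triple space and projecting), but the underlying structure is identical.
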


The reader may find Table \ref{tbl:exc_vs_cld} helpful regarding the counting measures that we have introduced.

\begin{proof}
By definition, $$\bF^y=\sum_{I\in V^y}\delta(\ell^y(I),\ShiftRestrict{\bN}{I})\quad\mbox{and}\quad\bG^y=\sum_{I\in V^y}\delta(\ell^y(I),\ShiftRestrict{\bX}{I}).$$ By Proposition \ref{thm:excursion_PRM}, $\bG^y$ is a \PRM[\Leb\otimes\mSxc].  
By Proposition \ref{prop:marking_jumps}, we may think of $\bN$ as being obtained by marking the \PRM\ of jumps of $\bX$. In particular, $\ShiftRestrict{\bN}{I}$ is obtained from 
$\ShiftRestrict{\bX}{I}$ in the same way independently for all $I\in V^y$. Hence, $\bF^y$ can be obtained from $\bG^y$ using the marking kernel $g\mapsto\mu_g$ and is therefore a
$\PRM[\Leb\otimes\mu^\prime]$ where $\mu^\prime=\int\mu_g\,\mSxc(dg)$. In particular, $\mu^\prime(A)=\EV\big[\bF^0([0,1]\times A)\big]$. By \eqref{eqn:nuclade}, we conclude that 
$\mu^\prime=\mClade$. 
\end{proof}

\begin{corollary}\label{cor:clade_PRM}
 $\bF^{\geq y}$ is a \PRM[\Leb\otimes\mClade^+] on $[0,\infty)\times \Hxc{+}$. Correspondingly, $\bF^{\leq y}$ is a \PRM[\Leb\otimes\mClade^-] on $[0,\infty)\times \Hxc{-}$.
\end{corollary}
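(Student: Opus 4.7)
The strategy is to exhibit $\bF^{\geq y}$ and $\bF^{\leq y}$ as measurable images of the bi-clade point process $\bF^y$, and to invoke the standard mapping theorem for Poisson random measures. First, recall from \eqref{eq:bi-clade_split} that the splitting maps $\phi^+\colon \Hxc{\pm}\to\Hxc{+}$ and $\phi^-\colon \Hxc{\pm}\to\Hxc{-}$ sending $N\mapsto N^+$ and $N\mapsto N^-$ are defined via: (a) locating the crossing time $T_0^+(N)$ of \eqref{eq:clade:crossing_def}, (b) splitting the middle spindle into $\check f^0_{T_0^+}$ and $\hat f^0_{T_0^+}$ according to \eqref{eq:spindle_split}, and (c) taking the appropriate restriction/shift. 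Each of these operations is measurable on $(\Hxc{\pm},\Sigma(\Hxc{\pm}))$, so $\phi^\pm$ are Borel maps (on the exceptional set $\Hxc{-}\cup\Hxc{+}$ the definitions are trivial).

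Second, apply $\phi^+$ and $\phi^-$ atomwise to $\bF^y$, keeping the time coordinate fixed. Writing $\bF^y = \sum_{I\in V^y}\delta(\ell^y(I^-),\shiftrestrict{\bN}{I})$, Definition \ref{def:bi-clade_PP} gives directly
\[
 \bF^{\geq y} = \sum_{I\in V^y}\delta\bigl(\ell^y(I^-),\,\phi^+(\shiftrestrict{\bN}{I})\bigr),\qquad \bF^{\leq y} = \sum_{I\in V^y}\delta\bigl(\ell^y(I^-),\,\phi^-(\shiftrestrict{\bN}{I})\bigr).
\]
Thus $\bF^{\geq y}$ is the image of $\bF^y$ under the measurable map $\Phi^+\colon(s,N)\mapsto(s,\phi^+(N))$, and similarly for $\bF^{\leq y}$ via $\Phi^-$. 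By Proposition \ref{prop:bi-clade_PRM}, $\bF^y$ is a \PRM[\Leb\otimes\mClade], and the standard mapping theorem for Poisson random measures (see e.g.\ \cite[Proposition 6.4.III]{DaleyVereJones1}) shows that $\bF^{\geq y}$ is a Poisson random measure on $[0,\infty)\times \Hxc{+}$ with intensity $(\Leb\otimes\mClade)\circ(\Phi^+)^{-1}$, and analogously for $\bF^{\leq y}$.

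Third, identify the pushforward intensities. For any $A\subseteq[0,\infty)$ and $B\in\cHxc{+}$,
\[
 (\Leb\otimes\mClade)\circ(\Phi^+)^{-1}(A\times B) = \Leb(A)\cdot \mClade\bigl(\{N\in\Hxc{\pm}\colon N^+\in B\}\bigr).
\]
By the defining formula \eqref{eqn:nuclade}, $\mClade^+(B)=\EV[\bF^{\geq 0}([0,1]\times B)] = \EV[\bF^0([0,1]\times\{N\colon N^+\in B\})] = \mClade(\{N\colon N^+\in B\})$, so the pushforward intensity equals $\Leb\otimes\mClade^+$. The analogous computation with $\phi^-$ gives $\Leb\otimes\mClade^-$ for $\bF^{\leq y}$.

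The one technical point to verify is that the resulting random measures are boundedly finite on $[0,\infty)\times\Hxc{+}$ and $[0,\infty)\times\Hxc{-}$, so that they are genuine elements of $\cNRH$; this is immediate from the bounded finiteness of $\bF^y$ on $[0,\infty)\times\Hxc{\pm}$, since $\Phi^\pm$ preserve the time coordinate and the only atoms sent to the trivial zero measure are those with $N\in\Hxc{\mp}$ (which do not affect bounded counting on the target space). The proof should be very short once the measurability of $\phi^\pm$ is noted; the main conceptual step is simply to recognize the definitions of $\bF^{\geq y}$ and $\bF^{\leq y}$ as coordinate-preserving images of $\bF^y$.
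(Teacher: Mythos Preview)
Your argument is correct and is exactly the approach the paper intends: the corollary is stated without proof, as an immediate consequence of Proposition \ref{prop:bi-clade_PRM} via the mapping theorem, which is precisely what you carry out. The identification of the pushforward intensity with $\mClade^\pm$ directly from the definition \eqref{eqn:nuclade} is the right way to close the loop.
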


We define a time-reversal involution and a scaling operator via
\begin{equation}\label{eq:clade:xform_def}
 \begin{split}
  \reverseH(N) &:= \int \Dirac{\len(N)-t,\reverseexc(f)}dN(t,f) \qquad \text{for }N\in\Hfin\\
  a\scaleH N &:= \int \Dirac{a^{1+\alpha}t,\scaleB[a][f]}dN(t,f) \qquad \text{for }N\in\H,\ a>0,
\end{split}
\end{equation}
where $\scaleB$ and $\reverseexc$ are as in 
\eqref{eq:BESQ:scaling_def}. 
The map $\reverseH$, in particular, reverses the order of spindles and reverses time within each spindle.

\begin{lemma}[Bi-clade invariance properties]\label{lem:clade:invariance}
 For $A\in\cHxc{\pm}$, $b>0$,
 $$\mClade(\reverseH(A)) = \mClade(A) \quad \text{and} \quad \mClade(\scaleH[b][A]) = b^{-\alpha}\mClade(A).$$
 Moreover, for $\bN$ a \PRM[\Leb\otimes\nu_{q,c}^{(-2\alpha)}], we have 
 $\scaleH[b][\bN] \stackrel{d}{=} \bN$. 
\end{lemma}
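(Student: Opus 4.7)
The plan is to establish the three claims in the order: first, the scaling invariance of the underlying \PRM\ $\bN$; next, the scaling invariance of $\mClade$; and finally, the reversal invariance of $\mClade$.

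To begin, I will verify the spindle-level identity $\nu_{q,c}^{(-2\alpha)}(\scaleB[b][A]) = b^{-1-\alpha}\nu_{q,c}^{(-2\alpha)}(A)$ for $A\in\SExc$. The scaling property \eqref{eq:spindle_scaling} of $\kappa_q$ gives $\kappa_q(z,\scaleB[b][A]) = \kappa_q(z/b,A)$, and substituting $u=z/b$ in the defining integral \eqref{eqn:generalnu} produces the factor $b^{-1-\alpha}$. Consequently, the pushforward of $\Leb\otimes\nu_{q,c}^{(-2\alpha)}$ under the map $(t,f)\mapsto(b^{1+\alpha}t,\scaleB[b][f])$ equals itself, since the factor $b^{1+\alpha}$ from the time scaling precisely compensates the factor $b^{-1-\alpha}$ from the spindle scaling. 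The mapping theorem for Poisson random measures then yields $\scaleH[b][\bN]\stackrel{d}{=}\bN$.

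For the scaling invariance of $\mClade$, I set $\widetilde\bN := \scaleH[b][\bN]$ and $\widetilde\bX := \xi(\widetilde\bN)$; a direct computation from \eqref{eq:JCCP_def} gives $\widetilde\bX = b\scaleS\bX$. A standard occupation-density computation using stable scaling yields $\ell^0_{\widetilde\bX}(s) = b^\alpha\ell^0_\bX(s/b^{1+\alpha})$, so the excursion intervals of $\widetilde\bX$ about level $0$ are the time-dilates $b^{1+\alpha}I$ of those for $\bX$, with left-endpoint local times multiplied by $b^\alpha$. Moreover, the bi-clade of $\widetilde\bN$ over $b^{1+\alpha}I$ is exactly $\scaleH[b]$ applied to the corresponding bi-clade of $\bN$ over $I$. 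Combining these observations with the first step and the bijectivity of $\scaleH[b]$,
\[
\mClade(\scaleH[b][A]) = \EV\big[F^0(\widetilde\bN)([0,1]\times\scaleH[b][A])\big] = \EV\big[\bF^0([0,b^{-\alpha}]\times A)\big] = b^{-\alpha}\mClade(A).
\]

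Finally, for the reversal invariance, I will invoke the representation $\mClade = \int\mu_g\,\mSxc(dg)$ from Proposition \ref{prop:bi-clade_PRM}, where $\mu_g$ is the marking kernel. A direct verification shows that $\xi(\reverseH(N)) = \reverseincr(\xi(N))$ for every bi-clade $N$, since both sides equal $-\xi_N((\len(N)-\cdot)-)$ on $[0,\len(N)]$. The conditional law $\nu_{q,c}^{(-2\alpha)}(\cdot\mid\zeta=z)$ is $\reverseexc$-invariant: this reduces to Lemma \ref{lem:BESQ:invariance}, because the $(\alpha,q,c)$-block excursion is obtained from the BESQ$(-2\alpha)$ excursion by the pointwise-in-time power transform $x\mapsto cx^q$, which commutes with time reversal. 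It follows that the pushforward of the marking kernel satisfies $(\reverseH)_*\mu_g = \mu_{\reverseincr(g)}$, and combining this with the $\reverseincr$-invariance of $\mSxc$ from Lemma \ref{lem:stable:invariance} and a change of variables,
\[
\mClade(\reverseH(A)) = \int\mu_{\reverseincr(g)}(A)\,\mSxc(dg) = \int\mu_g(A)\,\mSxc(dg) = \mClade(A).
\]
The main obstacle is establishing the $\reverseexc$-invariance of the conditional spindle law $\nu_{q,c}^{(-2\alpha)}(\cdot\mid\zeta=z)$ for general $(q,c)$; once this is reduced to the BESQ case via the power transform, the remaining steps amount to organized bookkeeping around Proposition \ref{prop:bi-clade_PRM} and the mapping theorem.
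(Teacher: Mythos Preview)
Your proof is correct and follows essentially the same route that the paper sketches: it relies on the marking representation $\mClade=\int\mu_g\,\mSxc(dg)$ from Proposition \ref{prop:bi-clade_PRM} together with the invariance properties of $\mSxc$ (Lemma \ref{lem:stable:invariance}) and of the spindle laws (Lemma \ref{lem:BESQ:invariance}, extended to general $(q,c)$ via the power transform). The only minor organizational difference is that for the scaling identity you go through the distributional equality $\scaleH[b][\bN]\stackrel{d}{=}\bN$ and a pathwise local-time computation, whereas the paper's hint would equally allow a direct marking-kernel argument parallel to your reversal proof; both approaches are valid and the paper leaves the details to the reader in any case.
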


\begin{proof}
 This can be derived straightforwardly from Proposition \ref{prop:bi-clade_PRM} and the invariance properties of Bessel processes and stable L\'evy processes noted in Lemmas \ref{lem:BESQ:invariance} and \ref{lem:stable:invariance}. We leave the details to the reader.
\end{proof}

\subsection{Mid-spindle Markov property and conditioning bi-clade It\^o measure}
\label{sec:clade_anti-clade}

Take $N\in\H$. A spindle $f_t$ that arises at time $t$ in $N$ is said to be born at level $\xi_N(t-)$ and die at level $\xi_N(t)$. Thus, at each level $z\in\BR$ it has mass $f_t(z-\xi_N(t-))$. In particular, the spindle crosses level $z$ only if $f_t(z-\xi_N(t-)) > 0$. In a bi-clade $N$ for which $\xi(N)$ is typical, in the sense of Definition \ref{def:typical_exc}, there is a single spindle that crosses level $0$. Otherwise, if $\xi(N)$ is degenerate, there is no such spindle. The following formula isolates the level-$0$ mass of this unique spindle, when it exists. Moreover, the formula is sufficiently general that it may be applied to clades and anti-clades as well. The \emph{(central spindle) mass} of $N\in\Hxc{\pm}$ is
\begin{equation}
 m^0(N) := \int \max\Big\{\ f\big((-\xi_N(s-))-\big),\ f\big(-\xi_N(s-)\big)\ \Big\}dN(s,f).\label{eq:clade:mass_def}
\end{equation}
Consider $N\in\Hxc{\pm}$ for which $\xi(N)$ is typical. Recalling the notation for broken spindles in \eqref{eq:spindle_split}, $f_{T_0^+}(-\xi_N(T_0^+\minus)) = \hat f_{T_0^+}^0(0) = \check f_{T_0^+}^0\big((-\xi_N(T_0^+\minus))\!\minus\!\big)$. Thus, $m^0(N) = m^0(N^+) = m^0(N^-)$.

\begin{lemma}\label{lem:heavy_clades_discrete}
 Under $\mClade$, the variable $m^0$ satisfies $\mClade\{m^0 > 1\} < \infty$.
\end{lemma}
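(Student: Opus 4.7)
The plan is to exploit the Poisson structure of bi-clades. By Proposition \ref{prop:bi-clade_PRM}, $\bF^0$ is a \PRM[\Leb\otimes\mClade] on $[0,\infty)\times\Hxc{\pm}$, so $\bF^0([0,1]\times\{m^0>1\})$ is Poisson-distributed with parameter $\mClade\{m^0>1\}$. Since a Poisson variable of infinite parameter is a.s.\ equal to $+\infty$, I will reduce the problem to showing that this count is a.s.\ finite.

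For the bound, I will use the pointwise inequality $m^0(N)\le A(f)$, where $f$ is the (a.s.\ unique) central spindle of a bi-clade $N$ crossing level $0$; this is immediate from \eqref{eq:clade:mass_def} and the definition of the amplitude $A$ in \eqref{eqn:lifeamplitude}. Any excursion interval $I\in V^0$ contributing to $\bF^0([0,1]\times\,\cdot\,)$ has $\ell^0(\inf I)\le 1$ and so lies entirely in $[0,\tau^0(1)]$, which means its central spindle arises as some spindle $(t,f)$ of $\bN$ with $t\le\tau^0(1)$. This yields the pointwise domination
\[
 \bF^0\big([0,1]\times\{m^0>1\}\big)\;\le\;\#\big\{(t,f)\in\bN\colon t\le\tau^0(1),\ A(f)>1\big\}.
\]

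To conclude, I need the right-hand side to be a.s.\ finite. The key ingredient is that $\mBxc_{q,c}^{(-2\alpha)}\{A>1\}<\infty$, which will follow from Lemma \ref{lem:BESQ:exc_length} in the BESQ case combined with the spindle scaling $A(\scaleB[x][f])=x^q A(f)$ and the power transformation relating $\nu_{q,c}^{(-2\alpha)}$ to $\nu_{\tt BESQ}^{(-2\alpha)}$. Consequently, the spindles of $\bN$ whose amplitude exceeds $1$ arrive as a homogeneous Poisson process in time of finite rate, while Proposition \ref{thm:excursion_PRM} gives $\tau^0(1)<\infty$ a.s.\ (recurrence of the \StableA process), so that only finitely many such spindles can fall in $[0,\tau^0(1)]$ almost surely. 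The main obstacle will be the routine but slightly tedious verification of $\mBxc_{q,c}^{(-2\alpha)}\{A>1\}<\infty$ in full $(\alpha,q,c)$ generality.
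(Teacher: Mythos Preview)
Your proposal is correct and follows essentially the same route as the paper: both bound $\bF^0([0,1]\times\{m^0>1\})$ pointwise by the number of spindles of $\bN$ in $[0,\tau^0(1)]$ with amplitude exceeding $1$, then invoke Proposition~\ref{thm:excursion_PRM} for $\tau^0(1)<\infty$ a.s.\ and Lemma~\ref{lem:BESQ:exc_length} for $\mBxc\{A>1\}<\infty$, concluding via the \PRM\ description of $\bF^0$. Your treatment is slightly more explicit about the contrapositive logic and the passage from the BESQ case to general $(q,c)$, but the argument is the same.
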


\begin{proof}
 Since $m^0(N)$ evaluates a single spindle in $N$ at a single point,
 \begin{equation*}
  \bF^0\big([0,1]\times \{m^0 > 1\}\big) \leq \bN\left( (0,\tau^0(1))\times\left\{f\in \Exc\colon \sup\nolimits_{y\in\BR}f(y) > 1\right\}\right).
 \end{equation*}
 Proposition \ref{thm:excursion_PRM} implies that $\tau^0(1)$ is a.s.\ finite. As $\bN$ is a \PRM[\Leb\otimes\mBxc_{q,c}^{(-2\alpha)}], by Lemma \ref{lem:BESQ:exc_length} the right-hand side is a.s.\ finite. The desired formula follows from the \PRM\ description of $\bF^0$ in Proposition \ref{prop:bi-clade_PRM}.
\end{proof}

Fix $y\in\BR$ and $n,j\in\BN$. For the purpose of the following, let
\begin{equation}\label{eq:Tnj}
 T^{y}_{n,j} := \inf\!\bigg\{ t > 0\;\bigg|\; \int_{[0,t]\times\Exc}\cf\!\left\{f\big( y\!-\!\bX(s-) \big) > \frac{1}{n}\bigg\} d\bN(s,f) \geq j \right\}\!.
\end{equation}
This is the $j^{\text{th}}$ time at which a spindle of $\bN$ crosses level $y$ with mass at least $1/n$.

\begin{lemma}[Mid-spindle Markov property]\label{lem:mid_spindle_Markov}
 Let $T$ be either the stopping time $T^{\geq y}$ for some $y > 0$ or $T^y_{n,j}$ for some $y\in\BR,\ n,j\in\BN$. Let $f_T$ denote the spindle of $\bN$ at this time. Let $\hat f^y_T$ and $\check f^y_T$ denote the split of this spindle about level $y$, as in \eqref{eq:spindle_split}. Then, given $f_T(y-\bX(T-)) = a > 0$,
  \begin{equation*}
   \left( \Restrict{\bN}{[0,T)},\check f^y_T\right) \quad \text{is conditionally independent of} \quad \left( \ShiftRestrict{\bN}{(T,\infty)},\hat f^y_T\right).
  \end{equation*}
  Under this conditional law, $\shiftrestrict{\bN}{(T,\infty)}$ is a \PRM[\Leb\otimes\mBxc_{q,c}^{(-2\alpha)}] independent of $\hat f^y_T$, which is a $(\alpha,q,c)$-block diffusion started at $a$ and killed upon hitting 0.
\end{lemma}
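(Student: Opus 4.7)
The plan is to combine three ingredients: the independent marking construction of Proposition \ref{prop:marking_jumps}, the strong Markov property of $\bN$ at the stopping time $T$, and the Markov property of the spindle $f_T$, viewed as a trajectory of the $(\alpha,q,c)$-block diffusion, at the intrinsic spindle-time $\tau := y-\bX(T\minus)$ where it attains the value $a$.

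First I will verify that $T$ is an $(\cF_t)$-stopping time in each case. For $T=T^{\ge y}$, this is immediate from $\{T^{\ge y}\le t\}=\{\sup_{s\le t}\bX(s)\ge y\}$ together with the right-continuity of $(\cF_t)$. For $T=T^y_{n,j}$, the integrand in \eqref{eq:Tnj} is $(\cF_t)$-adapted, since each spindle born at time $s$ is revealed at time $s$ and its value at level $y$ is then computable from $\bX(s\minus)$ and the spindle path; the running count is therefore $\cF_t$-measurable.

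Next I will invoke Proposition \ref{prop:marking_jumps} to regard $\bN$ as obtained by independently marking the jumps of $\bX$ by spindles with the appropriate conditional laws. The strong Markov property of the PRM $\bN$ then yields that $\ShiftRestrict{\bN}{(T,\infty)}$ is a fresh $\PRM[\Leb\otimes\mBxc_{q,c}^{(-2\alpha)}]$ independent of $\cF_T$. This already delivers the marginal law of $\ShiftRestrict{\bN}{(T,\infty)}$ and its independence of $\Restrict{\bN}{[0,T)}$ and of $\bX(T\minus)$, before any conditioning on $f_T$ is imposed.

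The core step is then to analyze the conditional law of the mark $f_T$. Under $\mBxc_{q,c}^{(-2\alpha)}$ a spindle is an excursion of the self-similar $(\alpha,q,c)$-block diffusion, and this remains true under each of the restriction/conditioning events that select the spindle at time $T$. By the Markov property of this diffusion at the time $\tau$ (which is $\cF_{T\minus}$-measurable), conditionally on $f_T(\tau)=a>0$ the past $\check f^y_T$ and the future $\hat f^y_T$ are independent, and $\hat f^y_T$ evolves as a $(\alpha,q,c)$-block diffusion started from $a$ and absorbed at $0$ (since the spindle dies at its first return to $0$). Combining this with the preceding PRM conclusion, the pair $(\Restrict{\bN}{[0,T)},\check f^y_T)$ is $\sigma(\cF_{T\minus},f_T|_{[0,\tau]})$-measurable while $(\ShiftRestrict{\bN}{(T,\infty)},\hat f^y_T)$ is, conditionally on $a$, a measurable function of independent ingredients, giving the asserted conditional product structure.

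The main obstacle is making the mid-spindle conditioning rigorous: the event $\{f_T(\tau)=a\}$ has probability zero, so the Markov property at $\tau$ must be interpreted as a genuine disintegration of the relevant excursion law along the evaluation map $f\mapsto f(\tau)$. I will obtain this in the spirit of Corollary \ref{cor:BESQ:scl_ker}, exploiting the explicit self-similar diffusion structure, and verify that the resulting disintegration interacts compatibly with the strong Markov property of $\bN$ at $T$ so that the two independence statements fuse into the claimed joint conditional independence.
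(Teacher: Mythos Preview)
Your outline captures the right intuition but the ``core step'' has a genuine gap. First, routing through Proposition~\ref{prop:marking_jumps} is dangerous: if you regard $f_T$ as the mark on the jump $\Delta\bX(T)$, you have implicitly conditioned on $\zeta(f_T)=\Delta\bX(T)$, which fixes $\zeta(\hat f^y_T)=\bX(T)-y$ and makes it impossible for $\hat f^y_T$ to be a free block diffusion from $a$. Second, even without that misstep, your argument requires two facts you do not establish: that the conditional law of $f_T$ given $\cF_{T-}$ is (for $T=T^y_{n,1}$, say) the normalised restriction of $\nu$ to $\{f:f(\tau)>1/n\}$ --- a predictable-thinning statement about PRMs --- and that the $\sigma$-finite excursion measure $\nu$ enjoys the simple Markov property at the time $\tau$ on $\{\zeta>\tau\}$. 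Both are true, but neither is trivial, and the scaling device of Corollary~\ref{cor:BESQ:scl_ker} does not supply either one.

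The paper avoids both difficulties by a different construction. It thins $\bN$ to the discrete i.i.d.\ sequence $(T_i,f_i)$ of spindles with amplitude exceeding $1/n$, each with probability law $\nu(\,\cdot\mid A>1/n)$. The within-spindle Markov property is taken not at a fixed time but \emph{after the first-passage time} $H_i$ of $f_i$ to level $1/n$, where Lemma~\ref{lem:BESQ:existence} guarantees the post-$H_i$ path is an honest block diffusion from $1/n$; one then applies the ordinary diffusion Markov property at the stopping time $\rho_i-H_i$, with $\rho_i=y-\bX(T_i-)$ if that spindle crosses level $y$ with mass exceeding $1/n$ and $\rho_i=\zeta(f_i)$ otherwise. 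Viewing $\big(\bX(T_i-),\restrict{f_i}{[0,\rho_i)}\big)$ as a discrete-time Markov chain in its own filtration, the time $T$ is $T_J$ for a stopping time $J$ of that chain, and the lemma follows from the chain's strong Markov property at $J$ together with that of $\bN$ at $T$. This route never needs the conditional law of $f_T$ or a fixed-time Markov property under $\nu$.
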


\begin{proof}
 We start by proving the case $T = T^y_{n,j}$. By the strong Markov property of $\bN$, it suffices to prove this with $j=1$. For the purpose of the following, let $\Exc_n := \big\{f\in\Exc\colon \sup_{u}f(u) > \frac1n\big\}$, where $\Exc$ is the space of spindles of \eqref{eq:cts_exc_space_def}. Lemma \ref{lem:BESQ:exc_length} asserts that $\mBxc(\Exc_n) < \infty$. Thus, we may sequentially list the points of $\bN$ in $\Exc_n$:
 \begin{equation*}
  \Restrict{\bN}{[0,\infty)\times \Exc_n} = \sum_{i=1}^\infty \Dirac{T_i,f_i} \quad \text{with} \quad T_1 < T_2 < \cdots.
 \end{equation*}
 
 First, note that each time $T_i$ is a stopping time in the time-filtration $(\cF_t)$; thus, by the Poisson property of $\bN$, each $f_i$ is independent of $\cF_{T_i-}$. Also the $(f_i)$ are i.i.d.\ with the law $\mBxc_{q,c}^{(-2\alpha)}(\,\cdot\,|\,\Exc_n)$. We define first passage times of $f_i$, $H_i := \inf\{z > 0\colon f_{i}(z) = 1/n\}$. Then by Lemma \ref{lem:BESQ:existence}, for each $i$ the process $\big(f_i(H_i+z),\ z\geq 0\big)$ is an $(\alpha,q,c)$-block diffusion starting from $1/n$. We define a stopping $\rho_i$ for $f_i$ as follows. If $f_i(y-\bX(T_i-)) > 1/n$, set $\rho_i := y-\bX(T_i-)$; otherwise, set $\rho_i := \zeta(f_i)$. Thus, $\rho_i$ is always greater than $H_i$, and hence $\rho_i-H_i$ is a stopping time for $\big(f_i(H_i+z),\ z\geq 0\big)$.
 
 Recall Definition \ref{def:filtrations} of $(\cF_t,\ t\geq 0)$. For the purpose of the following, for $i\!\geq\! 1$ let $\cG_i := \sigma(\cF_{T_i-},\restrict{f_{i}}{(-\infty,\rho_i)})$. The sequence of pairs $\big(\bX(T_i-),\ \restrict{f_i}{(-\infty,\rho_i)}\big)$ is a Markov chain in this filtration. Indeed, in the case $\rho_i = \life(f_i)$, the process $\bX$ simply runs forward from its value $\bX(T_i) = \bX(T_i-)+\life(f_i)$ until the $(\cF_t)$-stopping time $T_{i+1}$. In the case $\rho_i < \life(f_i)$, we have $f_i(\rho_i) > 1/n$. Then by the Markov property of $\big(f_i(H_i+z),\ z\geq 0\big)$ at $\rho_i-H_i$, conditionally given $\cG_i$ the process $\hat f^y_i = \shiftrestrict{f_i}{[\rho_i,\infty)}$ is an $(\alpha,q,c)$-block diffusion starting from $f_i(\rho_i)$. In particular, $\hat f^y_i$ is conditionally independent of $\cG_i$ given $f_i(\rho_i)$. Then $\bX(T_i) = y + \life\big(\hat f^y_i\big)$.
 
 Let $J := \inf\big\{i \geq 1\colon \rho_i < \life(f_i)\big\}$, so $T_J = T$. This $J$ is a stopping time for $(\cG_i)$. Therefore, conditionally given $f_J(\rho_J)$, the process $\hat f^y_J$ is independent of $\cG_J$, distributed like an $(\alpha,q,c)$-block diffusion starting from $f_J(\rho_J)$. By the strong Markov property of $\bN$, the process $\ShiftRestrict{\bN}{(T,\infty)}$ is a \PRM[\Leb\otimes\nu_{q,c}^{(-2\alpha)}], independent of $\Restrict{\bN}{[0,T]}$, as desired.
 
 For the case $T = T^{\geq y}$, note that $T^{\geq y} = \inf_{n \geq 1}T^y_{n,1}$. It follows from Proposition \ref{prop:nice_level} that this infimum is almost surely attained by some $n$. Thus, the result in this case follows from the previous case.
\end{proof}

\begin{lemma}\label{lem:clade:mass_ker}
 The It\^o measure $\mClade$ admits a unique $m^0$-disintegration $\mClade(\,\cdot\;|\;m^0)$ with the scaling property that for $a>0$, $B\in\SHxc{\pm}$,
 \begin{equation}
  \mClade(B\;|\;m^0 = a) = \mClade\left(\scaleH[a^{-1/q}][B]\;\middle|\;m^0 = 1\right).\label{eq:clade_mass_ker}
 \end{equation}
 Likewise, $\mClade^+$ and $\mClade^-$ admit unique $m^0$-disintegrations with this same scaling property.
\end{lemma}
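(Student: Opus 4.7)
The plan is to invoke the canonical-disintegration-under-scaling machinery outlined in Section \ref{sec:techrem}(A) and detailed in \ref{SuppMeas}. To apply it, I need to verify three ingredients: (i) measurability of $m^0$, (ii) homogeneity of $m^0$ under the clade scaling $\scaleH$, and (iii) scaling invariance of $\mClade$. Item (i) is routine since \eqref{eq:clade:mass_def} expresses $m^0$ as a countable sum of evaluation functionals on spindles of $N$, and item (iii) is exactly Lemma \ref{lem:clade:invariance}.

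For (ii), a direct change of variables using the definitions \eqref{eq:BESQ:scaling_def} of $\scaleB$ and \eqref{eq:clade:xform_def} of $\scaleH$ yields
\begin{equation*}
  m^0(\scaleH[b][N]) \;=\; b^q\, m^0(N), \qquad b>0.
\end{equation*}
Indeed, under the substitution $u = b^{1+\alpha}s$ one checks $\xi_{\scaleH[b][N]}(u-) = b\xi_N(s-)$, while $(\scaleB[b][f])(-\xi_{\scaleH[b][N]}(u-)) = b^q f(-\xi_N(s-))$, and similarly for the left-limit in \eqref{eq:clade:mass_def}. Combining (ii) with Lemma \ref{lem:clade:invariance}, the law of $m^0$ under $\mClade$ satisfies $\mClade\{m^0>b\}=b^{-\alpha/q}\mClade\{m^0>1\}$ for every $b>0$; by Lemma \ref{lem:heavy_clades_discrete} this is a finite power law, so $\mClade(m^0\in\cdot)$ has density proportional to $x^{-1-\alpha/q}$ on $(0,\infty)$.

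With these inputs, the framework of \ref{SuppMeas} produces the desired $m^0$-disintegration as follows: since $\Hxc{\pm}$ is a Borel space (Definition \ref{def:assemblage_m} and Theorem \ref{thm:Lusin}-style arguments), a regular conditional distribution $\mClade(\,\cdot\,|\,m^0=1)$ exists on the fiber $\{m^0=1\}$, and one \emph{defines}
\begin{equation*}
  \mClade(B\,|\,m^0=a) \;:=\; \mClade\big(\scaleH[a^{-1/q}][B]\,\big|\,m^0=1\big), \qquad a>0.
\end{equation*}
The scaling identities (ii)--(iii) together with the power-law form of $\mClade(m^0\in\cdot)$ ensure via \eqref{eq:scl_ker:integration} that this family is a valid disintegration; and \eqref{eq:clade_mass_ker} holds by construction. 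Uniqueness: any $m^0$-disintegration satisfying \eqref{eq:clade_mass_ker} is determined by its value at $a=1$, and that value is in turn uniquely determined as the regular conditional law on $\{m^0=1\}$. The same argument applies verbatim to $\mClade^+$ and $\mClade^-$, since the clade/anti-clade splitting \eqref{eq:bi-clade_split} commutes with $\scaleH$ and preserves $m^0$.

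The main (minor) obstacle is bookkeeping around the abstract framework -- in particular, checking joint measurability of $(b,N)\mapsto \scaleH[b][N]$ and verifying that the power-law marginal integrates correctly against the scaling identity to recover $\mClade$. These are standard once $\Hxc{\pm}$ has been identified as a Borel subspace of $\cNRE$, which is handled in \ref{SuppMeas}; no new probabilistic input is required beyond Lemmas \ref{lem:clade:invariance} and \ref{lem:heavy_clades_discrete}.
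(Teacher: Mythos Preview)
Your proposal is correct and follows essentially the same route as the paper: verify the scaling homogeneity $m^0(\scaleH[b][N]) = b^q m^0(N)$, combine it with Lemma~\ref{lem:clade:invariance} and the finiteness from Lemma~\ref{lem:heavy_clades_discrete}, and feed these into the abstract disintegration framework of Section~\ref{sec:techrem}(A). The paper's proof is just a one-line invocation of these same ingredients, whereas you have spelled out the bookkeeping more explicitly.
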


\begin{proof}
 Lemmas \ref{lem:clade:invariance} and \ref{lem:heavy_clades_discrete} and the scaling property $m^0(a^{1/q}\scaleH N) = a m^0(N)$ satisfy the hypotheses of Section \ref{sec:techrem}, which then yields the claimed result.
\end{proof}

\begin{proposition}\label{prop:clade_splitting}
 \begin{enumerate}[label=(\roman*), ref=(\roman*)]
  \item[(i)] Fix $a\!>\!0$. Let $\ol N_a$ have law $\mClade(\,\cdot\,|\,m^0\!=\!a)$, and let $(\ol N_a^+,\ol N_a^-)$ denote its decomposition into clade and anti-clade. Then $(\ol N_a^+,\ol N_a^-)$ has distribution
   \begin{equation}
    \mClade^+(\,\cdot\;|\;m^0 = a) \otimes \mClade^-(\,\cdot\;|\;m^0 = a);\label{eq:clade_split_joint_law}
   \end{equation}
   in particular, $\ol N_a^+$ and $\ol N_a^-$ are independent.\label{prop:c_s:split}
  \item[(ii)] Let $\hat f$ denote an $(\alpha,q,c)$-block diffusion started at $\hat f(0) = a$ and absorbed upon hitting $0$, independent of $\bN$, and let $\widehat T^0 := \inf\big\{t>0\!:$ $\xi_{\bN}(t) = -\zeta(\hat f)\big\}$. We define
   \begin{equation}
    \ol N_a^+ := \Dirac{0,\hat f} + \Restrict{\bN}{\left[0,\widehat T^0\right)}.
   \end{equation}
   Then $\ol N_a^+$ has the law $\mClade^+(\,\cdot\;|\;m^0 = a)$ and $\reverseH(\ol N_a^+)$ has the law $\mClade^-(\,\cdot\;|\;m^0 = a)$, where $\reverseH$ is time reversal as in \eqref{eq:clade:xform_def}.\label{prop:c_s:clade_law}
 \end{enumerate}
\end{proposition}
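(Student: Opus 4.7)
The plan is to establish (ii) first, using the Poisson structure of bi-clades together with the mid-spindle Markov property, and then to read off (i) almost for free. Let $T := T^0_{1,1}$ from \eqref{eq:Tnj}, i.e.\ the first time a spindle of $\bN$ crosses level $0$ with mass at least $1$. Then $T$ lies in the interior of a unique excursion interval $[s,s']$ of $\bX = \xi(\bN)$ about $0$, so the bi-clade $N_T := \ShiftRestrict{\bN}{[s,s']}$ is the first atom of the bi-clade \PRM\ $\bF^0$ of Proposition \ref{prop:bi-clade_PRM} whose middle mass exceeds $1$, and hence $N_T \sim \mClade(\,\cdot\;|\;m^0\geq 1)$. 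Direct inspection of \eqref{eq:bi-clade_split} gives
\begin{equation*}
 N_T^+ = \Dirac{0,\hat f_T^0} + \ShiftRestrict{\bN}{(T,s')}, \qquad s'-T = \inf\big\{t>0\colon \xi(\ShiftRestrict{\bN}{(T,\infty)})(t) = -\zeta(\hat f_T^0)\big\},
\end{equation*}
while $N_T^-$ is a measurable function of $\big(\Restrict{\bN}{[0,T)},\check f_T^0\big)$, since $s$ is the last zero of $\bX$ before $T$.

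I would then invoke the mid-spindle Markov property (Lemma \ref{lem:mid_spindle_Markov}) at the stopping time $T$: conditionally on $f_T(-\bX(T-)) = a$, the pair $\big(\Restrict{\bN}{[0,T)},\check f_T^0\big)$ is independent of $\big(\hat f_T^0,\ShiftRestrict{\bN}{(T,\infty)}\big)$, and the latter is an $(\alpha,q,c)$-block diffusion started at $a$ together with an independent $\PRM[\Leb\otimes\mBxc_{q,c}^{(-2\alpha)}]$. Because $m^0(N_T) = f_T(-\bX(T-))$, this identifies the regular conditional law of $(N_T^-, N_T^+)$ given $m^0(N_T) = a$ as a product, with the $N_T^+$-marginal equal to the distribution of $\ol N_a^+$ described in (ii). By uniqueness of disintegrations, $\mClade^+(\,\cdot\;|\;m^0 = a)$ coincides with this distribution for $\mClade(m^0\in\cdot\;|\;m^0\ge 1)$-a.e.\ $a$. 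To upgrade from a.e.\ $a\ge 1$ to every $a>0$, I would invoke the scaling identity \eqref{eq:clade_mass_ker} from Lemma \ref{lem:clade:mass_ker}: both sides transform identically under $\scaleH[b]$, using Lemma \ref{lem:clade:invariance} for the \PRM\ and the self-similarity of the $(\alpha,q,c)$-block diffusion for $\hat f$, so agreement on a set of positive $\mClade(m^0\in\cdot\,)$-measure propagates to all $a>0$.

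The anti-clade claim in (ii) and the product structure \eqref{eq:clade_split_joint_law} in (i) will then follow by combining the conditional independence above with the reversal invariance $\reverseH_*\mClade = \mClade$ from Lemma \ref{lem:clade:invariance}. Time reversal preserves the middle mass $m^0$ and, because of the sign flip implicit in the resulting identity $\xi(\reverseH(N)) = \reverseincr(\xi(N))$ (cf.\ \eqref{eq:stable:reversal_def}), interchanges clades and anti-clades: $\reverseH(N)^\pm = \reverseH(N^\mp)$. Therefore, under $\mClade(\,\cdot\;|\;m^0=a)$, the anti-clade $N^-$ is distributed as $\reverseH(\tilde N^+)$ with $\tilde N^+ \sim \mClade^+(\,\cdot\;|\;m^0 = a)$, which is exactly the description of $\mClade^-(\,\cdot\;|\;m^0=a)$ in (ii). Combined with the conditional independence established in the previous step, this yields the joint law \eqref{eq:clade_split_joint_law} in (i).

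The main technical nuisance, I expect, is the measurability bookkeeping that splits $N_T$ as a function of the two pairs $\big(\Restrict{\bN}{[0,T)},\check f_T^0\big)$ and $\big(\hat f_T^0,\ShiftRestrict{\bN}{(T,\infty)}\big)$: verifying that the excursion endpoint $s$ is $\cF_{T-}$-measurable and that the spindle-splitting \eqref{eq:spindle_split} commutes correctly with the shifted restrictions. Once this is in place, Lemma \ref{lem:mid_spindle_Markov} does essentially all the substantive work.
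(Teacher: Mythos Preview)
Your proposal is correct and follows essentially the same route as the paper: pick out the first bi-clade with $m^0>1$ via the stopping time $T^0_{1,1}$, apply the mid-spindle Markov property (Lemma \ref{lem:mid_spindle_Markov}) to split it into conditionally independent pieces with the claimed $N^+$-marginal, use scaling to pass to every $a>0$, and use the reversal invariance of $\mClade$ together with $(\reverseH(N))^- = \reverseH(N^+)$ for the anti-clade description. The only cosmetic difference is that the paper handles the ``a.e.\ $a$ to all $a$'' step by directly normalising $N_S$ to $m^0=1$ via the canonical scaling disintegration of Section \ref{sec:techrem} and then rescaling, whereas you first invoke uniqueness of disintegrations and then propagate by scaling; these are equivalent.
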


\begin{proof} 
 Let $T = T^y_{n,j}$ be as in \eqref{eq:Tnj}, with $y=0$, $n=j=1$, and let $S=\ell^0(T)$ and $N_S=\ShiftRestrict{\bN}{(\tau^0(S-)\tau^0(S))}$.
 Then $(S,N_S)$ is the earliest point of $\bF^0$ in $\{N\in\Hxc{\pm}\colon m^0(N) > 1\}$ 
  and $(T,f_T)$ is the spindle in $\bN$ that corresponds to the jump of $\xi(N_S)$ across level zero.  
 By the \PRM\ description of $\bF^0$, the bi-clade $N_S$ has law $\mClade(\,\cdot\;|\;m^0 > 1)$. 
 Moreover, as we are in the disintegration setting of Section \ref{sec:techrem}, $\ol N:= \scaleH[\frac{1}{m^0(N_S)}][N_S]$ has distribution $\mClade(\,\cdot\;|\;m^0 = 1)$, 
  and $\ol N_a:=\scaleH[a][\ol N]$ has law $\mClade(\,\cdot\;|\;m^0=a)$.
 
 The marginal distributions of $\ol N_a^+$ and $\ol N_a^-$ stated in (i) follow straight from the definitions of $\mClade$, $\mClade^+$ and $\mClade^-$ in \eqref{eqn:nuclade}. 
 The independence of $\ol N_a^+$ and $\ol N_a^-$ asserted in (i) and the description of $\mClade^+$ stated in (ii) follow from Lemma \ref{lem:mid_spindle_Markov}. 
 For the corresponding description of $\mClade^-$ in (ii), observe that
 \begin{equation}
  (\reverseH(N))^- = \reverseH(N^+) \quad \text{and} \quad m^0(\reverseH(N)) = m^0(N).
 \end{equation}
 (We refer the reader back to Figure \ref{fig:bi-clade_decomp} for an illustration; $\reverseH$ time-reversal corresponds to holding the page upside down.) 
 By Lemma \ref{lem:clade:invariance}, if $\ol N_a$ has law $\mClade(\,\cdot\;|\;m^0=a)$ then so does $\reverseH(\ol N_a)$. 
 Thus, $\reverseH(\ol N_a^+)$ has law $\mClade^-(\,\cdot\;|\;m^0=a)$, as desired.
\end{proof}

\section{$\nu_{\tt BESQ}^{(-2\alpha)}$-IP-evolutions in stopped \StableA\ processes}\label{sec:easyinitialdata}

\subsection{$\nu_{\tt BESQ}^{(-2\alpha)}$-IP-evolutions}
\label{sec:type-1:def}

Let $\bN$ be a \PRM[\Leb\otimes\nu_{\tt BESQ}^{(-2\alpha)}]. 

\begin{proposition}[Aggregate mass from $F^y(N)$]\label{prop:agg_mass_subord_extra}
 Take $N\in\H$ and $y\in\BR$ and suppose that level $y$ is nice for $\xi(N)$, in the sense of Proposition \ref{prop:nice_level}. Suppose also that either $\len(N) = \infty$ or $\xi_N(\len(N)) < y$. We write $F^y_{N} := F^y(N)$. For every $s\geq 0$,
 \begin{gather}\label{eq:agg_mass_from_clades}
  M^y_{N}\circ\tau^y_N(s) = M^y_N\circ\tau^y_N(0) + \int_{(0,s]\times\Hxc{\pm}} m^0(N')dF^y_N(r,N')\\
  \skewer(y,N) = \big\{ \big(M^y_{N}\circ\tau^y_N(s\minus),M^y_{N}\circ\tau^y_N(s)\big) \colon s\geq 0,\ \tau^y(s) > \tau^y(s\minus) \big\},\notag
 \end{gather}
 where we take $\tau^y(0\minus) := 0$. In particular, for fixed $y\in\BR$ this holds for $N = \bN$ almost surely. 
\end{proposition}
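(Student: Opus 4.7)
The plan is to expand $M^y_N$ as a sum over the points of $N$, to match each non-vanishing summand with the central spindle of a unique bi-clade appearing in $F^y_N$, and to read off both identities from this matching. First I would write
\[
M^y_N(t) = \sum_{(u,f_u)\in N,\ u\le t} m^y_u,\qquad m^y_u := \max\big\{f_u\big((y-\xi_N(u-))\minus\big),\,f_u(y-\xi_N(u-))\big\},
\]
and observe that $m^y_u>0$ only when the spindle $f_u$ spans level $y$. Niceness of level $y$ (Proposition \ref{prop:nice_level}) rules out spindles that merely touch $y$ without spanning it, so $m^y_u>0$ if and only if the jump of $\xi(N)$ at time $u$ is a strict upward crossing of $y$, i.e.\ $\xi_N(u-)<y<\xi_N(u)$. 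Every such crossing lies inside a unique typical excursion interval $[a,b]\in V^y(N)$, and every typical excursion contains exactly one such crossing.

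Next, for a typical $[a,b]\in V^y(N)$ with crossing time $u\in(a,b)$, the shifted bi-clade $N':=\shiftrestrict{N}{[a,b]}$ has scaffolding $\xi_{N'}(\cdot)=\xi_N(\cdot+a)-y$ and crossing time $T_0^+(N')=u-a$, so comparing \eqref{eq:clade:mass_def} with the above display gives $m^0(N')=m^y_u$. At the nice level $y$, \eqref{eq:inv_LT_intervals} provides the excursion parametrisation $V^y(N)=\{[\tau^y_N(r-),\tau^y_N(r)]\colon\tau^y_N(r-)<\tau^y_N(r)\}$, and the tail hypothesis $\len(N)=\infty$ or $\xi_N(\len(N))<y$ excludes an unseen crossing hiding in a final incomplete excursion. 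Telescoping the aggregate mass over complete excursions with local-time label in $(0,s]$ then yields
\begin{align*}
M^y_N(\tau^y_N(s))-M^y_N(\tau^y_N(0))
&= \sum_{r\in(0,s]:\,\tau^y_N(r-)<\tau^y_N(r)} m^0\big(\shiftrestrict{N}{[\tau^y_N(r-),\tau^y_N(r)]}\big)\\
&= \int_{(0,s]\times\Hxc{\pm}} m^0(N')\,dF^y_N(r,N'),
\end{align*}
which is the first identity.

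For the skewer formula I would observe that $M^y_N$ is piecewise constant on each excursion interval $[\tau^y_N(r-),\tau^y_N(r)]$ except at the unique crossing time $u_r$ (when the excursion is typical), whence $M^y_N(u_r\minus)=M^y_N(\tau^y_N(r-))$ and $M^y_N(u_r)=M^y_N(\tau^y_N(r))$, while degenerate excursions contribute no jump of $M^y_N$. With the convention $\tau^y_N(0-):=0$, the $r=0$ term captures the at-most-one crossing inside the initial incomplete excursion below $y$, and Definition \ref{def:skewer_2} of $\skewer(y,N)$ rewrites directly as the claimed set. The final assertion is immediate: for fixed $y\in\BR$ the scaffolding $\xi(\bN)$ is a.s.\ \StableA\ with $\len(\bN)=\infty$, and $y$ is a.s.\ nice by Proposition \ref{prop:nice_level}. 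The principal obstacle is the careful bookkeeping at excursion endpoints and around the initial incomplete excursion below $y$: one must verify that every jump of $M^y_N$ is captured exactly once by the skewer parametrisation and that no spurious contribution survives from spindles that merely touch $y$ without spanning it. Both issues dissolve once niceness of $y$ is invoked.
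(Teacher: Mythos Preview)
Your proof is correct and follows essentially the same approach as the paper: decompose the spindles of $N$ according to excursion intervals about level $y$, identify the mass contribution of each complete excursion with $m^0$ of the corresponding bi-clade, and use niceness together with the tail hypothesis to handle the initial and final incomplete excursions. The paper packages the partition step as an appeal to Proposition~\ref{prop:partn_spindles} rather than writing out the sum over points explicitly, but the substance is the same.
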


\begin{proof}
 As noted in Proposition \ref{prop:partn_spindles}, the bi-clades of $F^y(N)$, along with the potential initial and final incomplete bi-clades, partition the spindles of $N$. At most one of the spindles in the initial incomplete bi-clade crosses level $y$. Each subsequent excursion interval $I^y_N(a,b)$ with $[a,b]\in V^y(N)$ includes at most one jump of $\xi(N)$ that crosses level $y$. If $N' := \shiftrestrict{N}{I^y_N(a,b)}$ then this spindle crosses with mass $m^0(N')$. Finally, our requirement that either $\len(N) = \infty$ or $\xi_N(\len(N)) < y$ implies that either there is no final incomplete bi-clade about $y$ or, if there is, then this bi-clade dies during the incomplete anti-clade $N^{\leq y}_{\textnormal{last}}$, without contributing mass at level $y$. This gives us the claimed description of $M^y_N\circ\tau^y_N$. The subsequent description of $\skewer(y,N)$ follows from our assumption that level $y$ is nice for $\xi(N)$: no two level $y$ bi-clades, complete or incomplete, arise at the same local time.
 
 If $N=\bN$ then by Proposition \ref{prop:nice_level}, level $y$ is nice for $\bX$ almost surely. 
\end{proof}

Recall Definition \ref{constr:type-1} of $\nu_{\tt BESQ}^{(-2\alpha)}$-IP-evolutions, with $\bN_{\beta} = \ConcatIL_{U\in\beta}\bN_U$. Comparing that construction to Proposition \ref{prop:clade_splitting}, we see that each $\bN_U$ has distribution $\mClade^+(\,\cdot\;|\;m^0=\Leb(U))$. 

\begin{definition}\label{constr:type-1_2} Let $\Pr^{(\alpha)}_\beta:=\Pr^{\alpha,1,1}_\beta$.
 For $\bN_{\beta}$ as in Definition \ref{constr:type-1}, we abuse notation to write\vspace{-0.2cm}
 $$F^{\geq 0}_0(\bN_{\beta}) := \sum_{U\in\beta}\Dirac{\IPLT_{\beta}(U),\bN_U},\vspace{-0.2cm}$$ 
 substituting diversities in the place of local times in Definition \ref{def:bi-clade_PP}. We write $\Pr^{(\alpha)}_{\beta}\{F^{\geq 0}_0\!\in\!\cdot\,\}$ for its distribution, and correspondingly for $\Pr^{(\alpha)}_{\mu}:=\Pr^{\alpha,1,1}_\mu$.
\end{definition}

We will find from Propositions \ref{prop:type-1:LT_diversity} and \ref{prop:cts_lt_at_0} that 
 almost surely for all $t\geq 0$, if $t$ falls within the segment of $\bN_{\beta}$ corresponding to $\bN_U$, then\vspace{-0.1cm}
$$\IPLT_{\beta}(U) = \lim_{y\downto 0} \ell^y_{\bN_{\beta}}(t) = \lim_{h\downto 0}h^{-1}\Leb\{u\in [0,t]\colon \xi_{\bN_{\beta}}(u)\in [0,h]\}.$$

\begin{proposition}\label{prop:clade_lengths_summable}
 \begin{enumerate}[label=(\roman*), ref=(\roman*)]
  \item For every $\beta\in\IPspace$, the point process $\bN_{\beta}$ of Definition \ref{constr:type-1} a.s.\ has finite length: in the notation of that definition, $\sum_{U\in\beta}\len(\bN_U) < \infty$ a.s., when $q=c=1$.\label{item:CLS:CLS}
  \item The map $\beta\mapsto \Pr^{(\alpha)}_{\beta}$ is a stochastic kernel.\label{item:CLS:kernel}
  \item We have $\xi(\bN_{\beta}) = \ConcatIL_{U\in\beta}\,\xi(\bN_U)$, where concatenation is as in 
    \eqref{eqn:concatscaf}. 
    \label{item:CLS:scaffold}
  \item \label{item:CLS:PP}
  The map $\beta \mapsto \Pr^{(\alpha)}_{\beta}\{F^{\geq 0}_0\in \cdot\,\}$ is a stochastic kernel. Moreover, there exists a measurable function $\phi\colon \Hfin\to\cNRHf$ such that $F^{\geq 0}_0(\bN_{\beta})=\phi(\bN_{\beta})$ a.s..
 \end{enumerate}
\end{proposition}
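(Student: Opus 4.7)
Part (i) is the one substantive step; the rest follow by routine measurability and consistency considerations once (i) is in hand.

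\emph{Part (i), reduction by scaling.} By Proposition \ref{prop:clade_splitting}(ii), conditionally on $\beta$ the clades $(\bN_U)_{U\in\beta}$ are independent with respective laws $\mClade^+(\,\cdot\,|\,m^0=\Leb(U))$. Combining Lemmas \ref{lem:clade:invariance} and \ref{lem:clade:mass_ker} at $q=c=1$, if $\ol N\sim\mClade^+(\,\cdot\,|\,m^0=1)$ then $a\scaleH\ol N\sim\mClade^+(\,\cdot\,|\,m^0=a)$ and $\len(a\scaleH\ol N)=a^{1+\alpha}\len(\ol N)$. Hence $\len(\bN_U)\stackrel{d}{=}\Leb(U)^{1+\alpha}Y_U$ with $(Y_U)_{U\in\beta}$ i.i.d.\ copies of $\len$ under $\mClade^+(\,\cdot\,|\,m^0=1)$. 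By Proposition \ref{prop:clade_splitting}(ii) this common law can be written as that of $\life(\mathbf f)+\tau_{-\life(\mathbf f)}$, with $\mathbf f\sim\BESQ[-2\alpha]$ from $1$ (so $\life(\mathbf f)\sim\InvGammaDist[1+\alpha,1/2]$ by Lemma \ref{lem:BESQ:length}) and $\tau_{-z}$ the first passage below $-z$ of the \StableA\ scaffolding, which by spectral positivity is a stable subordinator of index $1/(1+\alpha)$ in $z$ satisfying $\tau_{-z}\stackrel{d}{=}z^{1+\alpha}\tau_{-1}$. A direct computation then yields $\EV[Y^\gamma]<\infty$ for every $\gamma\in(0,1/(1+\alpha))$, and the tail bound $\Pr(Y>t)\le Ct^{-1/(1+\alpha)}$ for $t\ge 1$.

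\emph{Part (i), summability.} The $\alpha$-diversity of $\beta$ forces $\#\{U\in\beta\colon\Leb(U)>h\}=O(h^{-\alpha})$ as $h\downarrow 0$, so a layer-cake computation gives $\sum_{U\in\beta}\Leb(U)^{p}<\infty$ for every $p>\alpha$. Fix $\gamma\in\big(\alpha/(1+\alpha),\,1/(1+\alpha)\big)$ and set $p:=\gamma(1+\alpha)>\alpha$. For $K$ large,
\[ \sum_{U\in\beta}\Pr(\len(\bN_U)>K)=\sum_{U\in\beta}\Pr\big(Y>K\Leb(U)^{-(1+\alpha)}\big)\le CK^{-1/(1+\alpha)}\sum_{U\in\beta}\Leb(U)<\infty, \]
so by Borel--Cantelli only finitely many lengths exceed $K$, a.s.. Using the bound $a\wedge K\le a^{\gamma}K^{1-\gamma}$ for the remaining lengths,
\[ \EV\Big[\sum_{U\in\beta}\len(\bN_U)\wedge K\Big]\le K^{1-\gamma}\EV[Y^{\gamma}]\sum_{U\in\beta}\Leb(U)^{p}<\infty, \]
so the truncated sum is finite a.s.. Adding the contributions of the finitely many large lengths yields $\sum_U\len(\bN_U)<\infty$ a.s.

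\emph{Parts (iii), (ii), (iv).} For (iii), each $\xi(\bN_U)$ is a clade excursion starting and ending at $0$, so the compensation in \eqref{eq:JCCP_def} is additive across disjoint time intervals, and the identity $\xi(\bN_\beta)=\ConcatIL_{U\in\beta}\xi(\bN_U)$ follows termwise from the definitions in \eqref{eqn:concat} and \eqref{eqn:concatscaf}. For (ii), enumerate the blocks of $\beta$ in decreasing order of mass (a measurable operation, since $(\IPspace,\dI)$ is Lusin by Theorem \ref{thm:Lusin}); realize each $\bN_U$ as a measurable functional of $\Leb(U)$ and an independent uniform seed via a Borel isomorphism of $(\Hfin,\SHfin)$ with a Borel subset of $[0,1]$; and observe that the concatenation in \eqref{eqn:concat}, a.s.\ well-defined by (i), is measurable as the a.s.\ limit of the finite concatenations in the enumeration order. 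For (iv), by (iii) the family $(\bN_U)_{U\in\beta}$ coincides with the complete bi-clades of $\bN_\beta$ about level~$0$, so defining $\phi$ to extract these clades from $\bN_\beta$ and index them by level-$0$ local time gives $F^{\ge 0}_0(\bN_\beta)=\phi(\bN_\beta)$ a.s., the identification of local time with the diversity parameter $\IPLT_\beta(U)$ being deferred to Proposition \ref{prop:type-1:LT_diversity}. The kernel statement then follows from (ii) together with the measurability of $\phi$.

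\emph{Main obstacle.} Part (i) is the real work: $\len(\bN_U)$ has infinite mean, so expectations alone cannot be summed. The argument above exploits the moment $\EV[Y^\gamma]<\infty$ for $\gamma<1/(1+\alpha)$ together with the summability of $\Leb(U)^{p}$ for $p>\alpha$ coming from the $\alpha$-diversity hypothesis; the compatibility of the two ranges, i.e.\ the existence of $\gamma\in(\alpha/(1+\alpha),1/(1+\alpha))$ with $p=\gamma(1+\alpha)>\alpha$, is exactly the point where the diversity constraint on block sizes meets the heaviness of stable first-passage tails.
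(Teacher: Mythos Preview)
Your argument for (i) is correct but takes a more laborious route than the paper's, and your treatment of (iii) has a genuine gap.

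For (i), the paper observes that $\EV[\life(\bff_U)] = \Leb(U)/(2\alpha)$ by Lemma~\ref{lem:BESQ:length}, so $L := \sum_{U}\life(\bff_U)$ has finite mean and is a.s.\ finite. It then realizes $\bN_\beta$ by coupling all of the $\bN_U$ onto a \emph{single} \PRM\ $\bN$, carving the successive clades from $\bN$ between the successive first-passage times of $\xi(\bN)$ down to the levels $-\!\sum_{V<U}\life(\bff_V)$, with the leading spindles $\bff_U$ inserted at those times. The total length is then the first-passage time $T$ of $\xi(\bN)$ to $-L$, which is a.s.\ finite. No tail or moment estimates are needed, and only $\|\beta\|<\infty$ is used rather than the diversity hypothesis --- which is why the paper can later extend (i) verbatim to all of $\HIPspace$ (Section~\ref{sec:Hausdorff}), whereas your argument cannot. (Minor slip: $\len(\bN_U)=\tau_{-\life(\bff)}$, not $\life(\bff)+\tau_{-\life(\bff)}$; the leading spindle sits at time $0$.)

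The real issue is (iii). Your claim that ``compensation is additive across disjoint time intervals'' so the identity ``follows termwise'' is exactly what fails in general: the paper explicitly warns, immediately after this proof, that $\xi\big(\ConcatIL_a N_a\big) \neq \ConcatIL_a \xi(N_a)$ in general, giving the excursions of $\xi(\bN)$ above its running minimum as a counterexample. Concretely, the pre-limit quantity in \eqref{eq:JCCP_def} at a time $t\in[S(a-),S(a)]$ picks up the tail $\sum_{b\prec a}\xi^{(z)}_{N_b}(\len(N_b))$; each summand tends to $0$ as $z\downarrow 0$, but the infinite sum need not. The paper's coupling from (i) is precisely what makes (iii) work: in that realization, $\bN_\beta'$ is a stopped \PRM\ (for which $\xi$ exists by Proposition~\ref{prop:stable_JCCP}) plus extra spindles with \emph{summable} lifetimes $\sum_U\life(\bff_U)=L<\infty$, and inserting these merely adds jumps to $\xi$ without disturbing the compensation limit. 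So your ``main obstacle'' paragraph has the emphasis backwards: the coupling makes (i) nearly immediate and is the key device for (iii), which without it is the delicate step.
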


\begin{proof}
 \ref{item:CLS:CLS} Let $(\bff_U,\,U\in\beta)$ denote an independent family of \BESQ[-2\alpha] processes absorbed at $0$, with each $\bff_U$ starting from $\Leb(U)$. By Lemma \ref{lem:BESQ:length},
 $$\EV[\life(\bff_U)] = \frac{1}{\Gamma(1+\alpha)} \int_0^{\infty} \frac{\Leb(U)}{2x}x^{\alpha}e^{-x}dx =\frac{\Leb(U)}{2\alpha}.$$
 For each $U\in \beta$, let $S(U-) := \sum_{V\in\beta\colon V<U}\life(\bff_V)$ and $S(U) := S(U-)+\life(\bff_U)$. Let $L:= \sup_{U\in\beta}S(U)$. Then
 \begin{equation}
  \EV[L] = \sum_{U\in\beta} \EV\big[\life(\bff_U)\big] =\frac{1}{2\alpha} \sum_{U\in\beta} \Leb(U) = \IPmag{\beta} < \infty.
 \end{equation} 
  Let $\bN$ denote a \PRM[\Leb\otimes\mBxc_{\tt BESQ}^{(-2\alpha)}], independent of $(\bff_U,\,U\in\beta)$. Let $H_U$ denote the first hitting time of $-S(U-)$ in $\xi(\bN)$, let $T$ denote the hitting time of $-L$, and set\vspace{-0.2cm}
 $$\bN_{\beta}' := \Restrict{\bN}{[0,T]} + \sum_{U\in\beta}\DiracBig{S(U),\bff_U}.\vspace{-0.2cm}$$
 It follows from the strong Markov property of $\bN$ that $\bN_{\beta}'$ has law $\Pr^{(\alpha)}_{\beta}$. Thus, in the setting of Definition
 \ref{constr:type-1}, the total length $\sum_{U\in\beta}\len(\bN_U)$ has the same distribution as $T$ in the construction here, which is 
 a.s.\ finite.
 
 \ref{item:CLS:kernel} This is straightforward from standard marking kernel methods.
 %
 
 \ref{item:CLS:scaffold} In the construction of \ref{item:CLS:CLS}, adding spindles $\DiracBig{S(U),\bff_U}$ to $\bN$ with summable lifetimes modifies the associated scaffolding $\xi(\bN_{\beta}')$ only by adding jumps of the corresponding heights. In particular, $\xi(\bN_{\beta}')$ is formed by concatenating the paths of the excursions $\xi\big(\shiftrestrict{\bN_{\beta}'}{[S(U-),S(U)]}\big)$. Thus, the claimed identity holds a.s.\ under $\Pr^{(\alpha)}_{\beta}$. We remark that, in light of Lemma \ref{lem:cutoff_vs_PP} connecting cutoff processes to point processes of (anti-)clades and Corollary \ref{cor:clade_PRM} asserting that these are Poisson point processes, this also proves
 \begin{equation}\label{eq:cutoff_scaffold_commute}
 \begin{split}
  \xi\left(\cutoffL{z}{\restrict{\bN}{[0,t]}}\right) &= \cutoffL{z}{\restrict{\xi(\bN)}{[0,t]}}\text{ and}\\
  \xi\left(\cutoffL{z}{\restrict{\bN_{\beta}}{[0,t]}}\right) &= \cutoffL{z}{\restrict{\xi(\bN_{\beta})}{[0,t]}} \quad \text{for }t\ge0.
 \end{split}
 \end{equation}

 \ref{item:CLS:PP} We prove this assertion in \ref{SuppMeas}.
\end{proof}

We now relate point processes of clades to the skewer process. Recall the cutoff processes of \eqref{eq:cutoff_def}.

\begin{lemma}\label{lem:cutoff_skewer}
 Take $N\in\H$, $y,z\geq 0$, and suppose $M^y_N(\len(N)) < \infty$.
 \begin{enumerate}[label=(\roman*), ref=(\roman*)]
  \item \label{item:CPS:cutoff_skewer}
  $\skewer(y,N)\,$equals $\skewer(y,\cutoffL{z}{N}\!,\cutoffL{z}{\xi(N)})$ for $y < z$ or 
  $\skewer\left(y-z,\cutoffG{z}{N},\cutoffG{z}{\xi(N)}\right)$ for $y > z$.
  \item \label{item:CPS:clades_skewer}
  	If level $z$ is nice for $\xi(N)$, in the sense of Proposition \ref{prop:nice_level}, then
  	\begin{equation}\label{eq:skewer_from_clade_PP}
  	 \skewer(y,N) = \Concat_{\text{points }(s,N^+_s)\text{ of }F^{\geq z}_0(N)} \skewer(y-z,N^+_s)\vspace{-0.3cm}
  	\end{equation}
  	for $y\ge z$.
  \item \label{item:CPS:clades_skewer_0}
  	Suppose $\beta\in\IPspace$ is \emph{nice} in the sense that, for $U,V\in\beta$, if $U\neq V$ then $\IPLT_{\beta}(U)\neq \IPLT_{\beta}(V)$. Let $\bN_{\beta}$ and $F^{\geq 0}_0(\bN_{\beta})$ be as in Definition \ref{constr:type-1_2}. In the event that $M^y_{\bN_{\beta}}(t) < \infty$ for all $t < \len(\bN_{\beta})$, \eqref{eq:skewer_from_clade_PP} holds with $z=0$ and $N = \bN_{\beta}$.
 \item The process $(\skewer(y,N),\,y\ge0)$, defined on $N\in \Hfins$, is adapted to the restriction of the filtration $(\cF^y,\,y\ge0)$ to $\Hfins$.
 \end{enumerate}
\end{lemma}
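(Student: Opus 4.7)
For part \ref{item:CPS:cutoff_skewer}, the strategy is to unfold the definitions. Consider the time-change $\sigma^z_N$ appearing in \eqref{eq:cutoff_def}: for $y<z$, a spindle $(t,f_t)$ of $N$ crosses level $y$ only if $\xi_N(t-)<y<\xi_N(t)$, which forces $\xi_N(t-)<z$, hence this spindle is preserved in $\cutoffL{z}{N}$ (appearing at time $\sigma^z_N(t)$) and its level-$y$ value $f_t(y-\xi_N(t-))$ is unchanged. Spindles fully above level $z$ do not cross $y$ at all, so they contribute nothing to either side. One checks directly from Definition \ref{def:skewer_2} that $M^y_N(t)=M^y_{\cutoffL{z}{N},\cutoffL{z}{\xi(N)}}(\sigma^z_N(t))$; as $\sigma^z_N$ is continuous and nondecreasing, its image contains the same sequence of jumps, giving equality of the skewers. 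The argument for $y>z$ via $\cutoffG{z}{\cdot}$ is symmetric, replacing $\sigma^z_N$ with $t-\sigma^z_N(t)$ and using that a crossing at level $y>z$ forces $\xi_N(t)>z$.

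For part \ref{item:CPS:clades_skewer}, I will compose part \ref{item:CPS:cutoff_skewer} (taking $y>z$ first) with the cutoff/concatenation identity from Lemma \ref{lem:cutoff_vs_PP}, which writes $\cutoffG{z}{N}=\Concat_{(s,N^+_s)}N^+_s$ over the points of $F^{\ge z}_0(N)$, ordered by local time $s$, and similarly for the scaffolding. Since each clade $N^+_s$ has $\xi(N^+_s)$ starting (and, for complete clades, returning to) $0$, the aggregate mass at level $y-z$ splits additively across consecutive clades, so that
\[
 \skewer(y-z,\cutoffG{z}{N})=\Concat_{(s,N^+_s)}\skewer(y-z,N^+_s).
\]
The boundary case $y=z$ follows by taking right limits in $y$ and using continuity of the skewer process where it holds (or by verifying that spindles straddling level $z$ contribute trivially at level $y=z$). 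The condition that $M^y_N(\len(N))<\infty$ is what allows us to rearrange the sum and use Proposition \ref{prop:agg_mass_subord_extra}.

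For part \ref{item:CPS:clades_skewer_0}, I will argue directly from Definition \ref{constr:type-1}: by construction $\bN_\beta=\Concat_{U\in\beta}\bN_U$, and Proposition \ref{prop:clade_lengths_summable}\ref{item:CLS:scaffold} gives $\xi(\bN_\beta)=\Concat_{U\in\beta}\xi(\bN_U)$, each $\xi(\bN_U)$ being an excursion from $0$ to $0$ with an initial upward jump. The aggregate mass at level $y$ therefore decomposes additively across the blocks of $\beta$, and the skewer is the left-to-right concatenation of the $\skewer(y,\bN_U)$'s. The niceness of $\beta$ ensures that $U\mapsto\IPLT_\beta(U)$ is injective and order-preserving, so indexing the clades by $\IPLT_\beta(U)$, as in Definition \ref{constr:type-1_2}, recovers the same left-to-right order as $\beta$. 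The finite-aggregate-mass hypothesis is what licenses this concatenation in $\IPspace$.

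For part (iv), I will fix $y\ge 0$ and apply part \ref{item:CPS:cutoff_skewer} with any $z>y$ to obtain $\skewer(y,N)=\skewer(y,\cutoffL{z}{N},\cutoffL{z}{\xi(N)})$. By Definition \ref{def:filtrations}, the right-hand side is $\cF^z$-measurable; intersecting over $z\downarrow y$ and invoking the right-continuity of $(\cF^y)$ yields $\skewer(y,N)\in\cF^{y+}=\cF^y$, as required on $\Hfins$.

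The main obstacle I anticipate is in part \ref{item:CPS:clades_skewer}: ensuring that the concatenation identity survives when $F^{\ge z}_0(N)$ carries an incomplete initial or final clade, and that the skewer/concatenation identity is compatible with the ``max/left-limit'' convention in Definition \ref{def:skewer_2} at the seams. Niceness of level $z$ (Proposition \ref{prop:nice_level}) and the hypothesis $M^y_N(\len(N))<\infty$ are what allow these boundary contributions to be controlled.
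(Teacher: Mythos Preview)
The paper defers this proof to Supplement A (described as ``technical results and proofs, mainly dealing with measurability''), so a direct comparison is not possible from the main text. Your outline for parts \ref{item:CPS:cutoff_skewer} and \ref{item:CPS:clades_skewer_0} is the natural unwinding of definitions and is almost certainly what the paper does; likewise, your route for \ref{item:CPS:clades_skewer} via part \ref{item:CPS:cutoff_skewer} and Lemma \ref{lem:cutoff_vs_PP} is the intended one.

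Two points deserve tightening. First, your handling of the boundary case $y=z$ in part \ref{item:CPS:clades_skewer} is shaky: the right-limit argument presumes continuity of $y\mapsto\skewer(y,N)$, which is only established for $N\in\Hfins$, and your alternative claim that ``spindles straddling level $z$ contribute trivially at level $y=z$'' is simply false---those spindles are exactly what contribute. The correct direct check is that for each clade $N^+_s$, the skewer at level $0$ is the single block $\{(0,m^0(N^+_s))\}$ coming from the initial broken spindle, and by Proposition \ref{prop:agg_mass_subord_extra} these are precisely the blocks of $\skewer(z,N)$, in the correct order since level $z$ is nice.

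Second, in part (iv) you implicitly assume that the map $(N',X')\mapsto\skewer(y,N',X')$ is measurable on the range of the cutoff, so that the identity from part \ref{item:CPS:cutoff_skewer} transports $\cF^z$-measurability. This is not covered by Proposition \ref{prop:skewer_measurable}, which is stated for the two-argument $\skewerP$ on $\Hfins$. Establishing the required measurability is presumably the content the paper relegates to the supplement; you should flag this rather than treat it as automatic.
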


We prove this in \ref{SuppMeas}.




\subsection{$\nu_{\tt BESQ}^{(-2\alpha)}$-IP-evolutions with \Stable[\alpha] initial state}

As before, let $\bN$ be a \PRM[\Leb\otimes\mBxc_{\tt BESQ}^{(-2\alpha)}] living on a probability space $(\Omega,\cA,\Pr)$. We continue to use the notation of the first paragraph of Section \ref{sec:biclade_PRM} for objects related to $\bN$. Let $(\ol\cF_t,\,t\ge0)$ and $(\ol\cF^y,\,y\ge0)$ denote $\Pr$-completions of the time- and level-filtrations on $(\Omega,\cA)$ generated by $\bN$, as in Definition \ref{def:filtrations}, augmented to allow an independent random variable $S$ measurable in $\ol\cF^0\cap\ol\cF_0$. That is, these are formed by augmenting the $\Pr$-completions of the pullbacks, via $\bN\colon \Omega\to\H$, of the time- and level-filtrations on $\H$. 
We define $\tdN := \restrict{\bN}{[0,T)}$, where $T$ is an a.s.\ finite $(\ol\cF_t)$-stopping time. We again take ``twiddled versions'' of our earlier notation to denote the corresponding objects for $\tdN$. 

\begin{proposition}\label{prop:PRM:Fy-_Fy+}
 Suppose $T$ has the properties: (a) $S^0 := \ell^0(T)$ is measurable in $\ol\cF^0$, and (b) $\bX < 0$ on the time interval $(\tau^0(S^0-),T)$. Then for each $y\geq 0$, the measure $\tdF^{\geq y}_0 = F^{\geq y}_0(\tdN)$ is conditionally independent of $\ol\cF^{y}$ given $\td\beta^y$, with the regular conditional distribution (r.c.d.) $\Pr^{(\alpha)}_{\td\beta^y}\{F^{\geq 0}_0\in\cdot\,\}$ of Definition \ref{constr:type-1_2}.
\end{proposition}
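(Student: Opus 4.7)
The plan is to combine the Poisson structure of bi-clades about level $y$ (Proposition \ref{prop:bi-clade_PRM}) with the bi-clade splitting identity (Proposition \ref{prop:clade_splitting}), using Lemma \ref{lem:cutoff_vs_PP} to identify $\ol\cF^y$ up to null sets with the $\sigma$-algebra generated by the anti-clade point process $\bF^{\leq y}_0$. Since $\bF^y$ is a \PRM[\Leb\otimes\mClade] on $[0,\infty)\times\Hxc{\pm}$, its atoms $(s,N^y_s)$ are independent; each splits as $((N^y_s)^-,(N^y_s)^+)$, and conditionally on $m^0(N^y_s)$ these two components are independent with respective marginals $\mClade^-(\,\cdot\,|\,m^0)$ and $\mClade^+(\,\cdot\,|\,m^0)$. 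Collecting anti-clades recovers $\bF^{\leq y}$, so conditionally on $\ol\cF^y$ the clades $(N^y_s)^+$ are independent with laws $\mClade^+(\,\cdot\,|\,m^0 = m^0(N^y_s))$ indexed by the same local times $s$ as the corresponding anti-clades.

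I would then transfer this from $\bN$ to $\tdN$. Because $\bX$ is spectrally positive it cannot cross $y$ upward continuously, so $\bX(T)<0\le y$ forces the level-$y$ excursion of $\bX$ containing $T$ to satisfy $\bX < y$ on $(\tau^y(S^y-),T)$, where $S^y := \tdl^y(T)$. Two consequences follow. First, no incomplete clade above $y$ contributes to $\tdF^{\geq y}_0$: $\bX(0)=0\le y$ rules out an initial one and $\bX(T)<y$ rules out a final one, so $\tdF^{\geq y}_0 = \tdF^{\geq y}$. Second, $S^y$ is $\ol\cF^y$-measurable -- for $y=0$ this is hypothesis (a), and for $y>0$ it follows from $\ol\cF^0\subseteq\ol\cF^y$ together with the identification of $S^y$ as the local time of the final incomplete anti-clade in $\bF^{\leq y}_0(\tdN)$. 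Hence $\tdF^{\geq y}_0$ is the $\ol\cF^y$-measurable restriction of $\bF^{\geq y}$ to local times in $[0,S^y)$, and the conditional independence from $\ol\cF^y$ given the middle masses of the surviving bi-clades is inherited from the bi-clade splitting.

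It remains to identify the resulting conditional distribution with $\Pr^{(\alpha)}_{\td\beta^y}\{F^{\geq 0}_0\in\cdot\,\}$. Proposition \ref{prop:agg_mass_subord_extra} puts the blocks $U\in\td\beta^y$ in order-preserving bijection with the bi-clades $N^y_s$ of $\tdN$ about level $y$, via $\Leb(U)=m^0(N^y_s)$; Theorem \ref{thm:LT_property_all_levels} then matches the $\alpha$-diversity $\IPLT^{\alpha}_{\td\beta^y}(U)$ with the local time $s$. Combining these, $\tdF^{\geq y}_0 = \sum_{U\in\td\beta^y}\delta\big(\IPLT^{\alpha}_{\td\beta^y}(U),\,(N^y_s)^+\big)$, where conditionally on $\ol\cF^y$ the clades $(N^y_s)^+$ are independent with laws $\mClade^+(\,\cdot\,|\,m^0=\Leb(U))$ that depend on $\ol\cF^y$ only through $\td\beta^y$. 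This matches exactly the construction $F^{\geq 0}_0(\bN_{\td\beta^y}) = \sum_{U\in\td\beta^y}\delta(\IPLT^{\alpha}_{\td\beta^y}(U),\bN_U)$ of Definition \ref{constr:type-1_2}, so both the conditional independence and the claimed r.c.d.\ follow. The main technical obstacle will be rigorously establishing the $\ol\cF^y$-measurability of $S^y$ and of the induced truncation, together with ruling out interference from the $\mClade$-null degenerate bi-clades that could in principle disrupt the skewer-to-bi-clade bijection.
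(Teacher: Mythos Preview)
Your overall strategy matches the paper's: combine the \PRM\ structure of $\bF^y$ with bi-clade splitting (Proposition \ref{prop:clade_splitting}), then restrict via an $\ol\cF^y$-measurable $S^y$. However, there is a genuine gap concerning the initial incomplete bi-clade.

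For $y>0$, the assertion that ``$\bX(0)=0\le y$ rules out an initial [incomplete clade above $y$]'' is incorrect. The first interval in $V^y_0$ is $[0,T^y]$, and on $(T^{\geq y},T^y)$ the scaffolding lies strictly above $y$; this yields a nontrivial clade $\bN^{\geq y}_{\textnormal{first}}$, which is precisely the atom at local time $0$ in $\tdF^{\geq y}_0$. Crucially, it is \emph{not} part of the \PRM\ $\bF^y$, so Proposition \ref{prop:clade_splitting} applied atom-by-atom to $\bF^y$ does not cover it. The paper handles this piece separately: the strong Markov property of $\bN$ at $T^y$ makes $\big(\bN^{\leq y}_{\textnormal{first}},\bN^{\geq y}_{\textnormal{first}}\big)$ independent of $(\bF^{\leq y},\bF^{\geq y})$, and then the mid-spindle Markov property (Lemma \ref{lem:mid_spindle_Markov}) at $T^{\geq y}$ gives $\bN^{\geq y}_{\textnormal{first}}\sim\mClade^+\big(\,\cdot\mid m^0=m^y(\bN^{\leq y}_{\textnormal{first}})\big)$ conditionally on $\bN^{\leq y}_{\textnormal{first}}$. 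Without this step you cannot conclude that the leftmost block of $\td\beta^y$ behaves correctly, nor that $\tdF^{\geq y}_0$ has the claimed conditional law. Your plan must invoke Lemma \ref{lem:mid_spindle_Markov} here; bi-clade splitting alone is insufficient.

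A secondary issue: your argument for the $\ol\cF^y$-measurability of $S^y$ is circular, since you identify $S^y$ through $\bF^{\leq y}_0(\tdN)$, but $\tdN$ already depends on $T$, which need not be $\ol\cF^y$-measurable. The paper instead writes $S^y=\inf\{s\ge0:\ell^0(\tau^y(s))\ge S^0\}$ and expresses $\ell^0(\tau^y(s))$ as an integral against $\bF^{\leq y}$ (the anti-clade process of $\bN$, not of $\tdN$) plus the contribution of $\bN^{\leq y}_{\textnormal{first}}$, which is visibly $\ol\cF^y$-measurable.
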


In light of Lemma \ref{lem:cutoff_skewer} \ref{item:CPS:cutoff_skewer}, this proposition is very close to a simple Markov property for $(\td\beta^y,\ y\geq 0)$. In order to minimize our involvement with measure-theoretic technicalities, we will postpone pinning this connection down until Corollary \ref{cor:type-1:simple_Markov}.

\begin{proof}
 Step 1 of this proof establishes the claimed result at a fixed level $y\geq 0$ when $T = \tau^y(s-)$, where $s>0$ is fixed. Note that this time does not satisfy conditions (a) and (b). In Step 1, $T$ is specific to a fixed level $y$, whereas in the proposition, the result holds at each level for a single time $T$. 
 In Step 2, we extend this to describe the unstopped point process $\bF^{\geq y}_0$. Finally, in Step 3, we extend our results to the regime of the proposition.
 
 \emph{Step 1}: Assume $T = \tau^y(s-)$. Note that $\tdF^y_0 = \restrict{\bF^y_0}{[0,s)}$. The strong Markov property of $\bN$ tells us that $\restrict{\bN}{[0,T^y)}$ is independent of $\shiftrestrict{\bN}{[T^y,\infty)}$. Rephrasing this in the notation of Definition \ref{def:bi-clade_PP}, $(\bN^{\leq y}_{\textnormal{first}},\bN^{\geq y}_{\textnormal{first}})$ is independent of $(\bF^{\leq y},\bF^{\geq y})$. This will allow us to consider conditioning separately for the first pair and the second. Let $m^y\colon \H\to [0,\infty)$ denote the mass of the leftmost spindle at level $y$:
 \begin{equation}\label{eq:LMB_def}
  m^y(N) := M^y_N\big(\inf\{ t\geq 0\colon M^y_N(t) >0\}\big) \qquad \text{for }N\in\H.
 \end{equation}
 We apply the mid-spindle Markov property, Lemma \ref{lem:mid_spindle_Markov}, at time $T^{\geq y}$. Together with the description of $\mClade^+$ in Proposition \ref{prop:clade_splitting}, this implies that the clade $\bN^{\geq y}_{\textnormal{first}}$ has conditional law $\mClade^+(\,\cdot\;|\;m^0 = m^y(\bN^{\leq y}_{\textnormal{first}}))$ given $\bN^{\leq y}_{\textnormal{first}}$, as desired.
 
 Now, let $\td\gamma^y$ denote $\td\beta^y$ minus its leftmost block, so that $\td\beta^y\!=\!\{(0,m^y(\tdN))\}\concat\td\gamma^y$. Proposition \ref{prop:agg_mass_subord} indicates two properties of $\td\gamma^y$: (a) it is a \Stable[\alpha/q] interval partition with total diversity $s$, in the sense of Proposition \ref{prop:IP:Stable}, and (b) it a.s.\ equals a function of $\tdF^y$. For $\beta\in\IPspace$ let $(N^{\pm}_U,\ U\in\beta)$ denote a family of independent bi-clades with respective laws $N^{\pm}_U \sim \mClade\{\,\cdot\;|\,m^0=\Leb(U)\}$,
 $$G_{\beta} := \sum_{U\in\beta} \Dirac{\IPLT_{\beta}(U),\Leb(U)}, \quad \text{and} \quad G^{\pm}_{\beta} := \sum_{U\in\beta} \Dirac{\IPLT_{\beta}(U),N^{\pm}_U}.$$
 Then $G_{\td\gamma^y}$ is a \PRM[\Leb\otimes \mClade\{m^0\in \cdot\,\}] on $[0,s)\times (0,\infty)$. Moreover, $G^{\pm}_{\td\gamma^y}$ is a \PRM[\Leb\otimes\mClade] on $[0,s)\times \Hxc{\pm}$, as it may be obtained by marking the points of $G_{\beta}$ via the stochastic kernel $a\mapsto \mClade\{\,\cdot\;|\,m^0 = a)$, and this is an $m^0$-disintegration of $\mClade$. By Proposition \ref{prop:bi-clade_PRM}, $\tdF^{y}$ has the same \PRM\ distribution as $G^{\pm}_{\td\gamma^y}$. Thus, the distribution of $G^{\pm}_{\td\gamma^y}$ is a regular conditional distribution for $\tdF^y$ given $\td\gamma^y$.
 
 Extending the preceding construction of $G^{\pm}_\beta$, for each $U\in\beta$ let $(N^+_U,N^-_U)$ denote the clade and anti-clade components of $N^{\pm}_U$, respectively. By Proposition \ref{prop:clade_splitting} these are independent. Thus,
 $$G^+_{\beta} := \sum_{U\in\beta} \Dirac{\IPLT_{\beta}(U),N^{+}_U} \quad \text{is independent of} \quad G^{-}_{\beta} := \sum_{U\in\beta} \Dirac{\IPLT_{\beta}(U),N^{-}_U}.$$
 Moreover, $G^+_{\beta}$ has law $\Pr^{(\alpha)}_{\beta}\{F^{\geq 0}_0\in\cdot\,\}$, as in Definition \ref{constr:type-1_2}. Thus, given $\td\gamma^y$, the measure $\tdF^{\geq y}$ is conditionally independent of $\tdF^{\leq y}$ with regular conditional distribution $\Pr^{(\alpha)}_{\td\gamma^y}\{F^{\geq 0}_0\in\cdot\,\}$. By another application of the strong Markov property of $\bN$ at time $T$, this conditional independence extends to conditional independence between $\tdF^{\geq y}$ and $\bF^{\leq y}$. 
 Now note the general principle that from $\cF_1\indep_{\cH_1}\cG_1$, $\cF_2\indep_{\cH_2}\cG_2$, and $(\cF_1,\cG_1,\cH_1)\indep (\cF_2,\cG_2,\cH_2)$, we may deduce $(\cF_1,\cF_2)\indep_{\cH_1,\cH_2}(\cG_1,\cG_2)$; see e.g.\ \cite[Propositions 6.6-6.8]{Kallenberg}. Thus, $\tdF^{\geq y}_0$ is conditionally independent of $\ol\cF^y$ given $\td\beta^y$, with regular conditional distribution $\Pr^{(\alpha)}_{\td\beta^y}\{F^{\geq 0}_0\in\cdot\,\}$.
 
 
 \emph{Step 2}: For $s>0$ let $\gamma^y_s := \skewer(y,\restrict{\bN}{[0,\tau^y(s-)})$. We write $\gamma^y_\infty := (\gamma^y_n,\ n\in\BN)$; this takes values in the subset of $\IPspace^{\BN}$ comprising projectively consistent sequences. We equip $\IPspace^{\BN}$ with the product $\sigma$-algebra. In the regime of such projectively consistent sequences, Definition \ref{constr:type-1_2} extends naturally to define a kernel $\beta_\infty = (\beta_n,\ n\geq 1) \mapsto \Pr^{(\alpha)}_{\beta_\infty}\{F^{\geq 0}_0\in\cdot\,\}$; i.e.\ a point process $G$ has this law if $\restrict{G}{[0,n)} \stackrel{d}{=} \sum_{U\in\beta_{n}} \Dirac{\IPLT_{\beta_{n}}(U),N^+_U}$ for every $n\geq 0$, 
 where the $(N^+_U)$ are as above. Extending the conditioning in the conclusion of Step 1, we find that $\restrict{\bF^{\geq y}_0}{[0,n)}$ is conditionally independent of $\ol\cF^y$ given $\gamma^y_\infty$. By consistency, $\bF^{\geq y}_0$ is conditionally independent of $\ol\cF^y$ given $\gamma^y_\infty$, with r.c.d.\ $\Pr^{(\alpha)}_{\gamma^y_\infty}\{F^{\geq 0}_0\in\cdot\,\}$.
 
 \emph{Step 3}: Assume $T$ satisfies conditions (a) and (b) stated in the proposition. We now show that $S^y := \ell^y(T)$ is measurable in $\ol\cF^y$. For $y=0$, this is exactly condition (a), so assume $y>0$. From condition (b), $S^y = \ell^y(\tau^0(S^0-))$. Thus, $\tau^0(S^0-)\in (\tau^y(S^y-),\tau^y(S^y))$. By monotonicity of $\ell^0$ we have $S^0\in [\ell^0(\tau^y(S^y-)),\ell^0(\tau^y(S^y))]$. In fact, we cannot have $S^0 = \ell^0(\tau^y(S^y-))$, since then we would have $\tau^y(S^y-)\in (\tau^0(S^0-),T)$ while $\bX(\tau^y(S^y-)) = y > 0$, which would violate condition (b). We conclude that $S^y = \inf\{s\ge0\colon \ell^0(\tau^y(s)) \ge S^0\}$. Finally,
 $$\ell^0(\tau^y(s)) = \ell^0_{N^{\leq y}_{\textnormal{first}}}(\infty) + \int_{[0,s]\times\Hxc{-}}\ell^{-y}_N(\infty)d\bF^{\leq y}(r,N),$$
 which is measurable in $\ol\cF^y$, as desired.
 
 Condition (b) has the additional consequence that time $T$ occurs in the midst of a (possibly incomplete) bi-clade about level $y$ at local time $S^y$, no later than the jump across level $y$. Thus, the clade that follows at local time $S^y$ is entirely excluded from $\tdN$, so $\tdF^{\geq y}_0 = \restrict{\bF^{\geq y}_0}{[0,S^y)}$.
 
 Appealing to the result of Step 2, $S^y$ is conditionally independent of $\bF^{\geq y}_0$ given $\gamma^y_\infty$. Thus, $\Pr^{(\alpha)}_{\gamma^y_\infty}\{F^{\geq 0}_0\in\cdot\,\}$ is a regular conditional distribution for $\bF^{\geq y}_0$ given $(\gamma^y_\infty,S^y)$. Consequently, for $f$ non-negative and measurable on the appropriate domain,
 $$\EV\big[ f\big(\bF^{\geq y}_0,S^y\big) \big] = \int f(G,s)\Pr^{(\alpha)}_{\beta_\infty}\{F^{\geq 0}_0\in dG\}\Pr\{\gamma^y_\infty\in d\beta_\infty,S^y\in ds\}.$$
 For the purpose of the following, for $(G,s)$ as above we will write $G_{< s} := \restrict{G}{[0,s)}$ and $G_{\geq s} := \shiftrestrict{G}{[s,\infty)}$. Similarly, modifying our earlier notation, for $\beta_\infty = (\beta_n,\ n\geq 1)$ as in Step 2, we will write $\beta_{<s}$ to denote the set of blocks of $\beta_{\infty}$ prior to diversity $s$, and $\beta_{\geq s}$ will denote the remainder, shifted to start at left endpoint zero. More formally, $\beta_{<s} := \{U\in \beta_{\lceil s\rceil}\colon \IPLT_{\beta_{\lceil s\rceil}}(U) < s\}$ and $\beta_{\geq s} := (\beta_{\geq s,n},\ n\geq 1)$ where, for $n\geq 1$,
 $$\beta_{\geq s,n} := \big\{(a-\IPmag{\beta_{<s}},b-\IPmag{\beta_{<s}})\colon (a,b)\in \beta_{\lceil s\rceil + n},\ \IPLT_{\beta_{\lceil s\rceil + n}}(a) \in [s,s+n)\big\}.$$
 Now, consider $f(G,s) := h(G_{<s})$ in our earlier disintegration calculation:
 \begin{equation*}
 \begin{split}
  \EV\big[ h\big(\tdF^{\geq y}_0\big) \big] &= \int h(G_{<s})\Pr^{(\alpha)}_{\beta_\infty}\{F^{\geq 0}_0\in dG\}\Pr\{S^y\in ds,\gamma^y_\infty\in d\beta_\infty\}\\
  		&= \int h(G_{\!<s})\Pr^{(\alpha)}_{\beta_{<s}}\!\{F^{\geq 0}_0\!\in\! dG_{\!<s}\}\Pr^{(\alpha)}_{\beta_{\geq s}}\!\{F^{\geq 0}_0\!\in\! dG_{\geq s}\}
  			\Pr\!\left\{\!\!\!\begin{array}{c}S^y\in ds,\\ \gamma^y_\infty\!\in\! d\beta_\infty\end{array}\!\!\!\right\}\\
  		&= \int h(G_{<s})\Pr^{(\alpha)}_{\beta_{<s}}\{F^{\geq 0}_0\in dG_{<s}\}\Pr\{S^y\in ds,\gamma^y_\infty\in d\beta_\infty\}.
 \end{split}
 \end{equation*}
 The second line above comes from noting that $\restrict{F^{\geq 0}_0}{[0,s)}$ is independent of $\shiftrestrict{F^{\geq 0}_0}{[s,\infty)}$ under $\Pr^{(\alpha)}_{\beta_\infty}$, and the third line comes from integrating out the $\Pr^{(\alpha)}_{\beta_{\geq s}}$ term. Noting that $\td\beta^y = \gamma^y_{< S^y}$, we conclude that $\Pr^{(\alpha)}_{\td\beta^y}\{F^{\geq 0}_0\in\cdot\,\}$ is a regular conditional distribution for $\tdF^{\geq y}_0$ given $(\gamma^y_\infty,S^y)$. We already have the desired conditional independence from $\ol\cF^y$. Finally, since this r.c.d.\ depends only on $\td\beta^y$, it is also an r.c.d.\ given $\td\beta^y$.
\end{proof}

We can now study 
$\nu_{\tt BESQ}^{(-2\alpha)}$-IP-evolutions from certain initial states. It follows from Proposition \ref{prop:PRM:cts_skewer} that $\tdN$ belongs to $\Hfins$ a.s.. 
Let $(\td\beta^{y},\ y\geq 0) := \skewerP\big(\tdN\big)$.

\begin{corollary}\label{cor:type-1:cts_from_Stable}
 Let $S>0$ be independent of $\bN$ 
 and $T := \tau^0(S)$. Then $\td\beta^{0}$ is a \Stable[\alpha]\ interval partition with total diversity $S$ 
 and $(\td\beta^{y},\ y\geq 0)$ is a path-continuous $\nu_{\tt BESQ}^{(-2\alpha)}$-IP-evolution.
\end{corollary}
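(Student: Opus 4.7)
The plan is to identify the initial state $\td\beta^0$, use clade excursion theory above level zero to realize the skewer process as one built from the initial partition clade-by-clade, and finally invoke the continuity already established in Proposition \ref{prop:PRM:cts_skewer}. First I would show that $\td\beta^0$ is a \Stable[\alpha] interval partition with total $\alpha$-diversity $S$: by Proposition \ref{prop:agg_mass_subord}, $(M^0_\bN\circ\tau^0(s),\,s\ge 0)$ is a \Stable[\alpha] subordinator with Laplace exponent $\lambda^{\alpha}$, and since $T=\tau^0(S)$ the skewer $\td\beta^0=\skewer(0,\tdN)$ is, by Definition \ref{def:skewer_2}, exactly the partition generated by its jumps over $[0,S]$. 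Proposition \ref{prop:IP:Stable} then delivers the claim.

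Next I would apply Proposition \ref{prop:PRM:Fy-_Fy+} at $y=0$ with $T=\tau^0(S)$. Hypothesis (a) is immediate since $\ell^0(T)=S$ is $\ol\cF^0$-measurable by the augmentation of the filtration. For hypothesis (b), the countable set of local-time levels at which $\tau^0$ jumps is $\bN$-measurable, so since $S$ is independent of $\bN$ it a.s.\ avoids this set; consequently $\tau^0(S-)=\tau^0(S)=T$ a.s., the interval $(\tau^0(S-),T)$ is empty, and (b) is vacuous. The proposition then yields that $\tdF^{\geq 0}_0$ has regular conditional law $\Pr^{(\alpha)}_{\td\beta^0}\{F^{\geq 0}_0\in\cdot\,\}$ given $\td\beta^0$. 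Since distinct jumps of a \Stable[\alpha] subordinator occur at distinct local-time levels, $\td\beta^0$ is a.s.\ nice in the sense of Lemma \ref{lem:cutoff_skewer}\ref{item:CPS:clades_skewer_0}, and Theorem \ref{thm:LT_property_all_levels} matches the local-time indexation of $\tdF^{\geq 0}_0$ with the $\alpha$-diversity indexation of Definition \ref{constr:type-1_2}. Lemma \ref{lem:cutoff_skewer}\ref{item:CPS:clades_skewer} then expresses every $\td\beta^y$ for $y>0$ as the concatenation $\ConcatIL_{(s,N^+_s)}\skewer(y,N^+_s)$ over the atoms of $\tdF^{\geq 0}_0$, which is the very functional that Definition \ref{constr:type-1} uses to construct $\skewerP(\bN_{\td\beta^0})$ from $F^{\geq 0}_0(\bN_{\td\beta^0})$. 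Matching conditional laws gives $(\td\beta^y,\,y\ge 0)\stackrel{d}{=}\skewerP(\bN_{\td\beta^0})$, a $\nu_{\tt BESQ}^{(-2\alpha)}$-IP-evolution from $\td\beta^0$.

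Path-continuity is then immediate from Proposition \ref{prop:PRM:cts_skewer}: $\tdN\in\Hfins$ a.s.\ and the skewer process is a.s.\ H\"older continuous in $(\IPspace,\dI)$. The main obstacle I anticipate is hypothesis (b) of Proposition \ref{prop:PRM:Fy-_Fy+}; its verification rests on the independence of $S$ from $\bN$ forcing $S$ to a.s.\ miss the random countable set of jump-levels of $\tau^0$. If instead $S$ were $\bN$-measurable, one would instead need to invoke the mid-spindle Markov property (Lemma \ref{lem:mid_spindle_Markov}) directly at the final crossing, substantially complicating the argument.
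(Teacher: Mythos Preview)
Your proof is correct and follows essentially the same approach as the paper: identify $\td\beta^0$ via Proposition~\ref{prop:agg_mass_subord}, apply Proposition~\ref{prop:PRM:Fy-_Fy+} at level~$0$ (the paper verifies hypothesis~(b) tersely by noting there is a.s.\ no bi-clade at local time~$S$, which is equivalent to your $\tau^0(S-)=\tau^0(S)$), and then invoke Lemma~\ref{lem:cutoff_skewer} and Proposition~\ref{prop:PRM:cts_skewer}. One minor point: you cite Lemma~\ref{lem:cutoff_skewer}\ref{item:CPS:clades_skewer} but argue niceness of $\td\beta^0$ in the sense of part~\ref{item:CPS:clades_skewer_0}; the paper uses~\ref{item:CPS:clades_skewer_0} together with Proposition~\ref{prop:nice_level} for level~$0$ being nice for $\bX$, though either route works here.
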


\begin{proof}
 First, the claimed distributions for $\td\beta^0$ 
follow from the \Stable[\alpha] description of $M^y_\bN$ in Proposition \ref{prop:agg_mass_subord} and the definitions of the \Stable\ 
interval partition laws in Proposition \ref{prop:IP:Stable}. 
 
 Next, note that 
 $\tdF^{\geq 0} = \restrict{\bF^{\geq 0}}{[0,\ell^0(T))}$ almost surely. 
 There is a.s.\ no bi-clade of $\bN$ about level 0 at local time $\ell^0(T) = S$. 
 From Proposition \ref{prop:PRM:Fy-_Fy+} applied at level $0$, we see that $\tdF^{\geq 0}_0$ has regular conditional distribution $\Pr^{(\alpha)}_{\td\beta^0}\{F^{\geq 0}_0\in\cdot\,\}$ given $\td\beta^0$. Thus, it has law $\Pr^{(\alpha)}_{\mu}\{F^{\geq 0}_0\in\cdot\,\}$, where $\mu$ is the law of $\td\beta^0$. Therefore, $F^{\geq 0}_0(\tdN)$ has law $\Pr^{(\alpha)}_{\mu}\{F^{\geq 0}_0\in\cdot\,\}$. From Lemma \ref{lem:cutoff_skewer} \ref{item:CPS:clades_skewer_0} and Proposition \ref{prop:nice_level}, since level 0 is a.s.\ nice for $\bN$ and thus for $\tdN$, we conclude that $(\td\beta^{y},\ y\geq 0)$ has law $\Pr^{(\alpha)}_{\mu}\{\skewerP\in\cdot\,\}$. Therefore, it satisfies Definition \ref{constr:type-1} of a $\nu_{\tt BESQ}^{(-2\alpha)}$-IP-evolution and is path-continuous.
\end{proof}

\subsection{$\nu_{\tt BESQ}^{(-2\alpha)}$-IP-evolutions starting from a single block}
\label{sec:type-1:clade}


$\!\!$On a suitable probability space $(\Omega,\cA,\Pr)$ let $\bN$ be a \PRM[\Leb\otimes\mBxc_{\tt BESQ}^{(-2\alpha)}]. We continue to use the notation of the first paragraph of Section \ref{sec:biclade_PRM} for objects related to $\bN$. 
%
%
Fix $a>0$ and let $\bff$ be a \BESQ[-2\alpha] starting from $a$ and absorbed upon hitting zero, independent of $\bN$. 
Let $\olN := \Dirac{0,\bff} + \bN$. We use barred versions of our earlier notation to refer to the corresponding objects associated with $\olN$. For example, $\olX = \bX + \zeta(\bff)$. Let $\ol T^0 = T^{-\life(\bff)}$ denote the first hitting time of 0 by $\olX$ and set $\widehat\bN := \restrict{\olN}{[0,\ol T^0)}$. By Proposition \ref{prop:clade_splitting}, $\widehat\bN$ has distribution $\mClade^+(\,\cdot\;|\;m^0 = a)$. We use hatted versions of our earlier notation to refer to the corresponding objects associated with $\widehat\bN$. Set $(\widehat\beta^y,\,y\geq 0) := \skewerP(\widehat\bN)$.

Let $(\ol\cF_t,\,t\ge0)$ and $(\ol\cF^y,\,y\ge0)$ denote 
$\Pr$-completions of the pullbacks, via $\olN\colon \Omega\to\H$, of the time- and level-filtrations on $\H$, as in Definition \ref{def:filtrations}.

 The \emph{lifetime} of a bi-clade $N\in\Hxc{\pm}$ is 
\begin{equation}\label{eqn:life}\life^+(N) := \sup_{t\in[0,\len(N)]} \xi_N(t).
\end{equation}
We call $\life^+$ ``lifetime'' rather than ``maximum'' since values in the scaffolding function play the role of times in the evolving interval partitions $(\skewer(y,N),\ y\geq 0)$ that we ultimately wish to study. 


\begin{corollary}\label{cor:clade:cts_skewer}
 The process $\widehat\bN$ a.s.\ belongs to $\Hfins$. In particular, $(\widehat\beta^y,\ y\geq 0)$ is a $\nu_{\tt BESQ}^{(-2\alpha)}$-IP-evolution starting from $\{(0,a)\}$, and it is a.s.\ H\"older-$\theta$ in $(\IPspace,\dI)$ for every $\theta\in \big(0,\alpha/2\big)$.
\end{corollary}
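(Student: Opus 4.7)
The plan is to identify $\widehat\bN$ with the construction $\bN_\beta$ from Definition \ref{constr:type-1} in the special case $\beta = \{(0,a)\}$, and then to deduce all three claims (membership in $\Hfins$, the IP-evolution law, and H\"older continuity) from results already established. For the distributional identification, observe that $\beta = \{(0,a)\}$ consists of a single block $U$ of mass $a$. The recipe in Definition \ref{constr:type-1} produces $\bN_U = \delta(0,\bff') + \restrict{\bN'}{[0,T']}$, where $\bff'$ is a \BESQ[-2\alpha] started from $a$ and absorbed at $0$, $\bN'$ is an independent \PRM[\Leb\otimes\nu_{\tt BESQ}^{(-2\alpha)}], and $T' = \inf\{t>0\colon \xi_{\bN'}(t) = -\zeta(\bff')\}$. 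This matches the construction of $\widehat\bN$ exactly in distribution, so by Definition \ref{constr:type-1} the process $\skewerP(\widehat\bN)$ is a $\nu_{\tt BESQ}^{(-2\alpha)}$-IP-evolution starting from $\{(0,a)\}$.

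For the H\"older continuity, the strategy is to mimic the proof of Proposition \ref{prop:PRM:cts_skewer}. Since $\ol T^0 < \infty$ a.s., it suffices, after intersecting with a high-probability event, to work on $\{\ol T^0 \le T\}$ for $T$ fixed and large; the conclusion will then transfer to the full probability space by letting $T\to\infty$. On this event, the spindles of $\widehat\bN$ are the single initial spindle $\bff$ together with spindles of $\restrict{\bN}{[0,T)}$. Apply Lemma \ref{lem:spindle_piles} to $\restrict{\bN}{[0,T)}$ to partition the latter into disjoint-support sequences $(g^n_j)_{j\ge1}$ with summable H\"older-$\theta$ constants $D_n$, and by Lemma \ref{lem:BESQ:Holder} adjoin $\bff$ as one additional sequence with its own H\"older-$\theta$ constant $D_0$ (any $\theta<1/2$). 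The correspondence argument in Proposition \ref{prop:PRM:cts_skewer}, using Theorem \ref{thm:Boylan} to control the H\"older-$\theta$ modulus of the local time and the bound $\sum_{n\ge 0}D_n < \infty$, then yields the H\"older-$\theta$ continuity of $(\widehat\beta^y, y\ge 0)$ for every $\theta<\alpha/2$.

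Membership of $\widehat\bN$ in $\Hfins$ is then a direct consequence: condition (iii) of Definition \ref{def:domain_for_skewer} is the H\"older continuity just established; condition (i) (finiteness of $M^y_{\widehat\bN}$) follows from the same pile decomposition as in Corollary \ref{cor:PRM:type-1_wd}, with the extra contribution from $\bff$ handled separately; and condition (ii) (bi-continuous local time satisfying \eqref{eq:div_LT_condition}) follows from Theorem \ref{thm:Boylan} applied to the scaffolding $\xi(\widehat\bN)$, which agrees on $[0,\ol T^0)$ with $\zeta(\bff) + \xi(\bN)$, together with Theorem \ref{thm:LT_property_all_levels}. The main obstacle is the bookkeeping required to graft the deterministic initial spindle $\bff$ and the random stopping time $\ol T^0$ onto the PRM arguments of Proposition \ref{prop:PRM:cts_skewer}; but because $\bff$ contributes only a single H\"older-$\theta$ spindle for any $\theta<1/2$ and $\ol T^0$ is a.s.\ finite, neither disturbs the summability of H\"older constants nor any other step of the proof.
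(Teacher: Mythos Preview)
Your proof is correct but takes a somewhat different route from the paper's. Instead of grafting $\bff$ into the pile decomposition of Lemma \ref{lem:spindle_piles} and re-running the entire correspondence argument of Proposition \ref{prop:PRM:cts_skewer}, the paper uses the clean decomposition
\[
  \widehat\beta^y = \big\{(0,\bff(y))\big\} \star \td\beta^{\,y-\zeta(\bff)},
\]
where $\tdN := \widehat\bN - \Dirac{0,\bff} = \restrict{\bN}{[0,T^{-\zeta(\bff)}]}$ is a stopped \PRM\ to which Proposition \ref{prop:PRM:cts_skewer} applies directly. H\"older-$\theta$ continuity of $(\td\beta^y)$ from that proposition, H\"older-$\theta$ continuity of $\bff$ from Lemma \ref{lem:BESQ:Holder}, and the concatenation bound of Lemma \ref{lem:IP:concat} then combine to give the result without redoing any of the pile analysis. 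The three $\Hfins$ conditions likewise reduce to those already verified for $\tdN$ via the relations $\widehat\bX = \td\bX + \zeta(\bff)$ and $M^y_{\widehat\bN} = \bff(y) + M^{y-\zeta(\bff)}_{\tdN}$. Your approach works because adding a single H\"older spindle to the piles leaves the summability intact, but the paper's decomposition is more modular: it treats Proposition \ref{prop:PRM:cts_skewer} as a black box rather than unpacking its proof. Your truncation to $\{\ol T^0 \le T\}$ is a careful way to handle the random stopping time; the paper simply asserts that $\tdN$ ``is in the regime of Section \ref{sec:type-1:PRM},'' implicitly relying on the fact that nothing in the proof of Proposition \ref{prop:PRM:cts_skewer} uses that $T$ is deterministic.
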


\begin{proof}
 For the purpose of the following let $\td\bN := \restrict{\bN}{[0,T^{-\life(\bff)}]} = \widehat\bN - \Dirac{0,\bff}$. Note that $\td\bN$ is in the regime of processes considered in Section \ref{sec:type-1:PRM}. 
 By Proposition \ref{prop:PRM:cts_skewer}, $\td\bN \in \Hfins$ almost surely. Let $\td\bX := \xi(\tdN)$ and $(\td\beta^y,\,y\ge0) := \skewerP(\tdN)$. Then
 \begin{gather*}
  \widehat\bX = \td\bX + \life(\bff), \quad M^y_{\widehat\bN}\left(\ol T^0\right) = M^{y-\life(\bff)}_{\td\bN}\left(\ol T^0\right) + \bff(y),\\
  \text{and} \quad \widehat\beta^y = \big\{(0,\bff(y))\big\} \concat \td\beta^{y-\life(\bff)}.
 \end{gather*}
 By Definition \ref{def:domain_for_skewer}, in order to have $\widehat\bN\in\Hfins$ we require that $M^y_{\widehat\bN}\left(\ol T^0\right) < \infty$, 
 that $(\widehat\beta^y,\,y\ge0)$ is continuous in $y$, and that  
 $$\wh\ell^y(t) = \IPLT_{\widehat\beta^y}\left(M^y_{\widehat\bN}(t)\right)\qquad \text{for }t\geq 0,\ y\in\left(0,\life^+\left(\whN\right)\right),$$
 where $\life^+$ denotes clade lifetime, as in 
\eqref{eqn:life}.  
 In light of the connections between $\widehat\bN$ and $\td\bN$ mentioned above, these three properties follow from the corresponding properties for $\td\bN$, noted in Proposition \ref{prop:PRM:cts_skewer}. That proposition further implies that $(\td\beta^y)$ is a.s.\ H\"older-$\theta$ for $\theta\in \big(0,\alpha/2\big)$. By Lemma \ref{lem:BESQ:Holder}, $\bff$ is a.s.\ H\"older-$\theta$ for $\theta\in \big(0,\frac12\big)$. Thus, $(\widehat\beta^y)$ is a.s.\ formed by concatenating two H\"older-$\theta$ processes, so the claimed H\"older continuity follows from Lemma \ref{lem:IP:concat} on concatenation. 
\end{proof}

\begin{lemma}[Lifetime of a $\nu_{\tt BESQ}^{(-2\alpha)}$-IP-evolution from $\{(0,a)\}$]\label{prop:type-1:transn}
 The lifetime of $(\widehat\beta^y,y\ge 0)$ has \InvGammaDist[1,a/2] distribution, i.e.\vspace{-0.1cm}
 \begin{equation}
  \Pr\big\{\life^+\big(\whN\big)> y\big\} = \Pr(\widehat\beta^y \not= \emptyset) = 1 - e^{-a/2y} \qquad \text{for }y>0.\label{eq:transn:lifetime}\vspace{-0.1cm}
 \end{equation}
\end{lemma}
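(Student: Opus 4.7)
The plan is to reduce $\Pr\{\life^+(\whN)>y\}$ to a two-sided exit probability of the $\StableA$ scaffolding $\bX$, then integrate against the explicit law of $\zeta(\bff)$ from Lemma \ref{lem:BESQ:length}. First, since $\xi(\whN)$ is spectrally positive, its running supremum is attained at the top of a jump, so $\widehat\beta^y\neq\emptyset$ (i.e.\ some spindle crosses level $y$) iff $\sup_{t}\xi_{\whN}(t)>y$, which is $\life^+(\whN)>y$; this yields the first equality in \eqref{eq:transn:lifetime}. From $\whN=\delta_{(0,\bff)}+\bN|_{[0,\overline{T}^{0})}$ we have $\xi_{\whN}(t)=\zeta(\bff)+\bX(t)$ for $t\in[0,\overline{T}^{0})$, so $\overline{T}^{0}=T^{-\zeta(\bff)}_{\bX}$ and
\[
\life^+(\whN)=\zeta(\bff)+\sup\nolimits_{0\le t\le T^{-\zeta(\bff)}_{\bX}}\bX(t).
\]

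Conditioning on $\zeta(\bff)=z$ (by the independence of $\bff$ and $\bN$), the event is automatic when $z\ge y$, and when $z<y$ it becomes $\{T^{y-z}_{\bX}<T^{-z}_{\bX}\}$. The dual $-\bX$ is spectrally negative $(1+\alpha)$-stable with scale function $W(u)\propto u^{\alpha}$ (the prefactor in the Laplace exponent \eqref{eq:JCCP:Laplace} cancels in the ratio $W(y-z)/W(y)$), and the standard two-sided exit formula for spectrally negative L\'evy processes gives
\[
\Pr\{T^{y-z}_{\bX}<T^{-z}_{\bX}\}=1-\big((y-z)/y\big)^{\alpha}.
\]
By Lemma \ref{lem:BESQ:length}, $\zeta(\bff)\sim\InvGammaDist[1+\alpha,a/2]$ with density $f(z)=(a/2)^{1+\alpha}z^{-2-\alpha}e^{-a/(2z)}/\Gamma(1+\alpha)$, so integrating yields
\[
\Pr\{\life^+(\whN)>y\}=1-y^{-\alpha}\int_{0}^{y}(y-z)^{\alpha}f(z)\,dz.
\]

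The remaining integral is a clean Gamma computation: the substitution $u=a/(2z)$ turns $f$ into a $\GammaDist[1+\alpha,1]$ density, and then $v=uy-a/2$ reduces the integral to $y^{\alpha}e^{-a/(2y)}$, collapsing the right-hand side to $1-e^{-a/(2y)}$. The only non-routine ingredient is the two-sided exit identity for the spectrally one-sided stable process $\bX$; everything after that is an elementary change of variables. A scaling reduction via $\whN\stackrel{d}{=}a\scaleH\whN_{1}$ (with $\whN_{1}$ of unit central mass) and $\life^+(a\scaleH N)=a\,\life^+(N)$ would let one reduce to the case $a=1$, but this is only cosmetic.
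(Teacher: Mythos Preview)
Your proof is correct and follows essentially the same approach as the paper: both condition on $\zeta(\bff)$, invoke the two-sided exit formula for the spectrally one-sided \StableA\ process, and integrate against the $\InvGammaDist[1+\alpha,a/2]$ density from Lemma \ref{lem:BESQ:length}. The two minor differences are: (i) for the first equality you argue directly via the no-creeping property of spectrally positive stable processes, while the paper deduces it from the Poisson structure of $\olF^y$; and (ii) you evaluate the integral by the two substitutions $u=a/(2z)$ and $v=uy-a/2$, which is cleaner and more elementary than the paper's Mellin-transform identification.
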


\begin{proof}
 By construction, $\life(\bff)$ is independent of $\bN$. Thus, by Proposition \ref{prop:nice_level}, level $y$ is a.s.\ nice for $\olX$; henceforth we restrict to that event. By Proposition \ref{prop:bi-clade_PRM} and the aforementioned independence, the point process $\olF^y = \bF^{y-\life(\bff)}$ is a \PRM[\Leb\otimes\mClade]. Let $\widehat S^y := \oll^y(\ol T^0)$. If $\widehat\bN$ survives past level $y$ then $\widehat S^y$ is the level $y$ local time at which some excursion of $\olX$ about level $y$ first reaches down to level zero:\vspace{-0.1cm}
 \begin{equation*}
  \widehat S^y = \cf\big\{\life^+\big(\whN\big) > y\big\}\inf\left\{s\!>\!0\colon \olF^y\big([0,s]\times\{N\!\in\!\Hxc{\pm}\colon \life^-(N) \geq y\}\big) > 0 \right\}.\vspace{-0.1cm}
 \end{equation*}
 Conditionally given the event $\{\life^+(\widehat\bN) > y\}$ of survival beyond level $y$, it follows from the Poisson property of $\olF^y$ that  
 $\widehat S^y>0$ a.s. In light of this, up to null events,\vspace{-0.1cm}
 \begin{equation}\label{eq:type-1:no_revival}
  \left\{\life^+\left(\widehat\bN\right) \leq y\right\} = \left\{\wh S^y = 0\right\} = \left\{\widehat\bF^{\geq y} = 0\right\} = \left\{\widehat\beta^y = \emptyset\right\}.\vspace{-0.1cm}
 \end{equation}
 Recall from Lemma \ref{lem:BESQ:length} the law of $\life(\bff)$. By the two-sided exit problem for spectrally one-sided \StableA\ processes (e.g. 
 \cite[Theorem VII.8]{BertoinLevy}), $x+\bX$ exits $[0,y]$ at 0 with probability $(1-x/y)^\alpha$, for all $x\in(0,y)$. Hence,\vspace{-0.1cm}
 \begin{align*}\Pr\big\{\life^+\big(\whN\big)\le y\big\}&=\int_0^y\frac{a^{1+\alpha}}{\Gamma(1+\alpha)2^{1+\alpha}}x^{-2-\alpha}e^{-a/2x}(1-x/y)^\alpha dx\\
   &=\frac{a^{1+\alpha}y^{-1-2\alpha}}{\Gamma(1+\alpha)2^{1+\alpha}}\int_0^1u^{-2-\alpha}(1-u)^\alpha e^{-a/2uy}du.\vspace{-0.1cm}
 \end{align*}
 Setting $y=a/2z$, we need to show that this equals $e^{-z}$. This follow by calculating the Mellin transform\vspace{-0.1cm}
 \begin{align*}&\int_0^\infty z^r\frac{z^{1+2\alpha}}{\Gamma(1+\alpha)}\int_0^1u^{-2-\alpha}(1-u)^\alpha e^{-z/u}dudz\\
   &=\frac{1}{\Gamma(1+\alpha)}\int_0^1u^{-2-\alpha}(1-u)^\alpha\Gamma(2+2\alpha+r)u^{2+2\alpha+r}du=\Gamma(r+1),\vspace{-0.1cm}
 \end{align*}
 which is the Mellin transform of $e^{-z}$. Since the exponential distribution is determined by its moments, this completes the proof.
\end{proof}


We now extend the Markov-like property of Proposition \ref{prop:PRM:Fy-_Fy+} to the present setting.

\begin{proposition}\label{prop:clade:Fy-_Fy+}
 $\widehat\bF^{\geq y}_0$ is conditionally independent of $\ol\cF^y$ given $\widehat\beta^y$ with regular conditional distribution $\Pr^{(\alpha)}_{\widehat\beta^y}\big\{ F^{\geq 0}_0\in \cdot\,\big\}$, where this law is as in Definition \ref{constr:type-1_2}.
\end{proposition}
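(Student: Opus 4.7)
The approach is to adapt the proof of Proposition \ref{prop:PRM:Fy-_Fy+} to the setting where $\olN=\delta(0,\bff)+\bN$ has the extra initial spindle $\bff$. The key is to decompose $\widehat\bN$ at level $y$ into an initial clade above $y$ (which absorbs $\hat\bff^y$) and a subsequent piece driven by a fresh \PRM, and then to apply Proposition \ref{prop:clade_splitting}(ii) to identify the law of the initial clade and Proposition \ref{prop:PRM:Fy-_Fy+} to the remainder. The base case $y=0$ is immediate: $\widehat\beta^0=\{(0,a)\}$ and $\widehat\bF^{\geq 0}_0=\delta_{(0,\widehat\bN)}$, so the distributional claim is exactly Proposition \ref{prop:clade_splitting}(ii), namely $\widehat\bN\sim\mClade^+(\,\cdot\,|\,m^0=a)$, which matches $\Pr^{(\alpha)}_{\{(0,a)\}}\{F^{\geq 0}_0\in\cdot\,\}$; independence from $\ol\cF^0$ follows because $\olX\ge 0$ on $[0,\ol T^0)$, so the below-$0$ cutoff of $\olN$ depends only on $\ShiftRestrict{\bN}{[\ol T^0,\infty)}$, which by the strong Markov property of $\bN$ at the creep-time $\ol T^0=T^{-\zeta(\bff)}_\bX$ is independent of $(\bff,\restrict{\bN}{[0,\ol T^0)})$ given $\zeta(\bff)$.

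Now fix $y>0$ and set $T_1:=T^{y-\zeta(\bff)}_\bX$. On $\{\zeta(\bff)>y\}$, $T_1>0$, one has $\widehat\bX\ge y$ on $[0,T_1]$ with equality only at $T_1$, and the initial clade above $y$ is $\widehat\bN_{\mathrm{init}}:=\delta(0,\hat\bff^y)+\restrict{\bN}{[0,T_1]}$, with no spindle of $\restrict{\bN}{[0,T_1]}$ lying below $y$. The Markov property of the \BESQ[-2\alpha] spindle $\bff$ at its hitting time of $y$ yields that, conditionally on $\bff(y)=m$, $\hat\bff^y$ is a \BESQ[-2\alpha] from $m$, independent of $\check\bff^y$ and $\bN$; Proposition \ref{prop:clade_splitting}(ii) then gives $\widehat\bN_{\mathrm{init}}\sim\mClade^+(\,\cdot\,|\,m^0=m)$ given $\bff(y)=m$. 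By the strong Markov property of $\bN$ at $T_1$ (given $\zeta(\bff)$), the shifted remainder $\bN':=\ShiftRestrict{\bN}{(T_1,\ol T^0)}$ is a fresh $\PRM[\Leb\otimes\mBxc_{\tt BESQ}^{(-2\alpha)}]$ stopped at $T':=T^{-y}_{\xi(\bN')}$, independent of $(\bff,\restrict{\bN}{[0,T_1]})$. Apply Proposition \ref{prop:PRM:Fy-_Fy+} to $\bN'$ at level $0$ with stopping time $T'$: conditions (a) and (b) hold because the spectrally positive \StableA\ scaffolding $\xi(\bN')$ creeps continuously from $0$ down to $-y$. On the complementary event $\{\zeta(\bff)\le y\}$, $T_1=0$ and $\bff$ contributes nothing at level $y$, so the argument reduces to applying Proposition \ref{prop:PRM:Fy-_Fy+} directly to $\restrict{\bN}{[0,\ol T^0)}$ with $\zeta(\bff)$ as the auxiliary $\ol\cF^0\cap\ol\cF_0$-measurable randomisation.

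Set $\beta':=\skewer(0,\restrict{\bN'}{[0,T')})$ and $G':=F^{\geq 0}_0(\restrict{\bN'}{[0,T')})$. Then $\widehat\beta^y=\{(0,\bff(y))\}\concat\beta'$ (with the first block dropped on $\{\zeta(\bff)\le y\}$), and $\widehat\bF^{\geq y}_0=\delta(0,\widehat\bN_{\mathrm{init}})+G'$ after aligning local times of $\widehat\bX$ at $y$ with those of $\xi(\bN')$ at $0$ via the shift by $T_1$ (noting that $\widehat\bX>y$ on $[0,T_1)$, so no $y$-local time accrues before $T_1$). Since $(\widehat\bN_{\mathrm{init}},\bff(y))$ is conditionally independent of $(\bN',\beta',G')$ given $\zeta(\bff)$, the joint conditional law of $\widehat\bF^{\geq y}_0$ given $\widehat\beta^y$ is the product of $\mClade^+(\,\cdot\,|\,m^0=\bff(y))$ placed at local time $0$ with $\Pr^{(\alpha)}_{\beta'}\{F^{\geq 0}_0\in\cdot\,\}$, which is exactly $\Pr^{(\alpha)}_{\widehat\beta^y}\{F^{\geq 0}_0\in\cdot\,\}$ of Definition \ref{constr:type-1_2}. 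The conditional independence from $\ol\cF^y$ is obtained by decomposing $\ol\cF^y$ into three pieces: on $[0,T_1]$ it reduces to $\check\bff^y$ (since $\widehat\bX\ge y$ there), and $\widehat\bN_{\mathrm{init}}$ is conditionally independent of $\check\bff^y$ given $\bff(y)$ by the Markov property of $\bff$; on $[T_1,\ol T^0)$ it is the below-$0$ cutoff of $\bN'$, from which $G'$ is conditionally independent given $\beta'$ by Proposition \ref{prop:PRM:Fy-_Fy+}; and beyond $\ol T^0$ it is independent of $\widehat\bN$ by strong Markov. The main obstacle will be the bookkeeping required to verify each of these three pieces lies in the correct conditional-independence class, and to unify the argument on $\{\zeta(\bff)>y\}$ and $\{\zeta(\bff)\le y\}$ into a single clean statement.
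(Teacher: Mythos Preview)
Your approach is essentially the paper's: both isolate the initial level-$y$ clade via a Markov/mid-spindle property and then invoke Proposition~\ref{prop:PRM:Fy-_Fy+} for the fresh \PRM\ governing the remaining bi-clades. The paper unifies the two cases by extending the mid-spindle Markov property of Lemma~\ref{lem:mid_spindle_Markov} to $\olN$ at the crossing time $\ol T^{\ge y}$ (which equals $0$ precisely when $\zeta(\bff)>y$), whereas you treat the cases separately and, on $\{\zeta(\bff)\le y\}$, let Proposition~\ref{prop:PRM:Fy-_Fy+} absorb the first-clade split by applying it, after conditioning on $\zeta(\bff)$, at the shifted level $y-\zeta(\bff)$; this is a legitimate shortcut. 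One small slip: on $\{\zeta(\bff)<y\}$ your $T_1=T^{y-\zeta(\bff)}_{\bX}$ is the hitting time of a strictly positive level and hence is a.s.\ positive, not $0$; fortunately you do not actually use $T_1$ in that branch.
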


\begin{proof}
 By \eqref{eq:type-1:no_revival}, the claimed regular conditional distribution holds trivially on $\big\{\life^+\big(\widehat\bN\big) \leq y\big\}$. Likewise, the result is trivial for $y=0$.
 
 The mid-spindle Markov property for $T^{\geq y}$, Lemma \ref{lem:mid_spindle_Markov}, may be extended from $\bN$ to apply to $\olN$. Indeed, if $\ol T^{\geq y} > 0$ then the same proof goes through; otherwise, if $\ol T^{\geq y} = 0$, i.e.\ if $\life(\bff) > y$, then the lemma reduces to the Markov property of $\bff$ at $y$. We use this extension to split $\olN$ into three segments.
 
 Let $T := \ol T^{\geq y}$. Let $(T,f_T)$ denote the spindle of $\ol\bN$ at this time, which equals $(0,\bff)$ if $\life(\bff) > y$. Let $\hat f^y_T$ and $\check f^y_T$ denote the broken spindles of \eqref{eq:spindle_split}. Extending the notation of Definition \ref{def:bi-clade_PP} (and Appendix \ref{app:exc_intervals}), set
 \begin{equation*}
  \olN^{\leq y}_{\textnormal{first}} := \Restrict{\olN}{[0,T)} + \Dirac{T,\check f_T^y},
  \ \olN^{\geq y}_{\textnormal{first}} := \ShiftRestrict{\olN}{\left(T,\ol T^y\right)} + \Dirac{0,\hat f_T^y},
  \ \olN^y_* := \ShiftRestrict{\olN}{[\ol T^y,\infty)}.
 \end{equation*}
 Let $H_1$ and $H_2$ be non-negative measurable functions on $\Hfin$, and likewise for $H_3$ on $\H$. Recall \eqref{eq:LMB_def} defining $m^y$. In the present setting $m^y(\olN) = f_T\big(y-\olX(T-)\big)$. By the preceding extension of the mid-spindle Markov property and the disintegration of $\mClade^+$ in Proposition \ref{prop:clade_splitting},
 \begin{equation*}
  \EV \left[ H_1\big(\olN^{\leq y}_{\textnormal{first}}\big)H_2\big(\olN^{\geq y}_{\textnormal{first}}\big) \right] = \EV \left[ H_1\big(\olN^{\leq y}_{\textnormal{first}}\big) \mClade^+\left[H_2\;\middle|\;m^0 = m^y\big(\olN^{\leq y}_{\textnormal{first}}\big)\right] \right],
 \end{equation*}
 Moreover, by the strong Markov property of $\olN$ applied at $\ol T^y$, $(\olN^{\leq y}_{\textnormal{first}},\olN^{\geq y}_{\textnormal{first}})$ is independent of $\olN^y_*$ and the latter is distributed like $\bN$. Thus,
 \begin{align*}
  &\EV \left[ H_1\big(\olN^{\leq y}_{\textnormal{first}}\big)H_2\big(\olN^{\geq y}_{\textnormal{first}}\big)H_3\big(\olN^y_*\big) \right] \\ 
  &= \EV \left[ H_1\big(\olN^{\leq y}_{\textnormal{first}}\big) \mClade^+\left[H_2\;\middle|\;m^0 = m^y\big(\olN^{\leq y}_{\textnormal{first}}\big)\right] \right] \EV\big[ H_3(\bN) \big].
 \end{align*}
 The event $\{\life^+(\widehat\bN) > y\}$ equals the event that the process $\xi\big(\olN^{\leq y}_{\textnormal{first}}\big)$ is non-negative. 
 In particular, this belongs to the $\sigma$-algebra $\sigma\big(\olN^{\leq y}_{\textnormal{first}}\big)$. Thus, the above formula also holds for the conditional expectation given this event. On this event, $\ol T^{\geq y} = \widehat T^{\geq y} < \infty$.
 
 Let $\tdN := \restrict{\olN^y_*}{[0,T^{-y}(\olN^y_*))}$. The stopping time $T^{-y}(\bN)$ satisfies the hypotheses of Proposition \ref{prop:PRM:Fy-_Fy+}. Thus, that proposition applies to the stopped \PRM\ $\tdN$. On the event $\{\life^+(\widehat\bN) > y\}$, which is independent of $\tdN$, we have $\tdN = \shiftrestrict{\whN}{[\wh T^y,\wh T^0)}$ and so $\whF^{y} = F^{0}_0(\tdN)$. Conditionally given this event, by Proposition \ref{prop:PRM:Fy-_Fy+}, the clade point process $\whF^{\geq y}$ is conditionally independent of $\olF^{\leq y}$ given $\skewer\big(0,\tdN\big) =: \beta$, with regular conditional distribution $\Pr^{(\alpha)}_{\beta}\{F^{\geq 0}_0\in\cdot\,\}$. It follows from Proposition \ref{prop:nice_level} that level $y$ is a.s.\ nice for $\ol\bN$. Thus, by Lemma \ref{lem:cutoff_vs_PP}, $\big(\olN^{\leq y}_{\textnormal{first}},\olF^{\leq y}\big)$ generates $\ol\cF^y$ up to $\Pr$-null sets. Putting all of this together, $\whF^{\geq y}_0 = \whF^{\geq y} + \cf\big\{\life^+\big(\widehat\bN\big) > y\big\}\DiracBig{0,\olN^{\geq y}_{\textnormal{first}}}$ a.s., and this has the desired conditional independence and regular conditional distribution.
\end{proof}

\section{$\nu_{q,c}^{(-2\alpha)}$-IP-evolutions with arbitrary initial states}\label{sec:arbitraryinitialdata}

\subsection{Simple Markov property of $\nu_{\tt BESQ}^{(-2\alpha)}$-IP-evolutions}
\label{sec:type-1_gen:cts}

In this section we fix $\alpha\in(0,1)$ and follow the notation of Definition \ref{constr:type-1} for $c=q=1$. Let $\beta\in\IPspace:=\IPspace_\alpha$, $(\bN_U,\,U\in\beta)$, $\bN_{\beta}$, and $(\beta^y,\,y\geq 0)$. We treat these objects as maps on a probability space $(\Omega,\cA,\Pr)$. We additionally define
\begin{equation}\label{eq:concat_skewers:type-1}
 \beta^y_U := \skewer(y,\bN_U) \quad \text{for }y\ge0,\ U\in\beta, \qquad \text{so} \qquad \beta^y = \ConcatIL_{U\in\beta}\beta^y_U.
\end{equation}
For each of the filtrations $(\cF_t)$, $(\cF_{t-})$, $(\cF^y)$, and $(\cF^{y-})$ on $\H$ introduced in Definition \ref{def:filtrations}, we accent with a bar, as in $(\ol\cF_t,\ t\geq 0)$, to denote the completion of the filtration under the family of measures $(\Pr^{(\alpha)}_{\beta},\ \beta\in\IPspace)$.

We begin this section by showing that $(\beta^y,\,y\geq 0)$ is a.s.\ an $\IPspace$-valued process, and we derive its transition kernel. Then we prove a simple Markov property of $(\beta^y,\,y\geq 0)$ as a random element of the product space $\IPspace^{[0,\infty)}$. Finally, we prove the existence of a continuous version of $(\beta^y,\,y\geq 0)$ 
as well as a simple Markov property for this continuous process.

\begin{lemma}\label{lem:finite_survivors}
 For $(\beta^y_U,\,y\ge0,\,U\in\beta)$ as above, 
 $\EV [\#\{U\in J\colon \beta_U^y \not= \emptyset\} ] \leq \frac{1}{2y}\sum_{U\in J}\Leb(U)$ and $\Pr \{\forall U\in J,\,\beta_U^y = \emptyset \} \geq 1 - \frac{1}{2y}\sum_{U\in J}\Leb(U)$ for all $J\subseteq\beta$ and $y>0$. 
 In particular, a.s.\ only finitely many of the $\big(\beta_U^z,\,z\geq 0\big)$ survive to level $y$.
\end{lemma}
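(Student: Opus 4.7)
The plan is to prove both inequalities as direct consequences of Lemma \ref{prop:type-1:transn}, combined with the independence built into Definition \ref{constr:type-1}. Since $\beta\in\IPspace$ is a summable collection of blocks, the index set $\beta$ is at most countable, so summations and products indexed by $J\subseteq\beta$ cause no measurability issues.

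First, by Definition \ref{constr:type-1} the family $(\bN_U,\,U\in\beta)$ is independent, and by Proposition \ref{prop:clade_splitting}\,(ii) each $\bN_U$ has law $\mClade^+(\,\cdot\,|\,m^0=\Leb(U))$. Writing $a_U:=\Leb(U)$, the process $\skewerP(\bN_U)$ therefore has the same distribution as the clade evolution $(\widehat\beta^y,\,y\ge 0)$ of Section \ref{sec:type-1:clade} started from $\{(0,a_U)\}$. Lemma \ref{prop:type-1:transn} then yields
\[
\Pr(\beta_U^y\neq\emptyset)\;=\;1-e^{-a_U/(2y)}\qquad\text{for each }U\in\beta,\ y>0.
\]

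Next, the first inequality follows by linearity of expectation together with the elementary bound $1-e^{-x}\le x$ for $x\ge 0$:
\[
\EV\bigl[\#\{U\in J\colon\beta_U^y\neq\emptyset\}\bigr]
\;=\;\sum_{U\in J}\bigl(1-e^{-a_U/(2y)}\bigr)
\;\le\;\frac{1}{2y}\sum_{U\in J}\Leb(U).
\]
For the second inequality, independence of the $\bN_U$ and the bound $e^{-x}\ge 1-x$ give
\[
\Pr\{\forall U\in J,\,\beta_U^y=\emptyset\}
\;=\;\prod_{U\in J}e^{-a_U/(2y)}
\;=\;\exp\!\Bigl(-\tfrac{1}{2y}\textstyle\sum_{U\in J}\Leb(U)\Bigr)
\;\ge\;1-\frac{1}{2y}\sum_{U\in J}\Leb(U).
\]

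Finally, taking $J=\beta$ and using $\sum_{U\in\beta}\Leb(U)=\|\beta\|<\infty$, the first inequality shows that the non-negative integer random variable $\#\{U\in\beta\colon\beta_U^y\neq\emptyset\}$ has finite mean, and is therefore a.s.\ finite. The argument is entirely routine once one has the explicit lifetime law from Lemma \ref{prop:type-1:transn}; the only substantive step is identifying the law of $\bN_U$ via Proposition \ref{prop:clade_splitting}\,(ii) so that this lifetime formula applies block-by-block, and there is no real obstacle.
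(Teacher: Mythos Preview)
Your proof is correct and takes essentially the same approach as the paper: both rely on Lemma \ref{prop:type-1:transn} (equation \eqref{eq:transn:lifetime}) for the survival probability $\Pr(\beta_U^y\neq\emptyset)=1-e^{-\Leb(U)/2y}$, the independence of the $(\bN_U)$ from Definition \ref{constr:type-1}, and the elementary bound $e^{-x}\ge 1-x$. Your version simply spells out the details that the paper's two-sentence proof leaves implicit.
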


\begin{proof}
 The variables $\cf\{\beta^y_U = \emptyset\}$ are independent Bernoulli trials with respective parameters $e^{-\Leb(U)/2y}$, by \eqref{eq:transn:lifetime}. Thus, both inequalities follow from $e^{-x} \geq 1-x$.
\end{proof}

We can extend Theorem \ref{thm:LT_property_all_levels} to the present setting.

\begin{proposition}\label{prop:type-1:LT_diversity} 
 It is a.s.\ the case that $\IPLT_{\beta^y}\big(M^y_{\bN_{\beta}}(t)\big) = \ell^y_{\bN_{\beta}}(t)$ for all $t\geq 0$, $y>0$.
\end{proposition}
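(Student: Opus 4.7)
The plan is to reduce the claimed identity to the per-clade identity already established in Corollary \ref{cor:clade:cts_skewer} and then exploit the additivity of local time, aggregate mass, and interval-partition diversity under concatenation. By Proposition \ref{prop:clade_lengths_summable}\itemref{item:CLS:scaffold}, the scaffolding $\xi(\bN_{\beta})$ is the concatenation of the $\xi(\bN_U)$, so each $\bN_U$ occupies the time interval $[S(U-),S(U)]$ with $S$ as in \eqref{eq:length_partial_sums}. For $t\in[S(U-),S(U)]$ this yields the three additive decompositions
\begin{align*}
 M^y_{\bN_{\beta}}(t) &= \sum_{V\prec U} M^y_{\bN_V}(\len(\bN_V)) + M^y_{\bN_U}(t-S(U-)),\\
 \ell^y_{\bN_{\beta}}(t) &= \sum_{V\prec U} \ell^y_{\bN_V}(\len(\bN_V)) + \ell^y_{\bN_U}(t-S(U-)),\\
 \IPLT_{\beta^y}\!\big(M^y_{\bN_{\beta}}(t)\big) &= \sum_{V\prec U} \IPLT_{\beta^y_V}(\infty) + \IPLT_{\beta^y_U}\!\big(M^y_{\bN_U}(t-S(U-))\big),
\end{align*}
where the last line uses $\beta^y=\ConcatIL_{V\in\beta}\beta^y_V$ from \eqref{eq:concat_skewers:type-1} together with the additivity of $\IPLT$ under concatenation.

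Next, for each $U\in\beta$, Corollary \ref{cor:clade:cts_skewer} gives $\bN_U\in\Hfins$ a.s., which by Definition \ref{def:domain_for_skewer}\itemref{item:d_f_s:div} means the identity $\IPLT_{\beta^y_U}(M^y_{\bN_U}(s))=\ell^y_{\bN_U}(s)$ holds simultaneously for all $s\ge0$ and all $y\in(0,\life^+(\bN_U))$. For $y\ge\life^+(\bN_U)$ both sides vanish (using \eqref{eq:type-1:no_revival} for the diversity side and the fact that $\xi(\bN_U)$ never reaches $y$ for the local-time side), so the per-clade identity holds for every $y>0$ on an almost-sure event. Taking a countable intersection over $U\in\beta$ produces a single almost-sure event on which this per-clade identity holds for all $U$ at all $y>0$ and all times.

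To assemble these per-clade identities into the global one at a fixed $y>0$, we need the sums above to be finite. By Lemma \ref{lem:finite_survivors} applied at level $y-\epsilon$ for any $\epsilon\in(0,y)$, the set $\{V\in\beta\colon\life^+(\bN_V)>y-\epsilon\}$ has finite expected size $\le\|\beta\|/(2(y-\epsilon))$; letting $\epsilon\downarrow0$ gives that $J_y:=\{V\colon\life^+(\bN_V)\ge y\}$ is a.s.\ finite. Only the clades in $J_y$ contribute nonzero terms to either sum, so each sum is effectively finite and the two sides match term-by-term via the per-clade identity.

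The main obstacle is upgrading the identity, valid a.s.\ at each fixed $y>0$, to an a.s.\ statement that holds simultaneously for all $(t,y)\in[0,\infty)\times(0,\infty)$. For rational $y>0$ this is immediate by countable intersection; to pass to all real $y>0$, I would argue that, on the almost-sure event chosen above, both sides of the identity are continuous in $y$ on $(0,\infty)$ locally uniformly in $t$. The right-hand side inherits joint continuity from the local time of each $\xi(\bN_U)$ (Definition \ref{def:domain_for_skewer}\itemref{item:d_f_s:div}) together with finiteness of $J_{y-\epsilon}$ for $\epsilon$ small, which reduces the infinite concatenation to a locally finite sum of jointly continuous terms; the left-hand side inherits continuity from the $d_\IPspace$-continuity of each $y\mapsto\beta^y_U$ (Corollary \ref{cor:clade:cts_skewer}), which provides $y$-continuity of $\IPLT_{\beta^y_U}(\cdot)$ and $M^y_{\bN_U}(\cdot)$, again combined over the finite set $J_{y-\epsilon}$. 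Agreement on a dense set of $y$ then forces agreement everywhere.
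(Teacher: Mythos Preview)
Your core argument --- additivity of mass, local time, and diversity under concatenation, combined with the per-clade identity from Corollary \ref{cor:clade:cts_skewer} and finiteness of surviving clades --- is exactly the paper's approach. The gap is in the final step, where you pass from ``all rational $y>0$'' to ``all real $y>0$'' via a continuity argument. This is both unnecessary and not rigorously justified: you have not established that $y\mapsto\IPLT_{\beta^y}(M^y_{\bN_\beta}(t))$ is continuous for fixed $t$ (controlling $\IPLT_{\beta^y}(x)$ at a moving point $x$ is more delicate than controlling $\IPLT_{\beta^y}(\infty)$, which is all that $d_\IPspace$-continuity immediately gives), and the claim that finiteness of $J_{y-\epsilon}$ reduces everything to a locally finite sum depends on an almost-sure event that itself varies with $y$.

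The fix is simple and matches the paper exactly: enlarge your almost-sure event up front. In addition to $\bN_U\in\Hfins$ for every $U$, also require that for every $n\in\BN$ the set $\{V\in\beta:\life^+(\bN_V)>1/n\}$ is finite --- this is a countable intersection of the a.s.\ events furnished by Lemma \ref{lem:finite_survivors} at levels $1/n$, hence itself a.s. On this single event, for \emph{every} $y>0$ (not just rational $y$) one picks $n$ with $1/n<y$, obtains finiteness of $J_y$ deterministically, and your additive decomposition and per-clade identity run through verbatim. No continuity or density argument is needed.
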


\begin{proof}
 Appealing to Corollary \ref{cor:clade:cts_skewer} and Lemma \ref{lem:finite_survivors}, we may restrict to an a.s.\ event on which:
 \begin{equation}\label{eq:good_clades_event}
  \forall U\!\in\!\beta,\ \bN_U\!\in\!\Hfins, \text{ and } \forall n\!\in\!\BN,\ \#\left\{V\!\in\!\beta\colon \life^+\left(\bN_V\right)\!>\!1/n\right\} < \infty.
 \end{equation}
 Let $y>0$ and consider the left-to-right ordered sequence $U_1,\ldots,U_K$ of intervals $U\in\beta$ for which $\life^+(\bN_{U})>y$. For $U\in\beta$, define $S(U-):=\sum_{V\in\beta\colon V<U}\len(\bN_V)$ and $S(U):=S(U-) + \len(\bN_U)$.
 
 Since no clade prior to time $S(U_1-)$ survives to level $y$, $\IPLT_{\beta^y}\big(M^y_{\bN_{\beta}}(t)\big) = \ell^y_{\bN_{\beta}}(t) = 0$ for $t\leq S(U_1-)$. We assume for induction that the same holds up to time $S(U_j-)$. Then for all $t\in [S(U_j-),S(U_j)]$
 \begin{equation*}
 \begin{split}
  \IPLT_{\beta^y}\big(M^y_{\bN_{\beta}}(t)\big) &= \IPLT_{\beta^y}\big(M^y_{\bN_{\beta}}(S(U_j-))\big) + \IPLT_{\beta^y_{U_j}}\big(M^y_{\bN_{U_j}}(t-S(U_j-))\big)\\
  		&= \ell^y_{\bN_{\beta}}(S(U_j-)) + \ell^y_{\bN_{U_j}}(t-S(U_j-)) = \ell^y_{\bN_{\beta}}(t),
 \end{split}
 \end{equation*}
 where the middle equality follows from our assumption $\bN_{U_j}\in\Hfins$ and the inductive hypothesis. For $t\in [S(U_j),S(U_{j+1}-)]$ or, if $j=K$, for all $t \geq S(U_j)$, no additional local time accrues and at most one skewer block arrives at level $y$ during this interval. Thus, on this interval,
 $$\IPLT_{\beta^y}\big(M^y_{\bN_{\beta}}(t)\big) = \IPLT_{\beta^y}\big(M^y_{\bN_{\beta}}(S(U_j))\big) = \ell^y_{\bN_{\beta}}(S(U_j)) = \ell^y_{\bN_{\beta}}(t).$$
 By induction, this proves that the identity holds at all $t\geq 0$ at level $y$, for all $y>0$.
\end{proof}

\begin{lemma}\label{lem:type-1:wd}
 It is a.s.\ the case that for every $y>0$, the collection of interval partitions $(\beta_U^y,U\!\in\! \beta)$ is strongly summable in the sense defined above Lemma
 \ref{lem:IP:concat}, 
 i.e.\ $\beta^y\!=\!\Concat_{U\in\beta}\beta_U^y$ is well-defined and lies in $\IPspace$.
\end{lemma}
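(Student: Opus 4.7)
The plan is to reduce the claim to a finite-concatenation argument using the three structural facts that are already in place, namely Corollary \ref{cor:clade:cts_skewer}, Lemma \ref{lem:finite_survivors}, and Proposition \ref{prop:clade_lengths_summable}\ref{item:CLS:CLS}.

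First I would work on the intersection of the following a.s.\ events, each of which holds by a previously established result: (i) $\sum_{U\in\beta}\len(\bN_U)<\infty$, so that $\bN_\beta$ itself is well-defined by Definition \ref{constr:type-1}; (ii) for every $U\in\beta$, $\bN_U\in\Hfins$, so in particular $\beta_U^y=\skewer(y,\bN_U)\in\IPspace$ for all $y\geq 0$; and (iii) for every $n\in\BN$, $\#\{U\in\beta\colon \beta_U^{1/n}\neq\emptyset\}<\infty$. Item (i) comes from Proposition \ref{prop:clade_lengths_summable}\ref{item:CLS:CLS}. Item (ii) is an application of Corollary \ref{cor:clade:cts_skewer} to each $U\in\beta$; since $\beta$ is a collection of pairwise disjoint open subintervals of a bounded interval it is at most countable, so a countable intersection of full-probability events again has full probability. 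Item (iii) is Lemma \ref{lem:finite_survivors} applied at each level $1/n$.

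Now fix $y>0$ arbitrary and choose $n$ with $1/n\leq y$. Monotonicity of survival in the level parameter shows that $\{U\in\beta\colon\beta_U^y\neq\emptyset\}\subseteq\{U\in\beta\colon\beta_U^{1/n}\neq\emptyset\}$, so only finitely many $\beta_U^y$ are nonempty; enumerate these in their $\beta$-order as $U_1\prec\cdots\prec U_K$. Summability is immediate:
\begin{equation*}
 \sum_{U\in\beta}\IPmag{\beta_U^y}=\sum_{j=1}^K\IPmag{\beta_{U_j}^y}<\infty,
\end{equation*}
because each $\beta_{U_j}^y\in\IPspace$ has finite mass. Moreover this bounds the partial sums $S(U-)$ used in the definition \eqref{eq:IP:concat_def} of the concatenation, so $\beta^y=\Concat_{U\in\beta}\beta_U^y$ is defined and equals the finite concatenation $\beta_{U_1}^y\concat\cdots\concat\beta_{U_K}^y$.

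It remains to check that $\beta^y\in\IPspace$, i.e.\ that $\beta^y$ has the $\alpha$-diversity property. For $t\in[0,\IPmag{\beta^y}]$ let $j^*$ be the index with $t\in[S_{j^*-},S_{j^*}]$, where $S_{j-}:=\sum_{i<j}\IPmag{\beta_{U_i}^y}$ and $S_j:=S_{j-}+\IPmag{\beta_{U_j}^y}$. Counting blocks in $\beta^y$ up to $t$ with width $>h$ splits, by definition of the concatenation, as a sum of $K$ such counts in the individual pieces. Multiplying by $\Gamma(1-\alpha)h^\alpha$ and letting $h\downarrow 0$, each summand converges, since each $\beta_{U_j}^y\in\IPspace$, and hence
\begin{equation*}
 \IPLT_{\beta^y}^\alpha(t)=\sum_{j<j^*}\IPLT_{\beta_{U_j}^y}^\alpha(\infty)+\IPLT_{\beta_{U_{j^*}}^y}^\alpha(t-S_{j^*-})
\end{equation*}
exists, as a finite sum of existing limits. (Alternatively, one can invoke Proposition \ref{prop:type-1:LT_diversity}, whose identity asserts the existence of $\IPLT_{\beta^y}^\alpha$ at all values attained by $M^y_{\bN_\beta}$; combined with the monotone step structure of $M^y_{\bN_\beta}$ this fills out $[0,\IPmag{\beta^y}]$.) This shows $\beta^y\in\IPspace$, completing strong summability. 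I do not anticipate a genuine analytic obstacle here; the only conceptual point is realising that, although the index set $\beta$ may be infinite, the collection of interval partitions to be concatenated is finite at each positive level, so the diversity property for the concatenation follows from the finite case.
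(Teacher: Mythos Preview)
Your argument is correct and is essentially the paper's proof unpacked: the paper just restricts to the a.s.\ event \eqref{eq:good_clades_event} and observes that finite families in $\IPspace$ are strongly summable. One small sharpening: your ``monotonicity of survival'' step is cleanest if you phrase (iii) in terms of clade lifetimes, i.e.\ $\#\{U\in\beta:\life^+(\bN_U)>1/n\}<\infty$, since $\beta_U^y\neq\emptyset\Rightarrow\life^+(\bN_U)>y$ holds deterministically and $\life^+$ is manifestly monotone in the threshold --- this is exactly how the paper states \eqref{eq:good_clades_event}, and it avoids having to invoke the no-revival identity \eqref{eq:type-1:no_revival} at the levels $1/n$.
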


\begin{proof}
 This holds on the event in \eqref{eq:good_clades_event}, as finite sequences in $\IPspace$ are strongly summable.
\end{proof}

\begin{proposition}[Transition kernel for $\nu_{\tt BESQ}^{(-2\alpha)}$-IP-evolutions]\label{prop:type-1:gen_transn}
 Fix $y\!>\!0$. Let $(\gamma^y_U,U\!\in\!\beta)$ denote an independent family of partitions, with each $\gamma^y_U$ a $\nu_{\tt BESQ}^{(-2\alpha)}$-IP-evolution
 starting from the single interval $\{(0,\Leb(U))\}$ at time $y$. Then $\skewer(y,\bN_{\beta})\stackrel{d}{=}\ConcatIL_{U\in\beta}\gamma^y_U$, and this law is supported on $\IPspace$.
\end{proposition}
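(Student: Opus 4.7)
The plan is to reduce the proposition to the already-established single-block case (Corollary \ref{cor:clade:cts_skewer}) by showing that the skewer map at level $y$ commutes with the concatenation $\bN_\beta=\ConcatIL_{U\in\beta}\bN_U$ of Definition \ref{constr:type-1}. Once that is established, the independence of the $\bN_U$, built into the construction, and the identification of the law of each $\bN_U$ as $\mClade^+(\,\cdot\,|\,m^0=\Leb(U))$ via Proposition \ref{prop:clade_splitting}(ii) will deliver the conclusion.

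First I would verify the geometric commutation identity
$$\skewer(y,\bN_\beta)=\Concat_{U\in\beta}\skewer(y,\bN_U).$$
Each $\bN_U$ is by construction a clade: its scaffolding $\xi(\bN_U)$ leaves $0$ via the initial $\bff_U$-jump and returns to $0$ at the stopping time $T=\inf\{t>0\colon \xi_\bN(t)=-\zeta(\bff_U)\}$, remaining non-negative in between. By Proposition \ref{prop:clade_lengths_summable}(iii), $\xi(\bN_\beta)=\ConcatIL_{U\in\beta}\xi(\bN_U)$, so during the time-segment $[S(U-),S(U))$ corresponding to $\bN_U$ the scaffolding of $\bN_\beta$ is simply the shifted copy of $\xi(\bN_U)$. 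Consequently a given level $y>0$ of $\bN_\beta$ is traversed, within that segment, by exactly the spindles that traverse level $y$ of $\bN_U$, and their level-$y$ widths agree. Feeding this into Definition \ref{def:skewer_2} yields, for every $t\ge 0$,
\begin{equation*}
 M^y_{\bN_\beta}(t)=\sum_{U\colon S(U)\le t} M^y_{\bN_U}(\len(\bN_U))+M^y_{\bN_{U_t}}(t-S(U_t-)),
\end{equation*}
where $U_t$ is the unique block with $t\in[S(U_t-),S(U_t))$. Lemma \ref{lem:finite_survivors} guarantees that only finitely many $\bN_U$ survive to level $y$, so the sum is finite, and the jumps of $M^y_{\bN_\beta}$ split cleanly across segments, producing the block-by-block concatenation identity above.

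Next I would assemble the distributional identification. By Definition \ref{constr:type-1} and Proposition \ref{prop:clade_splitting}(ii), the $\bN_U$ are independent and $\bN_U$ has law $\mClade^+(\,\cdot\,|\,m^0=\Leb(U))$; Corollary \ref{cor:clade:cts_skewer} then identifies $(\skewer(z,\bN_U),z\ge 0)$ as a $\nu_{\tt BESQ}^{(-2\alpha)}$-IP-evolution starting from $\{(0,\Leb(U))\}$, so in particular $\skewer(y,\bN_U)\stackrel{d}{=}\gamma_U^y$, jointly independent over $U\in\beta$. Combined with the commutation identity and the stability under concatenation of independent families, this yields
$$\skewer(y,\bN_\beta)=\Concat_{U\in\beta}\skewer(y,\bN_U)\stackrel{d}{=}\Concat_{U\in\beta}\gamma_U^y.$$

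Finally, to see the law is supported on $\IPspace$, I would invoke Lemma \ref{lem:type-1:wd}, which already asserts that $(\beta_U^y,U\in\beta)$ is almost surely strongly summable and that $\beta^y=\ConcatIL_{U\in\beta}\beta_U^y\in\IPspace$. I do not expect a genuine obstacle here: the only delicate issue is justifying that the decomposition of $M^y_{\bN_\beta}$ over segments really captures every block (no spindle of $\bN_\beta$ crossing level $y$ is missed at a segment boundary), which is immediate once one uses that each $\bN_U$ is a clade ending at scaffolding value $0$, so no spindle straddles two segments.
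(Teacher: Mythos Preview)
Your proposal is correct and follows essentially the same approach as the paper, which simply notes that the skewer map commutes with concatenation of clades (your identity $\skewer(y,\bN_\beta)=\ConcatIL_{U\in\beta}\skewer(y,\bN_U)$ is exactly equation \eqref{eq:concat_skewers:type-1}) and then cites Lemma \ref{lem:type-1:wd} for membership in $\IPspace$. Your version merely spells out more of the details behind the one-line justification.
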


\begin{proof}
 This follows from Definition \ref{constr:type-1_2} of $\Pr^{(\alpha)}_{\beta}$ via the observation that the skewer map commutes with concatenation of clades. By Lemma \ref{lem:type-1:wd}, the resulting law is supported on $\IPspace$.
\end{proof}

\begin{lemma}\label{lem:type-1:nice_lvl}
 For $y\!>\!0$, it is a.s.\ the case that level $y$ is nice for $\xi(\bN_{\beta})$ in the sense of Proposition \ref{prop:nice_level} and $\beta^y$ is nice in the sense of Lemma \ref{lem:cutoff_skewer} \ref{item:CPS:clades_skewer_0}.
\end{lemma}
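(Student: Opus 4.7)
My plan is to prove the two claims separately, using Lemma \ref{lem:finite_survivors} to reduce each to a statement about finitely many surviving clade scaffoldings plus standard properties of \Stable$(1+\alpha)$ processes, and using Proposition \ref{prop:type-1:LT_diversity} to convert the niceness of $\beta^y$ into a statement about strict monotonicity of local time.

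For the niceness of $\xi(\bN_\beta)$ at level $y$, I would first verify niceness at $y$ for each individual clade scaffolding $\xi(\bN_U)$. By Proposition \ref{prop:clade_splitting}(ii), $\bN_U$ is built from an initial $(\alpha,q,c)$-block diffusion of lifetime $\zeta(\bff_U)$ followed by a \PRM[\Leb\otimes\nu] stopped on hitting $-\zeta(\bff_U)$; therefore $\xi(\bN_U)$ is a \Stable$(1+\alpha)$ process initiated by a single up-jump to level $\zeta(\bff_U)$ and killed on its first return to $0$. Conditioning on $\bff_U$ and applying Proposition \ref{prop:nice_level} to the stopped \Stable$(1+\alpha)$ path after time $0+$ yields that level $y>0$ is a.s.\ nice for $\xi(\bN_U)$. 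By countable additivity over $\beta$, this holds simultaneously for all $U\in\beta$. On this almost-sure event, clades with $\life^+(\bN_U)\le y$ contribute nothing at level $y$ in the concatenation, while by Lemma \ref{lem:finite_survivors} only finitely many $U$ satisfy $\life^+(\bN_U)>y$. Since $\xi(\bN_\beta)=\Concat_{U\in\beta}\xi(\bN_U)$ by Proposition \ref{prop:clade_lengths_summable}\ref{item:CLS:scaffold}, the niceness properties (existence and continuity of $\ell^y_{\bN_\beta}$, absence of degenerate excursions about $y$, no accumulation of excursions, etc.) transfer from the finitely many surviving scaffoldings to the concatenation.

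For the niceness of $\beta^y$, Definition \ref{def:skewer_2} puts blocks of $\beta^y$ in bijection with up-jumps of $\xi(\bN_\beta)$ crossing $y$, and by Proposition \ref{prop:type-1:LT_diversity} the diversity of the block associated to such a jump at time $s$ equals $\ell^y_{\bN_\beta}(s)$. So it suffices to show that $\ell^y_{\bN_\beta}$ is strictly increasing between any two consecutive up-jumps $s_1<s_2$ of $\xi(\bN_\beta)$ across $y$. At such times $\xi(\bN_\beta)(s_1+)>y>\xi(\bN_\beta)(s_2-)$. Because $\xi(\bN_\beta)$ is a concatenation of spectrally positive Stable-like paths and each junction between clades is at height $0<y$ (the end of one clade and the start of the next before its initial jump), $\xi(\bN_\beta)$ has only upward jumps, so the descent from $\xi(s_1+)$ to $\xi(s_2-)$ must pass through the level $y$ continuously at some intermediate time $s^*\in(s_1,s_2)$. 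By the occupation-density property \eqref{eq:LT_def} combined with continuity of the path near $s^*$, positive local time at $y$ accrues on any neighbourhood of $s^*$; hence $\ell^y_{\bN_\beta}(s_2)>\ell^y_{\bN_\beta}(s_1)$, giving distinct diversities for distinct blocks of $\beta^y$.

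The main obstacle I anticipate is careful bookkeeping at the junctions between clades and across the infinitely many non-surviving clades: one must confirm that the concatenation does not introduce downward jumps across $y$ (which would break the continuous-descent argument) and that the infinitely many small clades do not spoil the niceness properties inherited from each individual $\xi(\bN_U)$. Both points follow from spectral positivity of each clade scaffolding (so all jumps, including those at junctions from $0$ to $\zeta(\bff_{U'})$, are upward) and from the fact that any non-surviving clade remains strictly below $y$ and hence contributes neither excursions about $y$ nor local time at $y$, reducing all relevant local-time bookkeeping to the finite collection of surviving clades identified in Lemma \ref{lem:finite_survivors}.
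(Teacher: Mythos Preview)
Your argument is correct and, for the first claim, follows the paper's proof closely: apply Proposition \ref{prop:nice_level} to each $\xi(\bN_U)$, then use Lemma \ref{lem:finite_survivors} and the concatenation identity from Proposition \ref{prop:clade_lengths_summable}\ref{item:CLS:scaffold} to pass niceness to $\xi(\bN_\beta)$.

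For the second claim the paper takes a shorter route. Having already established that level $y$ is nice for $\xi(\bN_\beta)$, condition (ii) of Proposition \ref{prop:nice_level} gives directly that distinct level-$y$ excursion intervals carry distinct local times; the paper then invokes Proposition \ref{prop:agg_mass_subord} for the correspondence between excursion intervals and blocks of $\beta^y$, and Proposition \ref{prop:type-1:LT_diversity} to identify diversity with local time, yielding distinct diversities immediately. Your argument instead re-derives the strict increase of local time by hand, using spectral positivity to produce a continuous down-crossing $s^*$ and then claiming local time accrues near $s^*$. This works, but the last step (that $\ell^y$ genuinely increases at a continuous crossing) relies on regularity of every level for the \StableA\ process, which you use implicitly without citing. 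The paper's approach sidesteps this by recycling niceness condition (ii), which was already obtained from It\^o excursion theory in the proof of Proposition \ref{prop:nice_level}.
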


\begin{proof}
 Proposition \ref{prop:nice_level} implies that for each $U$, level $y$ is a.s.\ nice for $\xi(\bN_U)$. It follows from this and Lemma \ref{lem:finite_survivors} that $y$ is a.s.\ nice for $\xi(\bN_{\beta})$. In particular, no two level $y$ excursion intervals arise at the same local time. Proposition \ref{prop:agg_mass_subord} characterizes a correspondence between level $y$ excursion intervals of $\xi(\bN_{\beta})$, including the incomplete first excursion interval, and blocks in $\beta^y$ whereby, via Proposition \ref{prop:type-1:LT_diversity}, the diversity up to each block $U\in\beta^y$ equals the level $y$ local time up to the corresponding excursion interval. Thus, $\beta^y$ is a.s.\ nice as well.
\end{proof}

We now extend the Markov-like property of Propositions \ref{prop:PRM:Fy-_Fy+} and \ref{prop:clade:Fy-_Fy+} to the present setting. Also recall notation from the beginning of this 
section.

\begin{proposition}\label{prop:type-1:Fy-_Fy+}
 For $y>0$, the point process $F^{\geq y}_0(\bN_\beta)$ is conditionally independent of $\ol\cF^y$ given $\beta^y$, with regular conditional distribution (r.c.d.)  $\Pr^{(\alpha)}_{\beta^y}(F^{\geq 0}_0\in\cdot\,)$.
\end{proposition}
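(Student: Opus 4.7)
The strategy is to bootstrap from the single-clade result Proposition \ref{prop:clade:Fy-_Fy+} to the concatenation $\bN_\beta=\ConcatIL_{U\in\beta}\bN_U$ by exploiting (i) the independence of the clades $\bN_U$, $U\in\beta$, which is built into Definition \ref{constr:type-1}, and (ii) the identification of level-$y$ local times with $\alpha$-diversities supplied by Proposition \ref{prop:type-1:LT_diversity}.

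First I would restrict to the probability-one event of Lemma \ref{lem:finite_survivors}, so that only finitely many clades $\bN_{U_1},\ldots,\bN_{U_K}$ (in left-to-right order) survive past level $y$, and simultaneously to the event of Lemma \ref{lem:type-1:nice_lvl} that $y$ is nice for $\xi(\bN_\beta)$ and $\beta^y$ is nice. On this event, the bi-clades of $\bN_\beta$ about level $y$ are in bijection with the bi-clades of the individual $\bN_{U_j}$ about level $y$; in particular
\[
F^{\geq y}_0(\bN_\beta) \;=\; \sum_{j=1}^K \int \Dirac{L_j+r,\,N'}\, dF^{\geq y}_0(\bN_{U_j})(r,N'),
\]
where $L_j = \sum_{i<j} \ell^y_{\bN_{U_i}}(\infty)$ is the level-$y$ local time accumulated by $\bN_\beta$ before the segment corresponding to $\bN_{U_j}$ begins. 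By Proposition \ref{prop:type-1:LT_diversity}, the shift $L_j$ coincides with the $\alpha$-diversity of $\beta^y$ up to the first block contributed by $\bN_{U_j}$, which is precisely the offset built into $\ConcatIL$ in Definition \ref{constr:type-1_2}.

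Next, for each $j$ Proposition \ref{prop:clade:Fy-_Fy+} says that $F^{\geq y}_0(\bN_{U_j})$ is conditionally independent of the level-$y$ past of $\bN_{U_j}$ given $\beta^y_{U_j}$, with r.c.d. $\Pr^{(\alpha)}_{\beta^y_{U_j}}\{F^{\geq 0}_0\in\cdot\,\}$. Since the $\bN_U$ are mutually independent, the general principle $\cF_1\indep_{\cH_1}\cG_1$, $\cF_2\indep_{\cH_2}\cG_2$ together with $(\cF_1,\cG_1,\cH_1)\indep(\cF_2,\cG_2,\cH_2)$ yielding $(\cF_1,\cF_2)\indep_{\cH_1,\cH_2}(\cG_1,\cG_2)$ (cf.\ the argument in the proof of Proposition \ref{prop:PRM:Fy-_Fy+}) lets me combine these into joint conditional independence: the family $\bigl(F^{\geq y}_0(\bN_{U_j})\bigr)_{j=1}^K$ is conditionally independent of the joint level-$y$ past $\sigma\bigl(\cutoffL{y}{\bN_{U}}:U\in\beta\bigr)$ given $(\beta^y_U,U\in\beta)$. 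Using Lemma \ref{lem:cutoff_vs_PP} (so that $\ol\cF^y$ is generated, up to null sets, by $F^{\leq y}_0(\bN_\beta)$, hence by the individual level-$y$ pasts of the $\bN_U$), this passes to conditional independence of $F^{\geq y}_0(\bN_\beta)$ from $\ol\cF^y$.

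Finally, by the definition of $\Pr^{(\alpha)}_{\beta^y}\{F^{\geq 0}_0\in\cdot\,\}$ in Definition \ref{constr:type-1_2} as a superposition of independent single-block clade point processes indexed by diversities, the conditional law of $F^{\geq y}_0(\bN_\beta) = \sum_j \Dirac{L_j,\,\cdot}\ast F^{\geq y}_0(\bN_{U_j})$ given the collection $(\beta^y_U,U\in\beta)$ is exactly $\Pr^{(\alpha)}_{\beta^y}\{F^{\geq 0}_0\in\cdot\,\}$ with $\beta^y=\ConcatIL_{U\in\beta}\beta^y_U$. Since this conditional law depends on $(\beta^y_U)$ only through $\beta^y$, standard properties of regular conditional distributions give that it is also an r.c.d.\ given $\beta^y$. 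The main obstacle is the bookkeeping in the third step: one must verify carefully that both the shifts in the PRM picture (local times) and those in the $\Pr^{(\alpha)}_{\beta}$ construction (diversities) align, which is where Proposition \ref{prop:type-1:LT_diversity} and the niceness of level $y$ from Lemma \ref{lem:type-1:nice_lvl} are indispensable.
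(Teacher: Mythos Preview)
Your proposal is correct and follows essentially the same approach as the paper: restrict to the nice-level event (Lemma \ref{lem:type-1:nice_lvl}), apply Proposition \ref{prop:clade:Fy-_Fy+} to each $\bN_U$ separately, use the mutual independence of the $\bN_U$ to pass to joint conditional independence, and then identify the resulting conditional law as $\Pr^{(\alpha)}_{\beta^y}\{F^{\geq 0}_0\in\cdot\,\}$ via Definition \ref{constr:type-1_2}. Your treatment is in fact more explicit than the paper's about the local-time/diversity alignment (via Proposition \ref{prop:type-1:LT_diversity}) and about reducing the conditioning from $(\beta^y_U)_{U\in\beta}$ to $\beta^y$, both of which the paper handles more tersely.
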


\begin{proof}
 By Lemma \ref{lem:type-1:nice_lvl}, we may restrict to the a.s.\ event that level $y$ is nice. For $U\in\beta$ let $\bF^{\geq y}_{0,U} := F^{\geq y}_0(\bN_U)$. By Proposition \ref{prop:clade:Fy-_Fy+} and the independence of the family $(\bN_U,\,U\in\beta)$, the process $\bF^{\geq y}_{0,U}$ is conditionally independent of $\ol\cF^y$ given $\beta^y_U$, with r.c.d.\ $\Pr^{(\alpha)}_{\beta^y_U}\{F^{\geq 0}_0\in\cdot\,\}$, for each $U\in\beta$. By Lemma \ref{lem:finite_survivors}, only finitely many of the $\bF^{\geq y}_{0,U}$ are non-zero, so $F^{\geq 0}_0(\bN_{\beta}) = \ConcatIL_{U\in\beta}\bF^{\geq y}_{0,U}$. In light of this, the claimed conditional independence and r.c.d.\ follow from Definition \ref{constr:type-1_2} of the kernel $\gamma\mapsto\Pr^{(\alpha)}_{\gamma}\{F^{\geq 0}_0\in\cdot\,\}$.
\end{proof}

\begin{corollary}[Simple Markov property for the skewer process under $\Pr^{(\alpha)}_{\mu}$]\label{cor:type-1:simple_Markov_1}
 Let $\mu$ be a probability distribution on $\IPspace$. Take $z>0$ and $0\leq y_1<\cdots<y_n$. Let $\eta\colon \Hfin \to [0,\infty)$ be $\ol\cF^{z}$-measurable. Let $f\colon\IPspace^n\to [0,\infty)$ be measurable. Then
  \begin{align*}
   &\Pr^{(\alpha)}_\mu\left[\eta f\left(\skewer(z\!+\!y_j,\cdot),j\!\in\![n]\right) \right]\\
     &=\! \int\!\eta(N)\Pr^{(\alpha)}_{\skewer(z,N)}\left[f\left(\skewer(y_j,\cdot),j\!\in\![n]\right)\right]d\Pr^{(\alpha)}_{\mu}(N).
  \end{align*}
\end{corollary}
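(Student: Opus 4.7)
The plan is to reduce to $\mu = \delta_\beta$ by Fubini, then express the skewer process above level $z$ as a measurable function of the point process of clades $F^{\geq z}_0(\bN_\beta)$, and finally invoke Proposition \ref{prop:type-1:Fy-_Fy+}.

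First, by linearity in $\mu$ I reduce to the case $\mu = \delta_\beta$ for $\beta \in \IPspace$. Fixing such a $\beta$, I appeal to Lemma \ref{lem:type-1:nice_lvl} to restrict to the $\Pr^{(\alpha)}_\beta$-a.s.\ event on which level $z$ is nice for $\xi(\bN_\beta)$ and $\beta^z$ is nice. Lemma \ref{lem:cutoff_skewer}\ref{item:CPS:clades_skewer} then produces, for each $j \in [n]$, a measurable map $\Phi_j$ (on the appropriate space of counting measures on $[0,\infty) \times \Hxc{+}$) such that
\[
 \skewer(z + y_j, \bN_\beta) \;=\; \Phi_j\!\left(F^{\geq z}_0(\bN_\beta)\right).
\]
Applying the same map $\Phi_j$ to $F^{\geq 0}_0(\bN_\gamma)$ via Lemma \ref{lem:cutoff_skewer}\ref{item:CPS:clades_skewer_0} yields $\skewer(y_j, \bN_\gamma) = \Phi_j(F^{\geq 0}_0(\bN_\gamma))$ for any $\gamma \in \IPspace$ under $\Pr^{(\alpha)}_\gamma$.

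Next, since $\eta$ is $\ol\cF^z$-measurable by hypothesis and $\beta^z = \skewer(z,\bN_\beta)$ is $\ol\cF^z$-measurable by Lemma \ref{lem:cutoff_skewer}(iv), I will condition on $\ol\cF^z$ and apply Proposition \ref{prop:type-1:Fy-_Fy+}, whose r.c.d.\ for $F^{\geq z}_0(\bN_\beta)$ given $\ol\cF^z$ is $\Pr^{(\alpha)}_{\beta^z}(F^{\geq 0}_0 \in \cdot\,)$. This yields
\[
 \Pr^{(\alpha)}_\beta\!\left[\eta\,f\!\left(\skewer(z+y_j,\cdot),\,j\!\in\![n]\right)\right]
 \;=\; \Pr^{(\alpha)}_\beta\!\left[\eta\,\Pr^{(\alpha)}_{\beta^z}\!\left[f\!\left(\Phi_j(F^{\geq 0}_0),\,j\!\in\![n]\right)\right]\right].
\]
Replacing $\Phi_j(F^{\geq 0}_0)$ by $\skewer(y_j,\cdot)$ in the inner integrand and then integrating against $\mu$ produces the stated identity.

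The main subtlety will be the identification of the two instances of $\Phi_j$: the clades of $F^{\geq z}_0(\bN_\beta)$ are indexed by the local time $\ell^z$ of $\xi(\bN_\beta)$, while those of $F^{\geq 0}_0(\bN_{\beta^z})$ in Definition \ref{constr:type-1_2} are indexed by the diversity $\IPLT_{\beta^z}$. The pointwise identity $\IPLT_{\beta^z}(M^z_{\bN_\beta}(t)) = \ell^z_{\bN_\beta}(t)$ furnished by Proposition \ref{prop:type-1:LT_diversity} is precisely what is needed to match these two parametrizations, so the same measurable map $\Phi_j$ genuinely serves in both settings. Once this identification is in hand, the remainder of the argument is a direct application of Proposition \ref{prop:type-1:Fy-_Fy+}.
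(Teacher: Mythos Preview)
Your proposal is correct and follows essentially the same route as the paper: apply Proposition~\ref{prop:type-1:Fy-_Fy+} for the conditional law of $F^{\geq z}_0$, use Lemma~\ref{lem:cutoff_skewer}\ref{item:CPS:clades_skewer} and \ref{item:CPS:clades_skewer_0} to express the relevant skewers as the \emph{same} measurable function of the clade point process, and invoke Lemma~\ref{lem:type-1:nice_lvl} for the niceness hypotheses. Your explicit discussion of matching the local-time indexing with the diversity indexing via Proposition~\ref{prop:type-1:LT_diversity} is more care than the paper spells out (the paper simply asserts one measurable map $h$ works in both settings, since the concatenation in \eqref{eq:skewer_from_clade_PP} depends only on the order of the atoms), but it is a sound observation and does no harm.
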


\begin{proof}
 By Proposition \ref{prop:type-1:Fy-_Fy+}, for $\eta$ as above and $g\colon \cNRHf \to [0,\infty)$ measurable,
 \begin{equation}\label{eq:Fy-_Fy+:Markov}
  \Pr^{(\alpha)}_\mu \left[ \eta\, g\big(F^{\geq z}_0\big)\right] = \Pr^{(\alpha)}_\mu \left[ \eta\, \Pr^{(\alpha)}_{\skewer(z,\cdot\,)}\big[g\big(F^{\geq 0}_0\big)\big]\right].
 \end{equation}
 By Lemma \ref{lem:cutoff_skewer} \ref{item:CPS:clades_skewer}, there is a measurable function $h$ for which we have $\left(\skewer(z+y_j,N),\,j\in[n]\right) = h(F^{\geq z}_0(N))$ identically on the event that level $z$ is nice for $N\in\Hfin$. Moreover, if $\beta\in\IPspace$ is nice in the sense 
 of Lemma \ref{lem:cutoff_skewer} \ref{item:CPS:clades_skewer_0}, 
 then that result gives $\left(\skewer(y_j,\bN_{\beta}),\,j\in[n]\right) = h(F^{\geq 0}_0(\bN_{\beta}))$. By Lemma \ref{lem:type-1:nice_lvl}, for $\bN_{\mu}\sim\Pr^{(\alpha)}_{\mu}$, level $z$ is a.s.\ nice for $\bN_{\mu}$ and $\skewer(y_1,\bN_{\mu})$ is a.s.\ a nice interval partition. Thus, setting $g := f\circ h$ in \eqref{eq:Fy-_Fy+:Markov} gives the claimed result.
\end{proof}

\subsection{Path-continuity and Markov property of $\nu_{\tt BESQ}^{(-2\alpha)}$-IP-evolutions}

We proceed towards proving continuity of $(\beta^y,\,y\ge0)$. We require the following.

\begin{lemma}\label{lem:IP:domination}
 Fix $\beta\in\IPspace$ and $\delta>0$, and let $\gamma$ denote a \Stable[\alpha] interval partition with total diversity $\IPLT_{\gamma}(\infty) = \IPLT_{\beta}(\infty)+\delta$, as in Proposition \ref{prop:IP:Stable}. Then with positive probability, there exists a matching between their blocks such that every block of $\beta$ is matched with a larger block in $\gamma$. (This is not a correspondence as used in Definition \ref{def:IP:metric}, as it need not respect left-right order.) In this event, we say $\gamma$ \emph{dominates} $\beta$. If, on the other hand, $\gamma$ is a \Stable[\alpha] interval partition with total diversity $\IPLT_{\gamma}(\infty) = \IPLT_{\beta}(\infty)-\delta$ then with positive probability it is dominated by $\beta$.
\end{lemma}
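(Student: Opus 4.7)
The strategy is to reduce the domination statement to a condition on the ranked block sizes, and then establish this condition via a coupling with the arrival times of a unit-rate Poisson process. Rank the blocks of $\beta$ in decreasing order of mass as $m_1\ge m_2\ge\cdots$ and those of $\gamma$ as $n_1\ge n_2\ge\cdots$. The key observation is that a matching witnessing that $\gamma$ dominates $\beta$ exists if and only if $n_i>m_i$ for every $i\ge1$: if $n_k\le m_k$ for some $k$, then the top $k$ blocks of $\beta$ would need to inject into blocks of $\gamma$ strictly larger than $m_k$, of which there are at most $k-1$; conversely, the diagonal matching $m_i\leftrightarrow n_i$ is valid when $n_i>m_i$ for all $i$. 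The case where $\beta$ has finitely many blocks reduces to a finite-dimensional density computation on the top few jumps of a \StableA\ subordinator, so we focus on the infinite case (automatic when $T:=\IPLT_\beta(\infty)>0$).

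From the standard inversion of the tail of the L\'evy measure of a \StableA\ subordinator $Y$ with Laplace exponent $\Phi(\lambda)=\lambda^\alpha$, the ranked jumps of $Y$ up to time $T+\delta$ admit the representation
$$n_i=\left(\frac{T+\delta}{\Gamma_i\,\Gamma(1-\alpha)}\right)^{\!1/\alpha},\qquad i\ge1,$$
where $\Gamma_i=E_1+\cdots+E_i$ for i.i.d.\ $E_j\sim\mathrm{Exp}(1)$. The condition $n_i>m_i$ becomes $\Gamma_i<a_i$ with $a_i:=(T+\delta)/(m_i^\alpha\Gamma(1-\alpha))$. The sandwich $n_\beta(m_i)\le i-1\le n_\beta(m_i^-)-1$ for $n_\beta(h):=\#\{U\in\beta\colon\Leb(U)>h\}$, combined with the $\alpha$-diversity $h^\alpha n_\beta(h)\to T/\Gamma(1-\alpha)$, yields $im_i^\alpha\to T/\Gamma(1-\alpha)$ and hence $a_i/i\to(T+\delta)/T>1$ (interpreted as $+\infty$ when $T=0$).

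The problem thus reduces to showing $\Pr(\Gamma_i<a_i$ for all $i\ge1)>0$ whenever $a_i>0$ satisfies $a_i/i\to 1+\rho$ for some $\rho>0$. The main obstacle is to control the infinitely many constraints jointly; the plan is to split them into a finite initial segment and an asymptotic tail, each handled with positive probability and combined by independence. Fix $\epsilon\in(0,\rho)$ and take $N_0$ large enough that $a_i\ge(1+\epsilon)i$ for every $i\ge N_0$; pick $c>0$ with $c\le\min_{i\le N_0}a_i/i$ and $cN_0\le(1+\epsilon)N_0/2$. On $A_1:=\{E_i<c\text{ for all }i\le N_0\}$, of positive probability $(1-e^{-c})^{N_0}$, one has $\Gamma_i<ci\le a_i$ for every $i\le N_0$ and $\Gamma_{N_0}<(1+\epsilon)N_0/2$. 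By independence, the supremum $W:=\sup_{k\ge0}\big(\sum_{j=1}^kE_{N_0+j}-(1+\epsilon)k\big)$ of a random walk with strictly negative drift is a.s.\ finite by the strong law of large numbers; enlarging $N_0$ if necessary ensures $A_2:=\{W<(1+\epsilon)N_0/2\}$ has probability greater than $1/2$. On $A_1\cap A_2$, for every $k\ge0$,
$$\Gamma_{N_0+k}<(1+\epsilon)N_0/2+W+(1+\epsilon)k<(1+\epsilon)(N_0+k)\le a_{N_0+k},$$
which completes the proof of the first assertion.

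The second assertion is entirely analogous. With $\gamma$ of total diversity $T-\delta>0$, the condition $m_i>n_i$ rephrases as $\Gamma_i>a_i'$ where $a_i':=(T-\delta)/(m_i^\alpha\Gamma(1-\alpha))$ and $a_i'/i\to(T-\delta)/T<1$. The random walk $\Gamma_i-a_i'$ now has strictly positive asymptotic drift, so the same splitting scheme---requiring the initial $E_i$ to be sufficiently large rather than small---yields $\Pr(\Gamma_i>a_i'$ for all $i\ge1)>0$.
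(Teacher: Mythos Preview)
Your proof is correct and takes a genuinely different route from the paper's. Both arguments reduce to showing $\Pr(n_i>m_i\text{ for all }i)>0$ for the ranked block masses, but the methods diverge from there.

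The paper argues directly from the diversity asymptotics: since $h^\alpha\#\{V\in\gamma:\Leb(V)>h\}\to(\IPLT+\delta)/\Gamma(1-\alpha)$ while the corresponding limit for $\beta$ is $\IPLT/\Gamma(1-\alpha)$, there is a.s.\ a random threshold $H>0$ below which $\gamma$ has strictly more blocks than $\beta$ at every mass level. One then fixes a deterministic $a>0$ for which this event occurs with positive probability and argues that, conditionally, the finitely many blocks of $\gamma$ with mass exceeding $a$ can all be pushed above $m_1$ with positive probability (their sizes being conditionally i.i.d.\ from the normalised L\'evy tail on $(a,\infty)$, independent of the small-block configuration given their count). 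This is short and uses only the definition of diversity plus elementary Poisson conditioning, though the conditional-independence step is left implicit.

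Your approach instead invokes the explicit Poisson-arrival representation $n_i=((T+\delta)/(\Gamma_i\Gamma(1-\alpha)))^{1/\alpha}$ for the ranked jumps of a stable subordinator, turning the problem into a deterministic constraint $\Gamma_i<a_i$ with $a_i/i\to(T+\delta)/T>1$, and handles this via a finite-initial-segment/negative-drift-random-walk splitting. This is more computational but entirely self-contained, makes the asymptotic mechanism transparent, and avoids the conditional bookkeeping the paper suppresses. Your parenthetical that infinitely many blocks is ``automatic when $T>0$'' is a correct remark (not a case split), and when $T=0$ your argument with $a_i/i\to+\infty$ still covers the infinite-block case, so nothing is lost.
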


\begin{proof}
 We begin with the case $\IPLT_{\gamma}(\infty) = \IPLT_{\beta}(\infty)+\delta$. We will abbreviate $\IPLT := \IPLT_{\beta}(\infty)$. By the diversity properties of these two partitions,\vspace{-0.2cm}
 \begin{equation*}
  \lim_{h\downto 0}h^{\alpha}\#\{U\in \beta\colon \Leb(U)>h\} = \frac{1}{\Gamma(1-\alpha)}\IPLT\vspace{-0.1cm}
 \end{equation*}
 and $\displaystyle\qquad\qquad 
  \lim_{h\downto 0}h^{\alpha}\#\{V\in \gamma\colon \Leb(V)>h\} = \frac{1}{\Gamma(1-\alpha)}(\IPLT+\delta)$. 
  
 Thus, there is a.s.\ some $H > 0$ sufficiently small so that
 \begin{equation}\label{eq:IP:domination}
  \#\{U\in \beta\colon \Leb(U)>h\} < \#\{V\in \gamma\colon \Leb(V)>h\} \quad \text{for all }h<H.
 \end{equation}
 Take $a>0$ sufficiently small that this holds for $H=a$ with positive probability. It follows from the definition of the \Stable[\alpha] 
 interval partition that, conditionally given that \eqref{eq:IP:domination} holds for $H = a$, there is positive probability that all of the blocks 
 in $\gamma$ with mass greater than $a$ also have mass greater than that of the largest block of $\beta$. In particular, there is positive 
 probability that $\gamma$ dominates $\beta$ by matching, for each $n\geq 1$, the $n^{\text{th}}$ largest block of $\beta$ with that of $\gamma$.
 
 If we instead take $\IPLT_{\gamma}(\infty) = \IPLT_{\beta}(\infty)-\delta$ then there is a.s.\ some $H>0$ such that \eqref{eq:IP:domination} holds 
 in reverse. Let $a$ be as before. Conditionally given that the reverse of \eqref{eq:IP:domination} holds for $H = a$, there is positive 
 probability that no blocks in $\gamma$ have mass greater than $a$. In this event, $\beta$ dominates $\gamma$ by matching blocks in ranked order, 
 as in the previous case.
\end{proof}

\begin{proposition}\label{prop:cts_lt_at_0}
 The diversity process $(\IPLT_{\beta^y}(\infty),\,y\geq 0)$ of a $\nu_{\tt BESQ}^{(-2\alpha)}$-IP-evolution $(\beta^y,y\!\ge\! 0)$ starting from $\beta^0\!=\!\beta\!\in\!\IPspace$ is a.s.\ continuous at $y \!=\! 0$.
\end{proposition}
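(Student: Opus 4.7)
The plan is to prove continuity at $y=0$ by sandwiching $\IPLT_{\beta^y}(\infty)$ between the total diversities of two \Stable[\alpha]-initialized IP-evolutions, whose continuity at $0$ is already established by Corollary \ref{cor:type-1:cts_from_Stable}, via the domination Lemma \ref{lem:IP:domination}. Setting $L := \IPLT_\beta(\infty)$ and using the concatenation $\bN_\beta = \ConcatIL_{U\in\beta}\bN_U$, Proposition \ref{prop:type-1:LT_diversity} applied clade-by-clade yields, for each $y>0$,
\begin{equation*}
\IPLT_{\beta^y}(\infty) \;=\; \sum_{U\in\beta} \IPLT_{\beta^y_U}(\infty),
\end{equation*}
an a.s.\ finite sum (Lemma \ref{lem:finite_survivors}) of independent nonnegative variables, one per surviving clade. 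This representation is the starting point for the comparison.

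Fix $\epsilon>0$. I will use Lemma \ref{lem:IP:domination} to produce, on an event $A_\epsilon$ of positive probability, \Stable[\alpha] partitions $\gamma^-,\gamma^+$ of diversities $L-\epsilon$ and $L+\epsilon$ together with block matchings $\sigma\colon\gamma^-\to\beta$ and $\pi\colon\beta\to\gamma^+$ obeying $\Leb(\sigma(V))\ge\Leb(V)$ and $\Leb(\pi(U))\ge\Leb(U)$. I then couple the three IP-evolutions on a common probability space by assigning each matched pair of single-block clades a shared driving Brownian motion (yielding BESQ comparison on the initial spindles) and a shared independent PRM of subsequent spindles, aiming at the pathwise sandwich
\begin{equation*}
\IPLT_{\gamma^{-,y}}(\infty)\;\le\;\IPLT_{\beta^y}(\infty)\;\le\;\IPLT_{\gamma^{+,y}}(\infty)\qquad\text{for all }y>0\text{ on }A_\epsilon.
\end{equation*}
Granting this, Corollary \ref{cor:type-1:cts_from_Stable} gives $\IPLT_{\gamma^{\pm,y}}(\infty)\to L\pm\epsilon$ a.s.\ as $y\downarrow 0$, so on $A_\epsilon$ one has $L-\epsilon\le\liminf_{y\downarrow 0}\IPLT_{\beta^y}(\infty)\le\limsup_{y\downarrow 0}\IPLT_{\beta^y}(\infty)\le L+\epsilon$. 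To promote positive probability to probability one, I will invoke a germ-at-zero zero-one law in the level filtration $(\ol\cF^y)$ of Definition \ref{def:filtrations}: the event $\{\limsup_{y\downarrow 0}|\IPLT_{\beta^y}(\infty)-L|\le\epsilon\}$ is $\bigcap_{y>0}\ol\cF^y$-measurable, and the germ $\sigma$-algebra is trivial thanks to the Poissonian independence of distinct clades (cf.\ Proposition \ref{prop:type-1:Fy-_Fy+}). Intersecting over $\epsilon = 1/n$ yields almost-sure continuity at $0$.

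The hard part will be the pathwise sandwich inequality. BESQ comparison does yield a pointwise ordering of the initial spindles for each matched pair, but the level-$y$ skewer of a clade depends subtly on the interaction of its initial spindle with the shared PRM of subsequent spindles, so pathwise domination of the resulting diversities is not automatic. A fallback is to weaken the coupling to distributional domination: using the self-similarity $D^y_m := \IPLT_{\beta^y_{\{(0,m)\}}}(\infty) \stackrel{d}{=} m^\alpha D^{y/m}_1$ for the single-block diversity together with the $\alpha/q$-diversity asymptotic $\#\{U\in\beta\colon\Leb(U)>h\} \sim L h^{-\alpha}/\Gamma(1-\alpha)$, a Campbell-type calculation shows $\mathbf{E}\big[\IPLT_{\beta^y}(\infty)\big]\to L$ and $\mathrm{Var}\big(\IPLT_{\beta^y}(\infty)\big)\to 0$ as $y\downarrow 0$, producing $L^2$-convergence. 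Combining this with the path-continuity of $y\mapsto\IPLT_{\beta^y}(\infty)$ for $y>0$ (inherited via the joint continuity of local time, Theorem \ref{thm:Boylan}, applied to the scaffolding $\xi(\bN_\beta)$) should then upgrade convergence in probability to almost-sure continuity at $y=0$.
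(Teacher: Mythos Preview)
Your overall strategy---sandwich $\IPLT_{\beta^y}(\infty)$ between two \Stable[\alpha]-initialized evolutions using Lemma \ref{lem:IP:domination}---is exactly what the paper does. But the proposal has two genuine gaps that the paper's argument resolves.

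\textbf{The coupling.} You propose coupling matched blocks $U_i\leftrightarrow V_i$ via a shared driving Brownian motion (to compare the initial \BESQ\ spindles) plus a shared PRM for the rest of the clade. As you yourself note, this does not deliver the pathwise inequality $\IPLT_{\beta^y_{U_i}}(\infty)\le\IPLT_{\td\beta^y_{V_i}}(\infty)$: even if $f_i\le g_i$ pointwise, the two clades have scaffoldings $\zeta(f_i)+\bX$ and $\zeta(g_i)+\bX$ stopped at different hitting times, and you are comparing local times at \emph{different} levels of the same process, which need not be ordered. The paper's coupling is different and makes the inequality trivial: set $f_i := (\Leb(U_i)/\Leb(V_i))\scaleB g_i$ (scaling, not SDE comparison), let $T_i$ be the first time $\xi(\tdN_{V_i})$ descends to $\zeta(f_i)$, and define $\bN_{U_i} := \Dirac{0,f_i} + \ShiftRestrict{\tdN_{V_i}}{(T_i,\infty)}$. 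Then $\xi(\bN_{U_i})$ is literally a time-shifted tail of $\xi(\tdN_{V_i})$, so its local time at every level is dominated by that of $\xi(\tdN_{V_i})$, giving $\IPLT_{\beta^y_{U_i}}(\infty)\le\IPLT_{\td\beta^y_{V_i}}(\infty)$ for all $y$ immediately.

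\textbf{Positive probability to almost sure.} Your appeal to a germ zero-one law in $(\ol\cF^y)$ is circular: Blumenthal-type triviality would require the strong Markov property, which this proposition is a step toward proving. The paper avoids this entirely. The domination event is $\sigma(\td\beta^0)$-measurable, and conditionally on $\td\beta^0$ the clades $(\tdN_{V_i})$ have laws $\mClade^+(\,\cdot\mid m^0=\Leb(V_i))$ by Proposition \ref{prop:PRM:Fy-_Fy+}. Hence the scaled-and-excised $(\bN_{U_i})$ have laws $\mClade^+(\,\cdot\mid m^0=\Leb(U_i))$ \emph{regardless} of the conditioning, so the coupled $(\beta^y)$ has law $\BPr^{(\alpha)}_\beta$ on the domination event. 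The inequality $\limsup_{y\downarrow 0}\IPLT_{\beta^y}(\infty)\le L+\delta$ is a property of the path of $(\beta^y)$ alone, holds a.s.\ on that event, and therefore holds $\BPr^{(\alpha)}_\beta$-a.s.

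Your $L^2$ fallback is also incomplete: convergence in probability together with continuity on $(0,\infty)$ does not yield a.s.\ continuity at $0$ without additional structure (monotonicity, maximal inequality, etc.), and none is supplied.
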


\begin{proof}
 Take $\delta > 0$, and abbreviate $\IPLT := \IPLT_{\beta}(\infty)$. Following the notation and situation of Corollary \ref{cor:type-1:cts_from_Stable}, let $\tdN$ denote an $\Hfins$-version of a \PRM[\Leb\times\mBxc_{\tt BESQ}^{(-2\alpha)}] stopped at an inverse local time $\tau^0(\IPLT+\delta)$ and let $(\td\beta^y,\,y\ge0) := \skewerP(\tdN)$. 
 Then, as in Corollary \ref{cor:type-1:cts_from_Stable}, $\td\beta^0$ is a \Stable[\alpha] interval partition with total diversity $\IPLT+\delta$. By Lemma \ref{lem:IP:domination}, $\td\beta^0$ dominates $\beta$ with positive probability. Since $\beta$ is deterministic, this domination event is independent of $(\beta^y,\,y\ge0)$. We condition on this event.
 
 We now define an alternative construction of $(\beta^y)$, coupled with $(\td\beta^y)$. Let $(U_i)_{i\geq 1}$ and $(V_i)_{i\geq 1}$ denote the blocks of $\beta$ and $\td\beta^0$ respectively, each ordered by non-increasing Lebesgue measure, with ties broken by left-to-right order. For each $i$ let $\tdN_{V_i}$ denote the clade of $\tdN$ corresponding to that block. By Proposition \ref{prop:PRM:Fy-_Fy+} 
 the $(\tdN_{V_i})_{i\ge1}$ are conditionally independent given $\td\beta^0$, with conditional laws $\mClade^+(\cdot\ |\ m^0 = \Leb(V_i))$. Then\vspace{-0.1cm}
 \begin{equation*}
  \td\beta^y = \Concat_{V\in \td\beta^0} \td\beta_V^y \qquad \text{where} \qquad \left(\td\beta_V^y,\,y\ge0\right) = \skewerP\big(\tdN_V\big).\vspace{-0.1cm}
 \end{equation*}
 
 Let $(0,g_i)$ denote the left-most point in $\tdN_{V_i}$. This is the spindle associated with the block $V_i$. Conditionally given $V_i$, the process $g_i$ is a \BESQ[-2\alpha] starting from $\Leb(V_i)$. We define\vspace{-0.1cm}
 \begin{equation*}
  f_i := \frac{\Leb(U_i)}{\Leb(V_i)}\scaleB g_i,\quad\mbox{and}\quad
  \bN_{U_i} := \Dirac{0,f_i} + \ShiftRestrict{\tdN_{V_i}}{(T_i,\infty)},\vspace{-0.1cm}
 \end{equation*}
 where $ T_i := \inf\left\{t\geq 0\colon \xi_{\tdN_{V_i}}(t) \leq \life(f_i)\right\}$.
 To clarify, $\bN_{U_i}$ is obtained from $\tdN_{V_i}$ by scaling down its leftmost spindle $g_i$ to get $f_i$ and cutting out the segment of $\tdN_{V_i}$ corresponding to the first passage of $\xi(\tdN_{V_i})$ down to level $\life(f_i)$. 
 From \BESQ\ scaling and the Poisson description of the laws $\mClade^+(\,\cdot\;|\;m^0)$ in Proposition \ref{prop:clade_splitting}, it follows that the $(\bN_{U_i})_{i\ge1}$ are jointly independent and have respective laws $\bN_{U_i} \sim \mClade^+(\cdot\ |\ m^0 = \Leb(U_i))$. As in \eqref{eq:concat_skewers:type-1} we define\vspace{-0.1cm}
 \begin{equation*}
  \beta^y := \Concat_{U\in\beta} \beta_U^y \qquad \text{where} \qquad \left(\beta_{U}^y,\,y\ge0\right) = \skewerP(\bN_U) \quad \text{for }U\in \beta.\vspace{-0.1cm}
 \end{equation*}
 The resulting $(\beta^y,\,y\geq 0)\sim\Pr^{(\alpha)}_{\beta}\{\skewerP\in\cdot\,\}$. 
 By virtue of this coupling, having conditioned on $\td\beta^0$ dominating $\beta$, it is a.s.\ the case that $\IPLT_{\beta_{U_i}^y}(\infty) \leq \IPLT_{\td\beta_{V_i}^y}(\infty)$ for $i\geq 1$, $y\geq 0$. Thus, by the continuity in Proposition \ref{prop:PRM:cts_skewer},\vspace{-0.1cm}
 $$\limsup_{y\downto 0}\IPLT_{\beta^y}(\infty) \leq \limsup_{y\downto 0}\IPLT_{\td\beta^y}(\infty) = \IPLT + \delta\quad \text{a.s.}.\vspace{-0.1cm}$$
 Since this holds for all $\delta>0$, the left hand side expression is a.s.\ bounded above by $\IPLT$.
 
 If we repeat this argument but $\tdN$ stopped at $\tau^0(\IPLT-\delta)$ then we can condition on $\beta$ dominating $\td\beta^0$ and reverse roles in the above coupling to show that\vspace{-0.1cm}
 \begin{equation*}
  \liminf_{y\downto 0}\IPLT_{\beta^y}(\infty) \geq \liminf_{y\downto 0}\IPLT_{\td\beta^y}(\infty) = \IPLT - \delta\vspace{-0.1cm}
 \end{equation*}
 almost surely for any positive $\delta$. The desired result follows.
\end{proof}

\begin{proposition}\label{prop:ex-BESQ_mass_temp} In the setting of Proposition \ref{prop:cts_lt_at_0}, the total mass process $(\IPmag{\beta^y},y\ge 0)$ is a.s.\ continuous.
\end{proposition}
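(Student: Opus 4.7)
The plan is to exploit the decomposition $\|\beta^y\|=\sum_{U\in\beta}\|\beta_U^y\|$ from \eqref{eq:concat_skewers:type-1}, combine the continuity of each single-clade process with the lifetime estimates of Lemmas \ref{lem:finite_survivors} and \ref{prop:type-1:transn}, and control the interchange of the countable sum with the limit at $y=0$. Each map $y\mapsto\|\beta_U^y\|$ is a.s.\ continuous with $\|\beta_U^0\|=\Leb(U)$: Corollary \ref{cor:clade:cts_skewer} gives $\dI$-H\"older continuity of $(\beta_U^y,\,y\ge 0)$, and the total-mass functional $\|\cdot\|$ is $\dI$-Lipschitz, which is immediate from items (i)-(ii) of Definition \ref{def:IP:metric}.

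Continuity at any fixed $y_0>0$ is then straightforward. Pick $\epsilon\in(0,y_0)$ and set $J_\epsilon:=\{U\in\beta:\life^+(\bN_U)>y_0-\epsilon\}$, which is a.s.\ finite by Lemma \ref{lem:finite_survivors}. For $U\notin J_\epsilon$ the clade $\bN_U$ contributes nothing on $[y_0-\epsilon,y_0+\epsilon]$, so on this interval $\|\beta^y\|=\sum_{U\in J_\epsilon}\|\beta_U^y\|$ is a finite sum of continuous functions and hence continuous at $y_0$.

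The main work is at $y=0$. The lower bound $\liminf_{y\downto 0}\|\beta^y\|\ge\|\beta\|$ follows by monotonicity: for any finite $J\subseteq\beta$, $\|\beta^y\|\ge\sum_{U\in J}\|\beta_U^y\|$, and letting $y\downto 0$ then $J\uparrow\beta$ gives the bound. For the matching upper bound $\limsup_{y\downto 0}\|\beta^y\|\le\|\beta\|$, I would split $\beta=\beta_\epsilon\sqcup(\beta\setminus\beta_\epsilon)$ with $\beta_\epsilon:=\{U\in\beta:\Leb(U)\ge\epsilon\}$ finite, so that $\sum_{U\in\beta_\epsilon}\|\beta_U^y\|$ is continuous at $0$ with value $\|\beta_\epsilon\|$, and then control the tail $\sum_{U\in\beta\setminus\beta_\epsilon}\|\beta_U^y\|$ uniformly in $y$ using the self-similarity $\|\beta_U^y\|\stackrel{d}{=}\Leb(U)\,\|\gamma^{y/\Leb(U)}\|$ (with $\gamma$ a unit-mass $\nu_{\tt BESQ}^{(-2\alpha)}$-IP-evolution, via spindle and scaffolding scaling) together with the lifetime-tail estimate $\Pr(\life^+(\bN_U)>y)\le\Leb(U)/(2y)$ from Lemma \ref{prop:type-1:transn}.

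The principal obstacle is this uniform tail control, since the single-clade aggregate mass is driven by the \Stable[\alpha] subordinator of Proposition \ref{prop:agg_mass_subord}, which has infinite first moment, so a naive $L^1$ bound on $\|\beta_U^y\|$ is unavailable. The workaround will be to use fractional moments of order $\gamma\in(0,\alpha)$: via the scaling above and moments of \Stable\ subordinators one obtains $\EV[\|\beta_U^y\|^\gamma]\le C_\gamma\Leb(U)^\gamma$ uniformly in $y$ on any bounded interval, and the sub-additivity $(\sum_i x_i)^\gamma\le\sum_i x_i^\gamma$ then bounds the tail in $L^\gamma$; a Chebyshev--Borel--Cantelli argument along $\epsilon=1/n$ upgrades the resulting in-probability right-continuity at $0$ to a.s.\ continuity.
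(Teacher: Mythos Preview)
Your argument for continuity at any $y_0>0$ and the lower bound $\liminf_{y\downto 0}\|\beta^y\|\ge\|\beta\|$ are fine. The gap is in the upper bound at $y=0$. You propose to control the tail via $\gamma$-moments with $\gamma\in(0,\alpha)$, appealing to moments of \Stable[\alpha]\ subordinators, and then sub-additivity to get $\EV\big[(\sum_{U\notin\beta_\epsilon}\|\beta_U^y\|)^\gamma\big]\le C_\gamma\sum_{U\notin\beta_\epsilon}\Leb(U)^\gamma$. But for any $\beta\in\IPspace_\alpha$ with positive total diversity, $\#\{U:\Leb(U)>h\}\sim c\,h^{-\alpha}$, so the $n$-th largest block has mass $\asymp n^{-1/\alpha}$ and $\sum_{U\in\beta}\Leb(U)^\gamma=\infty$ for every $\gamma\le\alpha$. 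Thus your bound is vacuous precisely in the range where the stable-subordinator moment is finite. Taking $\gamma\in(\alpha,1)$ would make the sum converge, but then the uniform bound $\sup_{z\ge 0}\EV[\|\gamma^z\|^\gamma]<\infty$ does not follow from \Stable[\alpha]\ moments; establishing it with the tools of this paper amounts to knowing something close to the \BESQ[0] identification of total mass, which is deferred to the sequel.

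The paper sidesteps this moment deadlock by a different decomposition: split each $\|\beta_U^y\|$ into the initial-spindle contribution $\bff_U(y)$ and the rest. For the initial spindles, the key is that the amplitude of a \BESQ[-2\alpha]\ started at $a$ has \emph{finite first moment} proportional to $a$ (from the scale function $s(x)=x^{1+\alpha}$), so $\sum_{U\in\beta}\sup_y\bff_U(y)$ has finite mean $\asymp\|\beta\|$ and the Weierstrass $M$-test gives uniform convergence, hence continuity of $y\mapsto\sum_U\bff_U(y)$. For the remaining spindles, the paper reuses the coupling with the stopped \PRM\ $\tdN$ from the proof of Proposition \ref{prop:cts_lt_at_0}: the non-initial spindles of all the $\bN_{U_i}$ are a subset of the spindles of $\tdN$, positioned at the same levels, so Lemma \ref{lem:spindle_piles} and the proof of Corollary \ref{cor:PRM:type-1_wd} apply directly to that subset and give (H\"older) continuity of their total mass. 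This decomposition is what makes an $L^1$ bound available where your $L^\gamma$ approach cannot close.
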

\begin{proof} Consider $\bN_\beta$ as in Definition \ref{constr:type-1}. We show separately the continuity of $\sum_{U\in\beta}\bff_U$ and of the total mass
  process of the remaining spindles. 

  For the former, we recall from \cite[p.442]{RevuzYor} that \BESQ[-2\alpha] has scale function
  $s(x)=x^{1+\alpha}$. Therefore, the amplitude $A$ has distribution $\Pr(A(\bff_U)>m)=(a/m)^{1+\alpha}$, where $a=\Leb(U)$, so that
  $\EV[A(\bff_U)]=a/\alpha$. Since we have $\sum_{U\in\beta}A(\bff_U)<\infty$ a.s., continuity of $\sum_{U\in\beta}\bff_U$ follows. 

  For the remaining spindles, we use the coupling of point measures $\bN_{U_i}$ and $\tdN$ of the proof of Proposition \ref{prop:cts_lt_at_0}. 
  with $\IPLT_\gamma(\infty)=\IPLT_\beta(\infty)+\delta$ and note that all unbroken spindles of $\bN_{U_i}$, $i\ge 1$, are positioned by the     
  associated scaffoldings $\bX_{U_i}=\xi(\bN_{U_i})$, $i\ge 1$, at the same levels as the corresponding spindles of $\tdN$. By Lemma 
  \ref{lem:spindle_piles} and the proof of Corollary \ref{cor:PRM:type-1_wd}, the associated total mass process is continuous.  
\end{proof}

\begin{proposition}[Path-continuity of $\nu_{\tt BESQ}^{(-2\alpha)}$-IP-evolutions]\label{prop:type-1:cts}
 For $\beta\in\IPspace$, $\bN_{\beta}$ belongs to $\Hfins$ almost surely. 
 In particular, $\skewerP(\bN_{\beta})$ is a.s.\ path-continuous in $(\IPspace,\dI)$. Moreover, this process is a.s.\ H\"older-$\theta$ for every $\theta\in\big(0,\alpha/2\big)$, except possibly at time zero.
\end{proposition}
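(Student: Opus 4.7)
The strategy is to combine the single-clade result (Corollary \ref{cor:clade:cts_skewer}), the finite-survivor lemma (Lemma \ref{lem:finite_survivors}), the local time/diversity identification (Proposition \ref{prop:type-1:LT_diversity}), and the two boundary continuity results at $y=0$ (Propositions \ref{prop:cts_lt_at_0} and \ref{prop:ex-BESQ_mass_temp}). I fix a version of $\bN_\beta=\ConcatIL_{U\in\beta}\bN_U$ and restrict to the probability-one event $E$ on which: (a) every $\bN_U\in\Hfins$ and $(\beta_U^y,y\ge0)$ is locally H\"older-$\theta$ for each $\theta\in(0,\alpha/2)$; (b) for every $n\in\BN$, the set of clades with $\life^+(\bN_U)>1/n$ is finite; (c) Propositions \ref{prop:type-1:LT_diversity}, \ref{prop:cts_lt_at_0}, and \ref{prop:ex-BESQ_mass_temp} hold, as does the analogue of \ref{prop:cts_lt_at_0} applied to each of the countably many truncated initial partitions $\beta|_{<V}:=\{U\in\beta\colon U<V\}$ for $V\in\beta$ with $\Leb(V)$ exceeding some threshold from a countable set. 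On $E$, condition (i) of Definition \ref{def:domain_for_skewer} for $\bN_\beta\in\Hfins$ follows because $M^y_{\bN_\beta}(t)$ is a finite sum of terms $M^{y}_{\bN_U}(\cdot)$ over clades surviving to level $y$, each finite by (a); condition (ii) is exactly Proposition \ref{prop:type-1:LT_diversity}; condition (iii) reduces to the path-continuity established below.

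Fix $0<\epsilon<T$. By (b), there are only finitely many clades $U_1,\ldots,U_K$ with $\life^+(\bN_{U_i})>\epsilon$. For $y\in[\epsilon,T]$, every other $\beta_U^y$ is empty, so $\beta^y=\Concat_{i=1}^K\beta_{U_i}^y$. Each factor is $\theta$-H\"older on $[0,T]$ with a finite constant $C_i$ by (a), and Lemma \ref{lem:IP:concat} (in the finite-index case) yields
\[
 d_\IPspace(\beta^y,\beta^{y'})\le \sum_{i=1}^K d_\IPspace(\beta_{U_i}^y,\beta_{U_i}^{y'})\le \Big(\sum_{i=1}^K C_i\Big)|y-y'|^\theta,
\]
for all $y,y'\in[\epsilon,T]$. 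Since $\epsilon>0$ and $\theta\in(0,\alpha/2)$ were arbitrary, $(\beta^y,y\ge0)$ is path-continuous and locally H\"older-$\theta$ on $(0,\infty)$.

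The remaining (and main) step is continuity at $y=0$. Fix $\eta>0$ and choose $\epsilon$ from the countable threshold set with $\IPmag{\beta}-\sum_{U\colon\Leb(U)>\epsilon}\Leb(U)<\eta/4$; let $U_1<\cdots<U_M$ list these ``large'' blocks. For $y>0$ small enough that each spindle value $\bff_{U_i}(y)>0$, let $V_i\in\beta^y$ be the leftmost block of $\beta_{U_i}^y$, which corresponds to the original spindle and has $\Leb(V_i)=\bff_{U_i}(y)$. Consider the correspondence $(U_i,V_i)_{i\in[M]}$. For terms (i)--(ii) of $\dis_\alpha$, path-continuity of each $\bff_{U_i}$ at $0$ yields $\sum_i|\Leb(U_i)-\bff_{U_i}(y)|\to 0$, the unmatched initial mass is bounded by $\eta/4$ by construction, and Proposition \ref{prop:ex-BESQ_mass_temp} gives $\IPmag{\beta^y}\to\IPmag{\beta}$, so both (i) and (ii) can be made $<\eta/2$. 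For term (iii), since $V_i$ is leftmost in $\beta_{U_i}^y$, one has $\IPLT_{\beta^y}(V_i)=\sum_{U<U_i}\IPLT_{\beta_U^y}(\infty)$; this equals the total diversity at time $y$ of the $\nu_{\tt BESQ}^{(-2\alpha)}$-IP-evolution from $\beta|_{<U_i}$ (coupled through the same clades), which converges to $\IPLT_{\beta|_{<U_i}}(\infty)=\IPLT_\beta(U_i)$ by Proposition \ref{prop:cts_lt_at_0} applied to $\beta|_{<U_i}$ as recorded in (c). Finally, (iv) tends to $0$ directly by Proposition \ref{prop:cts_lt_at_0}. Hence $d_\IPspace(\beta,\beta^y)<\eta$ for all sufficiently small $y$, giving continuity at $0$. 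The hardest point is term (iii): obtaining almost sure (not merely distributional) convergence of the partial diversity up to the $V_i$ requires applying Proposition \ref{prop:cts_lt_at_0} to each truncated initial partition simultaneously, which is why we intersect countably many almost sure events in (c).
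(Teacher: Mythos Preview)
Your proposal is correct and follows essentially the same approach as the paper: verify conditions (i)--(ii) of Definition \ref{def:domain_for_skewer} via Lemma \ref{lem:type-1:wd} and Proposition \ref{prop:type-1:LT_diversity}, obtain H\"older continuity on $(0,\infty)$ by combining Lemma \ref{lem:finite_survivors}, Corollary \ref{cor:clade:cts_skewer} and Lemma \ref{lem:IP:concat}, and handle continuity at $y=0$ via the same correspondence pairing each large block $U_i$ with the leftmost block of $\beta_{U_i}^y$. In fact you are more explicit than the paper on the one genuinely delicate point: the paper simply asserts that item (iii) of the distortion (the partial diversities $\IPLT_{\beta^y}(V_i)$) ``can be controlled via Proposition \ref{prop:cts_lt_at_0}'', whereas you correctly spell out that this requires applying Proposition \ref{prop:cts_lt_at_0} to each truncated initial partition $\beta|_{<U_i}$, coupled through the same clades, and intersecting the resulting countably many almost-sure events.
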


\begin{proof}
 We have already checked properties (i) and (ii) of Definition \ref{def:domain_for_skewer} of $\Hfins$, in Lemma \ref{lem:type-1:wd} and Proposition \ref{prop:type-1:LT_diversity} respectively. It remains only to confirm the claimed path-continuity.
 
 By Lemma \ref{lem:finite_survivors}, for $z > 0$ the process $(\beta^y,\,y\geq z)$ equals the concatenation of an a.s.\ finite subset of the processes $(\beta_U^y,\,y\geq z)$ of \eqref{eq:concat_skewers:type-1}. By Corollary \ref{cor:clade:cts_skewer}, each of the $(\beta_U^y,\,y\ge0)$ is a.s.\ H\"older-$\theta$ for $\theta\in(0,\alpha/2)$. This proves the a.s.\ H\"older continuity of $(\beta^y,\,y\ge z)$, by way of \eqref{eq:IP:concat_dist}. Since this holds for every $z$, it remains only to establish a.s.\ continuity at $y=0$.
 
 Fix $\epsilon > 0$. Take a subset $\{U_1,\ldots,U_k\}\subseteq\beta$ of sufficiently many large blocks so that $\IPmag{\beta} - \sum_{i=1}^k\Leb(U_i) < \epsilon/4$. We define a correspondence by pairing each $U_i$ with the leftmost block in $\beta^y_{U_i}$. Then there is a.s.\ some sufficiently small $\delta>0$ so that for $y<\delta$:
 \begin{enumerate}[label=(\roman*), ref=(\roman*)]
  \item for $i\in[k]$, $\big|\Leb(U_i) - m^y(\bN_{U_i})\big| < \epsilon/4k$, where $m^y$ is as in \eqref{eq:LMB_def};
  \item $\big|\IPmag{\beta^0} - \IPmag{\beta^y}\big| < \epsilon/4$;
  \item for $i\in[k]$, $\left|\IPLT_{\beta^0}(U_i) - \sum_{V\in\beta\colon V<U_i}\IPLT_{\beta^y_V}(\infty)\right| < \epsilon$; and
  \item $\big|\IPLT_{\beta^0}(\infty) - \IPLT_{\beta^y}(\infty)\big| < \epsilon$.
 \end{enumerate}
 The third and fourth of these can be controlled via Proposition \ref{prop:cts_lt_at_0}. The first can be controlled since each block $U_i$ is associated with the initial leftmost spindle of $\bN_{U_i}$, and said spindle evolves continuously as a \BESQ[-2\alpha]. Finally, the second comes from Proposition \ref{prop:ex-BESQ_mass_temp}. Hence, $(\beta^y,\,y\ge0)$ is a.s.\ continuous at $y=0$.
\end{proof}

\begin{definition}[$\BPr^{(\alpha)}_{\beta}$, $\BPr^{(\alpha)}_{\mu}$, $(\cFI^y)$]\label{def:IP_process_space_1}
 For $\beta\in\IPspace$, let $\BPr^{(\alpha)}_{\beta}$ denote the distribution on $\cCRI$ of a continuous version of $\skewerP(\bN_{\beta})$. As in Definition \ref{constr:type-1}, for probability measures $\mu$ on $\IPspace$, let $\BPr^{(\alpha)}_{\mu}$ denote the $\mu$-mixture of the laws $(\BPr^1_\beta)$. We write $(\cFI^y,\,y\geq 0)$ to denote the right-continuous filtration generated by the canonical process on $\cC([0,\infty),\IPspace)$. In integrals under the aforementioned laws, we will denote the canonical process by $(\beta^y,\,y\geq 0)$.
\end{definition}

In this setting, Corollary \ref{cor:type-1:simple_Markov_1} extends via a monotone class theorem to the following.

\begin{corollary}[Simple Markov property for $\nu_{\tt BESQ}^{(-2\alpha)}$-IP-evolutions]\label{cor:type-1:simple_Markov}
 $\!\!\!$Let $\mu$ be a probability distribution on $\IPspace$. Fix $y>0$. Take $\eta,f\colon\cCRI\to [0,\infty)$ measurable, with $\eta$ measurable with respect to $\cFI^y$. Let $\theta_y$ denote the shift operator. Then 
  $\BPr^{(\alpha)}_\mu\big[\eta\, f\circ\theta_y \big] = \BPr^{(\alpha)}_{\mu}\left[\eta\, \BPr^{(\alpha)}_{\beta^y}[f]\right].$
\end{corollary}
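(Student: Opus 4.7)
The plan is to upgrade Corollary \ref{cor:type-1:simple_Markov_1}, which handles finite-dimensional cylinder functionals $f(\skewer(y_j,\cdot),j\in[n])$, to arbitrary bounded measurable $f$ on $\cCRI$, and at the same time enlarge the filtration from the natural one to the right-continuous filtration $(\cFI^y)$. Path-continuity, supplied by Proposition \ref{prop:type-1:cts}, is what makes the canonical process well-defined as a $\cCRI$-valued random element under $\BPr^{(\alpha)}_\mu$, and hence what lets us view $f$ as a test function on $\cCRI$.

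First I would reduce to the case of bounded $f$ and bounded $\eta$ by standard truncation. Next, observe that by Corollary \ref{cor:type-1:simple_Markov_1} together with Definition \ref{constr:type-1_2} of $\Pr^{(\alpha)}_{\beta}\{\skewerP\in\cdot\,\}$ and the fact that $\BPr^{(\alpha)}_{\beta}$ is the law of a continuous version of $\skewerP(\bN_{\beta})$, the desired identity $\BPr^{(\alpha)}_{\mu}[\eta\, f\circ\theta_y] = \BPr^{(\alpha)}_{\mu}[\eta\, \BPr^{(\alpha)}_{\beta^y}[f]]$ holds when $f$ has the form $f(\omega) = f_0(\omega(y_1),\ldots,\omega(y_n))$ for $0\le y_1 < \cdots < y_n$ and $f_0\colon \IPspace^n \to [0,\infty)$ bounded and measurable, and $\eta$ is of the corresponding form in $\cCRI$ restricted to times in $[0,y]$. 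The collection of such cylinder sets is a $\pi$-system generating the Borel $\sigma$-algebra on $\cCRI$ (since $\cCRI$ is equipped with the topology of uniform convergence on compacts and evaluation maps separate points in the Lusin space $\IPspace$). A routine monotone class / Dynkin argument then extends the identity to all bounded Borel-measurable $f$ on $\cCRI$, because both sides are linear in $f$, are monotone under bounded pointwise limits, and agree on the generating $\pi$-system.

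The step that requires a little more care is the passage from the natural filtration $(\cFI^{y,\circ})$ generated by $(\beta^z,\,z\le y)$ to its right-continuous augmentation $\cFI^y = \bigcap_{\varepsilon>0}\cFI^{y+\varepsilon,\circ}$. Fix $\varepsilon > 0$. By the cylinder case just established, applied at level $y+\varepsilon$, for any bounded $\cFI^{y+\varepsilon,\circ}$-measurable $\eta$ we have
\begin{equation*}
  \BPr^{(\alpha)}_\mu\!\left[\eta\, f\circ\theta_{y+\varepsilon}\right] = \BPr^{(\alpha)}_\mu\!\left[\eta\, \BPr^{(\alpha)}_{\beta^{y+\varepsilon}}[f]\right].
\end{equation*}
Now specialize $\eta$ to be $\cFI^y$-measurable (hence $\cFI^{y+\varepsilon,\circ}$-measurable). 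Let $\varepsilon\downarrow 0$. The left-hand side converges to $\BPr^{(\alpha)}_\mu[\eta\, f\circ\theta_y]$ by continuity of the canonical process and bounded convergence, provided $f$ is continuous and bounded on $\cCRI$; for the right-hand side we use path-continuity together with the fact that $\beta \mapsto \BPr^{(\alpha)}_\beta[f]$ is measurable (a fact inherited from the kernel property in Proposition \ref{prop:clade_lengths_summable}\ref{item:CLS:kernel}), so that $\BPr^{(\alpha)}_{\beta^{y+\varepsilon}}[f]\to \BPr^{(\alpha)}_{\beta^y}[f]$ in some suitable sense as $\varepsilon\downarrow 0$. The cleanest way to carry out this limit is to use a continuity-in-initial-condition statement (the content of Proposition \ref{prop:type-1:cts_in_init_state} referenced in the overview) to conclude that $\beta\mapsto \BPr^{(\alpha)}_\beta[f]$ is continuous and bounded for bounded continuous $f$, so that both sides pass to the limit.

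The main obstacle, therefore, is not the monotone class step but the right-continuity upgrade: it rests on having continuity of $\beta\mapsto\BPr^{(\alpha)}_\beta$ for the weak topology on $\cCRI$-laws, which is exactly the content of the separately proved continuity-in-initial-state result. Once that is in hand, bounded convergence closes the argument for continuous bounded $f$, and a final monotone class extension upgrades to arbitrary bounded measurable $f$, giving the stated identity for all measurable $f\colon\cCRI\to[0,\infty)$ by monotone convergence on truncations.
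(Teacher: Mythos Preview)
Your core reduction --- apply Corollary \ref{cor:type-1:simple_Markov_1} to cylinder functionals and then extend in $f$ by a monotone class argument --- is exactly what the paper does; the paper's entire proof is the one-line ``extends via a monotone class theorem.'' Where you diverge is in the ``right-continuity upgrade,'' and this extra step is both unnecessary and, in the paper's logical order, premature.

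The point you are missing is that the level filtration $(\ol\cF^y)$ appearing in Corollary \ref{cor:type-1:simple_Markov_1} is \emph{already} right-continuous: Definition \ref{def:filtrations}(i) defines $(\cF^y)$ as the least right-continuous filtration making the cutoffs measurable, and the bar denotes completion. By Lemma \ref{lem:cutoff_skewer}(iv), the process $(\skewer(z,\,\cdot\,),\,z\ge0)$ is adapted to $(\cF^y)$ on $\Hfins$. Hence if $\eta\colon\cCRI\to[0,\infty)$ is $\cFI^y$-measurable, then for every $\varepsilon>0$ it is $\cFI^{y+\varepsilon,\circ}$-measurable, so $\eta\circ\skewerP$ is $\cF^{y+\varepsilon}$-measurable; intersecting over $\varepsilon>0$ and using right-continuity of $(\cF^y)$ gives that $\eta\circ\skewerP$ is $\ol\cF^y$-measurable. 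Thus Corollary \ref{cor:type-1:simple_Markov_1} applies directly with this pulled-back $\eta$, with no restriction to cylinder $\eta$ and no limiting argument in $\varepsilon$.

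Consequently you do not need Proposition \ref{prop:type-1:cts_in_init_state}. In the paper that proposition is proved \emph{after} the present corollary (in Section \ref{sec:Markov}), and is used only later to bootstrap from the simple to the strong Markov property. Invoking it here is at best redundant and at worst out of order. Drop the $\varepsilon\downarrow0$ paragraph entirely; what remains is the paper's proof.
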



\subsection{$\nu_{q,c}^{(-2\alpha)}$-IP-evolutions}\label{IPevolgeneral} 

In this section, we generalise our results for $\nu_{\tt BESQ}^{(-2\alpha)}$-IP-evolutions to IP-evolutions whose block diffusion is any 
self-similar diffusion on $[0,\infty)$ that is absorbed in 0. As mentioned at the end of Section \ref{sec:BESQ}, with reference to 
\cite{Lamperti72}, such self-similar diffusions form a three-parameter class and can all be obtained from \BESQ\ diffusions by space transformations
of the form $x\mapsto cx^q$. We need $\alpha\in(0,1)$ for the \StableA\ scaffolding. We now also see that the restriction $q>\alpha$ is needed to get 
$\Stable[\alpha/q]$ interval partitions and evolutions in $\IPspace_{\alpha/q}$. We need $c>0$ to preserve positive spindles with absorption in 0. For $\bff\sim\nu_{\tt BESQ}^{(-2\alpha)}(\,\cdot\,|\,\zeta=1)$ we have
\begin{equation}\int_0^1\EV[(\bff(y))^\alpha]dy=\frac{2^\alpha\Gamma(1\!+\!\alpha)}{1+\alpha}\ \Rightarrow\ 
  \int_0^1\EV[(c(\bff(y))^q)^{\alpha/q}]=\frac{c^{\alpha/q}2^\alpha\Gamma(1\!+\!\alpha)}{1+\alpha}
  \label{eqn:cnu}
\end{equation}
and hence $\nu_{q,c}^{(-2\alpha)}=c^{-\alpha/q}\nu_{\tt BESQ}^{(-2\alpha)}(cf^q\in\,\cdot\,)$.


We define $\IPspace_{\alpha/q}^{(1/q)}:=\{\beta\in\IPspace_{\alpha/q}\colon\sum_{U\in\beta}({\rm Leb}(U))^{1/q}<\infty\}$ and note that      $\IPspace_{\alpha/q}^{(1/q)}=\IPspace_{\alpha/q}$ for $q\le 1$, but is a strict subset when $q>1$.
Fix $\alpha\in(0,1)$, $q>\alpha$ and $c>0$. Consider the initial interval partition $\gamma\in\IPspace_{\alpha/q}^{(1/q)}$, and let $\beta\in\IPspace_\alpha$ be the 
interval partition obtained from $\gamma$ by transforming all block sizes by $x\mapsto(x/c)^{1/q}$. Let $(\beta^y,y\ge 0)\sim\BPr^{(\alpha)}_\beta$.
For each $y\ge 0$, let $\gamma^y\in\IPspace_{\alpha/q}^{(1/q)}$ be the interval partition obtained from $\beta^{y}$ by transforming all block sizes by 
$x\mapsto cx^q$. Then $(\gamma^y,y\ge0)$ is an $(\alpha,q,c)$-IP-evolution starting from $\gamma\in\IPspace_{\alpha/q}^{(1/q)}$. The operation on block
sizes is naturally carried out spindle by spindle, from a scaffolding-and-spindles construction of $(\beta^y,y\ge 0)$.

Let us show that this restriction of $\gamma$ to $\IPspace_{\alpha/q}^{(1/q)}$ is necessary and not just a feature of the above construction of a 
$\nu_{q,c}^{(-2\alpha)}$-IP-evolution from a $\nu_{\tt BESQ}^{(-2\alpha)}$-IP-evolution. In Lemma \ref{prop:type-1:transn}, we showed that 
$\Pr^{(\alpha)}_{\{(0,a)\}}(\life^+\!>\!z)\!=\!1\!-\!\exp(-a/2z)$, which implies here that $\Pr^{\alpha,q,c}_{(0,b)}(\zeta^+\!>\!z)\!=\!1\!-\!\exp(-(b/c)^{1/q}/2z)$. Hence, 
Lemma \ref{lem:finite_survivors} generalises to yield that a $\nu_{q,c}^{(-2\alpha)}$-IP-evolution starting from $\gamma\!\in\!\IPspace_{\alpha/q}$ has finitely 
many surviving clades if and only if $\sum_{V\in\gamma}({\rm Leb}(V))^{1/q}\!<\!\infty$ and hence summable interval lengths at all levels $z\!>\!0$ if and only if 
$\gamma\!\in\!\IPspace_{\alpha/q}^{(1/q)}$. 

\begin{corollary}\label{cor:genpathcont} $\!\!\nu_{q,c}^{(-2\alpha)}$-IP-evolutions are path-continuous $\IPspace_{\alpha/q}^{(1/q)}$-valued Markov processes, for all $\alpha\in(0,1)$, 
 $q>\alpha$ and $c>0$.
\end{corollary}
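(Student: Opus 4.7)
The plan is to deduce Corollary \ref{cor:genpathcont} from the already-established $q=c=1$ results, namely Corollary \ref{cor:type-1:simple_Markov} and Proposition \ref{prop:type-1:cts}, by transferring them through the pointwise block-mass transformation $\Psi=\Psi_{q,c}\colon x\mapsto cx^q$ described immediately before the statement. First I would verify that $\Psi$ is a bijection between $\IPspace_\alpha$ and $\IPspace_{\alpha/q}^{(1/q)}$. Well-definedness follows from three observations: (a) if $\beta\in\IPspace_\alpha$ then the ranked block masses satisfy $x_{(n)}\sim C n^{-1/\alpha}$, so $\sum_i cx_i^q<\infty$ whenever $q>\alpha$; (b) a direct computation from the diversity limit shows $\Psi(\beta)$ possesses the $\alpha/q$-diversity property with $\IPLT^{\alpha/q}_{\Psi(\beta)}(\Psi(U))=(c^{\alpha/q}\Gamma(1-\alpha/q)/\Gamma(1-\alpha))\,\IPLT^{\alpha}_{\beta}(U)$; and (c) the summability condition defining $\IPspace_{\alpha/q}^{(1/q)}$ corresponds, under $\Psi^{-1}$, to $\sum_U\Leb(U)<\infty$, i.e.\ to $\beta$ having finite total mass.

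Next I would observe that $\Psi$ intertwines the two processes at the level of scaffoldings and spindles. The spindle map $f\mapsto cf^q$ takes a \BESQ[-2\alpha] spindle to an $(\alpha,q,c)$-block-diffusion spindle, preserves zeros and therefore lifetimes, and hence preserves the $\StableA$ scaffolding $\bX=\xi(\bN_\beta)$. If $(\beta^y,y\ge0)\sim\BPr^{(\alpha)}_\beta$ is built from $\bN_\beta$ as in Definition \ref{constr:type-1}, then applying the spindle map pointwise yields a scaffolding-and-spindles realisation of the $\nu_{q,c}^{(-2\alpha)}$-IP-evolution from $\Psi(\beta)$, and since the skewer at level $y$ reads off spindle values at $y$, one has $\gamma^y=\Psi(\beta^y)$ identically in $y$, almost surely. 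The Markov property then transfers immediately because $\Psi$ is a deterministic bijection: the transition semigroup of $(\gamma^y)$ is simply the conjugation of that of $(\beta^y)$ by $\Psi$.

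For path-continuity it suffices to show that $\Psi\colon(\IPspace_\alpha,d_\alpha)\to(\IPspace_{\alpha/q}^{(1/q)},d_{\alpha/q})$ is continuous on almost every path of $(\beta^y)$, since Proposition \ref{prop:type-1:cts} already gives $d_\alpha$-continuity of $(\beta^y)$. Given a correspondence $(U_j,V_j)_{j\in[n]}$ from $\beta^{y_1}$ to $\beta^{y_2}$ of small $\alpha$-distortion, I consider the induced correspondence $(\Psi(U_j),\Psi(V_j))_{j\in[n]}$. Conditions (iii) and (iv) of Definition \ref{def:IP:metric} transfer with the scaling constant $C$ from step one. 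For the mass conditions (i) and (ii), write $|cx^q-cy^q|\le cqM^{q-1}|x-y|$ when $q\ge1$, using a uniform bound $M$ on block masses from the continuity of total mass on compacts (Corollary \ref{cor:contfns}); when $q<1$, use $|cx^q-cy^q|\le c|x-y|^q$ together with Jensen's inequality $\sum_j|x_j-y_j|^q\le n^{1-q}(\sum_j|x_j-y_j|)^q$, and control $n$ via the $\alpha/q$-diversity (equivalently, the $\alpha$-diversity) of the endpoints. Finally, the state-space claim is immediate: $\sum_{V\in\gamma^y}\Leb(V)^{1/q}=c^{1/q}\sum_{U\in\beta^y}\Leb(U)=c^{1/q}\|\beta^y\|<\infty$ almost surely.

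The main obstacle will be the case $q<1$ in the mass-distortion bound: because $x\mapsto x^q$ is only Hölder rather than Lipschitz, a correspondence of small $\alpha$-distortion does not automatically have small $\alpha/q$-distortion on its $\Psi$-image. Overcoming this requires combining the pointwise Hölder estimate with an a priori control on the number of blocks in a near-optimal correspondence, which in turn is extracted from the $\alpha$-diversity of $\beta^{y_1},\beta^{y_2}$; the continuity of $(\IPLT^\alpha_{\beta^y}(\infty),y\ge 0)$ along the path ensures that such a bound can be chosen locally uniformly in $y$.
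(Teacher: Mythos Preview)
Your approach coincides with the paper's for $q\ge1$: both reduce to the $\nu_{\tt BESQ}^{(-2\alpha)}$ case and use that the block-mass map $\Psi\colon x\mapsto cx^q$ induces a continuous map $(\IPspace_\alpha,d_\alpha)\to(\IPspace_{\alpha/q}^{(1/q)},d_{\alpha/q})$. The Markov transfer and state-space verification are fine for all $q$.

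For $\alpha<q<1$, however, there is a genuine gap. The map $\Psi$ is \emph{not} continuous between these metric spaces, and your proposed fix targets the wrong term. Take $\beta=\{(0,1)\}$ and $\beta_n$ consisting of one block of mass $1-\epsilon_n$ followed by $m_n$ blocks each of mass $\epsilon_n/m_n$, with $m_n=\epsilon_n^{-q/(1-q)}$. Both lie in $\IPspace_\alpha$ with total diversity $0$, and $d_\alpha(\beta,\beta_n)\le 2\epsilon_n\to0$. But any correspondence from $\Psi(\beta)$ to $\Psi(\beta_n)$ matches at most one block, so term (ii) of the distortion is bounded below by the unmatched $\Psi$-mass in $\beta_n$, namely $c\,m_n(\epsilon_n/m_n)^q=c\,\epsilon_n^q m_n^{1-q}=c$. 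Thus $d_{\alpha/q}(\Psi(\beta),\Psi(\beta_n))\not\to0$. Your Jensen bound $\sum_j|x_j-y_j|^q\le n^{1-q}(\sum_j|x_j-y_j|)^q$ controls only the matched part; the unmatched mass $\sum_{U\notin\{U_j\}}c\,\Leb(U)^q$ is not bounded in terms of $\sum_{U\notin\{U_j\}}\Leb(U)$ and the diversity, as the example shows (diversity is zero throughout). Arguing ``along paths'' does not save this without importing much more structure than $d_\alpha$-continuity and diversity continuity.

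The paper therefore takes a different route for $q<1$: it does not attempt to push continuity through $\Psi$. Instead it observes that the groundwork of Section~3 (Propositions \ref{prop:agg_mass_subord} and \ref{prop:PRM:cts_skewer}, hence Theorem \ref{thm:diffusion_0}) was already carried out for general $\nu$ of the form \eqref{eqn:generalnu}, which includes $\nu_{q,c}^{(-2\alpha)}$. It then retraces the steps of Sections 5--6 directly in the $\nu_{q,c}^{(-2\alpha)}$ setting: H\"older continuity for the stopped PRM, then for single-block initial states (adapting Corollary \ref{cor:clade:cts_skewer}), then for general $\beta\in\IPspace_{\alpha/q}^{(1/q)}$ on $(0,\infty)$ via finitely many surviving clades and Lemma \ref{lem:IP:concat}, and finally continuity at $y=0$ by adapting Propositions \ref{prop:cts_lt_at_0} and \ref{prop:ex-BESQ_mass_temp}. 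This avoids any appeal to continuity of $\Psi$.
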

\begin{proof} W.l.o.g. $c=1$. The Markov property follows from the construction of the $\nu_{q,c}^{(-2\alpha)}$-IP-evolution by transforming a 
  $\nu_{\tt BESQ}^{(-2\alpha)}$-IP-evolution, which is Markovian by Corollary \ref{cor:type-1:simple_Markov}. This construction also establishes
  that $\nu_{q,c}^{(-2\alpha)}$-IP-evolutions are $\IPspace_{\alpha/q}^{(1/q)}$-valued. It remains to establish path-continuity. For $q>1$, this is a consequence
  of the continuity of the map $r_q\colon\IPspace_\alpha \rightarrow\IPspace_{\alpha/q}^{(1/q)}$ that maps $\beta\in\IPspace_\alpha $ to the interval partition
  $r_q(\beta)$ formed from $\beta$ by transforming all block sizes by $x\mapsto cx^q$.

  For $\alpha<q<1$, we retrace the argument for $\nu_{\tt BESQ}^{(-2\alpha)}$-IP-evolutions concluded in Proposition \ref{prop:type-1:cts}. 
  First, Propositions \ref{prop:agg_mass_subord} and \ref{prop:PRM:cts_skewer} yield that a stopped \PRM[\Leb\otimes\nu_{q,c}^{(-2\alpha)}] gives 
  rise to a $\nu_{q,c}^{(-2\alpha)}$-IP-evolution that is $\theta$-H\"older for all $\theta\in(0,\min\{\alpha/2,q-\alpha\})$ and starting from a 
  \Stable[\alpha/q] initial state. 
  Second, Corollary \ref{cor:clade:cts_skewer} and its proof are easily adapted to show that $\nu_{q,c}^{(-2\alpha)}$-IP-evolutions starting from 
  $\{(0,a)\}$ are also $\theta$-H\"older for all $\theta\in(0,\min\{\alpha/2,q-\alpha\})$. 
  Third, since starting from any $\beta\in\IPspace_{\alpha/q}^{(1/q)}$, only finitely many clades survive beyond level $z>0$, the Markov property and Lemma 
  \ref{lem:IP:concat} show that $(\beta^y,y\ge z)$ is $\theta$-H\"older, too. 
  Fourth, we need to establish path-continuity at $y=0$.    
  
  To establish path-continuity at $y=0$ as in the proof of Proposition \ref{prop:type-1:cts}, we note that the continuity of total diversity 
  (and hence block diversity) at $y=0$ follows as in Proposition \ref{prop:cts_lt_at_0}, $(\alpha,q,c)$-block diffusions are path-continuous, and
  path-continuity of total mass follows as in Proposition \ref{prop:ex-BESQ_mass_temp}.
\end{proof}

We will denote their distributions on $\cC([0,\infty),\IPspace_{\alpha/q}^{(1/q)})$ by $\BPr^{\alpha,q,c}_\beta$, $\beta\in\IPspace_{\alpha/q}^{(1/q)}$.


\subsection{Continuity in the initial state, strong Markov property, proof of Theorem \ref{thm:diffusion}}
\label{sec:Markov}

In this section, we fix $\alpha\in(0,1)$ and $c,q\in(0,\infty)$ and work with $\nu_{q,c}^{(-2\alpha)}$-IP-evolutions. 
As we have seen, they take values in $\IPspace:=\IPspace_{\alpha/q}^{(1/q)}$, equipped with the metric $\dI:=d_{\alpha/q}$.

\begin{proposition}[Continuity in the initial state]\label{prop:type-1:cts_in_init_state}
 For $f\colon\IPspace\to [0,\infty)$ bounded and continuous and $z>0$, the map $\beta\mapsto\BPr^{\alpha,q,c}_{\beta}[f(\beta^z)]$ is continuous on $(\IPspace,\dI)$.
\end{proposition}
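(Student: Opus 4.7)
The plan is to reduce continuity in the initial state to a coupling argument. Since $f$ is bounded and continuous on $(\IPspace,\dI)$, by bounded convergence it suffices to prove that for any sequence $\beta_n\to\beta$ in $(\IPspace,\dI)$ there exist, on some probability space, $\IPspace$-valued random variables $\gamma_n\sim\BPr^{\alpha,q,c}_{\beta_n}\{\beta^z\in\cdot\,\}$ and $\gamma\sim\BPr^{\alpha,q,c}_{\beta}\{\beta^z\in\cdot\,\}$ with $\dI(\gamma_n,\gamma)\to 0$ in probability.

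The coupling exploits the Poissonian construction of Definition~\ref{constr:type-1} together with the scaling disintegration of clades in Lemma~\ref{lem:clade:mass_ker}. Take an independent family $(\bN_V)_{V\in\beta}$ of clades with $\bN_V\sim\mClade^+(\,\cdot\,\mid m^0=\Leb(V))$ and put $\gamma:=\Concat_{V\in\beta}\skewer(z,\bN_V)$; this has the required law. For each $n$, fix a correspondence $(U^n_j,V^n_j)_{j\in[k_n]}$ between $\beta_n$ and $\beta$ of distortion $\epsilon_n\le\dI(\beta_n,\beta)+1/n$. For the paired blocks, set $a^n_j:=(\Leb(U^n_j)/\Leb(V^n_j))^{1/q}$ and $\bN_{U^n_j}:=a^n_j\scaleH\bN_{V^n_j}$, which by Lemma~\ref{lem:clade:mass_ker} has law $\mClade^+(\,\cdot\,\mid m^0=\Leb(U^n_j))$; for unpaired $U\in\beta_n$ draw independent fresh clades $\bN_U\sim\mClade^+(\,\cdot\,\mid m^0=\Leb(U))$. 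Then $\gamma_n:=\Concat_{U\in\beta_n}\skewer(z,\bN_U)$ has the prescribed law.

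A direct calculation from $\scaleB[a][f](y)=a^qf(y/a)$ and the definition of $\scaleH$ shows $\skewer(y,a\scaleH N)=a^q\scaleI\skewer(y/a,N)$, so
\begin{equation*}
\skewer(z,\bN_{U^n_j})=(a^n_j)^q\scaleI\skewer(z/a^n_j,\bN_{V^n_j}).
\end{equation*}
Combined with the a.s.\ path-continuity of $y\mapsto\skewer(y,\bN_{V^n_j})$ (Corollary~\ref{cor:clade:cts_skewer}) and the scaling estimates of Lemma~\ref{lem:IP:scale}, this difference tends to $0$ in $\dI$ a.s.\ whenever $a^n_j\to 1$. The latter is automatic for any fixed finite collection of blocks $V_1,\ldots,V_k$ of $\beta$: once $\epsilon_n<\min_i\Leb(V_i)$, the distortion bound forces each $V_i$ into the correspondence with $|\Leb(U^n_{j_i})-\Leb(V_i)|\le\epsilon_n$, while the $\sup_j|\IPLT^{\alpha/q}_{\beta_n}(U^n_j)-\IPLT^{\alpha/q}_\beta(V^n_j)|$ term of Definition~\ref{def:IP:metric} enforces convergence of diversities at paired blocks. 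For unpaired blocks, the generalisation of Lemma~\ref{prop:type-1:transn} obtained via the space transformation in Section~\ref{IPevolgeneral} yields survival probability at most $(\Leb(U)/c)^{1/q}/(2z)$; since the unpaired mass is at most $\epsilon_n$ and $\sum_V\Leb(V)^{1/q}<\infty$ by $\beta\in\IPspace^{(1/q)}_{\alpha/q}$, the total number and mass of unpaired surviving clades at level $z$ vanishes in probability.

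Assembling a correspondence between $\gamma_n$ and $\gamma$ that pairs the finitely many surviving skewer blocks descending from paired $(U^n_j,V^n_j)$ in left-to-right order controls the four distortion terms of Definition~\ref{def:IP:metric}: paired contributions vanish by the scaling-continuity step, unpaired contributions vanish by the survival estimate. The main obstacle is the diversity terms, which are sup-norm rather than $L^1$ and aggregate contributions from every preceding clade. Overcoming this requires a quantitative moment bound on the total level-$z$ $(\alpha/q)$-diversity produced by a single-block clade of mass $a$, which by Proposition~\ref{prop:type-1:LT_diversity} and the scaling identity above is of order $a^{\alpha/q}$; Markov's inequality combined with the $\ell^1$-summability of $\Leb(U)^{\alpha/q}$ over unpaired blocks (consequence of $\IPspace^{(1/q)}_{\alpha/q}$-summability) then reduces the problem to small-mass tails that vanish with $\epsilon_n$, giving $\dI(\gamma_n,\gamma)\to 0$ in probability as required.
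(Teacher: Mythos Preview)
Your coupling via clade scaling --- pairing blocks through a near-optimal correspondence and setting $\bN_{U^n_j}:=a^n_j\scaleH\bN_{V^n_j}$ --- is exactly the paper's construction, and the skewer-scaling identity you state is correct. The gap is in the last paragraph, where you attempt to control the diversity terms directly.

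You claim that $\ell^1$-summability of $\Leb(U)^{\alpha/q}$ over unpaired blocks is a consequence of membership in $\IPspace^{(1/q)}_{\alpha/q}$. This is false. That space gives only $\sum_U\Leb(U)^{1/q}<\infty$, and since $\alpha<1$ forces $\alpha/q<1/q$, one has $x^{\alpha/q}\ge x^{1/q}$ for $x\le 1$, so the implication points the wrong way. Indeed, for any $\beta\in\IPspace_{\alpha/q}$ with $\IPLT^{\alpha/q}_\beta(\infty)>0$ the ranked masses satisfy $a_k\asymp k^{-q/\alpha}$, whence $\sum_k a_k^{\alpha/q}\asymp\sum_k k^{-1}=\infty$. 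So even if the moment bound $\EV\big[\IPLT_{\skewer(z,\bN_a)}(\infty)\big]=O(a^{\alpha/q})$ were available --- itself unproved, since scaling only gives $a^{\alpha/q}$ times the diversity of a unit-mass clade at the \emph{moving} level $za^{-1/q}$, and a uniform-in-$a$ bound on that factor is not established --- your Markov step cannot close.

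The paper sidesteps this entirely rather than confronting it. Having fixed $\epsilon$ and $\beta$, it chooses $K$ so that with probability $\ge 1-\epsilon$ \emph{none} of the clades $\bN_{U_j}$ with $j>K$ survive to level $z$ (Lemma~\ref{lem:finite_survivors}); once $\dI(\beta,\gamma)<\delta<a_K$, the top $K$ blocks of $\beta$ are all forced into the correspondence, and the unpaired blocks of $\gamma$ have small total mass and likewise die before $z$ with high probability. On the resulting good event, $\beta^z$ and $\gamma^z$ are concatenations of exactly $K$ pieces in matching order, so Lemma~\ref{lem:IP:concat} yields $\dI(\beta^z,\gamma^z)\le\sum_{j=1}^K\dI(\beta^z_{U_j},\gamma^z_{V_j})$; the diversity accumulation is absorbed into that lemma and no moment bound on per-clade diversity is ever needed. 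The $K$ per-piece terms are then handled by your own scaling-continuity step, after first restricting to $\{\IPLT_{\beta^z}(\infty)<L,\ \|\beta^z\|<M\}$ so that the constants in Lemma~\ref{lem:IP:scale} become explicit.
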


\begin{proof} It suffices to prove this for $\nu_{q,1}^{(-2\alpha)}$-IP-evolutions.  
 Fix $z>0$. We will show that for every $\epsilon>0$ and $\beta\in\IPspace$ there is some $\delta>0$ such that for $\gamma\in\IPspace$, $\dI(\beta,\gamma)<\delta$ implies the existence of a pair of $\nu_{q,1}^{(-2\alpha)}$-IP-evolutions $(\beta^y,\,y\geq 0)$ and $(\gamma^y,\,y\geq 0)$ starting from these two initial states, with
 \begin{equation}
  \Pr\{\dI(\beta^{z},\gamma^{z}) \geq 3\epsilon\} < 6\epsilon.\label{eq:level_z_IP_cnvgc_claim}
 \end{equation}
 
 Fix $0 < \epsilon < z$ and $\beta\in\IPspace$. Let $U_1,U_2,\ldots$ denote the blocks of $\beta$, listed in non-increasing order by mass. Let $(\bN_{U_j})_{j\geq 1}$ be as in Definition \ref{constr:type-1}, let $(\beta_{U_j}^y,\,y\geq 0) := \skewerP(\bN_{U_j})$, and set $a_j := \Leb(U_j)$. We take suitable versions so that the process $(\beta^y,\,y\geq 0)$ formed by concatenating the $(\beta_{U_j}^y)$ according to the interval partition order of the $U_j$ in $\beta$, as in \eqref{eq:concat_skewers:type-1}, is a path-continuous $\nu_{q,1}^{(-2\alpha)}$-IP-evolution starting from $\beta$.
 
 We take $L$, $M$, and $K$ sufficiently large and $\delta > 0$ sufficiently small so that setting
  $E_1 := \big\{\IPLT_{\beta^{z}}(\infty)\!<\!L;\,\IPmag{\beta^{z}}\!<\!M\big\}$, $E_2 := \big\{ \forall j\!>\!K,\zeta^+(\bN_{U_j})\!<\!z\big\}$ \linebreak and $E^j_3 := \left\{ \sup\nolimits_{y \in \left[\left(1-(\delta/a_K)\right)z,\left(1+(\delta/a_K)\right)z\right]}\dI\left(\beta^y_{U_j},\beta^z_{U_j}\right) < \epsilon/K \right\}$, for $j\in [K]$,
 we have $\Pr(E_1)\geq 1-\epsilon$, $\Pr(E_2)\geq 1-\epsilon$, and $\Pr(E^j_3)\geq 1-(\epsilon/K)$ for each $j$. By Lemma \ref{lem:finite_survivors}, it suffices that we take the smallest $K$ large enough that $\sum_{j>K}a_j < 2z\epsilon$. The existence of such a $\delta$ is then guaranteed by the continuity of the $\nu_{q,1}^{(-2\alpha)}$-IP-evolution. We further require
 \begin{equation}\label{eq:partn_SM_1_delta_constraints}
  \delta < \min\left\{a_K,z\epsilon,\frac{\epsilon a_K}{K L}, \frac{\epsilon a_K}{K M}\right\}.
 \end{equation}
 
 Now take $\gamma\in\IPspace$ with $\dI(\beta,\gamma) < \delta$. By definition of $\dI$, there exists a correspondence $(\td U_j,\td V_j)_{j\in [\td K]}$ from $\beta$ to $\gamma$ with distortion less than $\delta$. Since $\delta < a_K$, we get $\td K\geq K$ and
 $\{U_j\}_{j\in [K]} \subseteq \big\{\td U_j\big\}_{j\in [\td K]}$.
 Let $(V_j)_{j\in [K]}$ denote the terms paired with the respective $U_j$ in the correspondence; i.e.\ for each $j\in [K]$, the pair $(U_j,V_j)$ equals $(\td U_i,\td V_i)$ for some $i\in [\td K]$. For $j\in [K]$, let $b_j := \Leb(V_j)$.
 
 We assume w.l.o.g.\ that our probability space is sufficiently large for the following construction 
 of a $\nu_{q,1}^{(-2\alpha)}$-IP-evolution $(\gamma^y,\,y\geq 0)$ starting from $\gamma$, coupled with $(\beta^y,\,y\ge0)$. For $j\in [K]$, set $\bN_{V_j} := (b_j/a_j)\scaleH\bN_{U_j}$.  We take $(\bN_V,\,V\in\gamma\setminus\{V_j\colon j\in [K]\})$ to be an independent family, independent of $(\bN_U,\,U\in\beta)$, with distributions as in Definition \ref{constr:type-1}. We write $(\gamma_V^y,\,y\geq 0) := \skewerP(\bN_V)$ for each $V\in\gamma$. From Lemma \ref{lem:clade:mass_ker} and the definition of $\scaleH$ in \eqref{eq:clade:xform_def}, we deduce that for $j\in [K]$ and $y\geq 0$,
 \begin{equation*}
  \bN_{V_j} \sim \mClade^+\left(\,\cdot\;\middle|\;m^0 = \Leb(V_j)\right) \qquad \text{and} \qquad
  \gamma_{V_j}^y = \scaleI[\frac{b_j}{a_j}][\beta_{U_j}^{y(a_j/b_j)}].
 \end{equation*}
 Then $(\gamma^y,\,y\geq 0) := \left(\ConcatIL_{V\in \beta}\gamma_V^y,\,y\ge0\right)$ is a $\nu_{q,1}^{(-2\alpha)}$-IP-evolution from $\gamma$.

 By Definition \ref{def:IP:metric} of $\dI$ and our choices of $K$ and $\delta$,
 \begin{equation*}
  \IPmag{\gamma} - \sum_{j=1}^K b_j \leq \dI(\beta,\gamma) + \IPmag{\beta} - \sum_{j=1}^K a_j < \delta + 2z\epsilon < 3z\epsilon.
 \end{equation*}
 Thus, by Lemma \ref{lem:finite_survivors}, the event $E_4 := \big\{ \zeta^+(\bN_V) < z\text{ for every }V\in\gamma\setminus \{V_j\colon j\in [K]\}\big\}$ 
 has probability at least $1-3\epsilon$. 
 On $E_2\cap E_4$, the partition $\beta^z$ is formed by concatenating, in interval partition order, the $\beta^z_{U_j}$, and correspondingly for $\gamma^z$. 
 
 Inequality \eqref{eq:IP:scaling_dist_1} and the last two constraints on $\delta$ in \eqref{eq:partn_SM_1_delta_constraints} imply that on $E_1$,
 \begin{equation*}
  \dI\left(\beta_{U_j}^z,\gamma_{V_j}^{z(b_j/a_j)}\right) \leq \max\left\{\left|\left(\frac{b_j}{a_j}\right)^{\alpha/q} - 1\right|L, \left|\frac{b_j}{a_j}-1\right|M\right\} < \frac{\epsilon}{K}.
 \end{equation*}
 Moreover, \eqref{eq:IP:scaling_dist_2} implies that for each $j$, on $E^3_j\cap E_1$,
 \begin{equation*}
  \dI\!\left(\!\gamma^{z(b_j/a_j)}_{V_j},\gamma^{z}_{V_j}\right) < \max\left\{\frac{b_j}{a_j},\left(\frac{b_j}{a_j}\right)^{\alpha/q}\right\}\frac{\epsilon}{K} < \frac{2\epsilon}{K}, \quad \text{so} \ 
  \dI\left(\beta^{z}_{U_j},\gamma^{z}_{V_j}\right) < \frac{3\epsilon}{K}.
 \end{equation*}
 Finally, by Lemma \ref{lem:IP:concat}, 
 $\dI(\beta^{z},\gamma^{z}) < 3\epsilon$ on $E_1\cap E_2\cap E_4\cap\bigcap_{j=1}^K E_3^j$, 
 and this intersection has probability at least $1-6\epsilon$, as claimed in \eqref{eq:level_z_IP_cnvgc_claim}.
\end{proof}

\begin{corollary}\label{cor:type-1:cts_init_2}
 Take $m\in\BN$, let $f_1,\ldots,f_m\colon\IPspace\rightarrow[0,\infty)$ be bounded and continuous, and take $0\le y_1<\cdots<y_m$. Then $\beta \mapsto \BPr^{\alpha,q,c}_{\beta}\left[\prod_{i=1}^m f_i(\beta^{y_i})\right]$ is continuous.
\end{corollary}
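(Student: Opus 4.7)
The plan is to prove the corollary by induction on $m$, bootstrapping from the one-dimensional continuity statement of Proposition~\ref{prop:type-1:cts_in_init_state} using the simple Markov property. Corollary~\ref{cor:type-1:simple_Markov} gives this Markov property for $\nu_{\tt BESQ}^{(-2\alpha)}$-IP-evolutions, and it transfers to general $\nu_{q,c}^{(-2\alpha)}$-IP-evolutions via the pathwise spatial transformation $x\mapsto cx^q$ used in Section~\ref{IPevolgeneral} to construct the latter from the former; this transformation is a homeomorphism between the relevant state spaces, so it pushes forward the Markov property.

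For the base case $m=1$: if $y_1>0$ the claim is exactly Proposition~\ref{prop:type-1:cts_in_init_state}, and if $y_1=0$ then $\Phi_1(\beta):=\BPr^{\alpha,q,c}_\beta[f_1(\beta^{y_1})]=f_1(\beta)$, which is continuous by hypothesis. For the inductive step, assume the statement holds for $m-1$ functions. Write
$$\Phi_m(\beta):=\BPr^{\alpha,q,c}_\beta\Big[\prod_{i=1}^m f_i(\beta^{y_i})\Big].$$
If $y_1=0$, then $\Phi_m(\beta)=f_1(\beta)\,\BPr^{\alpha,q,c}_\beta[\prod_{i=2}^m f_i(\beta^{y_i})]$, and both factors are continuous (the second by the inductive hypothesis applied to the $m-1$ times $0<y_2<\cdots<y_m$). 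If $y_1>0$, apply the simple Markov property at level $y_1$ to obtain
$$\Phi_m(\beta)=\BPr^{\alpha,q,c}_\beta\big[f_1(\beta^{y_1})\,g(\beta^{y_1})\big],\qquad g(\gamma):=\BPr^{\alpha,q,c}_\gamma\Big[\prod_{i=2}^m f_i(\beta^{y_i-y_1})\Big].$$
The $m-1$ times $0<y_2-y_1<\cdots<y_m-y_1$ satisfy the induction hypothesis, so $g$ is continuous, and it is bounded by $\prod_{i=2}^m\|f_i\|_\infty$. Hence $h:=f_1\cdot g$ is bounded and continuous on $\IPspace$, and Proposition~\ref{prop:type-1:cts_in_init_state} applied to $h$ at $z=y_1>0$ gives the continuity of $\Phi_m(\beta)=\BPr^{\alpha,q,c}_\beta[h(\beta^{y_1})]$.

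There is no serious obstacle here: the argument is a routine induction, and the only mild care needed is (i) separating the $y_1=0$ case so that the Markov property is invoked only at strictly positive levels, where Proposition~\ref{prop:type-1:cts_in_init_state} applies, and (ii) confirming that the simple Markov property extends from the $\nu_{\tt BESQ}^{(-2\alpha)}$ setting of Corollary~\ref{cor:type-1:simple_Markov} to arbitrary $(\alpha,q,c)$, which follows immediately from the construction in Section~\ref{IPevolgeneral}.
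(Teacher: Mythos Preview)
Your proof is correct and follows essentially the same inductive argument as the paper: apply the simple Markov property at level $y_1$ to reduce to a product $f_1\cdot g$ where $g$ is continuous by the inductive hypothesis, then invoke Proposition~\ref{prop:type-1:cts_in_init_state}. You are in fact slightly more careful than the paper in explicitly separating out the case $y_1=0$ (where Proposition~\ref{prop:type-1:cts_in_init_state} does not directly apply), and your remark on transferring the Markov property from Corollary~\ref{cor:type-1:simple_Markov} to general $(\alpha,q,c)$ via the construction in Section~\ref{IPevolgeneral} matches the paper's citation of Corollary~\ref{cor:genpathcont}.
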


\begin{proof}
 The case $m=1$ is covered by Proposition \ref{prop:type-1:cts_in_init_state}. Assume for induction that for some $m\geq 1$, the assertion holds for all $m$-tuples $(f_1,\ldots,f_m)$ and $y_1 < \cdots < y_m$ as above. Now, fix $0\le y_1<\cdots<y_m<y_{m+1}$ and suppose $f_1,\ldots,f_m,f_{m+1}\colon\IPspace\rightarrow[0,\infty)$ are bounded and continuous. Then by the inductive hypothesis and the continuity of $f_1$, the function
 $$h(\beta) = f_1(\beta)\BPr^{\alpha,q,c}_{\beta}\left[\prod_{i=1}^{m}f_{i+1}(\beta^{y_{i+1}-y_1})\right]$$
 is bounded and continuous. The simple Markov property, noted in Corollaries \ref{cor:type-1:simple_Markov} and \ref{cor:genpathcont}, and Proposition \ref{prop:type-1:cts_in_init_state} applied to $h$ yield that for all sequences $\beta_j\rightarrow\beta$,
 $$
  \BPr^{\alpha,q,c}_{\beta_j}\left[\prod_{i=1}^{m+1}f_i(\beta^{y_i})\right] = \BPr^{\alpha,q,c}_{\beta_j}[h(\beta^{y_1})] \rightarrow
  \BPr^{\alpha,q,c}_{\beta}[h(\beta^{y_1})] = \BPr^{\alpha,q,c}_{\beta}\left[\prod_{i=1}^{m+1}f_i(\beta^{y_i})\right].
 $$
 This proves the continuity of 
 $\beta \mapsto \BPr^{\alpha,q,c}_{\beta}\left[\prod_{i=1}^{m+1}f_i(\beta^{y_i})\right],$ 
 thereby completing the induction.
\end{proof}

\newcommand{\cCRIq}{\mathcal{C}\left([0,\infty),\IPspace\right)}

\begin{proposition}[Strong Markov property]\label{prop:strong_Markov}
 Let $\mu$ be a probability distribution on $\IPspace$. Let $Y$ be an a.s.\ finite stopping time in $(\cFI^y,\,y\ge 0)$. Take $\eta,f\colon\cCRIq\!\to\! [0,\infty)$ measurable, with $\eta$ measurable with respect to $\cFI^Y$. Let $\theta_y$ denote the shift operator. Then 
  $\BPr^{\alpha,q,c}_\mu\big[\eta\, f\circ\theta_Y \big] = \BPr^{\alpha,c,q}_{\mu}\left[\eta\, \BPr^{\alpha,c,q}_{\beta^Y}[f]\right]$.
\end{proposition}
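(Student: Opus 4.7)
The plan is the standard one for lifting a simple Markov property to a strong one via a Feller-type continuity, which is supplied here by Proposition \ref{prop:type-1:cts_in_init_state} together with Corollary \ref{cor:type-1:cts_init_2}, and by the path-continuity of $\nu_{q,c}^{(-2\alpha)}$-IP-evolutions from Corollary \ref{cor:genpathcont}. By a monotone class argument it is enough to treat $f$ of the product form $f(\omega) = \prod_{i=1}^m f_i(\omega(y_i))$ for bounded continuous $f_i\colon\IPspace\to[0,\infty)$ and $0\le y_1<\cdots<y_m$, because such cylinder functions generate the Borel $\sigma$-algebra on $\cCRIq$ under the topology of uniform convergence on compacts; the general $f$ follows by monotone convergence.

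First, approximate $Y$ from above by the dyadic stopping times $Y_n := 2^{-n}\lceil 2^n Y\rceil$, which take values in the countable set $D_n := \{k2^{-n}\colon k\in\BN\}$ and satisfy $Y_n\downarrow Y$. Because $(\cFI^y)$ is right-continuous, $\{Y\le s\}\in\cFI^s$ for every $s\ge 0$, so $\{Y_n = k2^{-n}\}\in\cFI^{k2^{-n}}$, and moreover $\cFI^Y\subseteq\cFI^{Y_n}$. Decomposing over the values of $Y_n$ and invoking the simple Markov property from Corollary \ref{cor:type-1:simple_Markov} (extended to $\nu_{q,c}^{(-2\alpha)}$-IP-evolutions via Corollary \ref{cor:genpathcont}) separately on each atom $\{Y_n=k2^{-n}\}$ yields
\begin{equation*}
 \BPr^{\alpha,q,c}_\mu\big[\eta\, f\circ\theta_{Y_n}\big] \;=\; \BPr^{\alpha,q,c}_\mu\Big[\eta\,\BPr^{\alpha,q,c}_{\beta^{Y_n}}[f]\Big].
\end{equation*}

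Second, pass to the limit $n\to\infty$. On the left-hand side, by path-continuity (Corollary \ref{cor:genpathcont}) we have $\beta^{Y_n+y_i}\to\beta^{Y+y_i}$ a.s.\ in $(\IPspace,\dI)$ for each $i$, so $f\circ\theta_{Y_n}\to f\circ\theta_Y$ a.s.; dominated convergence (using boundedness of $f$ and $\eta$, which we may take by truncation and then remove by monotone convergence) gives convergence of the left-hand side. On the right-hand side, Corollary \ref{cor:type-1:cts_init_2} asserts that the map $\gamma\mapsto\BPr^{\alpha,q,c}_\gamma[f]$ is bounded and continuous on $(\IPspace,\dI)$ for $f$ of the chosen product form; combined with $\beta^{Y_n}\to\beta^Y$ a.s.\ and dominated convergence this yields convergence of the right-hand side to $\BPr^{\alpha,q,c}_\mu[\eta\,\BPr^{\alpha,q,c}_{\beta^Y}[f]]$, which is the claim.

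The main subtlety is ensuring that $\eta$, which is only assumed $\cFI^Y$-measurable, is also $\cFI^{Y_n}$-measurable so that the simple Markov property applies with the same $\eta$ on each atom $\{Y_n=k2^{-n}\}$; this is precisely the inclusion $\cFI^Y\subseteq\cFI^{Y_n}$ afforded by the right-continuity of $(\cFI^y)$ (Definition \ref{def:IP_process_space_1}) and the fact that $Y\le Y_n$. Once the product case is established, a standard monotone class / Dynkin argument extends the identity to all bounded measurable $f$, and monotone convergence removes the boundedness restriction on $f$ and $\eta$, completing the proof. This strong Markov property, combined with the path-continuity and continuity in the initial state already established, delivers the Hunt process assertion of Theorem \ref{thm:diffusion}; the self-similarity is immediate from the scaling identity \eqref{eq:clade_mass_ker} and the scaling properties of the scaffolding, via the construction in Definition \ref{constr:type-1}.
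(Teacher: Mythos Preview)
Your proof is correct and follows exactly the approach the paper takes: the paper's own proof is a two-line sketch that handles the case of finitely-valued $Y$ by the simple Markov property and then cites the standard discrete approximation argument (as in \cite[Theorem 19.17]{Kallenberg}), replacing the Feller property by Corollary \ref{cor:type-1:cts_init_2}. You have simply written out that standard argument in detail --- the dyadic approximation $Y_n\downarrow Y$, the inclusion $\cFI^Y\subseteq\cFI^{Y_n}$, and the passage to the limit via path-continuity on the left and Corollary \ref{cor:type-1:cts_init_2} on the right --- so there is nothing to correct. (Your final paragraph about Theorem \ref{thm:diffusion} and self-similarity is extraneous to this proposition, but accurate.)
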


\begin{proof}
 If $Y$ only takes finitely many values, this is implied by the simple Markov property. In general, this follows via a standard discrete approximation of $Y$, as in the proof of \cite[Theorem 19.17]{Kallenberg}, in which we replace the Feller property by Corollary \ref{cor:type-1:cts_init_2}.
\end{proof}

We now prove our second main theorem, establishing that $\nu_{q,c}^{(-2\alpha)}$-IP-evolutions as Hunt processes.

\begin{proof}[Proof of Theorem \ref{thm:diffusion}]\label{page:pf_of_diffusions}
  For Theorem \ref{thm:diffusion}, 
 referring to Sharpe's definition of Borel-right Markov processes and Hunt processes, e.g.\ \cite[Definition A.18]{Li11}, we must check four properties.
 
 (i) The state space $(\IPspace,\dI):=(\IPspace_{\alpha/q}^{(1/q)},d_{\alpha/q})$ must be a Radon space. In fact it follows from Theorem \ref{thm:Lusin} that it is Lusin.
 
 (ii) The semi-groups must be Borel measurable in the initial state. From Proposition \ref{prop:type-1:cts_in_init_state}, they are continuous.
 
 (iii) Sample paths must be right-continuous and quasi-left-continuous. In fact they are continuous, by Proposition \ref{prop:type-1:cts} and Corollary \ref{cor:genpathcont}.
 
 (iv) The processes must be strong Markov under a right-continuous filtration. We have this from Proposition \ref{prop:strong_Markov}.
 
 For self-similarity, recall the construction $\bN_\beta=\Concat_{U\in\beta}\bN_U$ of Definition \ref{constr:type-1}. By Lemma \ref{lem:clade:invariance} and the scaling 
 $m^0(a\scaleH N)=a^qm^0(N)$ that follows from \eqref{eq:clade:xform_def}, \eqref{eq:clade:mass_def} and \eqref{eq:BESQ:scaling_def}, we have $a\scaleH \bN_\beta\sim\Pr_{a^q\scaleI\beta}^{\alpha,q,c}$. Therefore, if $(\beta^y,\,y\ge0) \sim \BPr_{\beta}^{\alpha,q,c}$ then $\big(a^q\scaleI \beta^{y/a},\,y\ge0\big) \sim \BPr_{a^q\scaleI\beta}^{\alpha,q,c}$, as required.
\end{proof}

\subsection{Interval partition evolutions started without diversity}
\label{sec:Hausdorff}

$\!$The construction in Definition \ref{constr:type-1} of $\bN_{\beta} = \ConcatIL_{U\in\beta}\bN_U$, for $\beta\in\IPspace$, can be carried out for 
$\beta\in\HIPspace$ as well. Extending the notation of that definition, let $\Pr^{\alpha,q,c}_{\beta}$ denote the law of the resulting point 
process and $\Pr^{(\alpha)}_{\beta}:=\Pr^{\alpha,1,1}_{\beta}$. The proof of Proposition \ref{prop:clade_lengths_summable} \ref{item:CLS:CLS} and 
\ref{item:CLS:kernel} that $\len(\bN_{\beta}) < \infty$ a.s.\ and $\beta\mapsto\Pr^{(\alpha)}_{\beta}$ is a kernel still holds, without 
modification, in this generality. The same is true of the proofs of results in Section \ref{sec:type-1_gen:cts}, from Lemma 
\ref{lem:finite_survivors} up through Corollary \ref{cor:type-1:simple_Markov_1}. Several of these involve $\IPLT_{\beta^y}(t)$ for $y>0$, but 
none take $y=0$. As in Section \ref{IPevolgeneral}, this extends to $\Pr^{\alpha,q,c}_\beta$ if we restrict to 
$\IPspace_{H}^{(1/q)}:=\{\gamma\in\HIPspace\colon\sum_{U\in\gamma}(\Leb(U))^{1/q}<\infty\}$.  
In particular, we note the extensions of Lemmas \ref{lem:finite_survivors} and \ref{lem:type-1:wd} to this setting.

\begin{lemma}\label{lem:Hausdorff_entrance_I}
 For $\beta\in\IPspace_{H}^{(1/q)}$ and $y>0$, a.s.\ only finitely many of the $(\bN_U,\,U\in\beta)$ survive to level $y$. 
 Moreover, it is a.s.\ the case that for every $y>0$ we have $\skewer(y,\bN_{\beta})\in\IPspace_{\alpha/q}^{(1/q)}$.
\end{lemma}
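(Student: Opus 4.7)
The plan is to prove each of the two claims separately, with the first following from a direct first-moment estimate and the second from an a.s.\ event obtained by countable intersection.

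For the first claim, I would use the survival probability of a single-block IP-evolution. By the construction at the start of Section \ref{IPevolgeneral} and Lemma \ref{prop:type-1:transn}, each $\bN_U$ has survival probability
\begin{equation*}
  \Pr\{\zeta^+(\bN_U)>y\} \;=\; 1-\exp\bigl(-(\Leb(U)/c)^{1/q}/(2y)\bigr) \;\le\; \frac{(\Leb(U)/c)^{1/q}}{2y}.
\end{equation*}
Since the $(\bN_U,\,U\in\beta)$ are independent, summing over $U\in\beta$ yields
\begin{equation*}
  \EV\bigl[\#\{U\in\beta\colon\zeta^+(\bN_U)>y\}\bigr] \;\le\; \frac{c^{-1/q}}{2y}\sum_{U\in\beta}(\Leb(U))^{1/q} \;<\;\infty,
\end{equation*}
where finiteness uses the assumption $\beta\in\IPspace_H^{(1/q)}$. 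Hence only finitely many of the $\bN_U$ survive to level $y$, almost surely.

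For the second claim, I would exploit monotonicity: the quantity $\#\{U\in\beta\colon\zeta^+(\bN_U)>y\}$ is non-increasing in $y$. So if this number is a.s.\ finite at each $y=1/n$, $n\in\BN$, then on the (full-measure) countable intersection of these events it is finite at every $y>0$ simultaneously. On this event, for any $y>0$ the set $J_y:=\{U\in\beta\colon\zeta^+(\bN_U)>y\}$ is finite, and
\begin{equation*}
  \skewer(y,\bN_{\beta}) \;=\; \Concat_{U\in J_y}\skewer(y,\bN_U),
\end{equation*}
since any $U\notin J_y$ contributes the empty partition at level $y$.

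It remains to show that each $\skewer(y,\bN_U)$ lies in $\IPspace_{\alpha/q}^{(1/q)}$ and that a finite concatenation of such partitions stays in this space. The first is immediate from Corollary \ref{cor:genpathcont}: the process $(\skewer(z,\bN_U),\,z\ge 0)$ is a $\nu_{q,c}^{(-2\alpha)}$-IP-evolution from the single-block state $\{(0,\Leb(U))\}\in\IPspace_{\alpha/q}^{(1/q)}$, so it is $\IPspace_{\alpha/q}^{(1/q)}$-valued. The second is elementary: a finite concatenation of partitions with the $\alpha/q$-diversity property again has that property (diversities add up at the break-points), and the $1/q$-summability of block masses is trivially preserved by finite concatenation. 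I expect the only mildly subtle point is the passage from a.s.-finiteness at each $y=1/n$ to simultaneous a.s.-finiteness at all $y>0$, but this is handled cleanly by the monotonicity argument above.
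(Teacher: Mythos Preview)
Your proof is correct and follows essentially the same approach as the paper. The paper does not give a standalone proof of this lemma; rather, in the paragraph preceding it, the authors note that the proofs of Lemmas \ref{lem:finite_survivors} and \ref{lem:type-1:wd} carry over without modification to $\beta\in\IPspace_H^{(1/q)}$, and your argument spells out precisely those proofs (the first-moment bound via the survival probability of Lemma \ref{prop:type-1:transn} generalized as in Section \ref{IPevolgeneral}, followed by the observation that finite concatenations in $\IPspace_{\alpha/q}^{(1/q)}$ stay in that space).
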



\begin{corollary}\label{prop:cts_from_Hausdorff}
 Let $\beta\in\IPspace_{H}^{(1/q)}$. Then $\skewerP(\bN_{\beta})$ is H\"older-$\theta$ in $(\IPspace_{\alpha/q}^{(1/q)},d_{\alpha/q})$ on the time interval $(0,\infty)$, for every $\theta\in (0,\min\{\alpha/2,q-\alpha)\}$ a.s.. In particular, $\skewerP(\bN_{\beta})$ is $d_H'$-path-continuous on the time 
 interval $(0,\infty)$.
\end{corollary}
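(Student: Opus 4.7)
The plan is to reduce to the already established continuity on the space $\IPspace$ by restricting to time intervals $[y_0,\infty)$ with $y_0>0$, where only finitely many clades survive. Fix any $y_0>0$. By Lemma \ref{lem:Hausdorff_entrance_I}, there is a.s.\ a finite (random) subset $\{U_1,\ldots,U_K\}\subseteq\beta$ of blocks whose clades $\bN_{U_i}$ have clade lifetime $\zeta^+(\bN_{U_i})>y_0$, while for every other $U\in\beta$ the corresponding $\beta_U^y=\emptyset$ for all $y\ge y_0$. Hence, for $y\ge y_0$, $\skewer(y,\bN_\beta)=\skewer(y,\bN_{U_1})\concat\cdots\concat\skewer(y,\bN_{U_K})$, a.s..

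Next I would invoke single-block clade continuity. By Corollary \ref{cor:clade:cts_skewer} together with the $(\alpha,q,c)$-generalisation argument in Corollary \ref{cor:genpathcont}, each process $(\beta_{U_i}^y,y\ge 0)=\skewerP(\bN_{U_i})$ is a $\nu_{q,c}^{(-2\alpha)}$-IP-evolution from a single block and is therefore a.s.\ H\"older-$\theta$ in $(\IPspace_{\alpha/q}^{(1/q)},d_{\alpha/q})$ for every $\theta\in(0,\min\{\alpha/2,q-\alpha\})$. Applying Lemma \ref{lem:IP:concat} to the finite concatenation above yields that $(\skewer(y,\bN_\beta),y\ge y_0)$ is a.s.\ H\"older-$\theta$ in $d_{\alpha/q}$, with constant bounded by $\sum_{i=1}^K D_i$ where $D_i$ is the H\"older constant of $(\beta_{U_i}^y,y\ge 0)$.

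Since the preceding holds on an almost sure event that may depend on $y_0$, I would take a countable sequence $y_0=1/n\downarrow 0$ and intersect the corresponding almost sure events to obtain a single full-probability event on which, for every $n$, $(\skewer(y,\bN_\beta),y\ge 1/n)$ is H\"older-$\theta$. This gives H\"older-$\theta$ continuity on all of $(0,\infty)$, as claimed.

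Finally, for the $d_H'$-path-continuity, note that for any two interval partitions and any correspondence between them, the Hausdorff distortion (items \ref{item:IP_m:mass_1}--\ref{item:IP_m:mass_2} of Definition \ref{def:IP:metric}) is dominated by the $\alpha/q$-distortion. Hence $d_H'(\gamma,\gamma')\le d_{\alpha/q}(\gamma,\gamma')$, so $d_{\alpha/q}$-H\"older continuity on $(0,\infty)$ immediately implies $d_H'$-path-continuity on $(0,\infty)$. The main (and only) subtle point is the argument for finiteness of survivors at arbitrary $y_0>0$, but this is already packaged in Lemma \ref{lem:Hausdorff_entrance_I}, so the proof is essentially a reduction to the previously established $\IPspace_{\alpha/q}^{(1/q)}$-case.
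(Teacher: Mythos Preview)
Your proof is correct and follows essentially the same route as the paper: reduce to finitely many surviving clades at any positive level (Lemma \ref{lem:Hausdorff_entrance_I}, the Hausdorff extension of Lemma \ref{lem:finite_survivors}), invoke single-block H\"older continuity, and concatenate via Lemma \ref{lem:IP:concat}. The only cosmetic difference is that the paper treats $q=1$ directly via Corollary \ref{cor:clade:cts_skewer} and then handles $q\neq 1$ by the spatial transformation $x\mapsto cx^q$, whereas you invoke the already-packaged general-$q$ H\"older bound from the proof of Corollary \ref{cor:genpathcont}; both amount to the same argument.
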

\begin{proof} W.l.o.g.\ $c=1$. When $q=1$, the first part of the proof of Proposition \ref{prop:type-1:cts} applies to show that for $z>0$ the process $(\skewer(y+z,\bN_{\beta}),\,y\ge0)$ has the claimed H\"older continuity on $(\IPspace_\alpha,d_\alpha)$. 
By Definition \ref{def:IP:metric}, this implies continuity in $(\HIPspace,\dH')$. For $q\neq 1$, the spatial transformation of raising values to
their $q$th power shows that $\skewer(y,\bN_\beta)\in\IPspace_{\alpha/q}^{(1/q)}$, so again $(\skewer(y+z,\bN_{\beta}),\,y\ge0)$ has the claimed continuity.
\end{proof}
We believe that $d_H$-path-continuity extends to time 0, but it seems our methods here are not powerful enough to prove this. In the special case
of $\nu_{\tt BESQ}^{(-2\alpha)}$-IP-evolutions, such continuity can be deduced from \cite[Theorem 1.4]{PartB} using the additivity of \BESQ[0].

Where appropriate, we can extend the notation of Definition \ref{def:IP_process_space_1} and Section \ref{IPevolgeneral} to define $\BPr^{(\alpha)}_{\beta}$ and $\BPr^{\alpha,q,c}_\beta$ for $\beta\in\IPspace_{H}^{(1/q)}\setminus\IPspace_{\alpha/q}^{(1/q)}$, to denote the law of a version of $\skewerP(\bN_{\beta})$ that enters $\dH$-continuously and is subsequently $d_{\alpha/q}$-continuous. We call this continuous version a \emph{Hausdorff $\nu_{q,c}^{(-2\alpha)}$-IP-evolution}. In Section \ref{sec:Markov}, 
the same coupling argument used to prove Proposition \ref{prop:type-1:cts_in_init_state} also proves the following variant.

\begin{proposition}\label{prop:type-1:cts_in_init_state_H}
 Let $\beta\in\IPspace_{H}^{(1/q)}$. For $f\colon\IPspace_{H}^{(1/q)}\to [0,\infty)$ bounded and continuous and $z>0$, the map $\beta\mapsto\BPr^{\alpha,q,c}_{\beta}[f(\beta^z)]$
 is continuous on $(\IPspace_{H}^{(1/q)},\dH')$.
\end{proposition}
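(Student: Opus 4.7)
The plan is to adapt the coupling argument of Proposition \ref{prop:type-1:cts_in_init_state} to the $\dH'$-metric, exploiting two simplifications: only the mass items \ref{item:IP_m:mass_1}--\ref{item:IP_m:mass_2} of Definition \ref{def:IP:metric} contribute to the Hausdorff distortion, and $\dH'\le d_{\alpha/q}$ on $\IPspace_{\alpha/q}^{(1/q)}$. Take $c=1$ without loss of generality, using Section \ref{IPevolgeneral} for the general case. Fix $\beta\in\IPspace_H^{(1/q)}$, $z>0$, $\epsilon>0$, and $f$ bounded and $\dH'$-continuous. Enumerate the blocks of $\beta$ in non-increasing mass order as $(U_j)_{j\ge 1}$ with $a_j := \Leb(U_j)$, form $\bN_\beta = \Concat_{U\in\beta}\bN_U$ as in Definition \ref{constr:type-1}, and set $\beta^y_U := \skewer(y,\bN_U)$.

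First I choose $K$ so that $\sum_{j>K}a_j^{1/q}<2z\epsilon$, possible since $\beta\in\IPspace_H^{(1/q)}$. By the $(\alpha,q,1)$-generalization of Lemma \ref{lem:finite_survivors} discussed in Section \ref{IPevolgeneral}, the event $E_2 := \{\forall j>K,\,\life^+(\bN_{U_j})<z\}$ has probability at least $1-\epsilon$. Using the a.s.\ finiteness of $\IPmag{\beta^z}$ and the H\"older continuity of each $\skewerP(\bN_{U_j})$ on $(0,\infty)$ (from the $(\alpha,q,1)$-extension of Corollary \ref{cor:clade:cts_skewer}), I pick $M$ and $\delta_0>0$ so that $E_1 := \{\IPmag{\beta^z}<M\}$ and, for $j\in[K]$, $E_3^j := \{\sup_{y\in[z(1-2\delta_0/a_K),\,z(1+2\delta_0/a_K)]}\dH'(\beta^y_{U_j},\beta^z_{U_j})<\epsilon/K\}$ each have probability at least $1-\epsilon/K$. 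Finally I shrink $\delta$ so that $\delta<\min\{a_K/2,\delta_0,\epsilon a_K/(KM)\}$. For any $\gamma\in\IPspace_H^{(1/q)}$ with $\dH'(\beta,\gamma)<\delta$, a Hausdorff correspondence of distortion less than $\delta$ necessarily matches each of $U_1,\ldots,U_K$ to a block $V_j\in\gamma$ of mass $b_j$, and I couple as in Proposition \ref{prop:type-1:cts_in_init_state}: $\bN_{V_j} := (b_j/a_j)\scaleH\bN_{U_j}$ for $j\in[K]$, with independent fresh clades for the remaining blocks of $\gamma$.

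By the computation in the proof of Proposition \ref{prop:type-1:cts_in_init_state}, $\gamma^y_{V_j}=(b_j/a_j)\scaleI\beta^{y(a_j/b_j)}_{U_j}$; combining \eqref{eq:IP:Haus_scale} with $E_1$ and $E_3^j$ and using the triangle inequality gives $\dH'(\beta^z_{U_j},\gamma^z_{V_j})<3\epsilon/K$. The correspondence yields $\IPmag\gamma-\sum_{j\le K}b_j\le\dH'(\beta,\gamma)+\IPmag\beta-\sum_{j\le K}a_j<\delta+2z\epsilon$, controlling the total mass of unmatched blocks of $\gamma$; combined with the $(\alpha,q,1)$-analogue of Lemma \ref{lem:finite_survivors}, this yields an event $E_4$ of probability at least $1-C\epsilon$ on which none of the fresh clades for unmatched blocks of $\gamma$ survive to level $z$. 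On $E_1\cap E_2\cap E_4\cap\bigcap_{j=1}^K E_3^j$, Lemma \ref{lem:IP:concat} yields $\dH'(\beta^z,\gamma^z)<4\epsilon$, and bounded $\dH'$-continuity of $f$ completes the argument. The chief technical obstacle is the uniform tail control just invoked: translating the bound $\IPmag\gamma-\sum_{j\le K}b_j<\delta+2z\epsilon$ on unmatched mass into a corresponding uniform bound on the $1/q$-sum $\sum_{V\in\gamma\setminus\{V_j\}_{j\in[K]}}\Leb(V)^{1/q}$, since for $q\neq 1$ the mass sum does not dominate the $1/q$-sum; the resolution uses that $\dH'$-proximity forces unmatched blocks of $\gamma$ to cluster near boundary points of $\beta$'s blocks (via the equivalence of $\dH'$ with the Hausdorff distance on complements noted after Theorem \ref{thm:Lusin}), which, combined with $\beta\in\IPspace_H^{(1/q)}$, furnishes the required uniform tail bound.
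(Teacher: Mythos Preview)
Your approach matches the paper's: drop the diversity parameter $L$ and the associated constraint on $\delta$, redefine $E_1=\{\IPmag{\beta^z}<M\}$, and replace the $d_{\alpha/q}$-scaling estimates \eqref{eq:IP:scaling_dist_1}--\eqref{eq:IP:scaling_dist_2} by the Hausdorff identities \eqref{eq:IP:Haus_scale} to obtain $\dH'(\beta^z_{U_j},\gamma^z_{V_j})<3\epsilon/K$ on $E_1\cap E_3^j$. (A small slip: your displayed bound $\IPmag\gamma-\sum_{j\le K}b_j<\delta+2z\epsilon$ uses $\sum_{j>K}a_j<2z\epsilon$, whereas you chose $K$ for the $1/q$-sum; for $q<1$ you should enlarge $K$ to control both tails, which is harmless since $\beta\in\IPspace_H^{(1/q)}=\IPspace_H$ there.)

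The genuine gap is in your treatment of $E_4$ when $q>1$. Your proposed resolution---that $\dH'$-proximity forces unmatched blocks of $\gamma$ to ``cluster near boundary points of $\beta$'s blocks,'' and that this together with $\beta\in\IPspace_H^{(1/q)}$ furnishes a uniform bound on the unmatched $1/q$-sum---is not a valid argument: the Hausdorff metric on complements constrains only where endpoints of $\gamma$ lie relative to those of $\beta$, not the number or $1/q$-sizes of its small blocks. Concretely, for $\beta=\{(0,1)\}$ and $\gamma_n=\{(0,1)\}$ concatenated with $n$ blocks of mass $n^{-r}$ where $1<r<q$, one has $\dH'(\beta,\gamma_n)\le n^{1-r}\to0$ while the unmatched $1/q$-sum equals $n^{1-r/q}\to\infty$, so the $(\alpha,q,1)$-analogue of Lemma~\ref{lem:finite_survivors} gives no control on $\Pr(E_4^c)$. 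For $q\le1$ the obstacle disappears, since then $1/q\ge1$ and $x^{1/q}\le x$ for $x\le1$, so the $1/q$-sum of small unmatched blocks is dominated by their mass sum. The paper's own proof says only ``otherwise, the proof is as before'' and does not treat the $q>1$ case separately.
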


\begin{proof}
 We prove continuity under $d_H'$. 
 We follow the same argument, but omit the definition of $L$ and resulting bound on $\delta$ in \eqref{eq:partn_SM_1_delta_constraints}. So $E_1$ becomes $\{\IPmag{\beta^z}\leq M\}$. Then we make the same coupling to define $(\gamma^y)$ based on $(\beta^y)$. In this setting, applying \eqref{eq:IP:Haus_scale}, the final two displays in the proof become:  for each $j$, on $E^3_j\cap E_1$,
 \begin{equation*}
  \dH'\left(\beta_{U_j}^z,\gamma_{V_j}^{z(b_j/a_j)}\right) \leq \left|\frac{b_j}{a_j}-1\right|M < \frac{\epsilon}{K}, \quad \dH'\left(\gamma^{z(b_j/a_j)}_{V_j},\gamma^{z}_{V_j}\right) < \frac{b_j}{a_j}\frac{\epsilon}{K} < \frac{2\epsilon}{K},
 \end{equation*}
 and so $\dH'\big(\beta^{z}_{U_j},\gamma^{z}_{V_j}\big) < 3\epsilon/K$. Otherwise, the proof is as before.
\end{proof}

This result extends to a Hausdorff variant of Corollary \ref{cor:type-1:cts_init_2}, in the same manner as before, via the simple Markov property. Then the statement of the strong Markov property, Proposition \ref{prop:strong_Markov}, holds for initial distributions $\mu$ on $\IPspace_{H}^{(1/q)}$, via the same standard argument.

\appendix

\section{Excursion intervals}\label{app:exc_intervals}

\begin{definition}\label{def:exc_intervals}
 We define the set of intervals of excursions of $g\in\DS$ about level $y\in\BR$ by
  $$
    V^y(g) := \left\{[a,b]\subset [0,\len(g)]\ \middle|\begin{array}{c}
    		 a\! <\! b\! <\! \infty;\  g(t\minus)\!\neq\! y\!\neq\! g(t)\text{ for }t\in (a,b);\\[.2cm]
    		 g(a\minus)\! =\! y \text{ or }g(a)\! =\! y;\ g(b\minus)\! =\! y \text{ or }g(b)\! =\! y
         	\end{array}\!\!\right\}\!.$$
  We define $V^y_0(g)\supseteq V^y(g)$ to include incomplete first and/or last excursions. In particular, let
  \begin{equation*}
  \begin{split}
   T^y(g)   &:= \inf\left(\left\{t\in [0,\len(g)]\colon g(t)=y \text{ or }g(t-)=y\right\}\cup\{\len(g)\}\right),\\
   T^y_*(g) &:= \sup\left(\left\{t\in [0,\len(g)]\colon g(t)=y \text{ or }g(t-)=y\right\}\cup\{0\}\right)
  \end{split}
  \end{equation*}
  If $y\neq 0$ then we include $[0,T^y(g)]\cap [0,\infty)$ in $V^y_0(g)$. If $T^y_*(g) < \len(g)$ or $g(\len(g)) \neq y$, then we include $[T^y_*(g),\len(g)]\cap [0,\infty)$ in $V^y_0(g)$.
  
  For $[a,b]\in V^y_0(N)$ we define $I^y_N(a,b)$ to equal one of $[a,b]$, $(a,b]$, $[a,b)$, or $(a,b)$, as follows. We exclude the endpoint $a$ from $I^y_N(a,b)$ if and only if both $a<b$ and $g(a-) < y = g(a)$. We exclude $b$ if and only if both $a < b$ and $g(b-) = y < g(b)$. 
\end{definition}

\begin{proposition}\label{prop:exc_intervals}
 It is a.s.\ the case that for every $y\in\BR$, the following properties hold.
 \begin{enumerate}[label=(\roman*), ref=(\roman*)]
  \item $\displaystyle V^y\!\! =\!\! \left\{[a,b]\!\subset\! (0,\infty)\middle|a\!<\!b;\bX(a-)\!=\!y\!=\!\bX(b)\text{;\,and\;}\bX(t)\!\neq\!y\text{\;for\;}t\!\in\!(a,b)\right\}\!.$
  \item For $I,J\in V^y_0$, $I\neq J$, the set $I\cap J$ is either empty or a single shared endpoint.
  \item If two intervals $[a,b],[b,c]\in V^y_0$ share an endpoint $b$ then $\bX$ does not jump at time $b$.
  \item For every $t\notin \bigcup_{I\in V^y_0}I$, we find $\bX(t-) = \bX(t) = y$.
  \item $\displaystyle\Leb\left( [0,\infty)\setminus \bigcup\nolimits_{I\in V^y_0}I\right) = 0$.
 \end{enumerate}
\end{proposition}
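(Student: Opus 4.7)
The proof combines standard sample-path properties of the spectrally positive \StableA\ L\'evy process $\bX$ with a careful analysis of the set $Z^y := \{t \geq 0 : \bX(t-) = y \text{ or } \bX(t) = y\}$, whose complement's connected components are the interiors of the intervals in $V^y_0$. All five properties will hold on a single event of probability one.

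First, I would collect the basic sample-path facts. On an event of probability one, for every $y \in \BR$: $\bX$ has only positive jumps; $\bX$ is recurrent at $y$, with $Z^y$ closed, unbounded, nowhere dense, and of zero Lebesgue measure; $\bX$ creeps downward, i.e.\ the first hitting time $T^y := \inf\{t \geq 0 : \bX(t) \leq y\}$ satisfies $\bX(T^y-) = \bX(T^y) = y$ whenever $\bX(0) > y$ and $T^y < \infty$ (this follows automatically from spectral positivity: $\bX(T^y-) \geq y$ from the strict inequality on $[0,T^y)$, and any jump at $T^y$ would be upward, contradicting $T^y$ being the hitting time); $\bX$ does not creep upward, i.e.\ the first passage time $\tau^{\geq y} := \inf\{t \geq 0 : \bX(t) \geq y\}$ satisfies $\bX(\tau^{\geq y}-) < y < \bX(\tau^{\geq y})$ a.s.\ on $\{\tau^{\geq y} < \infty, \bX(0) < y\}$ (see \cite[Ch.\ VII]{BertoinLevy}); and $\bX$ has a jointly continuous local time process (Theorem~\ref{thm:Boylan}). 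A separability argument, exploiting the strong Markov property at rational times, allows these statements to be arranged to hold simultaneously for all $y$ on a single full-measure event.

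Given these facts, (ii), (iv), and (v) are essentially topological: (ii) because the intervals in $V^y_0$ are the closures of pairwise-disjoint open intervals; (v) because $Z^y$ has Lebesgue measure zero; and (iv) because any $t \notin \bigcup_{I \in V^y_0} I$ must lie in $Z^y$ as a two-sided limit of its points, which, combined with the fact (proved in the next paragraph) that no jump of $\bX$ occurs at such a $t$, forces $\bX(t-) = \bX(t) = y$. For (iii), if $[a,b], [b,c] \in V^y_0$ share the endpoint $b$, then by (i) $\bX(b-) = y = \bX(b)$, so $\bX$ does not jump at $b$.

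The heart of the proof is (i). Fix $[a,b] \in V^y$. The key observation, derived from the downward creeping and absence of upward creeping, is that any jump time $t$ of $\bX$ with $\bX(t-) = y$ is a left-accumulation point of $Z^y$: indeed, $\bX(t-) = y$ forces $\bX$ to approach $y$ on a left neighborhood of $t$; this approach cannot be from strictly below (which would be upward creeping, excluded by spectral positivity and infinite variation), so it is from above, and then downward creeping forces continuous hits of $y$ accumulating at $t$ from the left. A symmetric argument at $t^+$ handles the case $\bX(t) = y$. Consequently, no such jump time can sit at the boundary of an excursion interval in $V^y_0$, and each endpoint $a$ and $b$ of $[a,b] \in V^y$ is a limit of continuous hits of $y$; left-limits and right-continuity of $\bX$ then give $\bX(a-) = y = \bX(b)$. \textbf{The main obstacle} is arranging this simultaneously for \emph{all} $y \in \BR$: for each fixed $y$, the compensation formula for the jump measure of $\bX$ immediately yields that a.s.\ no jump time $t$ has $\bX(t-) = y$ or $\bX(t) = y$, but this fails on the random countable set $E = \{\bX(t-), \bX(t) : t \text{ is a jump time of } \bX\}$, and it is precisely the non-upward-creeping property that resolves the analysis at these exceptional levels $y \in E$.
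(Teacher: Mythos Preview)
Your overall architecture matches the paper's: reduce (i)--(iii) to the claim that no jump time $t$ can be an endpoint of an excursion interval about level $y=\bX(t-)$ or $y=\bX(t)$, and handle (iv)--(v) via the topology of $Z^y$ and occupation densities. The paper phrases the key claim for (i)--(iii) as ``$0$ is regular for $(-\infty,0)$ and for $(0,\infty)$ after and, by time reversal, before each of the countably many jump times,'' and then (iv) and (v) are dispatched essentially as you do.

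However, your justification of the central claim in (i) has a genuine gap. You argue that if $\bX(t-)=y$ at a jump time $t$, then the approach from the left ``cannot be from strictly below,'' and then write ``so it is from above, and then downward creeping forces continuous hits of $y$.'' But ruling out \emph{strictly below} does not give you hits of $y$: if $\bX(s)>y$ for all $s$ in a left neighbourhood of $t$ with $\bX(t-)=y$, the running infimum decreases continuously to $y$ without ever attaining it, and downward creeping says nothing (the process never enters $(-\infty,y]$ on that interval). You therefore also need to rule out \emph{strictly above}, i.e.\ you need that $\bX$ takes values on \emph{both} sides of $y$ in every left neighbourhood of $t$. Only then does spectral positivity (continuous downward crossings) force hits of $y$.

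The missing input is exactly what the paper invokes: since $\bX$ has unbounded variation, $0$ is regular for both $(0,\infty)$ and $(-\infty,0)$; applying this after each jump time via the strong Markov property, and before each jump time via time reversal (the dual is spectrally negative \StableA, with the same regularity), gives that $\bX$ oscillates around $\bX(t-)$ just before $t$ and around $\bX(t)$ just after $t$, for every jump time $t$. This is a statement about countably many random times, so it holds a.s.\ simultaneously. Your label ``upward creeping'' for the one-sided version is not the standard usage (creeping concerns whether $\bX(\tau^{\ge y})=y$, not whether $\bX(\tau^{\ge y}-)=y$), and in any case covers only half of what is needed. Once you replace that step by the two-sided regularity-at-jump-times argument, the rest of your outline goes through.
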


\begin{proof}
 (i), (ii), and (iii). These properties follow from a common observation. In the terminology of Bertoin \cite{BertoinLevy}, $0$ is regular for $(-\infty,0)$ and for $(0,\infty)$ after and, by time reversal, before each of the countably many jump times.
 
 (iv). Take $t>T^y$ and set $a = \sup\{s \leq t\colon \bX(s-) = y\}$ and $b = \inf\{s \geq t\colon \bX(s) = y\}$. If $a=b=t$ then, by the \cadlag\ property of $\bX$, we have $\bX(t-)= \bX(t) = y$. Otherwise, by assertion (i), $[a,b]\in V^y$ and $t\in [a,b]$.
 
 (v). This follows from (iv) and the a.s.\ existence of occupation density local time at all levels, per Theorem \ref{thm:Boylan}. Since occupation measure therefore has a derivative in level, it cannot jump at any level.
\end{proof}

\begin{proposition}\label{prop:nice_level}
 For each $y\in\BR$ it is a.s.\ the case that level $y$ is \emph{nice} for $\bX$ in the following sense.
 \begin{enumerate}[label=(\roman*), ref=(\roman*)]
  \item There are no degenerate excursions of $\bX$ about level $y$.
  \item Local times $(\ell^y(t),\,t\ge0)$ exist. For $[a,b],[c,d] \in V^y_0$, $\ell^y(a)\neq \ell^y(c)$ unless $[a,b]=[c,d]$.
  \item If $y>0$, we also have $T^y>T^{\geq y}:=\inf\{t\ge 0\colon\bX\geq y\}$.
 \end{enumerate}
\end{proposition}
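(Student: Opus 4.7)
The plan is to establish (i)--(iii) separately, relying on three standard properties of the \StableA\ scaffolding $\bX$ (spectrally positive, index $1+\alpha\in(1,2)$): the Poisson structure of its excursions (Proposition \ref{thm:excursion_PRM}), the strict monotonicity of the inverse local time $\tau^y$ between its jumps, and the absence of upward creeping. Each assertion is a fixed-level statement, so the Poisson description of $\bG^y$ will be our workhorse.

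For (i), Proposition \ref{thm:excursion_PRM} tells us that $\bG^y$ is a \PRM[\Leb\otimes\mSxc], so it suffices to show that the set $D\subset\DSxc$ of degenerate excursions (in the sense of Definition \ref{def:typical_exc}) is $\mSxc$-null. The plan is to decompose $D$ into three $\mSxc$-null sub-events and argue each in turn: (a) ``jump start''--$\mSxc$-a.e.\ excursion leaves $0$ continuously and into the negative half-line, since the only way to depart upward is via a positive jump and the L\'evy measure $c_\nu x^{-2-\alpha}dx$ is atomless; (b) ``jump landing exactly at $0$''--since spectrally positive stable processes of index $1+\alpha\in(1,2)$ do not creep upward (cf.\ Bertoin \cite{BertoinLevy}) and the L\'evy measure has no atom, the single positive jump across $0$ strictly overshoots $\mSxc$-a.e.; (c) ``non-crossing'' or ``multiple crossings''--once the excursion is positive it stays positive until the final continuous descent to $0$, since all jumps are positive, so exactly one upward jump crosses $0$. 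Combining (a)--(c), $\mSxc$-a.e.\ excursion is typical.

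For (ii), existence of the jointly continuous local time $(\ell^y(t),\,t\ge 0)$ is Theorem \ref{thm:Boylan}. For distinctness, I would invoke \eqref{eq:inv_LT_intervals}, which identifies complete excursion intervals $[\tau^y(s-),\tau^y(s)]\in V^y$ with jump times $s$ of the strictly increasing process $\tau^y$. Distinct jump times $s<s'$ give $\tau^y(s-)<\tau^y(s)\le\tau^y(s'-)$, and continuity of $\ell^y$ combined with the inversion relation $\tau^y(s-)=\inf\{t\colon\ell^y(t)\ge s\}$ yields $\ell^y(\tau^y(s-))=s$; so the local-time values at left endpoints of distinct complete excursions are distinct. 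The only element of $V^y_0\setminus V^y$ is a possible initial excursion $[0,T^y]$ (when $y\neq 0$), with left-endpoint local time $\ell^y(0)=0$, and one checks that $0$ is a.s.\ not a jump time of $\tau^y$ so this does not collide with any $s>0$ obtained above.

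For (iii), fix $y>0$. Recurrence of $\bX$ (index $\in(1,2)$) gives $T^{\geq y}<\infty$ a.s., and absence of upward creeping yields $\bX(T^{\geq y})>y$ a.s.; right-continuity of $\bX$ then provides a right-neighborhood of $T^{\geq y}$ on which $\bX>y$, so $T^y>T^{\geq y}$ a.s. The main obstacle throughout is part (i): while the sub-claims (a)--(c) are each classical, verifying their $\mSxc$-null status rigorously will require either direct appeal to a path-level description of the spectrally positive stable It\^o excursion measure (via Chaumont-type constructions or Bertoin's excursion theory) or careful bookkeeping combining the scaling and time-reversal invariance of $\mSxc$ from Lemma \ref{lem:stable:invariance} with the no-creeping fact, in order to match Definition \ref{def:typical_exc} exactly.
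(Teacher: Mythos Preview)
Your approach to (ii) and (iii) matches the paper's. The paper's argument for (ii) is terser---``the Poisson random measure $\bG^y$ places all excursions at different local times a.s.''---but your spelled-out version via $\tau^y$ is the same content. For (iii) both you and the paper simply invoke the absence of upward creeping.

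For (i) you take a genuinely different route. You work at the level of the excursion measure $\mSxc$, trying to show the set of degenerate excursions is $\mSxc$-null and then inferring the process-level statement from the \PRM\ structure. The paper instead argues directly at the process level for the fixed level $y$: the countably many jump times of $\bX$ have pre-jump and post-jump values with absolutely continuous laws, so a.s.\ no jump starts from or lands exactly at level $y$; Millar's no-creeping result rules out continuous approach from below to level $y$; these together exclude all degenerate excursion \emph{endings} at level $y$, and the time-reversal invariance of the stable process carries this over to degenerate \emph{beginnings}. This is more elementary because it avoids any analysis of $\mSxc$ itself. Your justification for sub-case (a) (``the L\'evy measure is atomless'') is not the right mechanism: atomlessness of the L\'evy measure governs jump \emph{sizes}, not whether pre-jump \emph{levels} avoid a fixed value; the paper's ``pre-jump and jump levels are absolutely continuous'' is the point you want. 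That said, the two routes are equivalent once completed, and you flag the gap yourself in your final paragraph.
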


\begin{proof}
 (i) There are four cases of potential degeneracy: start with a jump or creep up from the starting level; end with a jump or creep up to the end level. Millar \cite{Millar73} showed that spectrally positive \StableA\ processes a.s.\ do not creep up to a fixed level. The distributions of pre-jump levels and jump levels are absolutely continuous, so a.s.\ no jump ends at a fixed level. Hence, there is a.s.\ no degeneracy at ends of excursions. By time reversal, the same holds at the start of excursions. 
 
 (ii) Existence of local times has been addressed in Theorem \ref{thm:Boylan}. The Poisson random measure $\bG^y$ of Proposition \ref{thm:excursion_PRM} places all excursions at different local times a.s..
 
 (iii) As noted in (i), there is a.s.\ no creeping up to a level. Hence, $T^y>T^{\geq y}$ a.s..
\end{proof}

\begin{proposition}\label{prop:partn_spindles}
 Take $y\in\BR$ and $N\in\H$. Then for $[a,b]\in V^y(N)$, the process $\shiftrestrict{N}{I^y_N(a,b)}$ is a bi-clade. Moreover, the set $\big\{\shiftrestrict{N}{I^y_N(a,b)}\colon [a,b]\in V_0^y(N)\big\}$ partitions the spindles of $N$, in the sense that for each point $(t,f_t)$ of $N$ there is a unique $[a,b]\in V^y_0(N)$ for which $t\in I_N^y(a,b)$. 
\end{proposition}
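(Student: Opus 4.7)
The first claim is a matter of unwinding definitions. Fix $[a,b]\in V^y(N)$ and set $N':=\shiftrestrict{N}{I^y_N(a,b)}$. Membership $N'\in\Hfin$ is immediate, since $I^y_N(a,b)\subseteq[0,b]$ and $\restrict{N}{[0,b]}\in\Hfin$ by the definition of $\H$ in Definition \ref{def:assemblage_m}. It remains to verify $\xi(N')\in\DSxc$, i.e.\ that $\xi_{N'}$ is nonzero on $(0,\len(N'))$ and $\xi_{N'}(\len(N'))=0$. The endpoint conventions in Definition \ref{def:exc_intervals} are rigged precisely so that, using the formula \eqref{eq:JCCP_def},
\[
\xi_{N'}(s)=\xi_N(a+s)-y\qquad\text{for }s\in[0,\len(N')],
\]
with the understanding that $\xi_N(a-)=y$ in the case $a\in I^y_N(a,b)$. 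Concretely, omitting $a$ from $I^y_N(a,b)$ removes a jump at time $a$ that took $\xi_N$ from strictly below $y$ up to $y$ (the ``$\xi_N(a-)<y=\xi_N(a)$'' configuration), so the shifted scaffolding still starts from $0$; and including $b$ captures the terminal value $\xi_N(b)=y$. The defining property $\xi_N\neq y$ on $(a,b)$ then yields $\xi_{N'}(s)\neq 0$ for $s\in(0,\len(N'))$, and the endpoint rule gives $\xi_{N'}(\len(N'))=0$.

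For the partition claim, let $(t,f_t)$ be a point of $N$. Since $\xi_N$ is spectrally positive, $\xi_N(t-)<\xi_N(t)$, so at most one of $\xi_N(t-),\xi_N(t)$ equals $y$. Set
\[
a:=\sup\{s<t:\xi_N(s-)=y\text{ or }\xi_N(s)=y\},\qquad b:=\inf\{s>t:\xi_N(s-)=y\text{ or }\xi_N(s)=y\},
\]
interpreting $a:=0$ if the supremum is over an empty set and $b:=\len(N)$ if the infimum is over an empty set. I would then verify directly from Definition \ref{def:exc_intervals} that $[a,b]\in V^y_0(N)$ (using the clauses allowing initial and terminal incomplete intervals when $y\neq 0$ or $T^y_*(\xi_N)<\len(N)$), and that $t\in I^y_N(a,b)$. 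Indeed, the only way $t$ could sit on the boundary $\{a,b\}$ and be excluded is if $t=a$ with $\xi_N(a-)<y=\xi_N(a)$, or $t=b$ with $\xi_N(b-)=y<\xi_N(b)$; in each such case, the very same inclusion/exclusion rule applied to the unique adjacent interval of $V^y_0(N)$ places $t$ on that side instead.

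Uniqueness reduces to showing $I^y_N(a,b)\cap I^y_N(a',b')=\emptyset$ whenever $[a,b]\neq[a',b']$ both lie in $V^y_0(N)$. Two such intervals are either disjoint in $[0,\infty)$, in which case the conclusion is trivial, or they share a single boundary point, say $b=a'$. At such a shared point $\xi_N(b-)=y$ or $\xi_N(b)=y$ holds, and a short case analysis on the four possibilities for $(\xi_N(b-),\xi_N(b))$ (with spectral positivity allowing only nondecreasing jumps) shows that at most one of the two inclusion rules assigns the point to its interval. The main (and really only) obstacle is the bookkeeping of these endpoint conventions; the whole statement is deterministic, so Proposition \ref{prop:exc_intervals} is unavailable, and one must argue purely from the combinatorics of Definition \ref{def:exc_intervals} together with the spectrally positive jump structure.
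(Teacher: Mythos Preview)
The paper states this proposition without proof, treating it as a routine unpacking of Definition \ref{def:exc_intervals}. Your sketch is essentially the right way to fill this in, and you correctly flag the main subtlety: the statement is deterministic, so the almost-sure simplifications of Proposition \ref{prop:exc_intervals} are unavailable and one must argue directly from the endpoint conventions and spectral positivity.

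Two small points worth tightening. First, in your uniqueness case analysis at a shared boundary $b$, the claim that ``at most one of the two inclusion rules assigns the point to its interval'' fails when $\xi_N(b-)=\xi_N(b)=y$: neither exclusion clause fires, so $b$ lies in both $I^y_N(a,b)$ and $I^y_N(b,c)$. This does not damage the partition-of-\emph{spindles} claim, since in that case there is no jump at $b$ and hence no spindle $(b,f_b)$ to be double-counted; but the case split should be phrased accordingly. Second, your formula $\xi_{N'}(s)=\xi_N(a+s)-y$ and the conclusion $\xi_{N'}(\len(N'))=0$ implicitly use $\len(N')=b-a$. This holds because for any $N\in\H$ the points of $N$ are dense in $[0,\len(N))$: on any point-free open subinterval the compensated limit in \eqref{eq:JCCP_def} would diverge to $-\infty$, contradicting membership in $\Hfin$. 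With these two observations made explicit, your argument is complete.
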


Recall concatenation on $\Hfin$ as defined in \eqref{eqn:concat}. To form $\cutoffL{y}{N}$, we concatenate the anti-clades of $N$ below level $y$, along with potentially incomplete anti-clades at the start $[0,T^y(\xi(N)])$ and/or end $[T^y_*(\xi(N)),\len(N)]$, as in Definition \ref{def:exc_intervals}. To formally describe these incomplete anti-clades and the corresponding incomplete clades, we specify their crossing times:
\begin{equation*}
\begin{split}
 					T^{\geq y} &:= \inf\big(\{t\in [0,\len(N)]\colon \xi_N(t)  \geq y\}\cup\{\len(N)\}\big)\\
 \text{and} \quad T^{\geq y}_* &:= \sup\big(\{t\in [0,\len(N)]\colon \xi_N(t-) \leq y\}\cup\{0\}      \big).
\end{split}
\end{equation*}
Note that $T^{\geq y} = T^{\geq y}_*$ if and only if $\xi(N)$ is a single incomplete excursion about level $y$ that neither begins nor ends at $y$. 
To avoid duplication in our formulas, we adopt the convention that in this case, this sole incomplete bi-clade is called the last, and there is no first.
\begin{align*}
 N^{\leq y}_{\textnormal{first}}  &= \Big(\Restrict{N}{[0,T^{\geq y})}
 										+ \cf\big\{\xi_N\left(T^{\geq y}-\right) < y\big\}
 	\Dirac{T^{\geq y},\check f^y_{T^{\geq y}}}\!\Big)\cf\!\left\{T^{\geq y}\neq T^{\geq y}_*\right\}\!,\\[.2cm]
 N^{\geq y}_{\textnormal{first}}  &= \Big(\ShiftRestrict{N}{(T^{\geq y},T^y]}	
 										+ \cf\big\{y\!\neq\! 0;\, \xi_N\!\left(T^{\geq y}\right) > y\!\vee\!0\big\}
 	\delta\big( 0,\hat f^y_{T^{\geq y}}\big)\!\Big)\cf\!\left\{T^{\geq y}\!\neq\! T^{\geq y}_*\right\}\!,\\[.2cm]
 N^{\leq y}_{\textnormal{last}}   &= \ShiftRestrict{N}{[T^y_*,T^{\geq y}_*)}\\
 										&\ \ + \cf\big\{y\!\neq\!\xi_N(\len(N));\xi_N\big(T^{\geq y}_*\!\!-\!\!\big)\!<\big(y\!\wedge\! \xi_N\big(T^{\geq y}_*\big)\big)\big\}
 	\delta\!\left(T^{\geq y}_*\!-\!T^y_*,\check f^y_{T^{\geq y}_*}\right)\!,\\[.2cm]
 N^{\geq y}_{\textnormal{last}}	  &= \ShiftRestrict{N}{(T^{\geq y}_*,\len(N)]}
 										+ \cf\big\{\xi_N \big(T^{\geq y}_*\big) > y\big\}
 	\Dirac{0,\hat f^y_{T^{\geq y}_*}}\!.
\end{align*}
The first bi-clade is complete if and only if $y=0$, in which case $N^{\leq y}_{\textnormal{first}}=N^{\geq y}_{\textnormal{first}} \!=\! 0$. Similarly, the last bi-clade is complete if and only if $y\!=\!\xi_N(\len(N))$, in which case $N^{\leq y}_{\textnormal{last}} = N^{\geq y}_{\textnormal{last}} = 0$.

\begin{supplement}[id=SuppMeas]
 \sname{Supplement A}
 \stitle{Measure theoretic details}
 \slink[doi]{COMPLETED BY THE TYPESETTER}
 \sdatatype{.pdf}
 \sdescription{Technical results and proofs, mainly dealing with measurability, that have been omitted from the main document to aid readability.}
\end{supplement}

\begin{supplement}[id=SuppSim]
 \sname{Supplement B}
 \stitle{Simulation of IP-evolution}
 \slink[doi]{COMPLETED BY THE TYPESETTER}
 \sdatatype{.gif}
 \sdescription{Simulation of a construction and process of the type described in Theorem \ref{thm:diffusion} and Definition \ref{constr:type-1}. Simulation by N.\ Forman, G.\ Brito, D.\ Clancy, M.\ Chacon, R.\ Chou, A.\ Forney, C.\ Li, Z.\ Siddiqui, and N.\ Wynar.}
\end{supplement}

\bibliographystyle{abbrv}
\bibliography{AldousDiffusion}
\end{document}